\documentclass{article}

\usepackage{amsmath, amsthm, amssymb, dsfont, mathtools}
\usepackage{mathrsfs}

\usepackage{graphicx}
\usepackage{threeparttable}
\usepackage{verbatim}
\usepackage{natbib}
\usepackage{caption}
\usepackage{subcaption}
\usepackage{fancyvrb}
\usepackage{enumerate}
\usepackage[inline]{enumitem}
\usepackage{relsize}
\usepackage{xcolor}
\usepackage{microtype}
\usepackage{hyperref}
\usepackage[margin=1.4in]{geometry}
\hypersetup{
  colorlinks,citecolor=blue,urlcolor=blue,linkcolor=blue,
  pdftitle={Compound decisions and empirical Bayes via Bayesian nonparametrics},
  pdfauthor={Nikolaos Ignatiadis and Sid Kankanala},
  pdfkeywords={Bayesian nonparametrics, Dirichlet process, Gaussian process, Gaussian compound decisions}
}
\usepackage{diagbox}
\usepackage{nikos_tex}
\usepackage{float}

\usepackage{multirow}
\usepackage{makecell}
\usepackage{booktabs}
\usepackage[utf8]{inputenc}
\usepackage[english]{babel}

\graphicspath{{./figures/}{./}}

\theoremstyle{definition}
\newtheorem{prop}{Proposition}
\newtheorem{defi}[prop]{Definition}

\newtheorem{lemm}[prop]{Lemma}
\newtheorem{theo}[prop]{Theorem}
\newtheorem{rema}{Remark}

\newtheorem{assu}[prop]{Assumption}
\newtheorem{spec}[prop]{Prior Specification}

\date{September, 2026}

\title{Compound decisions and empirical Bayes\\ via Bayesian nonparametrics}

\author{
Nikolaos Ignatiadis\\
University of Chicago\\
\texttt{ignat@uchicago.edu}
\and
Sid Kankanala\\
University of Chicago\\
\texttt{sid.kankanala@chicagobooth.edu}
}

\allowdisplaybreaks[1]

\begin{document}

\maketitle

\begin{abstract}
We study compound decision theory from a nonparametric Bayesian  perspective, with particular emphasis on their relationship to empirical Bayes (EB) procedures. Motivated by the sharp risk guarantees available for EB procedures based on the nonparametric maximum likelihood estimator (NPMLE), we investigate whether analogous guarantees can be established for fully Bayesian decision rules. In a class of Gaussian compound decision problems, we show that the fully Bayesian posterior mean achieves near-optimal risk. Moreover, it is admissible as a genuine Bayes rule, whereas the corresponding NPMLE plug-in rule is inadmissible.  Simulations illustrate the  performance of nonparametric Bayes procedures relative to common alternatives. As an application, we apply our methodology to Census tract-level estimates of economic mobility from the \emph{Opportunity Atlas}.
\end{abstract}

\noindent\textbf{Keywords:} Bayesian nonparametrics, Dirichlet process, Gaussian process, Gaussian compound decisions.

\section{Introduction}
\label{sec:intro}

Empirical Bayes (EB) procedures~\citep{robbins1956empirical,efron2019bayes,walters2024empirical, koenker2026empirical} are widely used in applied economics. They provide a principled framework for compound decision problems by sharing information across a large collection of related problems. Common applications include estimating teacher value-added~\citep{gilraine2020new,rose2022effects}, treatment effects~\citep{azevedo2019empirical, azevedo2020testing}, and healthcare quality~\citep{gu2023invidious,chan2026productivity}, measuring discrimination~\citep{kline2024discrimination}, and panel forecasting~\citep{liu2020forecasting}.

The standard EB construction begins with an oracle Bayes rule whose unknown components include a latent distribution (the empirical Bayesian's ``prior'') and, in some settings, additional nuisance parameters. EB estimates these components, often fully nonparametrically, and substitutes the resulting estimates into the oracle rule. A central appeal of this framework is that the resulting decision rules can enjoy strong denoising guarantees in some settings such as Gaussian compound decision problems, where the goal is to estimate latent parameters under squared-error loss~\citep{jiang2009general, chen2026empirical, chen2026sharp}. At the same time, a classical objection to EB for decision making~\citep{deely1981bayes,morris1983parametric,berger1985statistical,zellner1985bayesian} is that it treats the estimated latent distribution and nuisance parameters as known, thereby ignoring uncertainty about the shrinkage structure that may be relevant for subsequent decisions. Fully nonparametric Bayesian modeling offers a natural alternative: the prior can provide useful regularization, particularly when the number of related problems is moderate, by expressing preferences over features such as smoothness and model complexity while retaining nonparametric flexibility, and the posterior propagates uncertainty about the shrinkage structure into the latent parameters and subsequent decisions based on them.

 In our view, the preceding considerations are  relevant in  modern EB applications. Indeed, many important economic settings involve only a moderate number of related problems, for example $n=108$ firms in~\citet{kline2022systemic}, $n=68$ policies in~\citet{moon2025optimal}, and $n=75$ nurse practitioners in~\citet{chan2026productivity}. At the same time, EB estimates are increasingly used as inputs into downstream decision problems that go beyond the denoising criterion for which the original procedure is justified. In such settings, uncertainty about the estimated shrinkage structure can remain important even when point estimation is accurate. For example, in interval inference, the coverage error of plug-in credible intervals may decay only at a logarithmic rate~\citep{han2026empirical}.\footnote{This reflects the slow rates associated with ill-posed deconvolution problems, in contrast to the much faster rates available for denoising.}

Motivated by these concerns, we study the natural fully Bayesian approach to compound decision problems, in which the unknown shrinkage structure is assigned a suitable nonparametric prior. The resulting hierarchical model induces a genuine Bayes rule that averages over posterior uncertainty about this structure rather than conditioning on a single estimate. More broadly, it yields a joint posterior for the latent parameters and the unknown components of the shrinkage rule, allowing this uncertainty to be carried into subsequent decision problems. As we show, the resulting Bayes rule also enjoys attractive decision-theoretic properties, including admissibility. The central question is whether these benefits can be obtained while retaining the tractability and strong frequentist risk performance that have made EB methods so attractive. We investigate this question in detail for a class of Gaussian compound decision problems.

To fix ideas, consider a setting in which a researcher observes an estimate $Z_i$ of a unit-specific latent parameter $\mu_i$, together with a known auxiliary quantity $\nu_i$ that characterizes its sampling distribution. For example, $\nu_i$ may represent a standard error or another measure of estimation precision. We assume that
\begin{equation}
\tag{CD}
Z_i \cond \mu_i,\nu_i \;\simindep\; p(\cdot \mid \mu_i,\nu_i),
\qquad i=1,\ldots,n,
\label{eq:hierarchy1}
\end{equation}
where $p(\cdot \mid \mu_i,\nu_i)$ is the known conditional distribution of $Z_i$ given $(\mu_i, \nu_i)$. Let $\boldZ=(Z_1,\ldots,Z_n)$, $\boldnu=(\nu_1,\ldots,\nu_n)$, and $\boldmu=(\mu_1,\ldots,\mu_n)$ denote the observed estimates, auxiliary quantities, and unknown unit-specific parameters, respectively. The compound decision problem is to estimate $\boldmu$ from the observed data $(\boldZ,\boldnu)$ using a decision rule $\widehat{\boldmu}=\boldt(\boldZ,\boldnu)$. To regularize the problem, empirical Bayes procedures typically supplement the sampling model in \eqref{eq:hierarchy1} with a second-layer ``working model'' for the latent parameters. A common specification is
\begin{equation} \tag{B} \mu_i \mid \nu_i,G,\eta \;\simindep\; \mathcal{T}_{\eta}(\nu_i,G), \qquad i=1,\ldots,n, \label{eq:hierarchy2} \end{equation}
where $G$ is an unknown latent distribution and $\eta$ is an unknown, possibly infinite-dimensional, nuisance component. Here, $\mathcal{T}_{\eta}(\nu_i,G)$ denotes the distribution induced by transforming $G$ as a function of $\nu_i$, according to a rule known up to $\eta$. This transformation is typically chosen to provide some flexibility in modeling the latent structure, while preserving the tractability of the conditional law of $\mu_i$. By a ``working model", we mean that \eqref{eq:hierarchy2} is introduced to regularize the compound decision problem and need not be part of the true data-generating process. If \eqref{eq:hierarchy1} and \eqref{eq:hierarchy2} were both true, then the corresponding oracle Bayes estimator under squared-error loss would be
$$
\widehat{\boldmu}^{\mathrm{B}}(G,\eta)
=
\mathbb{E}_{G,\eta}\!\left[\boldmu \mid \boldZ,\boldnu\right].
$$
Empirical Bayes methods proceed by estimating $G$ and $\eta$ from the data and plugging in the resulting estimates $(\widehat G,\widehat\eta)$ to obtain an estimated oracle rule $\widehat{\boldmu}^{\mathrm{B}}(\widehat{G},\widehat{\eta})$. By contrast, we consider a fully Bayesian treatment in which $(G,\eta)$ are assigned a  nonparametric prior,
$
(G,\eta)\sim\Pi.
$
Let $\Pi(\cdot\mid\boldZ,\boldnu)$ denote the resulting posterior distribution. Under squared-error loss, the associated Bayes rule for the compound decision problem is the posterior mean
\begin{equation}
\widehat{\boldmu}(\Pi) = \mathbb{E}_{\Pi}\!\left[\boldmu \mid \boldZ,\boldnu\right] = \int \widehat{\boldmu}^{\mathrm{B}}(G,\eta)\, \dd\Pi\!\left(G,\eta \mid \boldZ,\boldnu\right).
\label{eq:bayes_postmean}
\end{equation}
The posterior also provides uncertainty quantification, for example through credible intervals for the individual parameters $\boldmu$. In this setting, the prior specification also allows information about the distributional structure of the latent parameters and the form of the nuisance components to be incorporated directly into the analysis. By averaging over the posterior distribution of $(G,\eta)$, the fully Bayesian approach propagates uncertainty about the shrinkage structure into the decision rule for $\boldmu$. The resulting rule is a genuine Bayes rule and, under standard conditions, inherits a variety of attractive decision-theoretic properties such as admissibility. Moreover, for the broad class of compound decision problems commonly encountered in the literature, the hierarchy in \eqref{eq:hierarchy1} and \eqref{eq:hierarchy2} provides a tractable likelihood for $(G,\eta)$, and so computation of the posterior  $\Pi(\cdot\mid\boldZ,\boldnu)$ is also tractable.

The main theoretical contributions of this paper are as follows. First, we study the canonical homoscedastic Gaussian means setting under a  Dirichlet process Gaussian mixture (DPGM) prior for $G$. We establish contraction rates for the nonparametric Bayes posterior and frequentist risk guarantees for the resulting Bayes rule under the compound decision sampling model \eqref{eq:hierarchy1} alone, without requiring the latent working model \eqref{eq:hierarchy2} to be correctly specified. This result addresses an earlier question by \citet{vandervaart2019comment} concerning the potential viability of Bayesian nonparametric procedures for compound decision problems wherein the parameters $\boldmu$ are treated as fixed (that is, the working model~\eqref{eq:hierarchy2} is allowed to be  misspecified).
Second, we show that the conventional plug-in rule based on the nonparametric maximum likelihood estimator (NPMLE) is inadmissible in this setting, highlighting an important decision-theoretic distinction between plug-in empirical Bayes and fully Bayesian rules. Third, we consider a heteroscedastic Gaussian means model with a nonparametric conditional location-scale structure, in which $G$ is assigned a DPGM prior and the unknown location and scale functions collected in $\eta$ are assigned Gaussian process (GP) priors. In this setting, we provide a fully nonparametric Bayesian treatment of the model in \citet{chen2026empirical} and establish risk guarantees for the resulting Bayes rule.

As an application, we study Census tract-level estimates of economic mobility from the ``Opportunity Atlas''~\citep{chetty2026opportunity}, revisiting earlier analyses by~\citet{bergman2024creating} and~\citet{chen2026empirical}. We first analyze the Opportunity Atlas data by revisiting the comparison of Census tracts considered by~\citet{chen2026empirical}. Although the Bayesian and plug-in empirical Bayes procedures yield similar point estimates, the Bayesian posterior gives a substantially less decisive assessment of the relative ranking of the tracts once uncertainty about the estimated shrinkage structure is taken into account. We then complement the empirical analysis with the calibrated simulation designs of~\citet{chen2026empirical}, which draw on fifteen Opportunity Atlas outcomes. These experiments show that the fully Bayesian procedure is competitive with modern plug-in empirical Bayes methods for point estimation, while also providing useful uncertainty assessments across the calibrated designs. More broadly, the application illustrates how the Bayesian framework can combine competitive point estimation with a posterior assessment of uncertainty that can inform subsequent decisions involving the latent parameters.

The paper is organized as follows. Section~\ref{sec:homoscedastic_gaussian} develops the main ideas in the classical homoscedastic Gaussian compound decision problem. Section~\ref{sec-hetero} extends the analysis to heteroscedastic models in which the latent distribution may vary nonparametrically with estimation precision. Section~\ref{sec-sims} provides simulation evidence on the performance of  Bayes procedures relative to common empirical Bayes alternatives. Section~\ref{sec-atlas} applies the methodology to Census tract-level estimates of economic mobility from the Opportunity Atlas, considering both calibrated simulation designs based on its fifteen outcomes and the observed tract-level data. Section~\ref{conclude} concludes, while Section~\ref{sec:proof_ideas} describes some of the main proof ideas.

\subsection{Literature review: Compound decisions via hierarchical Bayes} \label{lit}

In this literature review we focus on previous proposals to use nonparametric hierarchical Bayes to solve compound decision problems that are usually solved using empirical Bayes. We make further connections with the EB literature throughout the manuscript.

The idea of using a fully Bayes approach to compound decision problems dates back to the early days of EB.
To wit, the idea already appears in the paper that introduced compound decision theory~\citep{robbins1951asymptotically}. Robbins studies model~\eqref{eq:hierarchy1} with $\boldmu \in \cb{\pm 1}^n$ and $Z_i \mid \mu_i \sim \mathrm{N}(\mu_i,1)$. He proposes to estimate $\boldmu$ by first using $\boldZ$ to estimate $p_n := \{\# i: \mu_i=1\}/n$, and then applying the Bayes rule for the model in which \smash{$(\mu_i + 1)/2 \simiid \mathrm{Bernoulli}(\hat{p}_n)$}. Robbins however acknowledges that his proposed estimator is not admissible, and moreover, he suggests a ``possible candidate for a rule superior to [his]'' by pursuing a fully Bayes approach in which one places a prior on \smash{$\boldmu \in \cb{\pm 1}^n$}. This suggestion is pursued by~\citet{gilliland1976asymptotica}.
An important precursor to our main result is~\citet{datta1991asymptotic}, who establishes asymptotic optimality of fully Bayes procedures for compound decision problems in compact exponential families.

An early argument for using nonparametric Bayes via the Dirichlet process~\citep{ferguson1973bayesian} for EB is given by~\citet{antoniak1974mixtures} for the Gaussian decision problem.  The same idea is pursued by~\citet{berry1979empirical} for the binomial EB problem;~\citet{gu2017empirical} implement such an approach and find prediction performance comparable to that of the NPMLE.
~\citet{weinstein2025nonparametric} carefully explain why fully Bayesian procedures are well-suited for sequence problems (when only~\eqref{eq:hierarchy1} holds, without requiring the working model~\eqref{eq:hierarchy2} to be correctly specified) and show promising empirical performance with a nonparametric prior given by Pólya Trees.\footnote{These empirical results refer to an early version of this paper, circulated under the title ``Hierarchical Bayes modeling for large-scale inference.''
}
Assuming both~\eqref{eq:hierarchy1} and~\eqref{eq:hierarchy2} govern data generation,~\citet{cannella2026universal} provide regret guarantees for the Gaussian and Poisson EB problems, and~\citet{favaro2026merging} for the Poisson EB problem.
\citet{ghosal2022discussion} suggests using BNP to construct confidence intervals for empirical Bayes estimands such as the posterior mean as an alternative to the confidence intervals of~\citet{ignatiadis2022confidence}. \citet{ignatiadis2025partially} develop a fully Bayesian alternative to the empirical partially Bayes testing approach of~\citet{ignatiadis2025empirical}.

\citet*{liu2020forecasting} develop an empirical Bayes approach for panel forecasting using Tweedie's formula and show asymptotically vanishing regret. Subsequent works by these authors on panel forecasting~\citep*{liu2021panel, liu2023forecasting,   liu2023density} instead adopt a fully Bayesian approach stating the latter is preferable over EB ``for interval and density forecasts, because it captures all sources of uncertainty.''

\section{Compound decisions}
\label{sec:homoscedastic_gaussian}

\subsection{Setup}
In this section, we begin with the classical homoscedastic Gaussian compound decision problem~\citep{johnstone2019gaussian}. For simplicity, we normalize the sampling variance to one, so that the sampling model \eqref{eq:hierarchy1} takes the form
\begin{equation}
\tag{CD-$\mathrm{N}$}
Z_i \cond \mu_i \;\simindep\; \mathrm{N}(\mu_i,1),
\qquad i=1,\ldots,n,
\label{eq:hierarchy1N}
\end{equation}
Since the sampling distribution is identical across units, there is no unit-specific auxiliary parameter $\boldnu$. The compound decision problem is to estimate $\boldmu=(\mu_1,\ldots,\mu_n)$ from the observations $\boldZ=(Z_1,\ldots,Z_n)$ using a decision rule $\widehat{\boldmu}=\boldt(\boldZ)$. To regularize the problem, empirical Bayes procedures commonly introduce the working model
\begin{equation}
\tag{B-$\mathrm{N}$}
\mu_i \cond G_{\star} \;\simiid\; G_{\star},
\qquad i=1,\ldots,n,
\label{eq:hierarchy2N}
\end{equation}
where $G_{\star}$ is an unknown latent distribution. We emphasize that \eqref{eq:hierarchy2N} is only a working model and need not describe the true data-generating process. If it were correctly specified, then under squared-error loss the corresponding oracle Bayes rule would be
\begin{equation}
\widehat{\boldmu}^{\mathrm{B}}(G_{\star})
=
\big(
\delta_{G_\star}(Z_1),\ldots,\delta_{G_\star}(Z_n)
\big),
\qquad
\delta_G(z)
:=
\frac{\textstyle\int \mu \varphi(z-\mu)\,\dd G(\mu)}
{\textstyle\int \varphi(z-\mu)\,\dd G(\mu)},
\label{eq:cd_b_post_mean}
\end{equation}
where $\varphi$ denotes the standard Gaussian density and $\delta_G$ denotes the corresponding Bayes rule (in the univariate component problem) for a generic probability distribution $G$ on $\RR$.

In the classical $G$-modeling approach to empirical Bayes~\citep{efron2014two}, one constructs an estimator \smash{$\widehat G $} and then estimates each $\mu_i$ using the plug-in rule \smash{$\widehat{\mu}_i^{\mathrm{EB}}=\delta_{\widehat G}(Z_i)$}. In particular, when \smash{$\widehat G$} is the nonparametric maximum likelihood estimator (NPMLE)~\citep{robbins1950generalization,kiefer1956consistency}, \citet{jiang2009general} and~\citet{chen2026sharp} establish remarkably strong regret guarantees for the resulting EB rule. The main message of this section is that classical Bayesian nonparametric procedures can achieve comparable regret guarantees under the compound decision model \eqref{eq:hierarchy1N} alone, while also retaining a variety of attractive decision-theoretic properties that the plug-in rule lacks.

Throughout this section, we write $\PP[\boldmu]{\cdot}$ and $\EE[\boldmu]{\cdot}$ for probability and expectation under the sampling model \eqref{eq:hierarchy1N}, with $\boldmu$ treated as fixed.

\subsection{Bayesian nonparametrics}
We consider a nonparametric Bayesian who places a prior $\Pi$ on $G$ and forms the posterior $\Pi(\cdot \mid \boldZ)$ under the potentially misspecified hierarchy \eqref{eq:hierarchy1N}--\eqref{eq:hierarchy2N}. As noted by \citet{vandervaart2019comment}, a natural question is whether such Bayesian nonparametric procedures admit meaningful frequentist guarantees under the compound decision model \eqref{eq:hierarchy1N}, with $\boldmu$ treated as fixed and without requiring the working model \eqref{eq:hierarchy2N} to be correctly specified. This raises two questions. First, does the posterior $\Pi(\cdot \mid \boldZ)$ contract under the fixed-$\boldmu$ sampling model, and, if so, toward what target and in what metric? Second, what frequentist risk guarantees can be established for the Bayes rule induced by this posterior, and how do they compare with those of classical empirical Bayes procedures?

To address the preceding questions in further detail, we first specify the nonparametric prior $\Pi$ on $G$. We focus on a flexible class of Dirichlet process Gaussian mixture (DPGM) priors, defined as follows.

\begin{spec}[DPGM Prior]
\label{dpgm}
We place a prior on $G$ through the representation
\[
G = Q * \mathrm{N}(0,A), \;
Q \sim \mathrm{DP}(\alpha,H), \;
A \sim \Gamma, \;
Q \indep A,
\]
where $\alpha>0$, $*$ denotes convolution,
and $\mathrm{DP}$ is the Dirichlet process~\citep{ferguson1973bayesian}. The prior $\Gamma$ on the Gaussian smoothing variance $A$ is supported on $[0,\bar A]$ for some $\bar A>0$. Moreover, there exist constants $c_1,c_2>0$ and $\kappa,r\geq 0$ such that, for all sufficiently small $\delta>0$,
\begin{equation}
\Gamma(A\leq \delta)
\geq
c_1 \delta^r \exp\{-c_2\delta^{-\kappa}\}.
\label{eq:prior_var_tail}
\end{equation}
For the base measure $H$, we consider either of the following specifications:
\begin{enumerate}[label=(\roman*), nosep, leftmargin=*]
\item \textbf{Base-Compact.} $H$ has a continuous, strictly positive density supported on $[-R,R]$.\label{assu:DP:compact}
\item \textbf{Base-Gauss.} $H=\mathrm{N}(0,A_H)$ for some $A_H>0$.\label{assu:DP:gauss}
\end{enumerate}
\label{assu:DPGM}
\end{spec}

The DPGM specification provides considerable flexibility for the shape of $G$, with the mixing variance $A$ governing its degree of smoothness. As $A$ approaches zero, $G$ can approach a discrete Dirichlet process draw, while larger values of $A$ induce greater smoothing. As we show below, the amount of mass that $\Gamma$ assigns near zero, summarized by $\kappa$, plays an important role in the frequentist behavior of both the posterior and the resulting Bayes rule. A useful special case is the degenerate choice $\Gamma=\delta_0$, for which the Gaussian convolution disappears and the specification reduces to the standard Dirichlet process prior $G\sim\mathrm{DP}(\alpha,H)$. In this case, the induced posterior mean coincides with estimators previously studied for the Gaussian sequence model by, among others, \citet{kuo1986note,escobar1994estimating,maceachern1994estimating}.

Our first result establishes posterior contraction for the marginal density induced by $G$. For a fixed mean vector $\boldmu \in \RR^n$ and a probability distribution $G$ on $\RR$, respectively, define
\begin{equation}
f_{\boldmu}(z)
:=
\frac{1}{n}\sum_{i=1}^n \varphi(z-\mu_i),
\; \;
f_G(z)
:=
\int \varphi(z-\mu)\,\dd G(\mu).
\label{eq:average_marginal}
\end{equation}
The density $f_{\boldmu}$ is the average marginal density of the observations under the sampling model~\eqref{eq:hierarchy1N} with mean vector $\boldmu$, while $f_G$ is the Gaussian mixture density induced by $G$. The following theorem shows that, under the fixed-$\boldmu$ compound decision sampling model, the posterior induced by the DPGM prior concentrates around $f_{\boldmu}$ in Hellinger distance. For two densities $f$ and $h$ on $\RR$, the squared Hellinger distance is defined by
\begin{equation}
\Dhel^2(f,h)
:=
\frac{1}{2}
\int
\left(\sqrt{f(z)}-\sqrt{h(z)}\right)^2
\,\dd z.
\label{eq:hellinger}
\end{equation}
For a prior $\Pi$ satisfying Specification~\ref{assu:DPGM}, define
\begin{equation}
\varepsilon_n(\Pi)
:=
\left(n^{-1/2}\log n\right)
\lor
\left(
n^{-\frac{1}{2+\kappa}}
\log^{\frac{\kappa}{2+\kappa}} n
\right).
\label{eq:epsilon_n}
\end{equation}

\begin{theo}[Posterior contraction]
\label{thm:compound-contraction}
Suppose \eqref{eq:hierarchy1N} holds with $\max_{1\le i\le n}|\mu_i|\le M$, and let $\Pi$ satisfy Specification~\ref{assu:DPGM} under either \ref{assu:DP:compact} with $R\geq M$ or \ref{assu:DP:gauss}. Then there exist constants $C,c>0$ such that, for all sufficiently large $n$,
\[
\PP[\boldmu]{
\Pi\!\left(
G:\Dhel(f_{\boldmu},f_G)\ge C\varepsilon_n(\Pi)
\;\Big|\; \boldZ
\right)
\le \exp\!\big(-c\,n\varepsilon_n^2(\Pi)\big)}
\ge 1-\frac{4}{n^2}.
\]
\end{theo}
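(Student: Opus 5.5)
The plan is to run the general posterior contraction argument of Ghosal, Ghosh and van der Vaart, adapted to independent but non-identically distributed observations. The Bayesian treats $Z_i \simiid f_G$. Under $\PP[\boldmu]{\cdot}$, however, the $Z_i$ are independent with densities $\varphi(\cdot-\mu_i)$, so the joint true density is $\prod_i \varphi(z_i-\mu_i)$ rather than $\prod_i f_{\boldmu}(z_i)$.

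The first step handles this discrepancy. The working likelihood ratio is $\prod_i f_G(Z_i)/\prod_i f_{\boldmu}(Z_i)$. For the denominator of the posterior I would lower bound the normalizing constant. I would work with
\[
\int \prod_i \frac{f_G(Z_i)}{f_{\boldmu}(Z_i)}\,\dd\Pi(G),
\]
which equals the posterior normalizer up to a $G$-independent factor. The key observation is that for any function $h$, $\EE[\boldmu]{\sum_i h(Z_i)} = n\int h f_{\boldmu}$. Hence first moments under the non-i.i.d. truth coincide with those under i.i.d. sampling from $f_{\boldmu}$, and variances are bounded by the i.i.d. second moments. The standard evidence lower bound (Lemma 8.1 of GGV) then goes through. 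Concretely, on a KL-type neighborhood
\[
B_n=\cb{G: K(f_{\boldmu},f_G)\le\varepsilon_n^2,\; V(f_{\boldmu},f_G)\le\varepsilon_n^2},
\]
the normalizer is at least $\Pi(B_n)e^{-2n\varepsilon_n^2}$ with $\PP[\boldmu]{}$-probability at least $1-1/(n\varepsilon_n^2)$. To obtain the stated $1-4/n^2$ I would instead use a Bernstein or Chebyshev bound with higher moments on the truncated log-ratios. Alternatively, I would reduce to a finer neighborhood where $\log(f_{\boldmu}/f_G)$ is controlled in sup-norm on $[-M-c\sqrt{\log n},\,M+c\sqrt{\log n}]$. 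Gaussian tails then make the complement of this interval contribute an event of probability $O(n^{-2})$.

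The second step is the tests and entropy. For the numerator I would use Hellinger tests built from the average density: Le Cam–Birgé tests for product measures of non-identical components, applied to the $n$ independent coordinates. Equivalently, I would use the fact that the Hellinger affinity of $\prod_i\varphi(\cdot-\mu_i)$ against $f_G^{\otimes n}$ is controlled by that of $f_{\boldmu}$ against $f_G$, via concavity: $\frac1n\sum_i \Dhel^2(\varphi_{\mu_i},f_G)\ge \Dhel^2(f_{\boldmu},f_G)$. I would use sieves $\mathcal F_n=\cb{f_G: Q([-a_n,a_n])\ge 1-\varepsilon_n}$, with $a_n\asymp\sqrt{\log n}$ in the Base-Gauss case and $a_n=R$ in the compact case. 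By Ghosal–van der Vaart (2001, 2007), Gaussian location mixtures with mixing supported on $[-a_n,a_n]$ and $A\le\bar A$ have Hellinger entropy $\lesssim \log^2(1/\varepsilon)\cdot a_n$-type terms, yielding entropy $\lesssim \log^2 n \lesssim n\varepsilon_n^2$, since $\varepsilon_n\ge n^{-1/2}\log n$. The complement $\Pi(\mathcal F_n^c)$ is exponentially small by the DP stick-breaking tail bounds.

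The third step, the prior mass of $B_n$, is where $\kappa$ enters, and I expect it to be the main obstacle. The truth $f_{\boldmu}$ is $f_{G_n}$ with $G_n$ the empirical distribution of the $\mu_i$, a discrete measure with compact support and no smoothing. I would first approximate $G_n$ by a discrete $Q'$ with $N\asymp\log(1/\varepsilon_n)$ atoms matching moments, via the Ghosal–van der Vaart moment-matching lemma, so that $\Dhel(f_{G_n},f_{Q'})\lesssim\varepsilon_n$. Next, for $A\le\delta$ with $\delta\asymp\varepsilon_n^{\,2}$ (or $\varepsilon_n$, to be checked), the convolution $Q'*\mathrm N(0,A)$ perturbs the mixture density by $O(A)$ in Hellinger. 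Then $Q\sim\mathrm{DP}$ lands within a weak neighborhood of $Q'$ with probability $\ge \exp\{-cN\log(1/\varepsilon_n)\}=\exp\{-c\log^2 n\}$. The factor $\Gamma(A\le\delta)\ge c_1\delta^r e^{-c_2\delta^{-\kappa}}$ contributes $\exp(-c\varepsilon_n^{-\kappa})$ up to logs. Requiring $\log^2 n+\varepsilon_n^{-\kappa}\lesssim n\varepsilon_n^2$ gives exactly the two branches of $\varepsilon_n(\Pi)$; the $\log^{\kappa/(2+\kappa)}$ factor comes from tuning $\delta$ with logarithmic slack.

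Finally, converting the Hellinger closeness into KL and $V$ closeness uses lower bounds on $f_G$ relative to $f_{\boldmu}$. Both are Gaussian mixtures with compactly supported (or light-tailed) components, so the ratio $f_{\boldmu}/f_G$ grows at most like $e^{c|z|}$. Ghosal–van der Vaart's Lemma 8 then gives $K,V\lesssim\varepsilon_n^2\log^2(1/\varepsilon_n)$, which is absorbed into constants after adjusting $\varepsilon_n$. Combining the evidence bound, the tests, and the sieve complement in the usual way yields the posterior bound $\exp(-cn\varepsilon_n^2)$ on an event of probability $\ge 1-4/n^2$.
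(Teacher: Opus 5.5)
Your evidence lower bound (centred at $f_{\boldmu}^{\otimes n}$, using $\mathbb{E}_{\boldmu}\sum_i h(Z_i)=n\int h f_{\boldmu}$, truncation at $M+O(\sqrt{\log n})$ and Bernstein) is essentially what the paper does. So is your prior-mass construction (moment matching with $O(\log n)$ atoms, Dirichlet mass $e^{-c\log^2 n}$, $A\lesssim\varepsilon_n/\log(1/\varepsilon_n)$).

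\textbf{The gap is in the numerator.} Once the posterior ratio is normalized by $f_{\boldmu}^{\otimes n}$, the usual step that combines tests with the evidence bound, $\mathbb{E}_{P_0}[(1-\phi_n)p_G/p_0]=\mathbb{E}_{P_G}[1-\phi_n]$, is unavailable, because $f_{\boldmu}^{\otimes n}$ is not the law of $\boldZ$. Indeed $\mathbb{E}_{\boldmu}[(1-\phi_n)\prod_i f_G(Z_i)/f_{\boldmu}(Z_i)]=\int(1-\phi_n)\prod_i f_G(z_i)\varphi(z_i-\mu_i)/f_{\boldmu}(z_i)\,\dd\boldz$, where the factors $\varphi(z-\mu_i)/f_{\boldmu}(z)$ can be as large as $n$. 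Type II errors under $f_G^{\otimes n}$ say nothing about this quantity.

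Centring everything at the true law $\prod_i\varphi(\cdot-\mu_i)$ instead, which is what your concavity inequality implicitly does, also fails. For fixed $G$ the expected log-likelihood ratio becomes $-\sum_i\DKL{\varphi(\cdot-\mu_i)}{f_G}=-n\{\bar I+\DKL{f_{\boldmu}}{f_G}\}$, with $\bar I:=n^{-1}\sum_i\DKL{\varphi(\cdot-\mu_i)}{f_{\boldmu}}$ typically of constant order. The Le Cam--Birg\'e exponent $\sum_i\Dhel^2(\varphi(\cdot-\mu_i),f_G)$ is at most half of that. So for $G$ just outside the $C\varepsilon_n$-ball, the resulting bound on the posterior mass is vacuous.

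The missing idea is to bound a fractional moment of the working ratio $L_n(h)=\prod_i h(Z_i)/f_{\boldmu}(Z_i)$ itself. By AM--GM and $n^{-1}\sum_i\varphi(\cdot-\mu_i)=f_{\boldmu}$, you get $\mathbb{E}_{\boldmu}\sqrt{L_n(h)}=\prod_i\int\sqrt{h/f_{\boldmu}}\,\varphi(\cdot-\mu_i)\le(\int\sqrt{hf_{\boldmu}})^n$, which equals $(1-\Dhel^2(f_{\boldmu},h))^n$ for a density $h$. The paper applies this to finitely many upper envelopes $u_j$ dominating every far $f_G$ at the data points, then uses Markov's inequality and a union bound. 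This is a misspecified-model (Kleijn--van der Vaart type) test, not a Hellinger test against the true product.

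\textbf{A second, smaller problem concerns Base-Gauss.} Your sieve $\{Q([-a_n,a_n])\ge1-\varepsilon_n\}$ with $a_n\asymp\sqrt{\log n}$ does not have complement mass $e^{-Ln\varepsilon_n^2}$. Since $Q([-a,a]^c)\sim\mathrm{Beta}(\alpha H([-a,a]^c),\alpha H([-a,a]))$ and $H([-a,a]^c)$ is of order $e^{-a^2/(2A_H)}$ up to polynomial factors, that mass is only polynomially small in $n$. For $\kappa=0$, reaching $e^{-L\log^2 n}$ forces $a_n\asymp\log n$. The available entropy bounds for mixtures with mixing support $[-a_n,a_n]$ then exceed $\log^2 n$ by a power of $\log n$, so you would lose logarithmic factors in the rate.

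The paper needs no sieve. On $\{\max_i|Z_i|\le T_n\}$, with $T_n=M+\sqrt{8\log n}$, an event of probability at least $1-2n^{-3}$, the likelihood only evaluates densities on $[-T_n,T_n]$. Lemma 2 of Zhang (2009) covers all Gaussian location mixtures, whatever the support of $Q$, in sup-norm on that window at accuracy $n^{-2}$ with log-cardinality $\lesssim\log^2 n$. Adding $2n^{-2}\mathbf{1}_{[-T_n,T_n]}$ to the net elements produces the envelopes $u_j$.
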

The density $f_{\boldmu}$ is the natural marginal-density target in the fixed-$\boldmu$ compound decision problem; we explain the underlying intuition in Section~\ref{sec:proof_ideas}. It also naturally ties with our goal of proving frequentist risk guarantees for the induced Bayes rule: by the fundamental theorem of compound decisions~\citep{robbins1951asymptotically,zhang2003compound} (which we review in Section~\ref{sec:proof_ideas}), the compound risk of any separable rule of the form $\widehat{\mu}_i=\delta(Z_i)$ under the fixed mean vector $\boldmu$ coincides with the Bayes risk of the same rule under the empirical distribution
\begin{equation}
\compoundG := \frac{1}{n} \sum_{i=1}^n \delta_{\mu_i}.
\label{eq:compound_prior}
\end{equation}
Consequently, the component Bayes rule $\delta_{\compoundG}$ (defined in~\eqref{eq:cd_b_post_mean}) associated with $\compoundG$ is optimal among separable rules and therefore provides the natural separable oracle benchmark for the fixed-vector problem. The density $f_{\boldmu}$ is precisely the marginal density associated with this oracle. To summarize, posterior contraction toward $f_{\boldmu}$ in $\mathbb{P}_{\boldmu}$-probability is the natural fixed-vector analogue of contraction toward $f_{G_\star}$ in $\mathbb{P}_\star$-probability under a correctly specified hierarchical model, where $\mathbb{P}_\star$ denotes the sampling law under which \smash{$Z_i \simiid f_{G_\star}$}.

A key feature of Theorem~\ref{thm:compound-contraction} is that it is established entirely under the compound decision sampling model \eqref{eq:hierarchy1N}, with $\boldmu$ treated as fixed. In particular, there need not exist any latent distribution $G_\star$ generating the coordinates of $\boldmu$, even though the posterior itself is constructed from the working model \eqref{eq:hierarchy2N}. The theorem gives an explicit contraction rate for the class of DPGM priors in Specification~\ref{assu:DPGM}, including the standard Dirichlet process prior as a special case. In the latter case, it strengthens the compound posterior consistency result of \citet{datta1991consistency} by providing an explicit rate. The result can also be viewed as a compound-decision counterpart of \citet[Theorem~5.1]{ghosal2001entropies}, with the important distinction that the means $\mu_i$ are fixed rather than generated from a common mixing distribution $G_\star$.

Under squared-error loss, the Bayes rule induced by the posterior $\Pi(\cdot \mid \boldZ)$ is the posterior mean
\begin{equation}
\widehat{\boldmu}(\Pi)
:=
\mathbb{E}_{\Pi}\!\left[\boldmu \mid \boldZ\right]
=
\int \widehat{\boldmu}^{\mathrm{B}}(G)\,
\dd \Pi(G \mid \boldZ).
\label{eq:bnp-post-mean}
\end{equation}
Unlike the empirical Bayes plug-in rule, which evaluates $\widehat{\boldmu}^{\mathrm{B}}(G)$ at a single estimate of $G$, the nonparametric Bayes rule averages over the full posterior distribution of $G$. A central question is whether this fully Bayesian treatment can retain the strong frequentist risk guarantees of classical empirical Bayes procedures, even when the working model \eqref{eq:hierarchy2N} is potentially misspecified. As is standard in the compound decision literature, we measure the frequentist performance of an estimator $\widehat{\boldmu}$ under the fixed-$\boldmu$ sampling model \eqref{eq:hierarchy1N} by its root mean squared error (RMSE)
\begin{equation}
\label{eq:RMSE}
R(\widehat{\boldmu},\boldmu)
:=
\sqrt{\frac{1}{n}\EE[\boldmu]{\Norm{\widehat{\boldmu}-\boldmu}_2^2}}.
\end{equation}
Our next result shows that the nonparametric Bayes rule $\widehat{\boldmu}(\Pi)$ enjoys strong regret guarantees under this criterion. We define regret relative to the best risk attainable within the class of permutation equivariant decision rules~\citep{weinstein2021permutation},
\begin{equation}
\mathcal{T}^{\mathrm{PE}}
:=
\left\{
\boldt:\RR^n\to\RR^n
\,:\,
\boldt(\boldz_\pi)=\boldt(\boldz)_\pi
\text{ for all } \boldz\in\RR^n,\ \pi\in\mathcal{S}_n
\right\},
\label{eq:PE}
\end{equation}
where $\boldz_\pi=(z_{\pi(1)},\ldots,z_{\pi(n)})$ and $\mathcal{S}_n$ denotes the set of permutations of $\{1,\ldots,n\}$. In the absence of additional information distinguishing the coordinates of $\boldmu$, this class provides a natural benchmark, requiring the decision rule to respect the symmetry of the compound decision problem.

\begin{theo}
\label{theo:main_risk_bound_eq}
Suppose the conditions of Theorem~\ref{thm:compound-contraction} hold. Then there exists a constant $C>0$, depending only on $M$ and the prior specification, such that, for all $n\geq 2$,
$$
\sup_{\boldmu\in[-M,M]^n}
\left\{
R\big(\widehat{\boldmu}(\Pi),\boldmu\big)
-
\inf_{\boldt\in\mathcal{T}^{\mathrm{PE}}}
R\big(\boldt(\boldZ),\boldmu\big)
\right\}
\leq
C\,\varepsilon_n(\Pi)\log^{3/2}n.
$$
Here, $\varepsilon_n(\Pi)$ is defined in~\eqref{eq:epsilon_n}, and $R(\cdot,\boldmu)$ is the frequentist risk in~\eqref{eq:RMSE} under the sampling model~\eqref{eq:hierarchy1N} with $\boldmu$ treated as fixed.
\end{theo}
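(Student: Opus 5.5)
We reduce the regret bound to a bound against the separable oracle, then control the gap between the posterior-averaged rule and that oracle using Theorem~\ref{thm:compound-contraction} together with a Tweedie-type stability estimate. By the fundamental theorem of compound decisions, every separable rule $z\mapsto\delta(z)$ has compound risk equal to its Bayes risk under $\compoundG$. Hence the best separable risk is $R(\delta_{\compoundG}(\boldZ),\boldmu)$. The permutation-equivariant oracle can be strictly better than this. We handle that gap with the known result, e.g.\ the Greenshtein--Ritov and Hannig--Greenshtein comparisons for bounded means, that the PE and separable oracles differ in RMSE by at most $O(\sqrt{\log n}/\sqrt n)$ or a comparable polylog$/\sqrt n$ term, uniformly over $[-M,M]^n$. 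This is dominated by $\varepsilon_n(\Pi)\log^{3/2}n$, so it suffices to bound
$$
\Big(\tfrac1n\EE[\boldmu]{\Norm{\widehat{\boldmu}(\Pi)-\widehat{\boldmu}^{\mathrm B}(\compoundG)}_2^2}\Big)^{1/2}
$$
and then apply the triangle inequality for RMSE.

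Next we write $\widehat\mu_i(\Pi)=\int\delta_G(Z_i)\,\dd\Pi(G\mid\boldZ)$. This rule is not separable, since the posterior depends on all of $\boldZ$. By Jensen, $(\widehat\mu_i(\Pi)-\delta_{\compoundG}(Z_i))^2\le\int(\delta_G(Z_i)-\delta_{\compoundG}(Z_i))^2\,\dd\Pi(G\mid\boldZ)$. We split the posterior integral over the set $\mathcal G_n=\{G:\Dhel(f_{\boldmu},f_G)<C\varepsilon_n\}$ and its complement, and work on the event $E$ of probability at least $1-4/n^2$ supplied by Theorem~\ref{thm:compound-contraction}.

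The bad pieces are handled crudely. We use $|\delta_G(z)|\le|z|+\sqrt{2\log(1/(\sqrt{2\pi}f_G(z)))}$, or more simply bound $\delta_G$ by the support or tail scale of $G$. Under Base-Compact the support lies in $[-R-\text{few}\sqrt{\bar A}\,]$ up to Gaussian tails. Under Base-Gauss we additionally need moment control of the posterior. Combined with $\EE[\boldmu]{Z_i^4}=O(1)$, these bounds make the complement contribution $O(e^{-cn\varepsilon_n^2}\cdot\mathrm{poly}\log n)$ on $E$ and $O(n^{-2}\mathrm{poly}(n))$ off $E$. The latter requires a crude deterministic bound on $\delta_G(Z_i)$ in terms of $\max_j|Z_j|$, which we obtain by noting the posterior mean lies in the convex hull of the atoms weighted by likelihood.

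The main obstacle is the good set, where we need a uniform-in-$G\in\mathcal G_n$ bound
$$
\frac1n\sum_i\EE[\boldmu]{\sup_{G\in\mathcal G_n}(\delta_G(Z_i)-\delta_{\compoundG}(Z_i))^2}\lesssim \varepsilon_n^2\log^3 n .
$$
For a fixed $G$ this is the Jiang--Zhang (2009) regret lemma: via Tweedie, $\delta_G=z+f_G'/f_G$. Regularizing with $f_G\vee\rho_n$ for $\rho_n\asymp n^{-1}$ and restricting to $|z|\lesssim M+\sqrt{\log n}$, one gets $\int(\delta_G-\delta_{\compoundG})^2f_{\boldmu}\lesssim \Dhel^2(f_G,f_{\boldmu})\log^3(1/\Dhel)$ plus tail terms. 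This would settle the problem if $Z_i$ were independent of the posterior, but it is not. To handle the dependence, we plan to follow the Jiang--Zhang and Saha--Guntuboyina device. We take a finite $\eta$-net (with $\eta=n^{-2}$) in sup-norm over $|z|\le M+\sqrt{\log n}$ of the regularized rules $\{\delta_G:G\in\mathcal G_n\}$, using the entropy bound $\log N\lesssim\log^2 n$ for Gaussian mixtures. We then control the empirical average $\frac1n\sum_i(\delta_{G_j}(Z_i)-\delta_{\compoundG}(Z_i))^2$ uniformly over net points by its mean plus a Gaussian concentration term of order $\sqrt{\log N/n}\cdot\log n$. After the square root this contributes $n^{-1/2}\log^{3/2}n\lesssim\varepsilon_n\log^{3/2}n$. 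The truncation at $\rho_n$ is reconciled with the actual $\delta_G$ using the Lipschitz and derivative bound $|\delta_G'|\lesssim\log(1/f_G)$ on the effective support. Collecting terms yields the $C\varepsilon_n(\Pi)\log^{3/2}n$ bound, and for small $n$ the inequality holds trivially by enlarging $C$.
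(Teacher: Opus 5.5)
Your architecture matches the paper's: reduction to the separable oracle $\delta_{\compoundG}$, a posterior split, Jensen, a sup-norm net of regularized rules, and the Saha--Guntuboyina bound for the mean. The concentration step, however, does not deliver the claimed rate.

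What Jensen requires is a bound on $\mathbb{E}_{\boldmu}\bigl[\sup_{G}\frac1n\sum_i(\delta_G(Z_i)-\delta_{\compoundG}(Z_i))^2\bigr]$. Your display puts the supremum inside each coordinate, which is a different and stronger statement; a net argument only yields the sup-outside version. This is a squared-error quantity, so a deviation term of order $\sqrt{\log N/n}\cdot\log n\asymp n^{-1/2}\log^2 n$ becomes $n^{-1/4}\log n$ after the square root, not $n^{-1/2}\log^{3/2}n$. Whenever $\kappa<2$, and in particular in the benchmark case $\kappa=0$ where the target is $n^{-1/2}\log^{5/2}n$, this is too large.

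The missing idea is a variance-adaptive bound. The summands $X_{ij}=(\delta^{\rho_n}_{H_j}(Z_i)-\delta^{\rho_n}_{\compoundG}(Z_i))^2$ are nonnegative and bounded by $C\log n$ (Lemma~\ref{lemm:proposition_1_jiang_zhang}), so $\mathrm{Var}(X_{ij})\le C\log n\,\mathbb{E}X_{ij}$. Bernstein's inequality plus a union bound over $\log N\lesssim\log^2 n$ net points then gives fluctuations of order $\sqrt{\log n\cdot\varepsilon_n^2\log^3 n\cdot\log N/n}+\log n\,\log N/n\lesssim\varepsilon_n^2\log^3 n$. These are of the same order as the mean, and this is how the paper closes its argument.

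Two further steps need repair.

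\textbf{The regularization level.} A window that contains all $Z_i$ with high probability needs $|z|\le M+\sqrt{c\log n}$ with $c>2$. On such a window, with $\rho_n\asymp n^{-1}$, $f_G$ can fall below $\rho_n$ at data points, so $\delta_G^{\rho_n}(Z_i)\neq\delta_G(Z_i)$, and no Lipschitz bound on $\delta_G'$ fixes this. The paper takes $\rho_n=\varphi(T_n+M_0)/2$ with $T_n=M+\sqrt{6\log n}$, which is polynomially small. It then uses Lemma~\ref{lem:hellinger-compactness}: Hellinger closeness to $f_{\boldmu}$ forces $G([-M_0,M_0])\ge 1/2$. Hence $f_G\ge\rho_n$ on the window and the regularization is inactive at every $Z_i$, at a cost of only $|\log\rho_n|^3\lesssim\log^3 n$.

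\textbf{The complement term.} Because you apply Jensen to the whole posterior, this term is $\int_{\mathcal{G}_n^c}\delta_G^2(Z_i)\,\mathrm{d}\Pi(G\mid\boldZ)$, which carries no explicit factor $\Pi(\mathcal{G}_n^c\mid\boldZ)$. Under Base-Gauss, $\delta_G(Z_i)$ is not bounded uniformly in $G$, and a likelihood-weighted average of the atoms of $G$ is not controlled by $\max_j|Z_j|$. The paper instead writes $\widehat\mu_i(\Pi)$ as a convex combination over the good set and its complement before applying Jensen. This produces $\Pi(\mathcal{G}_n^c\mid\boldZ)\,\mathbb{E}_\Pi[\mu_i^2\mid\boldZ]$, and the posterior moment is bounded by $C(Z_i^2+\log n)$ via the leave-one-out representation and Dirichlet process conjugacy (Lemma~\ref{lemm:posterior_moments_upper_bounded}). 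In your ordering you would need, for example, a fourth posterior moment and Cauchy--Schwarz. Also, the contraction event fails with probability only $O(n^{-2})$, so the crude bound used off that event must be polylogarithmic in $n$, not polynomial.
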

When $\kappa=0$, the rate in Theorem~\ref{theo:main_risk_bound_eq} simplifies to
$
\varepsilon_n(\Pi)\log^{3/2}n
=
n^{-1/2}\log^{5/2}n.
$
This matches the benchmark rate of \citet{jiang2009general}. The oracle benchmark in Theorem~\ref{theo:main_risk_bound_eq} is strong: the infimum is taken separately for each fixed $\boldmu$ over the entire class of permutation equivariant decision rules, so the comparison is with the best equivariant rule for the particular mean vector being estimated. It contains all simple separable decision rules considered in the classical compound decision literature~\citep{jiang2009general}, including the oracle Bayes rules $\widehat{\boldmu}^{\mathrm{B}}(G)$ in \eqref{eq:cd_b_post_mean} for any distribution $G$ on $\RR$. It also contains data dependent empirical Bayes procedures, such as the NPMLE plug-in rule, the James-Stein estimator~\citep{james1961estimation}, as well as Bayes rules induced by arbitrary exchangeable priors on $\boldmu$. The latter include, in particular, the fully Bayesian rules $\widehat{\boldmu}(\widetilde{\Pi})$ obtained from the hierarchy \eqref{eq:hierarchy1N}--\eqref{eq:hierarchy2N} under any prior $\widetilde{\Pi}$ on $G$. Thus, in the absence of additional information distinguishing the units, $\mathcal{T}^{\mathrm{PE}}$ provides a natural benchmark: it is not tied to any particular empirical Bayes or Bayesian construction, but instead ranges over a broad class of procedures that respect the permutation symmetry of the compound decision problem.

The role of $\kappa$ in Specification \ref{dpgm} and the rate $\varepsilon_n(\Pi)$ in (\ref{eq:epsilon_n}) can be understood through the amount of prior mass available near small smoothing variances. When $\kappa=0$, the prior assigns sufficiently large mass to arbitrarily small values of $A$. Thus, although $G=Q*\mathrm{N}(0,A)$ has a smooth density whenever $A>0$, the prior retains the flexibility to approximate the marginal distributions generated by essentially discrete mixing distributions as $A$ approaches zero. This allows the Bayesian procedure to attain the same benchmark regret rate as the NPMLE-based empirical Bayes rule. When $\kappa>0$, by contrast, the prior increasingly penalizes the small values of $A$ needed for such approximations, leading to the slower regret rate in Theorem~\ref{theo:main_risk_bound_eq}. The following remark provides some further insight into this trade-off.

\begin{rema}[Smoothing bias]
For a given smoothing variance $A > 0$, define the Gaussian mixture approximation bias:
\[
\lambda(A)
:=
\inf_{Q\in\mathcal{P}(\mathbb{R})}
\Dhel\!\left(f_{\boldmu},f_{Q*\mathrm{N}(0,A)}\right).
\]
The regret rate reflects a trade-off between making $A$ sufficiently small to control $\lambda(A)$ and assigning enough prior mass to such values of $A$. Without additional structure on $\boldmu$, one always has
$\lambda(A)\lesssim A$, and this bound is sharp in general. For example, when $\mu_i=0$ for all $i$, one has $\lambda(A)\asymp A$. Thus, when $\kappa>0$, the low prior mass near $A=0$ cannot in general be offset by a faster approximation rate. In  models with a ``true'' latent distribution $G_\star$, faster approximation rates may be available under additional smoothness conditions on $G_\star$~\citep{rousseau2024wasserstein,kankanala2023quasi}. In the current fixed-$\boldmu$ compound decision problem, however, there is no ``true'' $G_\star$, making it less clear how an analogous smoothness condition should be formulated.
\end{rema}

These observations also shed  light on the longstanding issue of smoothness in empirical Bayes. Indeed, a common criticism of the NPMLE is that the estimated  mixing distribution $\widehat{G}$ is discrete. This has motivated a variety of approaches for ``betting on smoothness''~\citep{efron2022discussion},\footnote{Some introduce smoothness through an additional tuning choice made separately from the original estimation problem, including post-hoc kernel smoothing of the NPMLE~\citep{pan1999two,koenker2017rebayes,gu2023invidious}, early stopping of EM algorithms initialized at a smooth distribution~\citep{laird1991smoothing,shen1999empirical,jiang2019comment}, and predictive-recursion methods~\citep{tokdar2009consistencya,martin2021survey}. Other approaches instead obtain smoothness through stronger assumptions on a correctly specified working model, for example by assuming that the true mixing distribution satisfies $G_\star=Q_\star*\mathrm{N}(0,A_\star)$ for a fixed, known $A_\star>0$~\citep{magder1996smooth,kim2026empirical}, or by restricting $G_\star$ to a parametric exponential family~\citep{efron2016empirical,kline2024discrimination}.} which typically obtain smoothness either through an additional smoothing choice external to the original estimation problem or through a substantive restriction on $G_\star$. Moreover, these approaches have not generally been accompanied by the sharp regret guarantees available for the classical unsmoothed NPMLE.

The Bayesian framework, based on the DPGM prior, makes the trade-off between smoothness and approximation flexibility explicit. The prior encodes preferences over the amount of smoothing through $\Gamma$, while the posterior adapts the degree of smoothing to the data. If $\Gamma(\{0\})=0$, then $A>0$ almost surely and the induced distribution $G=Q*\mathrm{N}(0,A)$ has a smooth density with prior probability one. Importantly, smoothness itself need not entail a loss in regret. For example, a continuous prior $\Gamma$ with polynomial mass near zero (e.g., a Beta prior), corresponding to $\kappa=0$, places probability one on smooth mixing distributions while retaining sufficient mass near $A=0$ to attain the benchmark regret rate in Theorem~\ref{theo:main_risk_bound_eq}. Thus, a broad class of fully Bayesian procedures can ``bet on smoothness,'' learn the degree of smoothing from the data, and still attain the same benchmark regret rate as the NPMLE-based empirical Bayes rule.

We now turn to admissibility in the compound decision problem. This complements the preceding regret analysis by examining whether a decision rule can be uniformly improved in frequentist risk.
\begin{defi}[Admissibility]
\label{def:admissibility_cd}
An estimator $\widehat{\boldmu}$ is inadmissible over the parameter space $\RR^n$ if there exists another estimator $\widetilde{\boldmu}$ such that
\begin{enumerate}
\item $R(\widetilde{\boldmu}, \boldmu) \leq R(\widehat{\boldmu}, \boldmu)$ for all $\boldmu \in \RR^n$, and
\item $R(\widetilde{\boldmu}, \boldmu_0) < R(\widehat{\boldmu}, \boldmu_0)$ for some $\boldmu_0 \in \RR^n$.
\end{enumerate}
Otherwise, $\widehat{\boldmu}$ is admissible over $\RR^n$.
\end{defi}
Our first result establishes the admissibility of the full Bayes decision rule.
\begin{prop}
\label{prop:bnp-admissible-cd}
The Bayes estimator $\widehat{\boldmu}(\Pi) = \mathbb{E}_{\Pi}\!\left[\boldmu \mid \boldZ\right]$ is admissible over $\RR^n$  in the compound decision setting with data generating process given by~\eqref{eq:hierarchy1N}.
\end{prop}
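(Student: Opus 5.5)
The argument will be the classical one: a unique Bayes rule with finite Bayes risk is admissible. The induced prior on $\boldmu$ is the marginal law $\pi$ obtained by drawing $G\sim\Pi$ and then $\mu_i \mid G \simiid G$. Under this prior, $\widehat{\boldmu}(\Pi)$ is the posterior mean $\mathbb{E}_\pi[\boldmu\mid\boldZ]$.

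\textbf{Finite Bayes risk.} First I will verify that the Bayes risk $r(\pi,\widehat{\boldmu}(\Pi))=\int R^2(\widehat{\boldmu}(\Pi),\boldmu)\,\dd\pi(\boldmu)$ is finite. It suffices that $\mathbb{E}_\pi\|\boldmu\|_2^2<\infty$, because the posterior mean minimizes the Bayes risk and is therefore no worse than $\boldt(\boldZ)=\boldZ$, whose risk is $n$. Under Specification~\ref{assu:DPGM}, each $\mu_i$ has marginal law $H*\mathrm{N}(0,A)$ averaged over $A\sim\Gamma$. Its second moment is at most $R^2+\bar A$ under Base-Compact and $A_H+\bar A$ under Base-Gauss, so it is finite in either case.

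\textbf{Uniqueness of the Bayes rule.} Squared-error loss is strictly convex. Hence any rule $\widetilde{\boldmu}$ with $r(\pi,\widetilde{\boldmu})\le r(\pi,\widehat{\boldmu}(\Pi))$ must equal the posterior mean almost everywhere with respect to the marginal law $m_\pi$ of $\boldZ$. This marginal is a mixture of Gaussians, $m_\pi(\boldz)=\int\prod_i\varphi(z_i-\mu_i)\,\dd\pi(\boldmu)$. It has a strictly positive density on $\RR^n$, so $m_\pi$ is mutually absolutely continuous with Lebesgue measure. The same holds for every sampling law $\mathrm{N}(\boldmu,I_n)$. Therefore $\widetilde{\boldmu}=\widehat{\boldmu}(\Pi)$ Lebesgue-a.e., and consequently $\mathbb{P}_{\boldmu}$-a.s. for every $\boldmu$. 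This gives $R(\widetilde{\boldmu},\boldmu)=R(\widehat{\boldmu}(\Pi),\boldmu)$ for all $\boldmu$.

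\textbf{Conclusion.} Suppose $\widetilde{\boldmu}$ dominated $\widehat{\boldmu}(\Pi)$ in the sense of Definition~\ref{def:admissibility_cd}. Integrating condition 1 against $\pi$ gives $r(\pi,\widetilde{\boldmu})\le r(\pi,\widehat{\boldmu}(\Pi))$. By the uniqueness step, the two risk functions then coincide everywhere, which contradicts condition 2. Note that $R$ is the square root of the mean squared error, but domination in $R$ is equivalent to domination in $R^2$, so the argument applies directly.

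\textbf{Main obstacle.} The only delicate point is the a.e.-to-everywhere transfer in the uniqueness step. It rests on the mutual absolute continuity of all $\mathrm{N}(\boldmu,I_n)$ with $m_\pi$. Because the Gaussian kernel is positive everywhere, this holds regardless of whether $\pi$ has full support. I do not expect it to cause difficulty; it just needs to be stated explicitly, together with measurability of the posterior mean.
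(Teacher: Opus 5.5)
Your proposal is correct and follows essentially the same route as the paper: integrate the dominance inequality against the induced exchangeable prior $G_\Pi$, which has finite second moment. The orthogonality identity then forces the competitor to equal the posterior mean Lebesgue-almost everywhere, and this transfers to equality of the risk functions at every $\boldmu$. Your explicit second-moment bound ($R^2+\bar A$ or $A_H+\bar A$) and your remark on mutual absolute continuity of the Gaussian sampling laws with Lebesgue measure simply fill in details the paper leaves implicit.
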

We next contrast this result with the NPMLE-based empirical Bayes estimator. To state this result precisely, we first define the NPMLE of the mixing distribution $G$~\citep{robbins1950generalization, kiefer1956consistency}. Let
\begin{equation} \widehat{G} \equiv \widehat{G}(\boldZ)\in \argmax_{G \in \mathcal{P}(\RR)} \cb{ \prod_{i=1}^n f_G(Z_i)},
\label{eq:npmle_and_marginal} \end{equation}
where $\mathcal{P}(\RR)$ denotes the set of all probability distributions supported on $\RR$. The EB estimator for the $i$-th coordinate is the plug-in Bayes rule $\widehat{\mu}_i^{\mathrm{EB}}(\boldZ) := \delta_{\widehat{G}}(Z_i)$.

\begin{prop}
\label{prop:npmle-inadmissible-cd}
The NPMLE-based EB estimator $\widehat{\boldmu}^{\mathrm{EB}}$ is inadmissible over $\mathbb R^n$ for all $n \geq 2$.\footnote{
For $n=1$, the NPMLE plug-in is admissible over $\RR$. The reason is that the NPMLE \smash{$\widehat{G}$} is a point mass at $Z_1$, and so \smash{$\widehat{\mu}_1^{\mathrm{EB}} \equiv Z_1$} which is admissible for $n=1$~\citep{hodges1951some}.
}
\end{prop}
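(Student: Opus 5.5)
The plan is to combine the complete class theorem for the multivariate normal mean with a simple analyticity obstruction. By \citet{brown1971admissible} (see also \citet{berger1985statistical}), in the model $\boldZ\sim\mathrm{N}(\boldmu,I_n)$ under squared-error loss, every admissible estimator with finite risk is generalized Bayes. That is, it has the form $\boldt(\boldz)=\boldz+\nabla\log m(\boldz)$ with $m(\boldz)=\int\varphi_n(\boldz-\boldmu)\,\dd\lambda(\boldmu)$ for some nonnegative measure $\lambda$. Such an $m$ is real analytic on all of $\RR^n$, so every generalized Bayes rule is a real-analytic function of $\boldz$. It therefore suffices to show two things: that $\widehat{\boldmu}^{\mathrm{EB}}$ has finite risk, and that it is not real analytic on $\RR^n$.

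Finite risk is easy. The NPMLE $\widehat G$ is supported in $[\min_i Z_i,\max_i Z_i]$, since moving an atom into the data range increases every $f_G(Z_i)$. Hence $|\delta_{\widehat G}(Z_i)|\le\max_j|Z_j|$, and the risk is finite for every $\boldmu$. If the NPMLE is not unique, we fix any measurable selection; the argument below only uses regions where it is unique.

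For non-analyticity I would use the identity theorem.

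\textbf{Step 1 (near-ties collapse to the mean).} Find a nonempty open set $U\subset\RR^n$ on which $\widehat G=\delta_{\bar Z}$, with $\bar Z=n^{-1}\sum_i Z_i$, so that $\widehat{\boldmu}^{\mathrm{EB}}=\bar Z\mathbf{1}$ on $U$. By the KKT / gradient characterization of the NPMLE \citep{lindsay1983geometry}, $\delta_m$ is the NPMLE if and only if
\[
D(u):=\frac1n\sum_{i=1}^n \exp\!\big\{(Z_i-m)(u-m)-(u-m)^2/2\big\}\le 1 \quad\text{for all } u.
\]
Stationarity in $m$ forces $m=\bar Z$. Writing $v=u-\bar Z$ and $s^2=n^{-1}\sum_i(Z_i-\bar Z)^2$, the expansion $D=1+(s^2-1)v^2/2+O(v^3)$ shows the inequality holds near $v=0$ when $s^2<1$. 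For large $|v|$, the Gaussian factor $e^{-v^2/2}$ dominates once $\max_i|Z_i-\bar Z|$ is small. I would take $U=\{\boldz:\max_i|z_i-\bar z|<\epsilon\}$ with $\epsilon$ small and verify $D\le1$ uniformly in $v$. The cleanest route is the bound $D(v)\le \exp\{\epsilon|v|-v^2/2\}$ for $|v|\ge 2\epsilon$, together with the Taylor bound above on $|v|\le 2\epsilon$.

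\textbf{Step 2 (far from ties the rule is not the mean).} Exhibit a single point $\boldz^\ast$ at which $\widehat{\boldmu}^{\mathrm{EB}}(\boldz^\ast)\ne\bar z^\ast\mathbf{1}$. For instance, take $z_1^\ast=L$ and $z_j^\ast=-L$ for $j\ge2$ with $L$ large. Here $\widehat G=\delta_{\bar z}$ fails the KKT condition, since $D(u)>1$ at $u$ near $L$. Hence $\widehat G$ has at least two atoms, and $\delta_{\widehat G}(z_1^\ast)>\bar z^\ast$. To make this rigorous, note that the two-atom distribution $\tfrac1n\delta_L+\tfrac{n-1}{n}\delta_{-L}$ already has strictly larger likelihood than any single atom. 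The posterior for $z_1^\ast$ then puts most weight on the upper atoms.

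\textbf{Step 3 (conclusion).} Suppose, for contradiction, that $\widehat{\boldmu}^{\mathrm{EB}}$ were generalized Bayes. It would be real analytic on the connected set $\RR^n$. It agrees with the analytic map $\boldz\mapsto\bar z\mathbf{1}$ on the open set $U$, so by the identity theorem it equals $\bar z\mathbf{1}$ everywhere, contradicting Step 2. Therefore $\widehat{\boldmu}^{\mathrm{EB}}$ is not generalized Bayes, and by Brown's theorem it is inadmissible for every $n\ge2$.

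I expect the main obstacle to be Step 1: producing a clean, fully rigorous open region on which the NPMLE is exactly the point mass at $\bar Z$. The difficulty is the uniform-in-$u$ verification of the gradient condition. For $n=2$ this can be done explicitly via the known threshold $|Z_1-Z_2|\le2$, and I would try to imitate that computation with the small-spread bound in general.
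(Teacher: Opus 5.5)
Your plan follows the paper's route: a KKT-verified open set where $\widehat G=\delta_{\bar z}$, a region where the NPMLE is nondegenerate, then Brown's complete class theorem and the identity theorem. Step~3 as written does not close, however. Brown's theorem only says that an admissible rule agrees \emph{almost everywhere} with a generalized Bayes, hence real-analytic, rule $\boldt_\lambda$. It cannot say more: changing an estimator on a Lebesgue-null set leaves its risk function unchanged, so admissibility cannot force analyticity pointwise. Run correctly, your argument gives $\boldt_\lambda=\bar z\mathbf 1_n$ a.e.\ on $U$, hence on all of $U$ by continuity, and hence on $\RR^n$ by the identity theorem. That yields only $\widehat{\boldmu}^{\mathrm{EB}}=\bar z\mathbf 1_n$ almost everywhere. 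A single point $\boldz^\ast$ with $\widehat{\boldmu}^{\mathrm{EB}}(\boldz^\ast)\neq\bar z^\ast\mathbf 1_n$ is therefore no contradiction; you need a set of positive measure.

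The repair is what the paper does. Let $G$ be your two-atom law. The strict inequality $L(G;\boldz^\ast)>\sup_\mu L(\delta_\mu;\boldz^\ast)$ is an open condition in $\boldz$, since both sides are continuous (the right side equals $\prod_i\varphi(z_i-\bar z)$). So it persists on an open neighborhood $\mathcal V$ of $\boldz^\ast$, on which no NPMLE is a point mass.

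To get $\widehat{\boldmu}^{\mathrm{EB}}\neq\bar z\mathbf 1_n$ on $\mathcal V$, replace the heuristic that the posterior favors the upper atoms with strict monotonicity. For nondegenerate $G$, $\delta_G'(z)$ equals the posterior variance of $\mu$ given $Z=z$, which is positive. Hence $\delta_{\widehat G}(z_1)\neq\delta_{\widehat G}(z_2)$ whenever $z_1\neq z_2$, which holds on a small enough $\mathcal V$.

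Two smaller points. First, the obstacle you anticipate in Step~1 disappears with Hoeffding's lemma. With $s_i=z_i-\bar z$ and $\max_i|s_i|\le 1$, it gives $n^{-1}\sum_i e^{v s_i}\le e^{v^2/2}$ for all $v$. So $U=\{\max_i|z_i-\bar z|<1\}$ works directly, with no Taylor/tail splitting.

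Second, you assert uniqueness of the NPMLE on $U$ without proof. It follows because $n^{-1}\sum_i e^{v s_i}\le\cosh v<e^{v^2/2}$ for $v\neq 0$. The gradient function is then strictly below $1$ away from $\bar z$, which forces every maximizer to be $\delta_{\bar z}$; the paper instead cites Lindsay's uniqueness result. Your finite-risk check, using that $\widehat G$ is supported in $[\min_i z_i,\max_i z_i]$, is a sensible addition that the paper leaves implicit.
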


Taken together, these results highlight an important decision-theoretic distinction between the two procedures: the Bayesian rule is admissible, whereas the NPMLE-based empirical Bayes rule is not. Beyond this distinction, the nonparametric Bayes rule also admits a useful structural interpretation. The following result, due to~\citet{datta1991asymptotic}, gives a leave-one-out (LOO) representation of $\widehat{\mu}_i(\Pi)$. In particular, it shows that the mixing distribution used to estimate $\mu_i$ is learned from the remaining observations $\boldZ_{-i}$, while $Z_i$ enters only through the final shrinkage rule.

\begin{prop}[LOO Representation]
\label{prop:datta_loo}
For each $i=1,\ldots,n$,
\[
\widehat{\mu}_i(\Pi)
=
\delta_{\widebar{G}_{-i}}(Z_i), \; \;
\widebar{G}_{-i}
:=
\EE[\Pi]{G\mid\boldZ_{-i}},
\]
where $\boldZ_{-i}=(Z_1,\ldots,Z_{i-1},Z_{i+1},\ldots,Z_n)$ and $\widebar{G}_{-i}$ is the posterior mean of $G$ given $\boldZ_{-i}$.
\end{prop}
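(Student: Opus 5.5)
The plan is to compute $\widehat{\mu}_i(\Pi)=\mathbb{E}_{\Pi}[\mu_i\mid\boldZ]$ by first conditioning on $G$, and then to rewrite the posterior of $G$ given the full data $\boldZ$ in terms of its posterior given $\boldZ_{-i}$. Under the hierarchy \eqref{eq:hierarchy1N}--\eqref{eq:hierarchy2N}, the pairs $(\mu_j,Z_j)$ are i.i.d.\ given $G$. By the tower property and conditional independence,
\[
\widehat{\mu}_i(\Pi)=\int \delta_G(Z_i)\,\dd\Pi(G\mid\boldZ).
\]
Bayes' theorem applied to the last observation gives
\[
\dd\Pi(G\mid\boldZ)=\frac{f_G(Z_i)\,\dd\Pi(G\mid\boldZ_{-i})}{\int f_{G'}(Z_i)\,\dd\Pi(G'\mid\boldZ_{-i})}.
\]
This holds because the likelihood factorizes as $\prod_j f_G(Z_j)$.

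Next we substitute and use the identity $\delta_G(z)f_G(z)=\int\mu\varphi(z-\mu)\,\dd G(\mu)$. The numerator becomes
\[
\int\!\!\int \mu\,\varphi(Z_i-\mu)\,\dd G(\mu)\,\dd\Pi(G\mid\boldZ_{-i}).
\]
The denominator becomes
\[
\int\!\!\int \varphi(Z_i-\mu)\,\dd G(\mu)\,\dd\Pi(G\mid\boldZ_{-i}).
\]
Both expressions are integrals of a fixed function of $\mu$ against a posterior mixture of $G$'s. Define the mean measure $\widebar G_{-i}(B):=\int G(B)\,\dd\Pi(G\mid\boldZ_{-i})$. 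By Fubini, for any measurable $h$ that is nonnegative or integrable, $\int\!\!\int h\,\dd G\,\dd\Pi(G\mid\boldZ_{-i})=\int h\,\dd\widebar G_{-i}$.

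Applying this with $h(\mu)=\mu\varphi(Z_i-\mu)$ and with $h(\mu)=\varphi(Z_i-\mu)$ gives
\[
\widehat{\mu}_i(\Pi)=\frac{\int\mu\varphi(Z_i-\mu)\,\dd\widebar G_{-i}(\mu)}{\int\varphi(Z_i-\mu)\,\dd\widebar G_{-i}(\mu)}=\delta_{\widebar G_{-i}}(Z_i).
\]
This is the claimed representation.

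The only delicate step is the measure-theoretic justification. We must check that $\widebar G_{-i}$ is a probability measure, which follows from monotone convergence, and that Fubini applies to the signed integrand $\mu\varphi(Z_i-\mu)$. For integrability, $|\mu|\varphi(Z_i-\mu)$ is bounded uniformly in $\mu$ by a constant depending on $Z_i$, so the integral is finite. The denominator is positive because $\varphi>0$. We also need measurability of $G\mapsto G(B)$ on the space of probability measures with the usual Borel structure, which is standard. The argument uses only the i.i.d.\ structure of the working model, so it holds for any prior $\Pi$ on $G$, including the DPGM prior of Specification~\ref{assu:DPGM}.
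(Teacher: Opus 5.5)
Your proof is correct and complete. The paper does not prove this proposition itself; it attributes the result to \citet{datta1991asymptotic} and uses it as a black box. Your argument therefore supplies the derivation the paper omits. You update $\Pi(\cdot\mid\boldZ_{-i})$ by the single likelihood factor $f_G(Z_i)$, then collapse the resulting mixture over $G$ into the mean measure $\widebar G_{-i}$. The checks you list (bounded integrand, positive denominator, $\widebar G_{-i}$ a probability measure) are exactly the ones needed.

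Your route also gives more than the stated result. Running the same computation with $h(\mu)=g(\mu)\varphi(Z_i-\mu)$ shows that the entire posterior law of $\mu_i$ given $\boldZ$ is the component posterior under prior $\widebar G_{-i}$, not just its mean. The paper relies on this stronger form with $g(\mu)=\mu^2$ in the proof of Lemma~\ref{lemm:posterior_moments_upper_bounded}. It also uses a heteroscedastic analogue in Lemma~\ref{lem:close-crude-moments}, where the law of $\mu_i$ given $(\theta,\sigma_i)$ plays the role of $G$ and the likelihood still factorizes over units. Your argument covers both uses verbatim, since $\mu^2\varphi(Z_i-\mu)$ is again bounded in $\mu$.
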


This representation contrasts with the NPMLE-based EB estimator $\widehat{\mu}_i^{\mathrm{EB}}$, which estimates $G$ using all observations, including $Z_i$. The LOO form is appealing because it separates the two sources of information entering the estimate: $\boldZ_{-i}$ determines the estimated prior distribution, while $Z_i$ enters only through the final shrinkage rule. In this sense, the rule cleanly distinguishes indirect information learned from the ensemble of other observations from the direct information about $\mu_i$ contained in $Z_i$. For the NPMLE, there is empirical evidence that leave-one-out estimation can improve performance in some settings~\citep{ho2025largescale}. Implementing such a procedure, however, requires solving the NPMLE optimization problem $n$ times and can therefore be computationally burdensome. By contrast, Proposition~\ref{prop:datta_loo} shows that the Bayesian rule acquires this leave-one-out structure automatically.

\section{Heteroscedastic models} \label{sec-hetero}
In many empirical Bayes applications, the precision of the unit-level estimates varies across units and may itself be informative about the latent parameters. We therefore consider the heteroscedastic Gaussian sampling model
\begin{equation}
\tag{CD-H}
Z_i \cond \mu_i,\sigma_i^2 \;
\simindep \;
\mathrm{N}(\mu_i,\sigma_i^2),
\qquad i=1,\ldots,n,
\label{eq:hierarchy1NHet}
\end{equation}
where the variances $\boldsigma^2=(\sigma_1^2,\ldots,\sigma_n^2)$ are observed and play the role of the auxiliary quantities $\boldnu$ in \eqref{eq:hierarchy1}. We assume throughout that
$0<\ubar\sigma\leq\sigma_i\leq\bar\sigma<\infty$ for all $i$.  Our analysis proceeds conditionally on the realized values of $\boldsigma^2$.

To accommodate the possibility that estimation precision is informative about the latent parameters, we follow~\citet{chen2026empirical} and consider a conditional location-scale working model,
\begin{equation}
\tag{B-H}
\mu_i
=
m_{\star}(\sigma_i)
+
s_{\star}(\sigma_i)\zeta_i,
\qquad
\zeta_i \simiid G_{\star},
\qquad i=1,\ldots,n,
\label{eq:hierarchy2NClose}
\end{equation}
where $G_{\star}$ is an unknown latent distribution and $m_{\star}$ and $s_{\star}$ are unknown location and scale functions, respectively. Thus, the location and scale of the conditional distribution of $\mu_i$ are allowed to vary nonparametrically with estimation precision, while $G_{\star}$ captures a common shape across units. This specification provides a useful instance of the  transformation $\mathcal{T}_{\eta}$ in \eqref{eq:hierarchy2}: it offers some flexibility in modeling the latent distribution while preserving tractability of the conditional law through the location-scale structure.

\begin{rema}[Heteroscedastic compound oracle]
In our analysis below, we suppose that data generation is governed by both~\eqref{eq:hierarchy1NHet} and~\eqref{eq:hierarchy2NClose}. Thus, unlike in Section~\ref{sec:homoscedastic_gaussian}, we do not consider the fixed-$\boldmu$ compound-decision setting. The reason that we (and related work, e.g.,~\citet{jiang2020general,Soloff,chen2026empirical}) do not consider the compound setting under heteroscedasticity is that it is unclear how to define a suitable oracle. In particular, consider a permutation-equivariant oracle as in~\eqref{eq:PE}, now consisting of decision rules $\boldt(\boldZ,\boldsigma^2)$ satisfying
$
\boldt(\boldZ,\boldsigma^2)_\pi
=
\boldt(\boldZ_\pi,\boldsigma_\pi^2),
$
so that the pairs $(Z_i,\sigma_i^2)$ are kept together under permutation. Allowing the oracle knowledge of $\boldmu$ is then too strong: if all $\sigma_i^2$ are distinct, it can simply use
$
t_i(\boldZ,\boldsigma^2)
=
\sum_{j=1}^n
\mu_j\ind\{\sigma_i^2=\sigma_j^2\}
=
\mu_i
$
and achieve RMSE equal to zero. Hence, a regret guarantee analogous to Theorem~\ref{theo:main_risk_bound_eq} is not possible against this oracle. Instead, under~\eqref{eq:hierarchy1NHet} and~\eqref{eq:hierarchy2NClose}, we compare against the usual Bayesian oracle,
$
\EE[\star]{\boldmu\mid\boldZ,\boldsigma^2}.
$
\label{rema:heteroscedastic_oracle}
\end{rema}

Throughout this section, let $(\Sigma^\alpha,\|\cdot\|_{\Sigma^\alpha})$ and $(H^\alpha,\|\cdot\|_{H^\alpha})$ denote the H\"older and Sobolev spaces of order $\alpha$ on the interval $ \mathcal{X} = [\ubar\sigma,\bar\sigma]$, respectively. (See Appendix~\ref{sec:funct_space} for definitions.)

\subsection{Analysis}
\label{subsec:CLOSE}
We consider a nonparametric Bayesian treatment of the unknown components in the working model~\eqref{eq:hierarchy2NClose}. We parameterize the location and scale functions by $\eta=(\eta_1,\eta_2)$ according to
\[
m(\sigma)=\eta_1(\sigma), \;
s_{\eta_2}(\sigma)=\Lambda^{1/2}\!\left(\eta_2(\sigma)\right),
\]
where $\Lambda:\RR\to[0,\infty)$ is a prespecified link function, ensuring that the scale function is nonnegative. We model the unknown functions $\eta_1$ and $\eta_2$ nonparametrically using Gaussian process (GP) priors. Specifically, for $\alpha>0$, let $\Pi_{\eta}$ denote the law of a centered Mat\'ern  Gaussian process on $[\ubar\sigma,\bar\sigma]$. This  process has covariance kernel
\begin{equation}
\label{GP}
K_{\alpha}(s,t)
=
\int_{\RR}
e^{-\mathbf{i}\langle s-t,\zeta\rangle}
(1+|\zeta|^2)^{-(\alpha+1/2)}
\,\dd\zeta.
\end{equation}
The parameter $\alpha$ controls the regularity of the sample paths: draws from $\Pi_{\eta}$ belong almost surely to the H\"older spaces $\Sigma^\beta$ for every $\beta<\alpha$~\citep[Proposition I.4]{ghosal2017fundamentals}. Thus, the prior may be viewed as concentrating on functions that are ``almost $\alpha$-smooth.''\footnote{Associated with $\Pi_\eta$ is a reproducing kernel Hilbert space (RKHS), which in this case is, up to norm equivalence, $H^{\alpha+1/2}$. Approximation under the Gaussian process prior is determined by how closely a target function can be approximated by elements of this space and by the norm of the approximating element.}

We write $\theta=(G,\eta_1,\eta_2)$ for a generic parameter and $\Pi(\cdot\mid\boldZ,\boldsigma^2)$ for the posterior induced by the hierarchy \eqref{eq:hierarchy1NHet}--\eqref{eq:hierarchy2NClose} under a prior $\Pi$ on $\theta$. We specify $\Pi$ as follows.
\begin{spec}[DPGM--GP prior]
\label{loc-scale-prior}
The prior $\Pi$ draws $G$, $\eta_1$, and $\eta_2$ independently, as follows.
\begin{enumerate}[label=(\roman*), nosep, leftmargin=*]
\item $G$ is drawn according to Specification~\ref{assu:DPGM} under \ref{assu:DP:compact}, with $R>0$, $r\geq 0$ and $\kappa=0$.
\item $\eta_1$ and $\eta_2$ are drawn independently from $\Pi_{\eta}$, with regularity $\alpha>1/2$.
\item We use the softplus link
$
\Lambda(t)=\log(1+e^t),
$
so that
$
s_{\eta_2}^2(\sigma)
= \log(1+ e^{\eta_2(\sigma)})
$.
\end{enumerate}
\end{spec}
An important difference from the setting in Section~\ref{sec:homoscedastic_gaussian} is that the working model now involves the unknown functions $m_\star$ and $s_\star$. Even if $G_\star$ were known, these functions must still be learned nonparametrically from variation in $\sigma_i$. For $\alpha$-smooth functions of a one-dimensional argument, the usual minimax rate is $n^{-\alpha/(2\alpha+1)}$. As our results below illustrate, this nonparametric difficulty determines the contraction rate in the present setting. Accordingly, we write
\begin{equation}
\varepsilon_n(\Pi)
:=
n^{-\alpha/(2\alpha+1)}.
\end{equation}
We impose the following regularity conditions on the data-generating process.
\begin{assu}[Regularity of  $(G_{\star}, m_{\star}, s_{\star}$)]
\label{assu:LS}
Fix $B,M>0$ and $\alpha>1/2$. Suppose that
\begin{enumerate}[label=(\roman*), nosep, leftmargin=*]
\item $G_{\star} \in \mathcal{P}([-M,M])$;
\item $m_\star \in \Sigma^{\alpha} \cap H^{\alpha}$ and $
\max\{\Norm{m_{\star}}_{\Sigma^{\alpha}},\, \Norm{m_{\star}}_{H^{\alpha}}, \, \Norm{m_{\star}}_{\infty}\} \leq B$;
\item $s_\star \in \Sigma^{\alpha} \cap H^{\alpha}$ and $
\max\{\Norm{s_{\star}}_{\Sigma^{\alpha}},\, \Norm{s_{\star}}_{H^{\alpha}}, \, \Norm{s_{\star}}_{\infty}\} \leq B$. For every sufficiently large $n$, there exists $\eta_{2,n} \in H^{\alpha+1/2}$ such that
$$
\Norm{s_\star - s_{\eta_{2,n}}}_\infty \le B \varepsilon_n(\Pi),
\quad \text{and} \quad
\Norm{\eta_{2,n}}_{H^{\alpha+1/2}} \le B \sqrt{n}\varepsilon_n(\Pi).
$$
\end{enumerate}
\end{assu}
The first two conditions impose standard support and smoothness restrictions on $G_\star$ and $m_\star$. Condition~(iii) similarly imposes smoothness on $s_\star$, together with an approximation requirement reflecting that the prior is placed on $\eta_2$ rather than directly on $s_\star$. Specifically, $s_\star$ need only be approximated at the target rate by functions of the form $s_{\eta_2}=\Lambda^{1/2}\circ\eta_2$, with
$\Norm{\eta_2}_{H^{\alpha+1/2}}$ of order at most $\sqrt{n}\varepsilon_n(\Pi)$. Since $\sqrt{n}\varepsilon_n(\Pi)\rightarrow\infty$, the complexity of the approximating functions may increase with $n$.

For the softplus link, when $s_\star$ is bounded away from zero, the approximation requirement in Assumption~\ref{assu:LS}(iii) is implied by the smoothness conditions imposed on $s_\star$. In that case,
\[
\eta_{2,\star}(\sigma)
=
\Lambda^{-1}\!\left(s_\star^2(\sigma)\right)
=
\log( e^{s_{\star}^2(\sigma)} - 1  ).
\]
Since the map $t\mapsto\Lambda^{-1}(t^2)$ has sufficiently many bounded derivatives over the range of $s_\star$,  $\eta_{2,\star}$ inherits the H\"older and Sobolev regularity of $s_\star$. Hence, if
$s_\star\in\Sigma^\alpha\cap H^\alpha$, standard approximation results imply Assumption~\ref{assu:LS}(iii)~\citep[Lemma 11.37]{ghosal2017fundamentals}. The more general approximation formulation in Assumption~\ref{assu:LS}(iii) avoids imposing any uniform positive lower bound on $s_\star$. This flexibility is useful in applications where the dispersion of the latent distribution varies with estimation precision and may become arbitrarily small at some values of $\sigma$. By formulating Assumption~\ref{assu:LS}(iii) as an approximation condition, we can accommodate such behavior even when the chosen link generates only strictly positive scale functions.
\begin{rema}[Scale functions at the boundary]
The procedure in~\citet{chen2026empirical} imposes a uniform positive lower bound on the scale function $s_\star(\sigma)$. This condition arises  from their two-step plug-in construction, which first centers and rescales the data using estimates of the location and scale functions to remove precision dependence, and then applies a conventional NPMLE procedure to the normalized observations. As $s_\star(\sigma)$ approaches zero, this normalization becomes increasingly unstable and is undefined at the boundary $s_\star(\sigma)=0$. Our framework does not require such a restriction. To illustrate, consider the fully degenerate case $
s_\star(\sigma)=0 $ for all $\sigma$
and take the softplus link $\Lambda(t)=\log(1+e^t)$. Define
$
\eta_{2,n}(\sigma)
=
-2\log\!\left(\varepsilon_n(\Pi)^{-1}\right).
$
Since $\log(1+x)\leq x$, we have $
s_{\eta_{2,n}}(\sigma)
=
\log^{1/2}\!(1+e^{\eta_{2,n}(\sigma)})
\leq
\varepsilon_n(\Pi)$
uniformly in $\sigma$. Moreover,
$
\Norm{\eta_{2,n}}_{H^{\alpha+1/2}}
\lesssim
\log\!\left(\varepsilon_n(\Pi)^{-1}\right)
=
O(\log n)
=
o\!\left(\sqrt{n}\varepsilon_n(\Pi)\right),$
so that Assumption~\ref{assu:LS}(iii) is satisfied even in the degenerate boundary case $s_\star = 0$.
\end{rema}

We next impose a regularity condition on the realized standard deviations $\sigma_1,\ldots,\sigma_n$. Because $m_\star$ and $s_\star$ are learned nonparametrically from variation in $\sigma_i$, the fixed design points must provide sufficient local information throughout their range. Conditions of this type are standard in nonparametric estimation and require, roughly, that the design not become too sparse around any $\sigma_i$. We formalize this as follows.
\begin{assu}[Regular fixed design]
\label{assu:close-fixed-design} (i) The standard deviations $\sigma_1,\ldots,\sigma_n$ satisfy
$\sigma_i\in[\ubar\sigma,\bar\sigma]$ for all $i$, where
$0<\ubar\sigma<\bar\sigma<\infty$. (ii) There exists a constant $c>0$ such that, for all sufficiently large $n$,
$
\min_{1\le i\le n}
\#\big\{j:|\sigma_j-\sigma_i|\le h_n\big\}
\ge cnh_n,
$ where $h_n=n^{-1/(2\alpha+1)}$.
\end{assu}
Assumption~\ref{assu:close-fixed-design} requires each $\sigma_i$ to have at least order $nh_n$ neighboring design points within distance $h_n$, ensuring that no part of the realized design becomes too sparse at the scale relevant for nonparametric estimation. The choice
$h_n=n^{-1/(2\alpha+1)}$ is the usual bandwidth for estimating an $\alpha$-smooth function of one variable; for example, $h_n=n^{-1/5}$ when $\alpha=2$. At this bandwidth, the condition is implied by Assumptions (LP1) and (LP3) of~\citet{tsybakov2008introduction} and by Assumption SM8.2 of~\citet{chen2026empirical}.

For a generic parameter $\theta=(G,\eta_1,\eta_2)$, integrating out the latent variable in the working model gives the conditional marginal density
$$
f_{\theta,\sigma^2}(z)
:=
\int
\varphi\!\left(
z-\eta_1(\sigma)-s_{\eta_2}(\sigma)u;\sigma^2
\right)
\,\dd G(u)
$$
for an observation with standard deviation $\sigma$. Since the marginal distribution generally varies with $\sigma$, we collect the densities corresponding to the realized design points in the vector
$
\boldf_{\theta,\boldsigma^2}
:=
(
f_{\theta,\sigma_1^2},
\ldots,
f_{\theta,\sigma_n^2}
).$
We use analogous notation for the data-generating distribution:
\begin{equation}
\boldf_{\star,\boldsigma^2}
:=
(
f_{\star,\sigma_1^2},
\ldots,
f_{\star,\sigma_n^2}
), \; \;
f_{\star,\sigma^2}(z)
:=
\int
\varphi\!\left(
z-m_\star(\sigma)-s_\star(\sigma)u;\sigma^2
\right)
\,\dd G_\star(u).
\label{eq:close_truth}
\end{equation}
Conditional on $\boldsigma^2$, the observations are independent but generally not identically distributed. We therefore measure the discrepancy between two vectors of marginal densities
$\boldf=(f_1,\ldots,f_n)$ and $\boldh=(h_1,\ldots,h_n)$ by their average Hellinger distance,
\begin{equation}
\Dhel^2(\boldf,\boldh)
:=
\frac{1}{n}
\sum_{i=1}^n
\Dhel^2(f_i,h_i).
\label{eq:avg_hellinger_het}
\end{equation}
With this notation, we can state our posterior contraction result. The theorem shows that the posterior learns the vector of conditional marginal distributions generated by the working model at the usual nonparametric rate $\varepsilon_n(\Pi)=n^{-\alpha/(2\alpha+1)}$. Thus, although both the latent distribution  and the location and scale functions are unknown, the additional nonparametric structure does not lead to a slower rate than the benchmark associated with estimating an $\alpha$-smooth function of one variable.

\begin{theo}[Posterior contraction]
\label{thm:close_contraction}
Suppose that data generation is governed by~\eqref{eq:hierarchy1NHet} and~\eqref{eq:hierarchy2NClose}, that Specification~\ref{loc-scale-prior} holds for $\Pi$, and that Assumption~\ref{assu:LS} holds. Then, for every $q\geq1$, there exist constants $D,c,C_q>0$ such that, for all sufficiently large $n$,
\[
\PP[\star]{\,
\Pi\!\left(
\Dhel\!\left(
\boldf_{\star,\boldsigma^2},
\boldf_{\theta,\boldsigma^2}
\right)
\geq D\varepsilon_n(\Pi)
\,\Big|\,
\boldZ,\boldsigma^2
\right)
\leq
\exp\!\left\{-cn\varepsilon_n^2(\Pi)\right\}
\,\Big|\,
\boldsigma^2
}
\geq
1-\frac{C_q}{n^q}.
\]
The constants may depend on the prior specification, Assumption~\ref{assu:LS}, $\ubar\sigma$, and $\bar\sigma$, but not on $n$.
\end{theo}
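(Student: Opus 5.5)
The argument follows the general posterior contraction theorem for independent, non-identically distributed observations of Ghosal and van der Vaart, with the average Hellinger distance \eqref{eq:avg_hellinger_het} as the metric. Working conditionally on $\boldsigma^2$, three ingredients are needed.
\begin{enumerate}[label=(\alph*), nosep, leftmargin=*]
\item A prior mass condition: $\Pi$ assigns mass at least $\exp(-c_0 n\varepsilon_n^2)$ to an average Kullback--Leibler neighbourhood of $\boldf_{\star,\boldsigma^2}$ of radius $\varepsilon_n$, where the neighbourhood controls both $\frac1n\sum_i K(f_{\star,\sigma_i^2},f_{\theta,\sigma_i^2})$ and the second-moment analogue.
\item Sieves $\Theta_n$ whose complement has prior mass at most $\exp(-(c_0+4)n\varepsilon_n^2)$.
\item Entropy bounds $\log N(\varepsilon_n,\Theta_n,\Dhel)\lesssim n\varepsilon_n^2$, which yield exponentially powerful tests through the Le Cam--Birgé construction for the average Hellinger metric.
\end{enumerate}
The polynomial probability bound $1-C_q/n^q$ comes from the evidence lower bound. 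Rather than using Chebyshev's inequality, I would use a moment (or Bernstein-type) bound on the log-likelihood ratio restricted to the KL neighbourhood. This needs higher-order KL-type moments to be controlled, which is possible because the Gaussian mixtures have bounded support and exponentially decaying tails, so the log ratios are polynomially bounded in $z$.

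For (a), the plan is to reduce KL to Hellinger up to log factors, using bounded support and tails of the Gaussian mixtures. I would then bound $\Dhel(f_{\theta,\sigma^2},f_{\star,\sigma^2})$ by three terms:
\begin{enumerate}[label=(\roman*), nosep, leftmargin=*]
\item $\|\eta_1-m_\star\|_\infty$;
\item $\|s_{\eta_2}-s_\star\|_\infty$ times a bound on the support of $G$;
\item a mixing-distribution term.
\end{enumerate}
For (i) and (ii) I would use the Lipschitz dependence of $\varphi(\cdot-a-bu;\sigma^2)$ on $(a,b)$, since $\sigma\ge\ubar\sigma$. For (iii) I would use the homoscedastic approximation from the proof of Theorem~\ref{thm:compound-contraction}: a discrete $Q$ with $O(\log(1/\varepsilon))$ atoms matching moments of $G_\star$, with $\kappa=0$ and small $A$, gives a mixture within $\varepsilon_n$ uniformly in $\sigma$. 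Since $G_\star$ is supported in $[-M,M]$ and we need $R\ge M$, scaling issues must be handled; with $s$ bounded this is fine. The prior mass is then $\exp(-C\log^2(1/\varepsilon_n))$ for the DP part, which is negligible compared to $n\varepsilon_n^2$.

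The GP parts use the concentration function. For $\eta_1$, $m_\star\in\Sigma^\alpha\cap H^\alpha$ together with the Matérn RKHS $H^{\alpha+1/2}$ gives $-\log\Pi(\|\eta_1-m_\star\|_\infty<\varepsilon_n)\lesssim n\varepsilon_n^2$ by van der Vaart--van Zanten. For $\eta_2$, I would use Assumption~\ref{assu:LS}(iii) with the shifted small-ball bound $\|\eta_{2,n}\|_{H^{\alpha+1/2}}^2\le B^2 n\varepsilon_n^2$. Since softplus$^{1/2}$ is only Hölder-$1/2$ near $-\infty$, I would bound $|s_{\eta}-s_{\eta'}|\le\sqrt{|\Lambda(\eta)-\Lambda(\eta')|}$. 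This forces a ball of radius $\varepsilon_n^2$ in sup norm for $\eta_2$, which only costs a constant in the exponent, since the small-ball exponent is $\varepsilon^{-1/\alpha}$ and $\varepsilon_n^{-2/\alpha}\le n\varepsilon_n^2$ is borderline. I expect this step, ensuring the square-root link does not degrade the rate, to be the main obstacle. If it fails, I would instead exploit that the Hellinger distance between location-scale mixtures depends on $s^2$ smoothly, bounding $\Dhel$ directly via $|s_\eta^2-s_\star^2|$, which is Lipschitz in $\eta$.

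For (b) and (c), I would take the standard GP sieves $M_n\mathbb{H}_1+\varepsilon_n\mathbb{B}_1$ with $M_n\asymp\sqrt n\varepsilon_n$, whose $\|\cdot\|_\infty$ entropy is $\lesssim n\varepsilon_n^2$ by Borell's inequality. I would intersect these with $\|\eta_i\|_\infty\le \bar M_n$ to bound the means and scales polynomially in $n$. For $G$, the base measure is compact and $A\le\bar A$, so the DPGM entropy of Ghosal--van der Vaart gives $\log^2$ terms. Combining via the Lipschitz bound $\Dhel\lesssim\|\eta_1-\eta_1'\|_\infty+\|\Lambda(\eta_2)-\Lambda(\eta_2')\|_\infty+\text{mixture term}$ then yields the entropy bound in (c) and completes the proof.
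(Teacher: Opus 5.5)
Your architecture matches the paper's: Theorem 8.23 of Ghosal and van der Vaart for independent, non-identically distributed data, $k$-th order KL balls so that the evidence bound fails with probability $(n\varepsilon_n^2)^{-k/2}\le n^{-q}$, GP concentration functions, and moment matching for $G$. Your sieve $M_n\mathbb{H}_1+\varepsilon_n\mathbb{B}_1$ is an acceptable substitute for the paper's $\delta_nH_1^\beta+C\sqrt n\varepsilon_nH_1^{\alpha+1/2}$.

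\emph{The softplus obstacle is illusory, and your workaround would fail.} The step you single out as the main obstacle rests on a false premise. The map $\sqrt\Lambda$ is globally Lipschitz. With $x=e^t$ we have $\Lambda'(t)=x/(1+x)\le\min\{1,\log(1+x)\}$, so $(\sqrt\Lambda)'=\Lambda'/(2\sqrt\Lambda)\le\sqrt{\Lambda'}/2\le 1/2$; near $-\infty$, $\sqrt{\Lambda(t)}\approx e^{t/2}$. The paper uses exactly this. It gives $\|s_{\eta_2}-s_\star\|_\infty\le\tfrac12\|\eta_2-\eta_{2,n}\|_\infty+B\varepsilon_n$. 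Hence a $c\varepsilon_n$-ball around $\eta_{2,n}$ suffices, and its GP mass is at least $e^{-Cn\varepsilon_n^2}$ by Assumption~\ref{assu:LS}(iii).

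Your alternative is not borderline. Since $\varepsilon_n^{-2/\alpha}=n^{2/(2\alpha+1)}=(n\varepsilon_n^2)^2$, an $\varepsilon_n^2$-ball costs a polynomial factor in the exponent and yields a strictly slower rate.

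The fallback via $|s_\eta^2-s_\star^2|$ fails too. Component locations are $\eta_1(\sigma)+s_{\eta_2}(\sigma)u$, which is linear in $s$. Near $s=0$, the case Assumption~\ref{assu:LS}(iii) is built to admit, the Hellinger distance is generally of order $|s-s'|$, and this can be $\asymp|s^2-s'^2|^{1/2}$. The same slip appears in your entropy step. The Lipschitz bound there should use $\|s_{\eta_2}-s_{\eta_2'}\|_\infty\le\tfrac12\|\eta_2-\eta_2'\|_\infty$, not $\|\Lambda(\eta_2)-\Lambda(\eta_2')\|_\infty$.

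\emph{Going through Hellinger in (a) loses logarithms where they are not free.} To reach average KL $\le\varepsilon_n^2$ you would need nuisance sup-norm errors of order $\varepsilon_n/\log^c n$. The centred GP small-ball exponent at that radius is $\asymp n\varepsilon_n^2\log^{c/\alpha}n$. Moreover, Assumption~\ref{assu:LS}(iii) supplies no approximant of $s_\star$ finer than $\varepsilon_n$. You would therefore get $\varepsilon_n$ times a power of $\log n$, not the stated rate.

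The paper bounds KL and $V_k$ directly in Lemma~\ref{lem-small-close}. It uses the same $Q$ in both mixtures, joint convexity, and the explicit Gaussian bounds $K\lesssim\delta^2$ and $|\log\{\varphi(z-a_0;\omega_0^2)/\varphi(z-a;\omega^2)\}|\lesssim\delta(1+z^2)$. These give $O(\varepsilon_n^2)$ and $O(\varepsilon_n^k)$ with no loss. The cross term with the mixing error is handled by Cauchy--Schwarz against the Hellinger distance. Lossy comparisons are confined to the $G$-part, where the Dirichlet mass $e^{-C\log^2(1/\varepsilon)}$ absorbs them.

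\emph{Two further steps need real arguments, not remarks.} First, Specification~\ref{loc-scale-prior} allows any $R>0$. Placing $G_\star$ inside $[-R,R]$ therefore replaces $s_\star$ by $cs_\star$, and Assumption~\ref{assu:LS}(iii) must be re-established for $cs_\star$ through the nonlinear link. Lemma~\ref{lem:softplus-rescaling} does this by adding an RKHS approximant of $d_c\circ s_\star^2$, where $d_c(x)=\Lambda^{-1}(c^2x)-\Lambda^{-1}(x)$ is smooth on $[0,\infty)$.

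Second, the entropy of the $G$-coordinate is not just $\log^2 n$. On the sieve, $s_{\eta_2}$ can grow up to some $S_n$, so after rescaling the effective noise level is $\underline\sigma/S_n$ and the entropy is of order $S_n\log^2 n$. This is $o(n\varepsilon_n^2)$ only because the sieve keeps $\|\eta_2\|_\infty\lesssim\sqrt n\varepsilon_n$. An arbitrary polynomial bound $\bar M_n$ would not be enough.
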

We now turn from posterior contraction to the decision-theoretic performance of the resulting estimator. As discussed in Remark~\ref{rema:heteroscedastic_oracle}, in the present setting we evaluate performance relative to the oracle Bayes rule under the working model~\eqref{eq:hierarchy2NClose}. Accordingly, rather than conditioning on a fixed vector $\boldmu$ as in~\eqref{eq:RMSE}, we integrate over the latent means generated by the working model. As throughout this section, all risks are conditional on the realized values of $\boldsigma^2$, with this conditioning suppressed from the notation.

Our goal is to estimate $\boldmu$, so we consider decision rules
$\boldt:\RR^n\times[\ubar\sigma^2,\bar\sigma^2]^n\to\RR^n$
and estimators of the form
$\widehat{\boldmu}=\boldt(\boldZ,\boldsigma^2)$. We measure their performance by the integrated RMSE
\begin{equation}
R(\widehat{\boldmu},\star)
:=
\left\{
\frac{1}{n}
\EE[\star]{\Norm{\widehat{\boldmu}-\boldmu}^2}
\right\}^{1/2},
\label{eq:RMSE_bayes}
\end{equation}
where $\star=(G_\star,m_\star,s_\star)$ and the expectation is taken over both the latent means in~\eqref{eq:hierarchy2NClose} and the observations in~\eqref{eq:hierarchy1NHet}, conditional on $\boldsigma^2$. Under squared-error loss, the optimal rule is the oracle Bayes rule
\begin{equation}
\begin{aligned}
\widehat{\boldmu}^{\mathrm{B}}(\star)
&:=
\left(
\delta_\star(Z_1,\sigma_1^2),
\ldots,
\delta_\star(Z_n,\sigma_n^2)
\right),\\
\delta_\star(z,\sigma^2)
&:=
\frac{1}{
f_{\star,\sigma^2}(z)
}
\int
\{m_\star(\sigma)+s_\star(\sigma)u\}
\varphi\!\left(
z-m_\star(\sigma)-s_\star(\sigma)u;\sigma^2
\right)
\,\dd G_\star(u).
\label{eq:close_bayes_rule}
\end{aligned}
\end{equation}
Equivalently,
$\delta_\star(z,\sigma^2)$ is the conditional mean of $\mu_i$ given
$(Z_i,\sigma_i^2)=(z,\sigma^2)$. Its risk is
\[
R\!\left(\widehat{\boldmu}^{\mathrm{B}}(\star),\star\right)
=
\left\{
\frac{1}{n}
\sum_{i=1}^n
\EE[\star]{
\Var[\star]{\mu_i\mid Z_i,\sigma_i^2}
}
\right\}^{1/2}.
\]
This is an oracle benchmark because the rule depends on the unknown components $(G_\star,m_\star,s_\star)$ of the working model. The BNP posterior mean estimator $\widehat{\boldmu}(\Pi) = \EE[\Pi]{\boldmu \mid \boldZ, \boldsigma^2}$ defined in~\eqref{eq:bayes_postmean} instead learns these components from the data through the posterior. The following result shows that its excess risk relative to the oracle is governed by the same nonparametric rate as the posterior contraction result, up to logarithmic factors.

\begin{theo}[Hierarchical regret]
\label{theo:close-regret}
Suppose that data generation is governed by~\eqref{eq:hierarchy1NHet} and~\eqref{eq:hierarchy2NClose}, that Specification~\ref{loc-scale-prior} holds for $\Pi$, and that Assumptions~\ref{assu:LS} and~\ref{assu:close-fixed-design} hold. Then there exists a constant $C>0$, depending only on the constants in the prior specification and the assumptions, such that, for all sufficiently large $n$,
$$
R\!\left(\widehat{\boldmu}(\Pi),\star\right)
-
R\!\big(\widehat{\boldmu}^{\mathrm{B}}(\star),\star\big)
\leq
C\,\varepsilon_n(\Pi)\log^2 n.
$$
\end{theo}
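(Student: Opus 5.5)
The plan is to convert the contraction statement of Theorem~\ref{thm:close_contraction} into a regret bound through a Tweedie-type representation of the posterior mean, following the route used for Theorem~\ref{theo:main_risk_bound_eq}. The decomposition is
\[
\widehat{\mu}_i(\Pi)=\int \delta_\theta(Z_i,\sigma_i^2)\,\dd\Pi(\theta\mid \boldZ,\boldsigma^2),
\]
where $\delta_\theta(z,\sigma^2)=z+\sigma^2 f'_{\theta,\sigma^2}(z)/f_{\theta,\sigma^2}(z)$ is the component Bayes rule under $\theta$. Tweedie's formula applies because, for fixed $\sigma$, $\mu$ given $\theta$ is a location--scale image of $G$, so $f_{\theta,\sigma^2}$ is a Gaussian location mixture with variance $\sigma^2$. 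The oracle $\delta_\star$ has the same form with $f_{\star,\sigma^2}$.

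The first step is to note that
\[
R(\widehat{\boldmu}(\Pi),\star)^2-R(\widehat{\boldmu}^{\mathrm B}(\star),\star)^2=\tfrac1n\sum_i\EE[\star]{(\widehat\mu_i(\Pi)-\delta_\star(Z_i,\sigma_i^2))^2},
\]
by orthogonality of the Bayes residual. The regret is therefore at most the root of the right-hand side, and it suffices to bound $\tfrac1n\sum_i\EE[\star]{(\widehat\mu_i-\delta_\star)^2}\lesssim \varepsilon_n^2\log^4 n$.

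I split this expectation on the good event $\mathcal{E}_n$ of Theorem~\ref{thm:close_contraction}, taking $q$ large, together with the event $\max_i|Z_i|\lesssim\sqrt{\log n}$.
\begin{itemize}
\item On $\mathcal{E}_n^c$, I use crude bounds. Under the DPGM prior with compact base, $\widehat{\mu}_i$ is a weighted average whose magnitude I will bound by a polynomial in $|Z_i|$ and in GP sup-norms, controlled in posterior expectation. Cauchy--Schwarz against $C_q n^{-q}$ then makes this part negligible.
\item On $\mathcal{E}_n$, I split the posterior into the set $\Theta_n=\{\Dhel(\boldf_{\star},\boldf_\theta)\le D\varepsilon_n\}$ and its complement. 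The complement carries mass $e^{-cn\varepsilon_n^2}$, which kills polynomially bounded rules.
\end{itemize}
On $\Theta_n$, Jensen gives $(\widehat\mu_i-\delta_\star)^2\lesssim\int_{\Theta_n}(\delta_\theta-\delta_\star)^2(Z_i,\sigma_i^2)\,\dd\Pi+\text{negligible}$.

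The core is a deterministic lemma in the spirit of \citet{jiang2009general} and \citet{Soloff}, applied coordinatewise. For a Gaussian mixture $f_\theta$ with variance $\sigma^2\ge\ubar\sigma^2$, I work with the regularized rule that replaces $f_\theta$ by $f_\theta\vee\rho_n$ with $\rho_n\asymp n^{-2}$. The lemma should give
\[
\int(\delta_\theta-\delta_\star)^2 f_{\star,\sigma^2}\lesssim \log^{3}(1/\rho_n)\,\Dhel^2(f_{\theta,\sigma^2},f_{\star,\sigma^2})+\text{tail terms},
\]
with polylogarithmic constants uniform in $\sigma\in[\ubar\sigma,\bar\sigma]$. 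The bound on $|f'/f|\lesssim\sqrt{\log(1/f)}$ makes the un-regularization cost at most another small term.

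Averaging over $i$ converts the per-coordinate Hellinger distances into the average Hellinger distance \eqref{eq:avg_hellinger_het}, which is at most $D\varepsilon_n$ on $\Theta_n$. This yields the required bound of order $\varepsilon_n^2\log^4n$.

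The \textbf{main obstacle} is that $\theta$ is drawn from the posterior, which depends on $Z_i$ itself, while the lemma integrates $Z_i$ against $f_{\star,\sigma_i^2}$ for a fixed $\theta$. My first attempt is a uniform approach. I would bound $\sup_{\theta\in\Theta_n}\frac1n\sum_i[(\delta_\theta-\delta_\star)^2(Z_i)-\EE{\cdot}]$ by a chaining argument over a finite $\varepsilon_n$-net of $\{f_{\theta,\sigma^2}\}$, since the truncated rules are Lipschitz in $\theta$ under the sup norm. The net must have log-cardinality of order $n\varepsilon_n^2$, with entropy coming from the DPGM part (a polylogarithmic count) and from the GP sieves (the $n\varepsilon_n^2$ count from the contraction proof).

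The design condition, Assumption~\ref{assu:close-fixed-design}, enters here. A sup-norm net for $\eta$ only controls averages, so the design condition is what converts the average Hellinger bound into local control of $m_\theta,s_\theta$ near each $\sigma_i$, which is needed to make the regularized rules uniformly close. This is where I expect the extra $\log n$ factors to accumulate.

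If chaining proves too delicate, the fallback is a leave-one-out argument in the spirit of Proposition~\ref{prop:datta_loo}, which writes $\widehat\mu_i$ through a posterior given $\boldZ_{-i}$ that is independent of $Z_i$. The paper has only established Proposition~\ref{prop:datta_loo} in the homoscedastic model, so its heteroscedastic analogue for $\Pi(\theta\mid\boldZ_{-i},\boldsigma^2)$ would have to be proved. This fallback would also require contraction for each leave-one-out posterior, which follows from Theorem~\ref{thm:close_contraction} with $n-1$ observations and a union bound over $i$, absorbed by choosing $q$ large.
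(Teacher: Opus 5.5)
Your architecture is the paper's. Orthogonality reduces the claim to $\frac1n\sum_i\EE[\star]{(\widehat\mu_i(\Pi)-\delta_\star(Z_i,\sigma_i^2))^2}\lesssim\varepsilon_n^2\log^4 n$. The posterior is split into a Hellinger ball and its complement, and the argument is finished by Jensen, regularized rules, Lemma~\ref{lemm:saha_e1}, and a sup-norm net of regularized rules with Bernstein and a union bound.

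The gap is the un-regularization step. You say that passing from $\delta_\theta^{\rho_n}$ back to $\delta_\theta$ is cheap because $|f'/f|\lesssim\sqrt{\log(1/f)}$. That bound only shows that the squared cost at coordinate $i$ is $\lesssim\log\{1/f_{\theta,\sigma_i^2}(Z_i)\}$ on $\{f_{\theta,\sigma_i^2}(Z_i)<\rho_n/\sigma_i\}$. On the GP sieve, $\eta_1$ and $s_{\eta_2}$ can be of size $\sqrt n\varepsilon_n$, so this logarithm can be of order $n\varepsilon_n^2$. The bound $\Dhel(\boldf_\star,\boldf_\theta)\le D\varepsilon_n$ by itself still allows about $D^2n\varepsilon_n^2$ coordinates to be completely misfit. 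Those coordinates contribute up to $n\varepsilon_n^4$, which is far above $\varepsilon_n^2\log^4 n$ because $n\varepsilon_n^2=n^{1/(2\alpha+1)}$. What you actually need is $f_{\theta,\sigma_i^2}(Z_i)\ge\rho_n/\sigma_i$ simultaneously for every $i$ and every $\theta$ in the good set, so that $\delta_\theta=\delta_\theta^{\rho_n}$ exactly at the data.

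This is where the paper uses Assumption~\ref{assu:close-fixed-design}, and you have assigned it to the wrong job. The net step needs no design condition. The rule $\delta_\theta^\rho$ is Lipschitz in $(\eta_1,\eta_2)$ under the sup norm, uniformly in $(z,\sigma^2)$, with constant $|\log\rho|^{3/2}$.

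The design condition instead enters through Lemma~\ref{lem:close-average-to-coordinate}, which proceeds in three steps:
\begin{enumerate}
\item On the sieve, $d_\theta(\sigma):=\Dhel(f_{\star,\sigma^2},f_{\theta,\sigma^2})$ has modulus $\lesssim\delta_n+\varepsilon_n+(1+\sqrt n\varepsilon_n)|\sigma-\tilde\sigma|^{\tilde\alpha}$ with $\tilde\alpha>1/2$. It therefore varies by $o(1)$ over windows of width $h_n$.
\item The $cnh_n$ design neighbours of the worst $\sigma_{i_0}$ then force $D^2\varepsilon_n^2\ge ch_n\{\max_i d_\theta(\sigma_i)-o(1)\}_+^2$. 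This gives $\max_i d_\theta(\sigma_i)=o(1)$, since $\varepsilon_n/\sqrt{h_n}=h_n^{\alpha-1/2}$.
\item Lemma~\ref{lem:hellinger-compactness}, applied coordinatewise, yields $\PP[U\sim G]{|\eta_1(\sigma_i)+s_{\eta_2}(\sigma_i)U|\le M_2}\ge 1/2$ for all $i$. This gives the lower bound on $\{\max_i|Z_i|\le T_n\}$ with $T_n\asymp\bar\sigma\sqrt{\log n}$ and $\rho_n=n^{-K}$ for $K$ large.
\end{enumerate}
Your choice $\rho_n\asymp n^{-2}$ is generally too large here, because the available lower bound is $\exp\{-(T_n+M_2)^2/(2\ubar\sigma^2)\}$.

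Two smaller points. First, your good set must be intersected with the sieve $\Theta_n$, because both the H\"older modulus and the covering are only available there; the posterior mass of the sieve complement is exponentially small by the contraction proof. Second, for the bad events the paper bounds $\EE[\Pi]{\mu_i^2\mid\boldZ,\boldsigma^2}\le C(n+\Norm{\boldZ}_2^2)$ through a leave-one-out lower bound on the predictive density. It does not use posterior moments of GP sup-norms, which are not easily controlled.
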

Thus, the BNP estimator asymptotically attains the oracle Bayes risk at the usual one-dimensional nonparametric rate, up to logarithmic factors, despite jointly learning the latent distribution and the unknown location and scale functions. The resulting regret rate matches the minimax-optimal rate established by~\citet{chen2026empirical} for their two-step plug-in empirical Bayes procedure.

\begin{rema}[Normalization and identification]
\label{normalize}
The location-scale representation in~\eqref{eq:hierarchy2NClose} is not uniquely identified without a normalization of $G_\star$, since location and scale transformations of the latent variable can be offset by corresponding transformations of $(m_\star, s_\star)$. The analysis in~\citet{chen2026empirical} proceeds   by imposing that $G_\star$ has mean zero and variance one. Under this normalization,
$
m_\star(\sigma)
=
\mathbb{E}[Z_i\mid\sigma_i=\sigma] \; , \;
s_\star^2(\sigma)
=
\Var{Z_i\mid\sigma_i=\sigma}-\sigma^2,
$
so that the location and scale functions are uniquely identified as conditional moments from the data. Our procedure does not require imposing this normalization. Instead, we work directly with the original likelihood, and our theoretical analysis depends only on the induced conditional distributions and Bayes rules, which are invariant across observationally equivalent location-scale representations. If desired, the conventional mean-zero, unit-variance normalization can be imposed ex post on posterior draws for interpretation.\footnote{For a posterior draw $(G,m,s)$, let $a_G$ and $b_G>0$ denote the mean and standard deviation under $G$. Replacing $G$ by the distribution of $(U-a_G)/b_G$, $U\sim G$, and setting
$\bar m(\sigma)=m(\sigma)+a_Gs(\sigma)$ and
$\bar s(\sigma)=b_Gs(\sigma)$
yields an observationally equivalent representation in which the latent distribution has mean zero and variance one.}
\end{rema}

\subsection{Special case: Mean-precision independence}
\label{subsec:CDNHet}

A useful and simple special case of the location-scale model~\eqref{eq:hierarchy2NClose} is obtained by setting
$m_\star(\sigma)=0$ and $s_\star(\sigma)=1$ for all $\sigma$. The working model then reduces to
\begin{equation}
\tag{B-$\mathrm{N}^*$}
\mu_i\cond\sigma_i^2,G_\star
\;\simiid\;
G_\star,
\qquad i=1,\ldots,n,
\label{eq:hierarchy2NInd}
\end{equation}
so that the distribution of the latent means no longer varies with estimation precision. From a Bayesian perspective, this amounts to placing point-mass priors on the location and scale functions at $0$ and $1$, respectively, so that only $G_\star$ remains to be learned. This is the heteroscedastic empirical Bayes model studied, for example, by~\citet{jiang2020general} and~\citet{Soloff}.

The objects from the preceding subsection simplify accordingly. The marginal density is
\begin{equation}
f_{G,\sigma^2}(z)
:=
\int \varphi(z-\mu;\sigma^2)\,\dd G(\mu),
\qquad
\boldf_{G,\boldsigma^2}
:=
\left(
f_{G,\sigma_1^2},\ldots,f_{G,\sigma_n^2}
\right),
\label{eq:vectorized_marginal_density}
\end{equation}
while the corresponding oracle and Bayes rule are
\begin{equation}
\widehat{\boldmu}^{\mathrm{B}}(G_\star)
=
\left(
\delta_{G_\star}(Z_1,\sigma_1^2),\ldots,
\delta_{G_\star}(Z_n,\sigma_n^2)
\right) \;,\;  \;\delta_G(z,\sigma^2)
:=
\frac{
\textstyle\int \mu\,\varphi(z-\mu;\sigma^2)\,\dd G(\mu)
}{
\textstyle\int \varphi(z-\mu;\sigma^2)\,\dd G(\mu)
}.
\label{eq:cd_b_post_mean_het_ind}
\end{equation}
We retain the DPGM prior in Specification~\ref{assu:DPGM}. Since there are no longer unknown location and scale functions to estimate nonparametrically, the contraction rate returns to that of the homoscedastic compound decision problem.

\begin{theo}[Posterior contraction]
\label{theo:het-ind-contraction}
Suppose that the data are generated according to~\eqref{eq:hierarchy1NHet} and~\eqref{eq:hierarchy2NInd}, with
$G_\star\in\mathcal P([-M,M])$.
Let $\Pi$ satisfy Specification~\ref{assu:DPGM} under either \ref{assu:DP:compact} or \ref{assu:DP:gauss}. Under \ref{assu:DP:compact}, assume additionally that the support of the base measure $H$ satisfies $R\ge M$.
Then there exist constants $C,c>0$, depending only on the specification, $M$, $\ubar\sigma^2$, and $\bar\sigma^2$, such that, for all sufficiently large $n$,
\[
\PP[G_\star]{
\Pi\!\left(
G:
\Dhel\!\left(
\boldf_{G_\star,\boldsigma^2},
\boldf_{G,\boldsigma^2}
\right)
\geq C\varepsilon_n(\Pi)
\,\Big|\,
\boldZ,\boldsigma^2
\right)
\leq
\exp\!\left\{-cn\varepsilon_n^2(\Pi)\right\}
}
\geq
1-\frac{4}{n^2},
\]
where $\varepsilon_n(\Pi)$ is defined in~\eqref{eq:epsilon_n}.
\end{theo}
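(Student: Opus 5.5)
The argument is the standard posterior contraction scheme for independent, non-identically distributed observations (Ghosal and van der Vaart's theorem for independent non-i.i.d.\ data), carried out conditionally on $\boldsigma^2$. The $i$th observation has density $f_{G_\star,\sigma_i^2}$ under the truth and $f_{G,\sigma_i^2}$ under the model, and distance is measured by the average Hellinger distance~\eqref{eq:avg_hellinger_het}. Three ingredients are needed:
\begin{enumerate*}[label=(\alph*)]
\item a lower bound on the prior mass of a Kullback--Leibler-type neighbourhood, averaged over $i$;
\item a sieve $\mathcal{G}_n$ whose complement has prior mass $\le e^{-(c+4)n\varepsilon_n^2}$;
\item a bound on the metric entropy of $\{\boldf_{G,\boldsigma^2}:G\in\mathcal G_n\}$ in the average Hellinger metric by $n\varepsilon_n^2$.
\end{enumerate*}
Given these, exponentially consistent tests (built from the Le Cam--Birgé construction for product densities, which handles the average Hellinger metric directly) and the usual evidence lower bound yield the statement. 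The probability $1-4/n^2$ comes from a Chebyshev or Bernstein control of the log-likelihood ratio in the denominator, exactly as in Theorem~\ref{thm:compound-contraction}.

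The key simplification is that every step reduces to the homoscedastic analysis, uniformly over $\sigma\in[\ubar\sigma,\bar\sigma]$. For entropy and sieve, write $f_{G,\sigma^2}=f_{G*\mathrm N(0,\sigma^2-\ubar\sigma^2)}$ computed at noise level $\ubar\sigma^2$. I would use the same sieve as in Theorem~\ref{thm:compound-contraction}: $Q$ nearly supported on $[-a_n,a_n]$ with $a_n\asymp\sqrt{\log n}$ under Base-Gauss, and $A\le\bar A$. The moment-matching/discrete-approximation argument (a mixing distribution with $O(\log n)$ atoms approximates $f_{G,\sigma^2}$ to within $1/n$ in sup and $L_1$ norms, uniformly in $\sigma$) then gives $\log N(\varepsilon_n,\mathcal G_n,\Dhel)\lesssim\log^2 n\le n\varepsilon_n^2$. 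This uses that the Gaussian kernel with variance at least $\ubar\sigma^2$ is analytic, with constants depending only on $\ubar\sigma,\bar\sigma$. Because the average Hellinger distance is at most the supremum over $i$, a single net works for all $i$ simultaneously. The sieve complement bound is unchanged from the homoscedastic case, since the prior does not involve $\boldsigma^2$.

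For prior mass, the plan is to approximate $G_\star\in\mathcal P([-M,M])$ by a discrete $G'=\sum_{j\le N}w_j\delta_{u_j}$ with $N\asymp\log n$ atoms. Its first $\asymp\log n$ moments match those of $G_\star$, which gives $\sup_{\sigma}\|f_{G_\star,\sigma^2}-f_{G',\sigma^2}\|_\infty\lesssim n^{-1}$ together with tail control, uniformly in $\sigma$, by the same Taylor/moment argument. I would then show that any $G=Q*\mathrm N(0,A)$ with $A\le\delta$, with $Q$ placing weights within $\epsilon$ of $w_j$ on small intervals around $u_j$, lies in the neighbourhood where the KL divergence and the second log-moment are both $\lesssim\varepsilon_n^2$, for every $\sigma_i$. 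This uses the standard bound of KL by Hellinger times a log factor, valid because $f_{G,\sigma^2}$ has Gaussian tails bounded below on $[-M-\text{const},M+\text{const}]$. The Gaussian smoothing contributes a Hellinger error $\lesssim A$ (Remark on smoothing bias), so $\delta\asymp\varepsilon_n$. The resulting prior mass is the Dirichlet small-ball probability $\exp(-CN\log(1/\varepsilon_n))$ times $\Gamma(A\le\varepsilon_n)\ge c_1\varepsilon_n^{r}e^{-c_2\varepsilon_n^{-\kappa}}$. Requiring this to be $\ge e^{-Cn\varepsilon_n^2}$ gives exactly $\varepsilon_n^{2+\kappa}\gtrsim \log(n)/n$ together with $\varepsilon_n\gtrsim n^{-1/2}\log n$, which is the rate~\eqref{eq:epsilon_n}. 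Under Base-Compact, $R\ge M$ ensures that $H$ charges neighbourhoods of every $u_j$.

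I expect the main obstacle to be uniformity in $\sigma$ within the Kullback--Leibler neighbourhood: the log-ratio $\log(f_{G_\star,\sigma^2}/f_{G,\sigma^2})$ must be controlled on the tails for every $\sigma_i$ at once. I would handle this by splitting at $|z|\le M+C\bar\sigma\sqrt{\log n}$. On that region a lower bound $f_{G,\sigma^2}(z)\gtrsim e^{-(|z|+M')^2/(2\ubar\sigma^2)}$ holds whenever $Q$ puts mass on $[-M',M']$. Outside it, Gaussian tails make the contribution $O(n^{-2})$. All constants depend only on $\ubar\sigma,\bar\sigma,M$, which is why the conclusion holds with the same rate as in the homoscedastic case.
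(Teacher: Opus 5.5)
Your scheme (KL prior mass, a sieve with Le Cam--Birgé tests, and a Bernstein lower bound on the evidence) is fine in outline under Base-Compact. There $Q$ lives on $[-R,R]$ and the Hellinger entropy is $\lesssim\log^2 n$. It has a genuine gap under Base-Gauss, which the statement explicitly covers, and the gap is in the sieve.

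For your choice $a_n\asymp\sqrt{\log n}$ the complement condition fails. Under $H=\mathrm N(0,A_H)$ one has $Q([-a,a]^c)\sim\mathrm{Beta}(\alpha p,\alpha(1-p))$ with $p=H([-a,a]^c)\approx e^{-a^2/(2A_H)}$. Hence $\Pi\{Q([-a,a]^c)>\eta\}$ is of order $p\log(1/\eta)$. For $a\asymp\sqrt{\log n}$ this is only polynomially small in $n$, whatever $\eta$ you use to define ``nearly supported''. You need it below $e^{-(c+4)n\varepsilon_n^2}\le e^{-(c+4)\log^2 n}$.

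Making the complement small forces $a_n\gtrsim\sqrt n\,\varepsilon_n$, i.e. $a_n\gtrsim\log n$ when $\kappa=0$. But then the moment-matching count for mixtures with mixing support $[-a_n,a_n]$ needs about $a_n\sqrt{\log n}$ atoms, each costing $\log n$. The resulting Hellinger entropy bound is of order $\log^{5/2}n$, which exceeds $n\varepsilon_n^2\asymp\log^2 n$. So in the headline case $\kappa=0$ with a Gaussian base, the sieve/test route loses a logarithmic factor. (For $\kappa>0$, $n\varepsilon_n^2$ is polynomial and absorbs it.) Note also that the paper's proof of Theorem~\ref{thm:compound-contraction} uses no sieve at all, so there is no ``same sieve'' to import.

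The missing idea is the data-window argument of Lemma~\ref{lemm:upper_bound_small}, which the paper reuses almost verbatim with a genuine likelihood ratio.
\begin{enumerate*}[label=(\roman*)]
\item On $\{\max_i|Z_i|\le T_n\}$ with $T_n\asymp\sqrt{\log n}$, the likelihood only evaluates $f_{G,\sigma_i^2}$ on $[-T_n,T_n]$.
\item The class of \emph{all} Gaussian location mixtures, with no restriction on where $Q$ puts mass, has a sup-norm cover on this window of log-cardinality $\lesssim\log^2 n$, uniformly in $\sigma\in[\underline\sigma,\bar\sigma]$. This is Lemma 4 of \citet{jiang2020general}, replacing Lemma 2 of \citet{zhang2009generalized}.
\item Each cell meeting $\{G:\text{distance}\ge r_n\}$, with $r_n=C\varepsilon_n$, is dominated on the window by an envelope $u_j=f_{0,j}+2\delta_n\mathbf 1_{[-T_n,T_n]}$.
\item The Hellinger affinity gives $\mathbb E\sqrt{L_n(u_j)}=\prod_i\int\{u_j(z,\sigma_i)f_{G_\star,\sigma_i^2}(z)\}^{1/2}\,dz\le e^{-nr_n^2/2}$.
\item Markov's inequality and a union bound then control the numerator.
\end{enumerate*}
No sieve or prior-complement condition arises, so arbitrary tails of $Q$ under Base-Gauss are harmless.

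Two smaller points.
\begin{itemize}
\item Chebyshev in the denominator only gives failure probability $O\{(n\varepsilon_n^2)^{-1}\}$, not $O(n^{-2})$. You do need the truncated Bernstein argument of Lemma~\ref{lem:denom-bernstein}, with the envelope $|\log(f_{G,\sigma^2}/f_{G_\star,\sigma^2})|\lesssim 1+z^2$ supplied by the local set.
\item Controlling KL rather than Hellinger requires $A\lesssim\varepsilon_n/\log(1/\varepsilon_n)$. This produces the factor $\log^{\kappa/(2+\kappa)}n$ in \eqref{eq:epsilon_n}, so your condition should read $\varepsilon_n^{2+\kappa}\gtrsim\log^{\kappa}(n)/n$.
\end{itemize}
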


Thus, allowing the sampling variances to differ across units does not by itself slow posterior contraction relative to Theorem~\ref{thm:compound-contraction}. The same conclusion carries over to regret.

\begin{theo}[Hierarchical regret]
\label{theo:regret_bayes_ind_gauss}
Suppose the conditions of Theorem~\ref{theo:het-ind-contraction} hold.
Then there exists a constant $C>0$, depending only on the specification, $M$, $\ubar\sigma^2$, and $\bar\sigma^2$, such that, for all $n\geq2$,
\[
\sup_{G_\star\in\mathcal P([-M,M])}
\left\{
R\!\left(\widehat{\boldmu}(\Pi),G_\star\right)
-
R\!\left(\widehat{\boldmu}^{\mathrm{B}}(G_\star),G_\star\right)
\right\}
\leq
C\,\varepsilon_n(\Pi)\log^{3/2}n.
\]
\end{theo}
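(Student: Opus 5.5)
We reduce the regret bound to the Hellinger contraction of Theorem~\ref{theo:het-ind-contraction}, following the standard Jiang--Zhang route for plug-in rules but applied to a posterior-averaged rule.

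\emph{Step 1: structure of the rule.} We first extend the LOO representation of Proposition~\ref{prop:datta_loo} to the heteroscedastic case. This gives $\widehat\mu_i(\Pi)=\delta_{\widebar G_{-i}}(Z_i,\sigma_i^2)$, where $\widebar G_{-i}=\EE[\Pi]{G\mid \boldZ_{-i},\boldsigma^2_{-i}}$. The proof is the same computation as before, since the likelihood factorizes given $G$. Because $\widebar G_{-i}$ is independent of $(Z_i,\mu_i)$ under $\PP[G_\star]{}$, we have
\[
\EE[G_\star]{(\widehat\mu_i-\mu_i)^2}-\EE[G_\star]{(\delta_{G_\star}(Z_i,\sigma_i^2)-\mu_i)^2}
=\EE{\int (\delta_{\widebar G_{-i}}-\delta_{G_\star})^2(z,\sigma_i^2)\,f_{G_\star,\sigma_i^2}(z)\,\dd z}.
\]
The regret in RMSE is therefore bounded by the square root of the average of these terms. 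This follows from $\sqrt{a+b}-\sqrt a\le\sqrt b$ and the orthogonality of the Bayes residual.

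\emph{Step 2: posterior mean of $G$.} We show that $\widebar G_{-i}$ is close to $G_\star$ in the average Hellinger distance of the marginals. By convexity of $\Dhel^2$ in its argument and linearity of $G\mapsto f_{G,\sigma^2}$,
\[
\Dhel^2(\boldf_{G_\star},\boldf_{\widebar G_{-i}})\le \int \Dhel^2(\boldf_{G_\star},\boldf_G)\,\dd\Pi(G\mid \boldZ_{-i}).
\]
On the high-probability event of Theorem~\ref{theo:het-ind-contraction}, applied with $n-1$ observations and with the $i$-th coordinate contributing at most $1/n$, the right side is at most $C^2\varepsilon_n^2+\exp(-cn\varepsilon_n^2)\lesssim\varepsilon_n^2$. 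There is one subtlety: the average runs over all $n$ design points while the posterior uses $n-1$ of them. This is harmless because each Hellinger term is bounded by one. On the complement, of probability at most $4/n^2$, we use a crude bound: $|\delta_G(z,\sigma^2)|\le M'+C\sqrt{\log(1/f)}$-type bounds, or simply $|\delta_{\widebar G_{-i}}-\mu_i|$ controlled via Gaussian tails of $Z_i$. To have such a bound we need the support of $\widebar G_{-i}$ to be controlled. Under Base-Compact this is immediate, with support in $[-R-\text{tail},\ldots]$. Under Base-Gauss we instead use Tweedie's formula, $\delta_G(z,\sigma^2)=z+\sigma^2 f'_{G,\sigma^2}/f_{G,\sigma^2}$, and the bound $|\delta_G(z,\sigma^2)-z|\lesssim\sqrt{\log(1/(\sqrt{2\pi}\sigma f_{G,\sigma^2}(z)))}$, which holds for any $G$. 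Together these contribute $O(\log n/n)$ to the squared regret.

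\emph{Step 3: the key inequality.} This is the main obstacle. We need a heteroscedastic version of the Jiang--Zhang / Soloff et al.\ bound. For any $G$, $G_\star\in\mathcal P([-M,M])$-type marginal, and $\sigma\in[\ubar\sigma,\bar\sigma]$,
\[
\int(\delta_G-\delta_{G_\star})^2(z,\sigma^2)f_{G_\star,\sigma^2}(z)\,\dd z\;\lesssim\;\max\{\Dhel^2(f_{G,\sigma^2},f_{G_\star,\sigma^2}),n^{-2}\}\log^3 n.
\]
I would prove this by the Tweedie representation: regularize the denominator to $f\vee\rho$ with $\rho=n^{-2}$, and bound the numerator difference $\sigma^2(f'_G-f'_{G_\star})$ via $|f'|\lesssim f\sqrt{\log(1/f)}$ and Hellinger. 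The tail region $|z|\gtrsim M+\bar\sigma\sqrt{\log n}$ is handled using the compact support of $G_\star$. The constants must be checked to be uniform in $\sigma\in[\ubar\sigma,\bar\sigma]$, which holds by rescaling $z/\sigma$.

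\emph{Step 4: combining.} Averaging Step~3 over $i$ and applying Jensen to the concave map $x\mapsto x$ (the bound is linear in $\Dhel^2$ up to the max) turns per-coordinate Hellinger distances into the average Hellinger distance of Step~2. This gives squared regret $\lesssim\varepsilon_n^2\log^3 n+\log n/n$, and taking square roots yields $C\varepsilon_n(\Pi)\log^{3/2}n$. The constants in Theorem~\ref{theo:het-ind-contraction} are uniform over $G_\star\in\mathcal P([-M,M])$, which gives the supremum. Small $n$ is absorbed into $C$ because the regret is trivially bounded.
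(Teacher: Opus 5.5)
Your Steps 1 and 2 are sound. The leave-one-out representation $\widehat\mu_i(\Pi)=\delta_{\widebar G_{-i}}(Z_i,\sigma_i^2)$ holds in the heteroscedastic model, $\widebar G_{-i}$ is independent of $Z_i$ under $\mathbb P_{G_\star}$, and convexity of $\Dhel^2$ transfers contraction to $\widebar G_{-i}$.

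\textbf{The gap is in Step 4.} Step 3 bounds the excess risk of coordinate $i$ by $h_i^2\log^3 n$, where $h_i:=\Dhel(f_{\widebar G_{-i},\sigma_i^2},f_{G_\star,\sigma_i^2})$. This is the accuracy of $\widebar G_{-i}$ at exactly the design point whose observation was left out. Step 2 only gives $\frac1n\sum_{j}\Dhel^2(f_{\widebar G_{-i},\sigma_j^2},f_{G_\star,\sigma_j^2})\lesssim\varepsilon_n^2$, in which the term $j=i$ has weight $1/n$. For fixed $i$ this yields only $h_i^2\lesssim n\varepsilon_n^2$, which is vacuous. After averaging over $i$ you face the diagonal sum $\frac1n\sum_i h_i^2$. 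Each term there involves a different mixing distribution $\widebar G_{-i}$, so no Jensen step converts it into the average Hellinger distance of Step 2. Your remark that each Hellinger term is at most one handles only the $n$ versus $n-1$ normalization, not this. In the homoscedastic case all $f_{G,\sigma_j^2}$ coincide and the issue disappears, which is why the leave-one-out route works there.

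\textbf{The obvious repair loses a logarithm.} Hellinger distance contracts under Gaussian convolution, so $\Dhel(f_{G,\sigma_i^2},f_{G_\star,\sigma_i^2})\le\Dhel(f_{G,\sigma_j^2},f_{G_\star,\sigma_j^2})$ whenever $\sigma_j\le\sigma_i$. Set $k_i:=\#\{j\ne i:\sigma_j\le\sigma_i\}$. Then $h_i^2\lesssim\min\{1,n\varepsilon_n^2/k_i\}$ on the contraction event. Summing over ranks gives $\frac1n\sum_i h_i^2\lesssim\varepsilon_n^2\log n$. The resulting regret is of order $\varepsilon_n\log^2 n$, a factor $\sqrt{\log n}$ worse than the theorem. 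To reach the stated rate along your route, you would need an extra stability argument showing that each $\widebar G_{-i}$ is accurate at its own left-out $\sigma_i$.

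\textbf{How the paper avoids this.} The paper never passes to leave-one-out. It uses the full-data posterior, which is common to all coordinates. It bounds the good-event contribution by $\sup_G\frac1n\sum_i(\delta_G-\delta_{G_\star})^2(Z_i,\sigma_i^2)$ over the average-Hellinger ball. It then controls this supremum by covering the regularized rules (Theorem~7 of \citet{jiang2020general}), together with Bernstein's inequality and a union bound. Because a single $G$ is evaluated at every design point, the expectation is governed directly by the average Hellinger distance.

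\textbf{A secondary issue.} Your Step~3 inequality cannot hold for arbitrary $G$. If $G$ places its mass far from $[-M,M]$, then $|\delta_G(z,\sigma^2)-z|$ is unbounded where $f_{G_\star,\sigma^2}$ lives, while the right side is $O(\log^3 n)$. You need a lower bound $f_{G,\sigma_i^2}(z)\ge n^{-K}$ for $|z|\lesssim\sqrt{\log n}$. For $\widebar G_{-i}$ this follows from its base-measure component of weight of order $1/n$, as in the proof of Lemma~\ref{lemm:posterior_moments_upper_bounded}.
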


Analogous regret bounds for NPMLE plug-in estimators were established by~\citet{jiang2020general} and~\citet{Soloff}. Theorem~\ref{theo:regret_bayes_ind_gauss} shows that the fully Bayesian estimator attains the corresponding benchmark rate.

\section{Simulations} \label{sec-sims}

We first conduct a numerical study in the homoscedastic setting of
Section~\ref{sec:homoscedastic_gaussian}. Simulated datasets are generated
according to both~\eqref{eq:hierarchy1N}
and~\eqref{eq:hierarchy2N}, so that
there is a true latent distribution $G_{\star}$. More precisely, we let \smash{$\mu_i \simiid G_{\star}$} and \smash{$Z_i \simindep \mathrm{N}(\mu_i, 1)$} for $i=1,\ldots,n$. We consider
four choices of $G_{\star}$:
$$
\begin{alignedat}{4}
&\text{Gaussian:}\;\;  && G_{\star}=\mathrm{N}(0,V), \qquad
&&\text{Gamma:}\;\;    && G_{\star}=\mathrm{Gamma}(6\sqrt{V},3),\\
&\text{Two-point:}\;\; && G_{\star}=0.9\,\delta_0+0.1\,\delta_{\sqrt{10V}}, \qquad
&&\text{Bimodal:}\;\;  && G_{\star}=0.5\,\mathrm{N}(-2,V)+0.5\,\mathrm{N}(2,V).
\end{alignedat}
$$

Across settings, we vary $V\in\{0.1,1,2\}$ and the sample size
$n\in\{200,1000\}$.  Each setting is evaluated using $1000$ Monte Carlo
replications. The first three specifications comprise a subset of the
data generating processes in the study of~\citet{koenker2020empirical}.

We compare the following procedures:
\begin{itemize}[leftmargin=*,nosep]
\item \textbf{Oracle.} The Bayes rule computed under the true latent
distribution $G_{\star}$.
\item \textbf{NPMLE.} The empirical Bayes procedure based on the
Kiefer--Wolfowitz NPMLE $\widehat G$.  We use the by-now-standard approach of~\citet{koenker2014convex} and~\citet{koenker2017rebayes} using discretization and the MOSEK~\citep{mosek} interior point solver.
\item \textbf{AKP.} The robust empirical Bayes procedure of
\citet{armstrong2022robust}. Its point estimate is the empirical Bayes
posterior mean obtained under a Gaussian working model for $G_{\star}$.
\item \textbf{DPGM.} Our fully Bayesian procedure under the DPGM prior in
Specification~\ref{assu:DPGM}.
We make the following empirical hyperprior choices. We first let $A \sim \mathrm{Beta}(2,5)$.\footnote{This prior satisfies condition~\eqref{eq:prior_var_tail} with $r=2$ and $\kappa=0$, and so leads to fast regret rates in Theorem~\ref{theo:main_risk_bound_eq}.
}
We next choose $H=\mathrm{N}\{\bar{\boldZ},\, 9 \widehat{\mathrm{Var}}(\boldZ)\}$, where $\bar{\boldZ}$ and \smash{$\widehat{\mathrm{Var}}(\boldZ)$} denote the sample average and sample variance of $Z_1,\ldots,Z_n$.
Regarding the concentration parameter of the Dirichlet process, we follow
the approach of~\citet{escobar1995bayesian} with the specific hyperparameter choices of~\citet{hirano2002semiparametric}, setting the hyperprior as $\alpha \sim \mathrm{Gamma}(2,0.5)$ (with a shape-rate parameterization). For computation, we use the Gibbs sampler ($5,000$ burn-in, $10,000$ post-burn-in iterations) of Algorithm 2 in~\citet{neal2000markov} along with updates for $\alpha$ described in~\citet{escobar1995bayesian}.
\\
\end{itemize}
We evaluate the methods first in terms of estimation of the $\mu_i$ in squared error, reporting RMSE. We also consider interval estimation for the individual parameters $\mu_i$: following~\citet{armstrong2022robust}, for each method of constructing nominal $95\%$ intervals $\mathcal{I}_i$, we report (Monte Carlo estimates of) average coverage ($n^{-1}\sum_{i=1}^n \PP{\mu_i \in \mathcal{I}_i}$) and average interval length ($n^{-1}\sum_{i=1}^n \EE{|\mathcal{I}_i|}$).

\begin{table}
\centering
\caption{Point estimation and interval performance across latent distributions $G_{\star}$.}
\label{tab:sim-summary-main}
\renewcommand{\arraystretch}{0.85}
\setlength{\tabcolsep}{5pt}
\begin{threeparttable}
\begin{tabular}{lccccccccc}
\toprule
 & \multicolumn{3}{c}{$V = 0.1$} & \multicolumn{3}{c}{$V = 1.0$} & \multicolumn{3}{c}{$V = 2.0$} \\
\cmidrule(lr){2-4} \cmidrule(lr){5-7} \cmidrule(lr){8-10}
Method & RMSE & Cov. & Len. & RMSE & Cov. & Len. & RMSE & Cov. & Len. \\
\midrule
\multicolumn{10}{l}{\textit{Panel A.} Gaussian, $n = 200$} \\
Oracle & 0.30 & 0.95 & 1.18 & 0.71 & 0.95 & 2.77 & 0.81 & 0.95 & 3.20 \\
NPMLE & 0.33 & 0.45 & 0.66 & 0.73 & 0.71 & 1.96 & 0.84 & 0.73 & 2.32 \\
AKP & 0.32 & 0.95 & 1.75 & 0.71 & 0.95 & 2.78 & 0.82 & 0.95 & 3.20 \\
DPGM & 0.32 & 0.98 & 1.65 & 0.72 & 0.94 & 2.76 & 0.83 & 0.95 & 3.19 \\
\addlinespace[0.1em]
\multicolumn{10}{l}{\textit{Panel B.} Gaussian, $n = 1000$} \\
Oracle & 0.30 & 0.95 & 1.18 & 0.71 & 0.95 & 2.77 & 0.82 & 0.95 & 3.20 \\
NPMLE & 0.31 & 0.56 & 0.74 & 0.72 & 0.79 & 2.18 & 0.83 & 0.80 & 2.56 \\
AKP & 0.31 & 0.97 & 1.63 & 0.71 & 0.95 & 2.80 & 0.82 & 0.95 & 3.21 \\
DPGM & 0.31 & 0.96 & 1.32 & 0.71 & 0.95 & 2.75 & 0.82 & 0.95 & 3.18 \\
\addlinespace[0.1em]
\multicolumn{10}{l}{\textit{Panel C.} Discrete two-point, $n = 200$} \\
Oracle & 0.28 & 1.00 & 0.85 & 0.46 & 1.00 & 0.56 & 0.30 & 1.00 & 0.19 \\
NPMLE & 0.32 & 0.44 & 0.57 & 0.49 & 0.38 & 0.87 & 0.35 & 0.34 & 0.51 \\
AKP & 0.30 & 0.96 & 1.64 & 0.69 & 0.95 & 3.07 & 0.80 & 0.95 & 3.29 \\
DPGM & 0.31 & 0.96 & 1.61 & 0.51 & 0.99 & 2.14 & 0.36 & 1.00 & 1.76 \\
\addlinespace[0.1em]
\multicolumn{10}{l}{\textit{Panel D.} Discrete two-point, $n = 1000$} \\
Oracle & 0.28 & 1.00 & 0.85 & 0.47 & 1.00 & 0.56 & 0.32 & 1.00 & 0.19 \\
NPMLE & 0.30 & 0.56 & 0.69 & 0.48 & 0.46 & 0.80 & 0.33 & 0.48 & 0.45 \\
AKP & 0.29 & 0.95 & 1.66 & 0.69 & 0.95 & 3.09 & 0.80 & 0.95 & 3.29 \\
DPGM & 0.29 & 0.93 & 1.29 & 0.48 & 0.99 & 1.61 & 0.33 & 1.00 & 1.23 \\
\addlinespace[0.1em]
\multicolumn{10}{l}{\textit{Panel E.} Gamma, $n = 200$} \\
Oracle & 0.41 & 0.95 & 1.51 & 0.62 & 0.95 & 2.39 & 0.69 & 0.95 & 2.66 \\
NPMLE & 0.44 & 0.51 & 0.92 & 0.65 & 0.67 & 1.64 & 0.72 & 0.69 & 1.87 \\
AKP & 0.43 & 0.96 & 2.07 & 0.64 & 0.95 & 2.58 & 0.70 & 0.95 & 2.77 \\
DPGM & 0.43 & 0.97 & 1.84 & 0.63 & 0.95 & 2.46 & 0.70 & 0.94 & 2.69 \\
\addlinespace[0.1em]
\multicolumn{10}{l}{\textit{Panel F.} Gamma, $n = 1000$} \\
Oracle & 0.41 & 0.95 & 1.51 & 0.62 & 0.95 & 2.39 & 0.69 & 0.95 & 2.66 \\
NPMLE & 0.42 & 0.62 & 1.08 & 0.63 & 0.75 & 1.82 & 0.70 & 0.77 & 2.07 \\
AKP & 0.42 & 0.97 & 2.02 & 0.63 & 0.96 & 2.62 & 0.70 & 0.96 & 2.82 \\
DPGM & 0.42 & 0.96 & 1.67 & 0.63 & 0.95 & 2.43 & 0.69 & 0.95 & 2.67 \\
\addlinespace[0.1em]
\multicolumn{10}{l}{\textit{Panel G.} Bimodal Gaussian mixture, $n = 200$} \\
Oracle & 0.60 & 0.95 & 1.73 & 0.86 & 0.95 & 3.29 & 0.91 & 0.95 & 3.55 \\
NPMLE & 0.63 & 0.43 & 1.18 & 0.89 & 0.70 & 2.34 & 0.94 & 0.72 & 2.57 \\
AKP & 0.90 & 0.95 & 3.50 & 0.92 & 0.95 & 3.57 & 0.93 & 0.95 & 3.62 \\
DPGM & 0.62 & 0.98 & 2.30 & 0.87 & 0.94 & 3.27 & 0.92 & 0.94 & 3.53 \\
\addlinespace[0.1em]
\multicolumn{10}{l}{\textit{Panel H.} Bimodal Gaussian mixture, $n = 1000$} \\
Oracle & 0.61 & 0.95 & 1.74 & 0.85 & 0.95 & 3.28 & 0.91 & 0.95 & 3.55 \\
NPMLE & 0.62 & 0.51 & 1.27 & 0.86 & 0.79 & 2.60 & 0.92 & 0.80 & 2.84 \\
AKP & 0.90 & 0.95 & 3.50 & 0.91 & 0.95 & 3.57 & 0.93 & 0.95 & 3.63 \\
DPGM & 0.61 & 0.96 & 1.93 & 0.86 & 0.95 & 3.26 & 0.91 & 0.95 & 3.53 \\
\bottomrule
\end{tabular}
\begin{tablenotes}[flushleft]
\small
\item \textit{Notes:} Entries report averages over units ($n$) and Monte Carlo replications ($1,000$). For the two-point design, equal-tailed posterior intervals may have posterior probability above $0.95$ (due to discreteness), explaining Oracle coverage of $1.00$.
\end{tablenotes}
\end{threeparttable}
\end{table}

Next, we describe the interval constructions. If $G_{\star}$ were
known, let
$
q_{G_{\star}}(\tau\mid z)
:=
\inf\left\{
t\in\RR:
\PP[G_{\star}]{\mu \leq t \mid Z=z}
\geq \tau
\right\}
$
denote the $\tau$-quantile of the posterior distribution of $\mu_i$ given
$Z_i=z$. A natural $95\%$ posterior interval is then
$
\mathcal{I}_{G_{\star}}(Z_i) := [
q_{G_{\star}}(0.025\mid Z_i),\,
q_{G_{\star}}(0.975\mid Z_i)].$
The plug-in empirical Bayes construction (e.g., using the NPMLE) replaces $G$ by an estimate
\smash{$\widehat G$}, yielding the $i$-th interval $\mathcal{I}_{\widehat{G}}(Z_i)$.
By contrast, the fully Bayesian interval is given by the $0.025$ and $0.975$
quantiles of the posterior distribution of $\mu_i\mid\boldZ$, which integrates
over posterior uncertainty about $G$.
This distinction is most transparent by considering the posterior variance of $\mu_i$ under the fully Bayesian model,
\begin{equation}
\Var[\Pi]{\mu_i\mid\boldZ}
=
\EE[\Pi]{\Var[G]{\mu_i\mid Z_i}\mid\boldZ}
+
\Var[\Pi]{\EE[G]{\mu_i\mid Z_i}\mid\boldZ}.
\label{eq:total_variance}
\end{equation}
The plug-in EB construction fixes $G$ at $\widehat G$ and
therefore does not account for the second source of uncertainty, which
reflects uncertainty about the mixing distribution itself.\footnote{The issue is classical in empirical Bayes. For parametric models,
\citet{morris1983parametric} notes that the plug-in intervals of
\citet{cox1975prediction} can under cover in small samples and advocates for
hierarchical Bayes corrections; see also the bootstrap approach of
\citet{laird1987empirical}, which is used e.g., by
\citet{efron2019bayes} and \citet{jiang2019comment}.}

Turning to the AKP intervals, they effectively start from a Gaussian working
model for $G_\star$, but widen the corresponding plug-in empirical Bayes
interval so as to guarantee asymptotic average coverage even when the Gaussian
specification is incorrect. Unlike the fully Bayesian construction, however,
they do not obtain the interval by integrating over uncertainty in the
estimated prior parameters.\footnote{AKP nevertheless account for estimation
error in the prior moments entering their procedure, including finite-sample
adjustments when estimating the prior variance. These adjustments mimic a Bayesian approach, and the authors refer to them as flat prior limited information Bayes.}

\subsection{Results}

Table~\ref{tab:sim-summary-main} reports the results. We first consider point estimation of the $\mu_i$. When $G_\star$ is Gaussian, all three empirical procedures have similar RMSE, with performance approaching that of the oracle as $n$ increases. The same is broadly true under the Gamma specification. Under the two-point and bimodal specifications, the NPMLE and DPGM continue to perform similarly, while the Gaussian working model underlying AKP can lead to noticeably larger RMSE. Practically this means that for these two specifications, it is important to allow for nonlinear shrinkage rules as provided by the NPMLE and DPGM (but not by AKP).

The picture is quite different for interval estimation. The plug-in NPMLE intervals substantially undercover in every setting considered here. While coverage generally improves with $n$, the undercoverage remains pronounced even at $n=1000$. The undercoverage phenomenon for the NPMLE is well known and has been noted by other authors. \citet{koenker2020empirical} attributes it to the discrete nature of the NPMLE: since $\widehat{G}$ in~\eqref{eq:npmle_and_marginal} is discrete (with  $O(\log n)$ support points~\citep{polyanskiy2020selfregularizinga}), the plug-in posterior distributions are discrete as well, and any intervals derived from them are therefore sensitive to the location of its mass points. A further explanation is based on the law of total variance in~\eqref{eq:total_variance} and the fact that the NPMLE ignores the second source of uncertainty, as discussed above.
Meanwhile, the DPGM intervals instead have coverage close to the nominal level throughout, ranging from $0.93$ to $1.00$ across the specifications in Table~\ref{tab:sim-summary-main}.

The AKP intervals also attain coverage close to the nominal level, but the relative interval lengths depend strongly on $G_\star$. Under the Gaussian specification, the DPGM and AKP intervals have similar lengths, particularly for the larger values of $V$, while DPGM is shorter for $V=0.1$. The difference is much larger under the two-point specification. For example, when $n=1000$ and $V=2$, the average interval length is $1.23$ for DPGM compared with $3.29$ for AKP. A similar pattern occurs for the bimodal specification (e.g., for $V=0.1$, DPGM has an average length of $1.93$ vs.\ $3.5$ for AKP).

\section{Empirical Application} \label{sec-atlas}
In this section, we apply our methodology to Census tract-level estimates of economic mobility. The ``Opportunity Atlas''~\citep{chetty2026opportunity} reports estimates $Z_i$ of children's adult outcomes by Census tract $i$, along with corresponding standard errors $\sigma_i$. In this setting, $Z_i$ is interpreted as a noisy estimate of an unobserved economic mobility parameter $\mu_i$. Our analysis revisits the empirical exercise in \citet{chen2026empirical}, where the decision rule $\widehat{\boldmu}(\boldZ, \boldsigma^2)$ is estimated using plug-in empirical Bayes procedures based on nonparametric regression and nonparametric maximum likelihood (NPMLE).

For all the outcomes considered, we model the latent mobility parameter $\mu_i$ using the conditional nonparametric location-scale specification from Section~\ref{subsec:CLOSE} with
\smash{$Z_i \mid \mu_i, \sigma_i \simindep \mathrm{N}(\mu_i,\sigma_i^2)$},
$\mu_i = m_{\star}(\sigma_i) + s_{\star}(\sigma_i)\zeta_i$ and $
\zeta_i \mid \sigma_i \simindep G_{\star}$.
Following the methodology in Section~\ref{subsec:CLOSE}, we place Gaussian process priors on the unknown functions $m_{\star}$ and $s_{\star}$, and a Dirichlet process Gaussian mixture prior on $G_{\star}$. Further implementation details are provided in Appendix~\ref{implement}.

The section is organized as follows. Section~\ref{results} presents the empirical results for the primary Opportunity Atlas outcome considered in~\citet{chen2026empirical}, while Section~\ref{calibrated} complements the analysis with simulations calibrated to all fifteen outcomes in their study.

Throughout, we use ``CLOSE'' to denote the plug-in empirical Bayes procedure of~\citet{chen2026empirical}. We emphasize that~\citet{chen2026empirical} develops and recommends CLOSE for denoising, for which their method performs well; see below. We additionally use the fitted CLOSE model in the natural plug-in fashion for downstream interval inference and pairwise ranking problems, treating the estimated shrinkage structure as fixed. These latter tasks are not considered by~\citet{chen2026empirical}, but illustrate how the fitted empirical Bayes model would be used in subsequent decision problems.

\subsection{Results}
\label{results}
We focus on the primary outcome analyzed in~\citet{chen2026empirical}: the probability that a Black child whose parents were at the 25th percentile of the national income distribution attains family income in the top 20 percent of the national distribution in adulthood. We use their sample of $n=10{,}056$ Census tracts, which covers the 20 largest Commuting Zones (measured by the number of tracts).

Figure~\ref{fig:mean_variance} compares the GP posterior estimates of the conditional mean $m(\sigma)$ and conditional standard deviation $s(\sigma)$ with the corresponding plug-in estimates from CLOSE. In the left panel, the two methods produce nearly identical estimates of $m(\sigma)$, and the GP credible band is relatively tight over most of the observed range of $\sigma$, suggesting that $m(\sigma)$ is well identified by the data.

The conditional standard deviation $s(\sigma)$ in the right panel of Figure~\ref{fig:mean_variance} is less precisely estimated, and the differences between the two approaches are more visible. CLOSE obtains $s^2(\sigma)$ by subtracting the known sampling variance from a nonparametric estimate of the residual variance, with a lower truncation that keeps the estimate bounded away from zero.\footnote{Specifically, CLOSE applies a local linear regression to the squared residuals $(Z_i-\widehat m(\sigma_i))^2$, subtracts $\sigma_i^2$, and truncates the resulting estimate below at a small positive floor.} When these two variance components are similar, the resulting estimate can be sensitive to estimation error, which is reflected near the edges of the $\sigma$ range where the truncation becomes relevant. The Bayesian procedure instead estimates $s(\sigma)$ jointly with the other model components through the likelihood. The posterior mean remains smooth, while the credible band widens  in regions where the data are less informative about the latent scale.

\begin{figure}
\centering
\includegraphics[width=\linewidth]{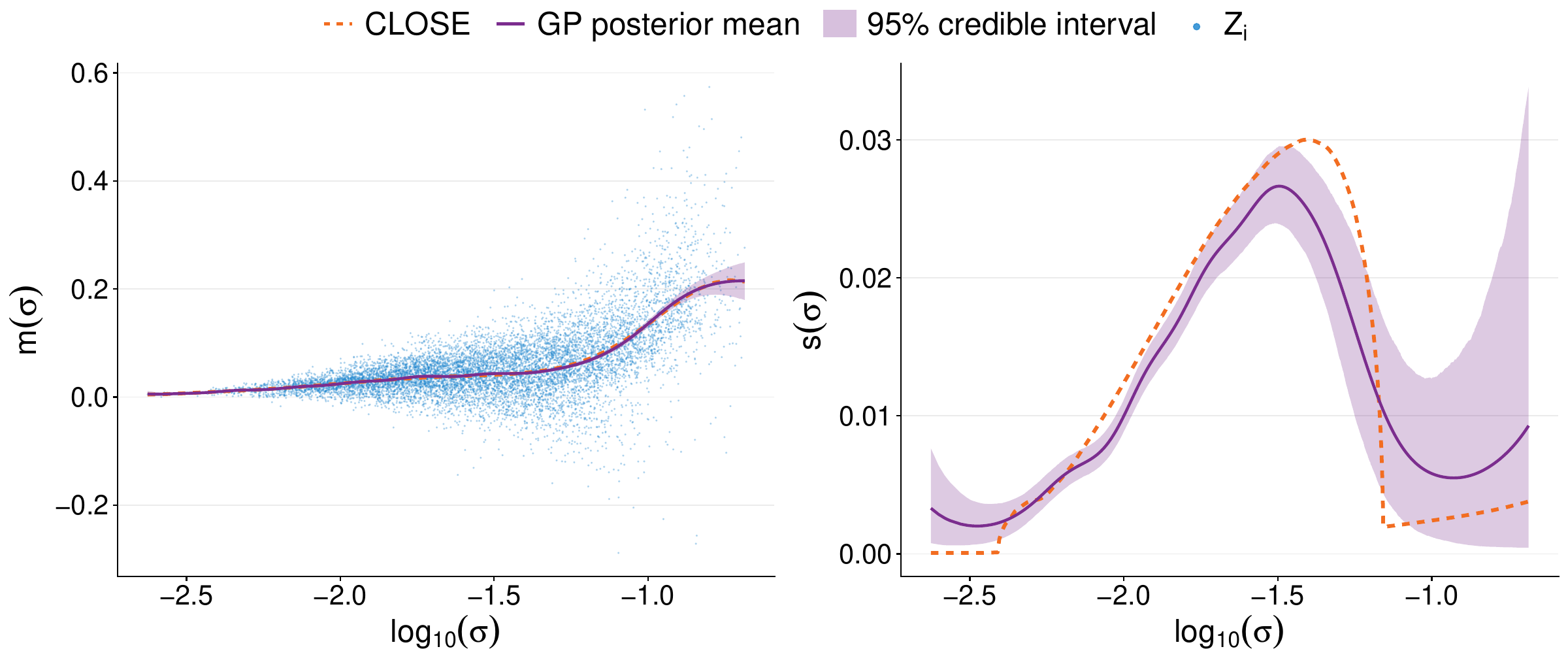}
\caption{Posterior estimates of the conditional mean $m(\sigma)$ (left) and conditional standard deviation $s(\sigma)$ (right), compared with the CLOSE plug-in estimates. Blue points in the left panel are the raw estimates $Z_i$.}
\label{fig:mean_variance}
\end{figure}

\begin{figure}
\centering
\includegraphics[width=\linewidth]{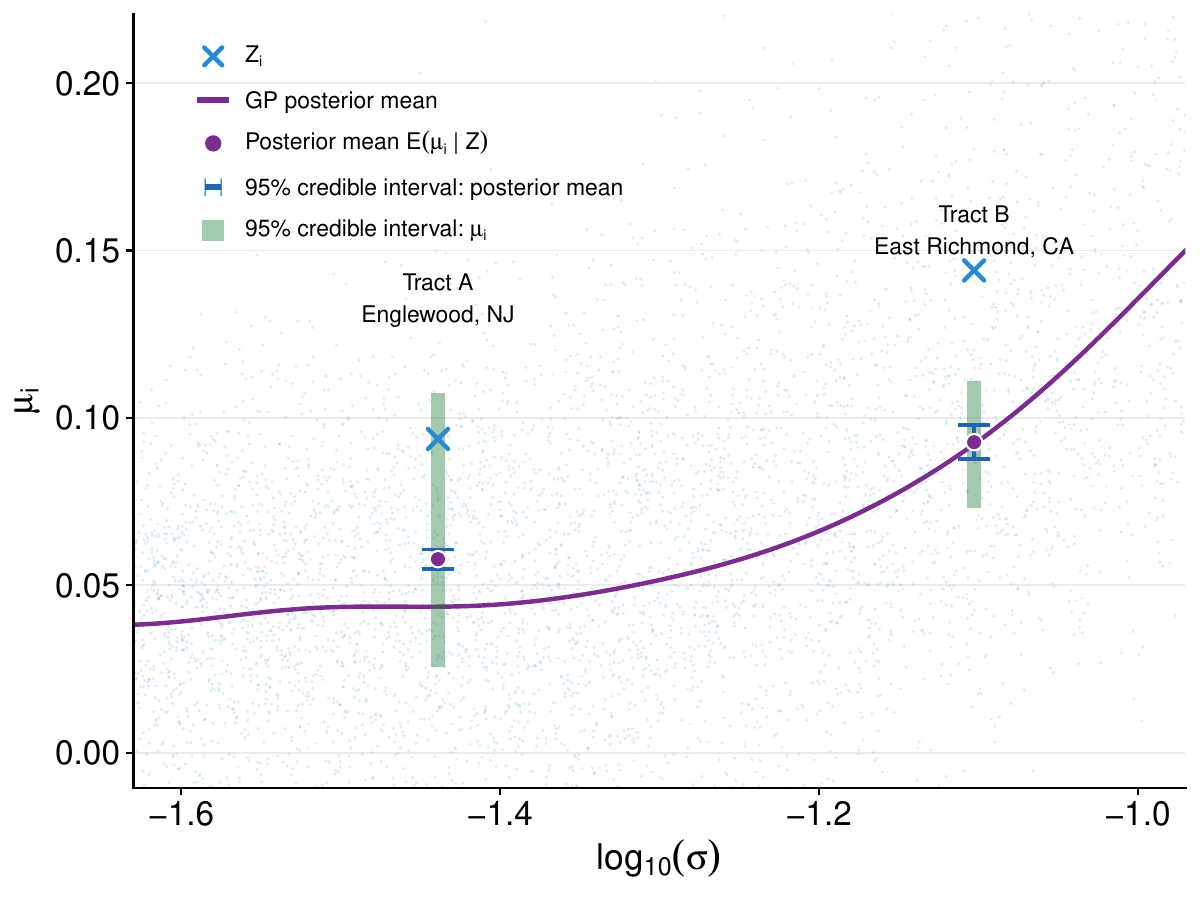}
\caption{Posterior inference for Census tracts.}
\label{fig:tract_comparison}
\end{figure}

\begin{table}
\centering
\caption{CLOSE and Bayesian estimates for two Census tracts.}
\label{tab:tracts}
\setlength{\tabcolsep}{5.7pt}
\begin{tabular}{lcccc}
\toprule
 & \multicolumn{2}{c}{Tract A: Englewood, NJ} & \multicolumn{2}{c}{Tract B: East Richmond, CA} \\
\cmidrule(lr){2-3} \cmidrule(lr){4-5}
 & CLOSE & Bayesian & CLOSE & Bayesian \\
\midrule
$Z_i$ & \multicolumn{2}{c}{0.094} & \multicolumn{2}{c}{0.144} \\
$\sigma_i$ & \multicolumn{2}{c}{0.036} & \multicolumn{2}{c}{0.079} \\
\midrule
Prior mean $\mathbb{E}[\mu_i \mid \sigma_i]$ & 0.045 & 0.044 & 0.098 & 0.092 \\
\quad 95\% credible interval & --- & [0.041, 0.046] & --- & [0.087, 0.097] \\
\midrule
Posterior mean $\mathbb{E}[\mu_i \mid Z_i, \sigma_i]$ & 0.058 & 0.058 & 0.098 & 0.093 \\
\quad 95\% credible interval & --- & [0.055, 0.061] & --- & [0.088, 0.098] \\
\midrule
95\% credible interval for $\mu_i$ & [0.049, 0.092] & [0.026, 0.107] & [0.095, 0.102] & [0.073, 0.111] \\
\midrule
Posterior odds of $\mu_B > \mu_A$ & \multicolumn{4}{c}{CLOSE: 128:1 \qquad\qquad Bayesian: 12:1} \\
\bottomrule
\end{tabular}
\vspace{0.5em}
\parbox{\textwidth}{\small \textit{Notes:} $Z_i$ is the raw estimate for tract $i$ with standard error $\sigma_i$; $\mu_i$ is the tract's latent mobility parameter. CLOSE is the plug-in empirical Bayes method of~\citet{chen2026empirical}. For CLOSE, the ``$95\%$ credible interval for $\mu_i$'' and the posterior odds are computed from the plug-in empirical Bayes posterior, treating the estimated shrinkage structure as known and the two tracts as independent. For the Bayesian procedure, the posterior odds are computed from the joint posterior of $(\mu_A,\mu_B)$, which accounts for their dependence through the shared shrinkage structure.}
\end{table}

Next, we turn to posterior inference for the unobserved economic mobility parameter $\mu_i$. Our interest in $\mu_i$ is motivated in part by applied settings in which empirical Bayes estimates are used as inputs for downstream decision problems. In the Opportunity Atlas context, for example, \citet{bergman2024creating} use empirical Bayes posterior means to identify Census tracts with high upward-mobility potential. To illustrate the main idea, we focus on the two Census tracts considered in \citet{chen2026empirical}: one in Englewood, NJ, and one in Richmond, CA, which we refer to as tracts A and B, respectively.

Figure~\ref{fig:tract_comparison} illustrates the resulting shrinkage. Both raw estimates lie above the fitted conditional mean and are therefore pulled downward toward it. The adjustment is larger for tract B: its raw estimate, $Z_B=0.144$, is shrunk to a posterior mean of $0.093$, whereas tract A is adjusted more modestly, from $0.094$ to $0.058$. As Table~\ref{tab:tracts} shows, CLOSE and the Bayesian procedure produce very similar point estimates for both tracts. This is consistent with the calibrated simulations in Section~\ref{calibrated}, where the two procedures also exhibit similar point-estimation risk. Moreover, the Bayesian credible intervals for the posterior means\footnote{To elaborate, recall that the oracle rule $\widehat{\mu}_i^{\mathrm{B}}(\star)$ in~\eqref{eq:close_bayes_rule} depends on the unknown $\star=(G_\star,m_\star,s_\star)$. The posterior for $\star$ therefore induces a posterior distribution for $\widehat{\mu}_i^{\mathrm{B}}(\star)$. Its posterior mean is precisely the Bayes estimator $\widehat{\mu}_i(\Pi)$, while its posterior quantiles provide the credible intervals reported here.
} are relatively tight.

A more noticeable difference between the two approaches appears in their assessment of uncertainty about the latent mobility parameters. In applied work, this uncertainty is often summarized using plug-in empirical Bayes intervals, which estimate the prior or shrinkage rule and then construct posterior intervals while treating the estimated objects as known; see, for example, \citet{kline2024discrimination}. For the two tracts considered here, the CLOSE plug-in intervals are narrow and disjoint, suggesting that their latent mobility outcomes can be ranked with relatively little uncertainty. By contrast, the Bayesian credible intervals for $\mu_A$ and $\mu_B$ are wider and overlap substantially. To assess the ranking directly, Table~\ref{tab:tracts} also reports the posterior odds that tract B has the higher latent mobility outcome, defined as
$$\gamma_{\text{}}(\mu_A,\mu_B) =
\Pr(\mu_B>\mu_A\mid\boldZ,\boldsigma^2)\big/
\Pr(\mu_B\leq\mu_A\mid\boldZ,\boldsigma^2).
$$
These odds are $128:1$ under the CLOSE plug-in posterior, compared with $12:1$ under the  full Bayesian posterior. Thus, both procedures favor tract B over tract A, but accounting for uncertainty in the shared shrinkage structure leads to a substantially more cautious assessment of the ranking. It is worth noting that this difference remains sizeable even with $n=10{,}056$ tracts. In applications with only a small or moderate number of related units, uncertainty about the shrinkage structure is likely to play an even larger role in downstream decisions~\citep{ignatiadis2022confidence, gu2025reasonable}.

\begin{figure}
\centering
\includegraphics[width=\linewidth]{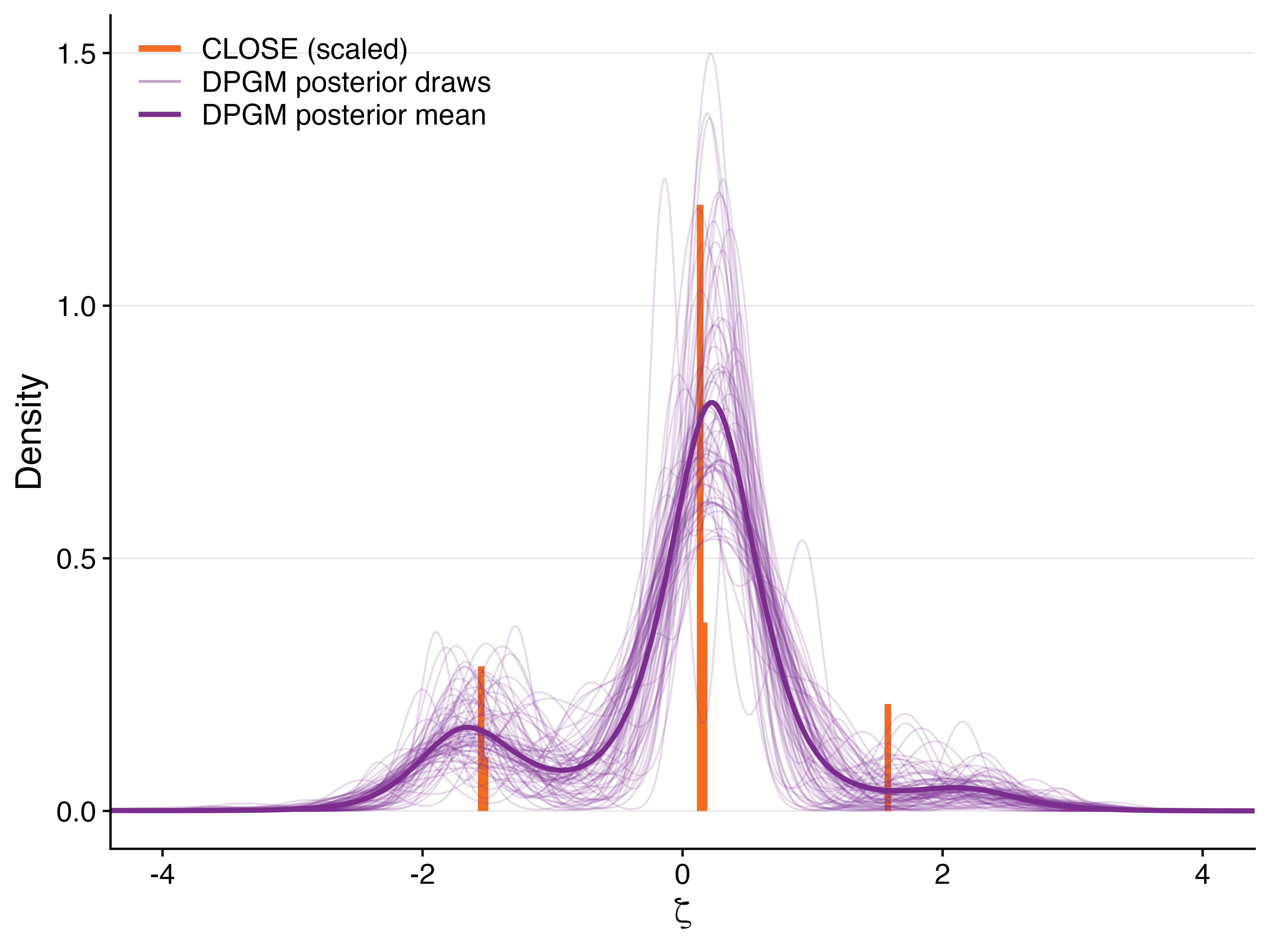}
\caption{Posterior distribution of the standardized shape distribution $G_{\star}$. Vertical orange lines denote the atoms of the discrete CLOSE-NPMLE estimate, with heights proportional to their estimated masses and scaled for visibility on the density axis.}
\label{fig:prior_G}
\end{figure}

Finally, Figure~\ref{fig:prior_G} illustrates the posterior uncertainty about the latent distribution $G_{\star}$. The figure plots posterior draws from the DPGM, together with the posterior mean and the corresponding CLOSE estimate, on a common standardized scale.\footnote{For comparability, each posterior draw is transformed ex post to have mean zero and variance one, as described in Remark~\ref{normalize}.} The two approaches point to a similar structure, with a dominant central mode and additional mass on either side. The posterior draws nevertheless reveal considerable uncertainty about the precise shape of the distribution. The location of the central mode is fairly stable across draws, but its height and sharpness vary substantially, while the smaller features away from the center also differ in their location and prominence. At the same time, the DPGM posterior draws remain smooth and continuous, reflecting the smoothing built into the Gaussian mixture prior specification.

\subsection{Calibrated simulations}
\label{calibrated}

We complement the empirical analysis using the calibrated simulation designs in~\citet{chen2026empirical}, covering all fifteen tract-level outcomes in their study. These comprise three measures of economic outcomes: mean adult income rank, incarceration, and top-quintile attainment, each reported for five subpopulations defined by race and gender.\footnote{Outcome names have two components. The first identifies the measure: \texttt{kfr} denotes mean household income rank in adulthood, \texttt{jail} denotes incarceration status  and \texttt{kfr\_top20} denotes attainment of the top quintile of household income. The remainder identifies the subpopulation over which the tract-level average is taken: all children (\texttt{pooled}), Black or white children (\texttt{black\_pooled}, \texttt{white\_pooled}), or Black or white men (\texttt{black\_male}, \texttt{white\_male}). All outcomes refer to children raised in the tract with parents at the 25th percentile of the national parental income distribution.} We use the same prior specification as in the empirical application for all fifteen outcomes, without any outcome-specific tuning. As in Section~\ref{results}, we do not use covariate information: both the data-generating process and the estimators condition only on the pairs $(Z_i,\sigma_i^2)$.\footnote{This corresponds to the ``no residualization'' benchmark in~\citet{chen2026empirical}.}

For each outcome, the data-generating process is calibrated as follows. First, the conditional location and scale functions $\widehat m(\sigma)$ and $\widehat s(\sigma)$ are estimated from the observed pairs $(Z_i,\sigma_i)$ by the local linear regression in \citet{chen2026empirical}.\footnote{Following \citet{chen2026empirical}, the calibration uses CLOSE's
estimated conditional moments, including the lower truncation of
$\widehat s^2(\sigma)$ described in Section~\ref{results}. Because the
simulated means are generated as
$\mu_i = \widehat m(\sigma_i) + \widehat s(\sigma_i)\,\zeta_i$ with $\zeta_i$
standardized, the dispersion of the true means at noise level $\sigma$ is
exactly $\widehat s(\sigma)$: wherever the truncation binds, the lower bound
holds exactly in the simulated data. Thus the simulations do not speak to the role
of the truncation in the  empirical application of Section \ref{results}. }  Second, tracts are partitioned into twenty bins according to $\sigma_i$, and within each bin $b$ a distribution $\widehat G_b$ is estimated by NPMLE. Each simulated dataset is then generated according to
\begin{align*}
\zeta_i \simindep \widehat G_{b(i)}, \; \; \;
\mu_i = \widehat m(\sigma_i) + \widehat s(\sigma_i)\,\zeta_i, \; \; \;
Z_i \mid \mu_i, \sigma_i \simindep \mathrm{N}(\mu_i, \sigma_i^2),
\end{align*}
where $b(i)$ denotes the bin containing tract $i$, and the standard errors $\sigma_i$ are held fixed at their observed values. The resulting designs preserve the empirically estimated dependence of the latent location and scale on $\sigma_i$ while allowing the shape of the latent distribution to vary across outcomes and across regions of the $\sigma_i$ distribution. Moreover, the simulation design does not impose the working model of Section~\ref{subsec:CLOSE}: $\widehat G_b$ varies across the twenty bins rather than being common across tracts. The exercise therefore also assesses the procedures under this form of misspecification.

For each simulated dataset, we compare four procedures: (i)~Independent-NPMLE, which estimates a single distribution for $\mu_i$ by NPMLE and ignores its dependence on $\sigma_i$; (ii)~CLOSE; (iii)~the Bayesian procedure; and (iv)~the oracle posterior mean, which knows the conditional location and scale functions and the bin-specific shape distributions used to generate the data. For point estimation, we report the fraction of the naive-to-oracle MSE gap captured, averaged across replications. For a given method, it is defined as
\[ \text{naive-to-oracle gap} =
(\mathrm{MSE}_{\mathrm{naive}}-\mathrm{MSE}_{\mathrm{method}})/
(\mathrm{MSE}_{\mathrm{naive}}-\mathrm{MSE}_{\mathrm{oracle}}),
\]
so that the naive estimator $\mu_i = Z_i$ receives $0\%$ and the oracle receives $100\%$.

We also evaluate uncertainty quantification using the coverage of nominal $95\%$ intervals for $\mu_i$, averaged across tracts and replications. For the Bayesian procedure, we use equal-tailed posterior credible intervals, which incorporate posterior uncertainty in the location and scale functions and the latent distribution. For CLOSE and Independent-NPMLE, we use the plug-in empirical Bayes intervals described in Section~\ref{results}, which condition on the estimated shrinkage structure. Table~\ref{tab:calibrated} reports the results.

\begin{table}[t!]
\centering
\caption{Calibrated simulations: point estimation, coverage, and interval width}
\label{tab:calibrated}
\setlength{\tabcolsep}{3pt}
\begin{tabular}{lc@{\hspace{5pt}}c@{\hspace{5pt}}ccccc}
\toprule
 & \multicolumn{3}{c}{\% of naive-to-oracle gap} & \multicolumn{3}{c}{Coverage in \% (average width)} & Width \\
\cmidrule(lr){2-4} \cmidrule(lr){5-7}
Outcome & Indep. & CLOSE & Bayes & Indep. & CLOSE & Bayes & ratio \\
\midrule
\multicolumn{8}{l}{\emph{Mean income rank}} \\
\texttt{kfr\_pooled} & 41.6 & 81.2 & 81.9 & 89.2 (0.064) & 88.3 (0.062) & 94.8 (0.067) & 1.09 \\
\texttt{kfr\_black\_pooled} & 67.6 & 94.5 & 94.7 & 89.8 (0.097) & 88.6 (0.092) & 95.0 (0.100) & 1.08 \\
\texttt{kfr\_white\_pooled} & 87.8 & 92.3 & 92.5 & 88.6 (0.115) & 86.5 (0.113) & 94.8 (0.128) & 1.13 \\
\texttt{kfr\_black\_male} & 60.5 & 94.6 & 94.9 & 86.6 (0.102) & 87.5 (0.092) & 94.6 (0.103) & 1.11 \\
\texttt{kfr\_white\_male} & 87.9 & 93.9 & 94.0 & 87.6 (0.133) & 84.0 (0.129) & 94.5 (0.148) & 1.15 \\
\midrule
\multicolumn{8}{l}{\emph{Incarceration}} \\
\texttt{jail\_pooled} & 38.7 & 97.9 & 98.0 & 88.0 (0.033) & 64.8 (0.019) & 97.0 (0.023) & 1.25 \\
\texttt{jail\_black\_pooled} & 52.8 & 98.2 & 98.8 & 78.7 (0.053) & 87.1 (0.053) & 96.0 (0.058) & 1.11 \\
\texttt{jail\_white\_pooled} & 77.3 & 98.0 & 97.6 & 67.6 (0.027) & 61.6 (0.026) & 97.8 (0.036) & 1.42 \\
\texttt{jail\_black\_male} & 47.6 & 98.0 & 98.6 & 78.8 (0.103) & 86.4 (0.079) & 95.9 (0.086) & 1.10 \\
\texttt{jail\_white\_male} & 54.5 & 98.7 & 98.3 & 71.6 (0.045) & 50.2 (0.031) & 98.4 (0.048) & 1.54 \\
\midrule
\multicolumn{8}{l}{\emph{Top-quintile attainment}} \\
\texttt{kfr\_top20\_black\_pooled} & 27.8 & 98.4 & 99.1 & 76.1 (0.070) & 83.6 (0.065) & 96.1 (0.066) & 1.01 \\
\texttt{kfr\_top20\_pooled} & 8.0 & 91.1 & 91.4 & 87.8 (0.079) & 84.6 (0.065) & 94.9 (0.076) & 1.17 \\
\texttt{kfr\_top20\_white\_pooled} & 73.4 & 96.9 & 97.2 & 88.2 (0.150) & 84.7 (0.125) & 95.0 (0.147) & 1.17 \\
\texttt{kfr\_top20\_black\_male} & 22.7 & 98.6 & 99.2 & 73.7 (0.065) & 69.9 (0.044) & 97.0 (0.052) & 1.19 \\
\texttt{kfr\_top20\_white\_male} & 59.3 & 98.0 & 98.1 & 86.2 (0.169) & 83.3 (0.121) & 95.3 (0.143) & 1.18 \\
\bottomrule
\end{tabular}

\vspace{0.5em}
\parbox{\textwidth}{\small \textit{Notes:} Results are averages over 500 simulation replications per outcome. The middle three columns report the coverage of nominal 95\% intervals for $\mu_i$, with the average interval width on the scale of the outcome in parentheses: plug-in empirical Bayes intervals for Independent-NPMLE and CLOSE, credible intervals for the Bayesian procedure. ``Width ratio'' is the ratio of the average Bayesian width to the average CLOSE width.}
\end{table}
For point estimation, CLOSE and the Bayesian procedure perform nearly identically, with a slight edge for the Bayesian procedure: its performance is at least as high as CLOSE's for thirteen of the fifteen outcomes, although the differences never exceed $0.7$ percentage points in either direction. Both procedures substantially outperform Independent-NPMLE due to the dependence between $\mu_i$ and $\sigma_i$ in these designs.

The two procedures differ more markedly in uncertainty quantification. The Bayesian credible intervals attain close to nominal coverage for every outcome, ranging from $94.5\%$ to $98.4\%$, while the plug-in intervals fall below nominal coverage for every outcome, ranging from $50.2\%$ to $88.6\%$ for CLOSE. The difference is largest for outcomes in which the latent variation is small relative to the sampling noise, such as \texttt{jail\_white\_male} ($50.2\%$) and \texttt{jail\_white\_pooled} ($61.6\%$): for these outcomes, the posterior for $\mu_i$ depends heavily on the estimated shrinkage structure, so estimation error in that structure accounts for a large share of the uncertainty about $\mu_i$, and intervals that condition on the estimated structure reflect only the remainder. The width ratios in the last column also show that the Bayesian intervals are typically only $10$--$20\%$ wider than the CLOSE intervals, and the two outcomes with the largest ratios ($1.54$ and $1.42$) are those with the lowest plug-in coverage.

As is common in the literature, Table~\ref{tab:calibrated} reports a single coverage rate for each design, obtained by averaging the coverage indicator across tracts and Monte Carlo replications. For each $\sigma_i$, we can also compute a tract-specific coverage rate, given by the fraction of replications in which the interval for tract $i$ contains its simulated $\mu_i$. This allows us to assess whether the aggregate coverage rates mask systematic differences across levels of estimation precision. Figure~\ref{fig:coverage-by-sigma} plots these tract-specific coverage rates as a function of $\sigma_i$ for six of the fifteen designs (two from each outcome family). The disaggregated results reveal substantial heterogeneity in the plug-in
intervals. Within a given design, coverage can vary considerably with
$\sigma$ and tends to deteriorate near the edges of the $\sigma$ range. This
pattern is consistent with greater uncertainty in the estimated conditional
moments in regions with fewer nearby tracts, which is not reflected in
intervals that condition on the estimated shrinkage structure. By comparison,
the Bayesian coverage is more stable across $\sigma$ and remains much closer
to the nominal level, including near the edges of the $\sigma$ range. Note that
the tract-level coverage exhibits a clustered appearance, which reflects the
calibrated data-generating process itself: all tracts within one of the $20$
calibration bins share the same latent distribution $\widehat{G}_b$, while
both procedures fit a single shape distribution across all tracts, so the
resulting misspecification is common to all tracts in a bin and changes
discretely at bin boundaries. This misspecification is largely absorbed by the
Bayesian credible intervals, whereas it is clearly visible in the coverage of
the plug-in intervals.

\begin{figure}
\centering
\includegraphics[width=\textwidth]{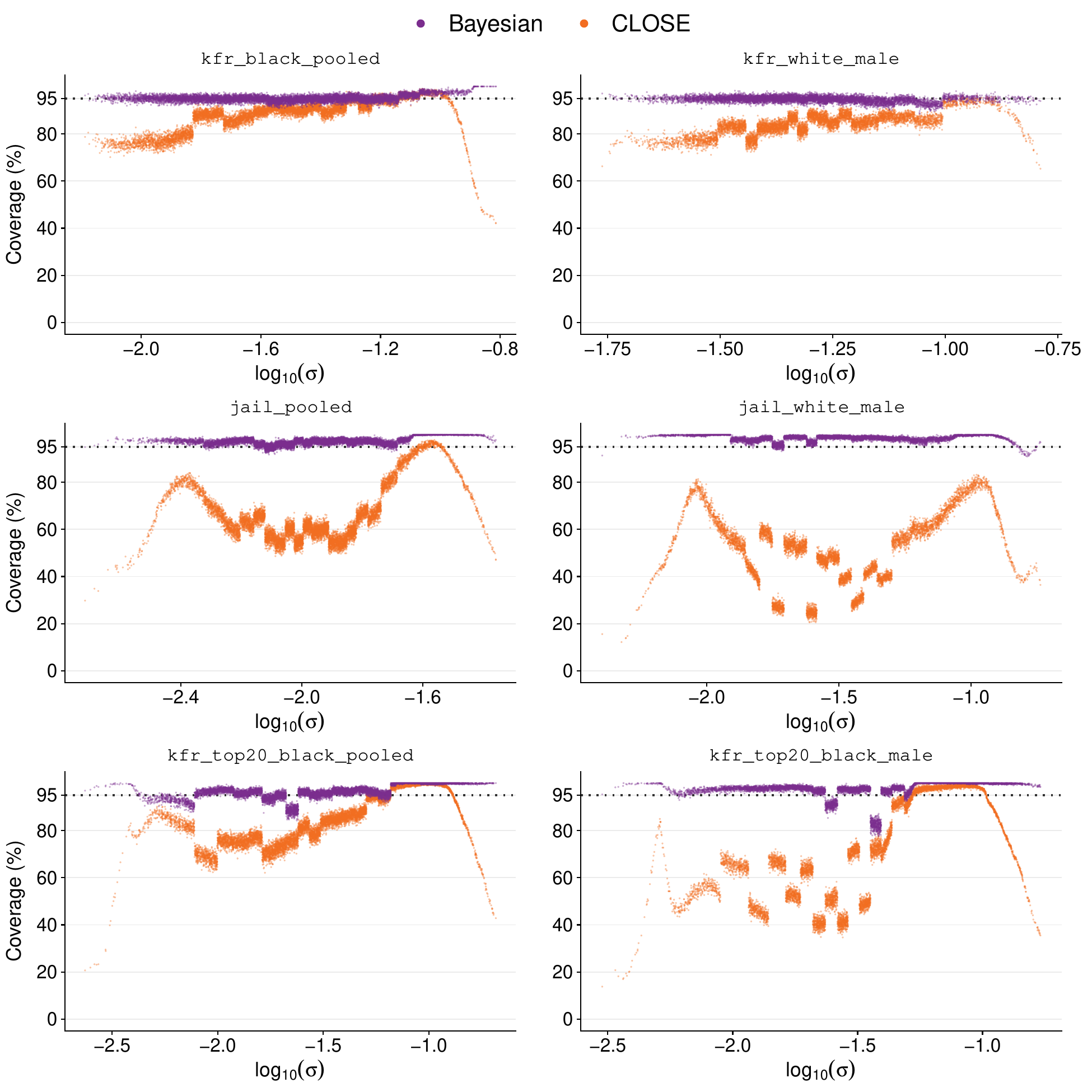}
\caption{Coverage of nominal 95\% intervals as a function of the noise level
$\sigma$ in six of the fifteen calibrated simulation designs (two from each
outcome family). Each point is one census tract, plotted at its own $\sigma_i$,
with coverage computed across the 500 simulation replications. The dotted
horizontal line marks the nominal 95\% level.}
\label{fig:coverage-by-sigma}
\end{figure}

Finally, we note that the calibrated simulations are built entirely from objects fitted by CLOSE. The latent scale is given by CLOSE's own truncated estimate $\widehat s(\sigma)$, while the latent distributions $\widehat G_b$ are discrete NPMLE solutions, in contrast to the continuous Gaussian mixtures supported by our DPGM prior. Moreover, $\widehat G_b$ varies across bins, so the common-$G_\star$ working model used by both procedures only approximates the calibrated data-generating process. Despite these differences, the single prior specification  performs well across all fifteen designs. The Bayesian point estimates are comparable to CLOSE, and slightly more accurate for most outcomes, while the credible intervals achieve coverage close to nominal throughout. The intervals are also only moderately wider than their plug-in counterparts. These results suggest that the Bayesian posterior is a competitive denoiser and can provide useful uncertainty assessments even when the working model is only approximate, at least in calibrated designs of the type considered here.

\section{Conclusion} \label{conclude}
This paper studies a nonparametric Bayes approach to a class of Gaussian compound decision problems typically addressed using empirical Bayes methods. We show that the resulting procedures can retain the strong frequentist denoising performance of empirical Bayes, while providing a joint posterior over the latent parameters and the unknown shrinkage structure. Beyond point estimation, this posterior can also be used to inform a variety of downstream decisions involving the latent parameters. This feature may be particularly useful in applications with only a small or moderate number of related problems, where the shrinkage structure is learned less precisely from the data.

There are several useful directions for extending this analysis. One is to allow richer forms of latent heterogeneity and covariate dependence. In model~\eqref{eq:hierarchy2NClose}, the latent distribution has a common shape $G_\star$, with variation across $\sigma_i$ entering only through its location and scale. The fully Bayesian framework lends itself naturally to more flexible conditional models. For example, the conditional mixture models of~\citet{norets2010approximation,norets2014posterior} could be used to allow the entire latent distribution, including its shape, to vary with the precision $\sigma_i$.

\section{Proof ideas for compound decision results}
\label{sec:proof_ideas}

In this section, we provide intuition for the main theoretical arguments for the theorems of Section~\ref{sec:homoscedastic_gaussian}.
Complete proofs are provided in the Appendix.

We start by providing the high-level intuition underlying Theorem~\ref{thm:compound-contraction}. The technical challenge lies in the fact that the model is misspecified, and so it is not a-priori clear that the posterior contracts around the average marginal density $f_{\boldmu}$ in~\eqref{eq:average_marginal}.
\begin{proof}[Proof sketch for Theorem~\ref{thm:compound-contraction}.]

Fix $C>0$ and write $\varepsilon_n \equiv \varepsilon_n(\Pi)$. We seek to show that the posterior assigns little probability to the set
$
\mathcal{U} := \cb{ G \,:\, \Dhel(f_{\boldmu}, f_G) \geq C\varepsilon_n }.
$
By the simplest Schwartz posterior contraction argument,
\begin{equation}
\begin{aligned}
\Pi( \mathcal{U} \mid \boldZ) \;= \; \frac{ \int_{\mathcal{U} } \prod_{i=1}^n f_G(Z_i) \dd\Pi(G)}{\int \prod_{i=1}^n f_G(Z_i)\dd\Pi(G)} \;=\; \frac{ \int_{\mathcal{U} } \prod_{i=1}^n \{f_G(Z_i)/f_{\boldmu}(Z_i)\}\dd\Pi(G)}{\int \prod_{i=1}^n \{f_G(Z_i)/f_{\boldmu}(Z_i)\}\dd\Pi(G)}.
\end{aligned}
\label{eq:schwartz}
\end{equation}
We have to show two facts: that with high probability under $\PP[\boldmu]{\cdot}$, both the numerator is ``small'' and the denominator is ``large.'' Note that
$\prod_{i=1}^n f_{\boldmu}(Z_i) \neq \prod_{i=1}^n \varphi(Z_i-\mu_i),$
where the latter is the likelihood under the data generating process in~\eqref{eq:hierarchy1N}, and so the ratio introduced in~\eqref{eq:schwartz} is not a likelihood ratio under $\PP[\boldmu]{\cdot}$. We can show that the posterior contracts despite this misspecification.\footnote{Similar arguments also appear in the analysis of nonparametric Bayesian procedures that utilize quasi-likelihoods \citep{kato2013quasi,kankanala2025generalized}.}
To see why, consider the following argument. Fix $f_G$ for any distribution $G$ in the support of $\Pi$. To understand both the numerator and denominator in~\eqref{eq:schwartz}, we must understand the behaviour of $\sum_{i=1}^n \log\cb{f_G(Z_i)/f_{\boldmu}(Z_i)}$. Taking expectations under $\PP[\boldmu]{\cdot}$, we get
$$
\begin{aligned}
\EE[\boldmu]{ \sum_{i=1}^n \log\p{\frac{f_G(Z_i)}{f_{\boldmu}(Z_i)}} } &= \sum_{i=1}^n \int \log\p{\frac{f_G(z)}{f_{\boldmu}(z)}} \varphi(z-\mu_i)\, \dd z \\
&= n \int \log\p{\frac{f_G(z)}{f_{\boldmu}(z)}} \underbrace{\frac{1}{n}\sum_{i=1}^n \varphi(z-\mu_i)}_{=f_{\boldmu}(z)}\, \dd z
\,=\, - n \DKL{f_{\boldmu}}{f_G},
\end{aligned}
$$
where $\DKL{\cdot}{\cdot}$ is the Kullback-Leibler divergence, defined for densities $f,h$ as
\begin{equation}
\DKL{f}{h} := \int f(z) \log\frac{f(z)}{h(z)} \dd z.
\label{eq:DKL}
\end{equation}
By comparison, if data generation were given by both~\eqref{eq:hierarchy1N} and~\eqref{eq:hierarchy2N} with true prior $G_{\star}$, then  we would have $\EE[G_{\star}]{ \sum_{i=1}^n \log\cb{f_G(Z_i)/f_{G_{\star}}(Z_i)}} = - n \DKL{f_{G_\star}}{f_{G}}$, i.e., effectively $f_{\boldmu}$ plays the role of $f_{G_\star}$.
Given suitable concentration uniformly over $f_G$, we can now surmise the following.
In the numerator of~\eqref{eq:schwartz}, fix $G \in \mathcal{U}$ (within the integral). Note that $-n\DKL{f_{\boldmu}}{f_G} \leq -2n\Dhel^2(f_{\boldmu}, f_G) \leq -2 C^2 n \varepsilon_n^2$ and so the numerator should be small. The proof in the appendix makes this argument precise, and also shows that the denominator is sufficiently large (using the fact that the prior $\Pi$ puts enough mass around $f_{\boldmu}$ in KL neighborhoods).
\end{proof}

We now turn to providing intuition for the proof of Theorem~\ref{theo:main_risk_bound_eq}.
The overall proof strategy has four main elements:
\begin{enumerate*}[noitemsep,leftmargin=*]
\item reduction from permutation equivariant to separable decision rules;
\item the fundamental theorem of compound decisions;
\item posterior contraction under~\eqref{eq:hierarchy1N} as shown in Theorem~\ref{thm:compound-contraction};
\item control of posterior means following~\citet{jiang2009general}.
\end{enumerate*}
We explain the overall strategy in more detail below.

To begin, we define the class of simple separable decision rules:
\begin{equation} \mathcal{T}^{\mathrm{S}} := \left\{ \boldt: \RR^n \to \RR^n \;\,\text{s.t.}\;\, \boldt(\boldZ) = (t(Z_1),\ldots,t(Z_n)) \,\text{ for }\, t: \RR \to \RR \right\}. \label{eq:Ssym} \end{equation}
As a first step, we note the following quantitative bound, due to \citet[Theorem~5.1]{greenshtein2009asymptotic}; see also \citet[Theorem~4.1]{han2025besting} and \citet{liang2025sharp}.
\begin{theo} There exists a constant $C_M >0$ that depends only on $M$ such that
$$\sup_{\boldmu \in [-M,M]^n} \p{ \inf_{\boldt \in \mathcal{T}^{\mathrm{S}} } \cb{R( \boldt(\boldZ),\boldmu) } - \inf_{\boldt \in \mathcal{T}^{\mathrm{PE}} } \cb{R(\boldt(\boldZ),\boldmu) }} \leq C_M \frac{1}{\sqrt{n}}.$$
\label{theo:equiv_and_simple}
\end{theo}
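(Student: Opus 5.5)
The plan is to compare the two oracles through their Bayes-risk representations, bound the gap in \emph{mean squared} error by $C_M/n$, and then pass to RMSE using the elementary inequality $\sqrt{a}-\sqrt{b}\le\sqrt{a-b}$ for $a\ge b\ge 0$. This last step sidesteps the fact that the oracle RMSE can be arbitrarily close to zero (e.g.\ $\mu_i\equiv 0$), so one cannot divide by it. Write $\mathrm{mmse}(G)$ for the Bayes risk of the one-dimensional problem $\mu\sim G$, $Z\mid\mu\sim\mathrm N(\mu,1)$.

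\emph{Step 1 (separable oracle).} By the fundamental theorem of compound decisions, for $\boldt\in\mathcal T^{\mathrm S}$ we have $R(\boldt(\boldZ),\boldmu)^2=\int\EE{(t(Z)-\mu)^2\mid\mu}\,\dd G_n(\mu)$, where $G_n$ is the empirical distribution $\compoundG$ in~\eqref{eq:compound_prior}. Hence the infimum over $\mathcal T^{\mathrm S}$ equals $\mathrm{mmse}(G_n)^{1/2}$, attained by $\delta_{G_n}$.

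\emph{Step 2 (equivariant oracle).} For $\boldt\in\mathcal T^{\mathrm{PE}}$, equivariance implies that the risk at $\boldmu$ equals the risk at every permutation $\boldmu_\pi$. Therefore the risk at $\boldmu$ equals the Bayes risk under the prior $\boldmu_\pi$ with $\pi$ uniform on $\mathcal S_n$. This Bayes rule is itself permutation equivariant, so the PE oracle MSE is the Bayes risk under the permutation prior. Its $i$-th coordinate is $\EE{\mu_{\pi(i)}\mid\boldZ}$. Conditionally on $\boldZ_{-i}$, the label $\mu_{\pi(i)}$ has a random prior $G^{(i)}=\sum_j w_j(\boldZ_{-i})\delta_{\mu_j}$, where $w_j\propto \mathrm{perm}$-likelihood of $\boldZ_{-i}$ under the multiset $\boldmu$ with $\mu_j$ removed. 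Moreover $Z_i$ is conditionally $\mathrm N(\mu_{\pi(i)},1)$, and $\EE{G^{(i)}}=G_n$. Thus the PE oracle MSE equals $\frac1n\sum_i\EE{\mathrm{mmse}(G^{(i)})}$.

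\emph{Step 3 (Jensen gap).} Since $\mathrm{mmse}$ is concave in the prior (it is an infimum of functionals linear in $G$), we get $\EE{\mathrm{mmse}(G^{(i)})}\le \mathrm{mmse}(G_n)$, which is the correct direction. We now need the reverse bound $\mathrm{mmse}(G_n)-\EE{\mathrm{mmse}(G^{(i)})}\le C_M/n$. Write
\[
\mathrm{mmse}(G_n)-\mathrm{mmse}(H)=\bigl[r(G_n,\delta_{G_n})-r(G_n,\delta_H)\bigr]+\bigl[r(G_n,\delta_H)-r(H,\delta_H)\bigr],
\]
where $r(G,\delta)$ denotes the Bayes risk of the rule $\delta$ under the prior $G$. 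The first bracket is $\le 0$. For $H=G^{(i)}$, the second bracket is linear in $H-G_n$ except through $\delta_H$. Averaging over $\boldZ_{-i}$ removes the linear part, leaving a quadratic term that we bound, using $|\mu_j|\le M$ and Gaussian tails of $Z$, by
\[
C_M\,\EE{\sum_j n\,(w_j-1/n)^2}=C_M\,\EE{\chi^2(G^{(i)},G_n)}.
\]

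\emph{Step 4 (main obstacle).} It remains to show that $\EE{\chi^2(G^{(i)},G_n)}\lesssim_M 1/n$. In other words, observing the other $n-1$ coordinates reveals only $O(1/n)$ chi-square information about which $\mu_j$ was left for coordinate $i$. We would attack this by writing $n w_j$ as a ratio of permanents of the $n\times n$ likelihood matrix $(\varphi(Z_k-\mu_l))$ with one row and one column deleted, and computing its second moment under the data law. This is the Greenshtein--Ritov argument. Alternatively, one could try the cruder but cleaner route of comparing the permutation prior with i.i.d.\ sampling from $G_n$ via a coupling (sampling without versus with replacement), which alters only $O(1)$ coordinates in expectation relative to coordinate $i$. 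This step is where the real work lies, and we would expect the boundedness $|\mu_i|\le M$ to enter only through uniform bounds on the Gaussian likelihood ratios $\varphi(z-\mu_j)/\varphi(z-\mu_l)$ on the bulk $|z|\lesssim M+\sqrt{\log n}$, with a tail truncation handling the rest.

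Combining the steps gives an MSE gap of at most $C_M/n$ uniformly over $\boldmu\in[-M,M]^n$. Taking square roots with $\sqrt a-\sqrt b\le\sqrt{a-b}$ yields the stated $C_M n^{-1/2}$ bound on the RMSE gap.
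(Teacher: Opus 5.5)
Your Steps 1--3 are correct, and they are the standard reduction behind the result the paper quotes. The paper does not prove this theorem itself; it imports it from Greenshtein and Ritov (Theorem 5.1), see also Han et al.\ and Liang.

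The quadratic bound you assert in Step 3 holds with no tail argument. For $H=\sum_j w_j\delta_{\mu_j}$ one has $\delta_H-\delta_{G_n}=f_{G_n}^{-1}\sum_j (w_j-\tfrac1n)(\mu_j-\delta_H)\varphi(\cdot-\mu_j)$. Cauchy--Schwarz then gives $\int(\delta_H-\delta_{G_n})^2 f_{G_n}\le 4M^2\chi^2(H\|G_n)$, and the nonlinear remainder is at most $8M^2\chi^2(H\|G_n)$.

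One clarification: the cancellation of the linear part uses $\mathbb{E}\,w_j=1/n$. This holds under the permutation-mixture marginal of $\boldZ_{-i}$, not under $\mathbb{P}_{\boldmu}$. So every expectation in Steps 3--4, including the ``data law'' in Step 4, must be taken under that mixture.

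The gap is Step 4, which is the entire content of the theorem rather than a technical remainder. Unwinding the definitions, $\mathbb{E}\,\chi^2(G^{(i)}\|G_n)=n^{-1}\sum_{j}\chi^2(P_{-j}\|\bar P)$. Here $P_{-j}$ is the law of $n-1$ independent unit-variance Gaussians whose means are a uniformly random arrangement of $\boldmu$ with $\mu_j$ deleted, and $\bar P=n^{-1}\sum_j P_{-j}$. You must therefore show that a Gaussian permutation mixture is insensitive, at $\chi^2$-level $O_M(1/n)$, to which single mean was deleted. The proposal never does this.

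The permanent second-moment calculation you mention is precisely the work in the cited source. It is not routine, since the ratios $\varphi(z-\mu_j)/\varphi(z-\mu_k)$ are unbounded and some truncation is required.

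The coupling route does not produce the factor $1/n$:
\begin{itemize}
\item Any coupling of with- versus without-replacement sampling of all $n$ labels differs, in general, in about $n/e$ coordinates in expectation.
\item Suppose instead you couple $P_{-j}$ to $P_{-k}$ by replacing $\mu_k$ with $\mu_j$ at its uniformly random position $L$. Then $\mathrm{d}P_{-k}/\mathrm{d}P_{-j}=\mathbb{E}[\varphi(Z_L-\mu_j)/\varphi(Z_L-\mu_k)\mid \boldZ_{-i}]$. For any fixed $L$ this ratio has $\chi^2$ of order one, namely $e^{(\mu_j-\mu_k)^2}-1$.
\item The $1/n$ can only come from averaging over $L$. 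That means controlling the off-diagonal terms $l\neq l'$ in the second moment under a reference law that is itself a permutation mixture, not a product measure, so those cross terms are not equal to one.
\end{itemize}
That approximate-independence estimate for permutation mixtures is the missing ingredient. Without it, the proposal reduces the theorem to a lemma of the same difficulty rather than proving it.
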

The upshot of Theorem~\ref{theo:equiv_and_simple} is that it suffices to compete with the best simple separable estimator, rather than with the best permutation equivariant estimator; moreover, the former admits an explicit characterization via the fundamental theorem of compound decision theory~\citep{robbins1951asymptotically, jiang2009general}. The key idea is that the compound model with fixed $\boldmu$ in~\eqref{eq:hierarchy1N} behaves, in certain respects, similarly to the Bayes model in~\eqref{eq:hierarchy1N}--\eqref{eq:hierarchy2N} with \smash{$\mu_i \simiid \compoundG$}, where $\compoundG$ is the empirical distribution of $\boldmu$ defined in~\eqref{eq:compound_prior}. We give a self-contained proof of the theorem, which also makes this connection transparent.

\begin{theo}[Fundamental theorem of compound decisions]
\label{thm:fundamental-compound}
Let $t:\RR\to\RR$ be measurable and consider the separable estimator with $\hat\mu_i=t(Z_i)$ for all $i$.
Then under~\eqref{eq:hierarchy1N},
\begin{equation}
\label{eq:fundamental-compound}
R^2( \widehat{\boldmu},\,\boldmu)
= \int (t(z) - \mu)^2\varphi(z-\mu)\, \dd z \, \dd \compoundG(\mu),
\end{equation}
where  the left-hand side risk is defined as in~\eqref{eq:RMSE}.
Consequently,
$t^\star_{\boldmu}(z)=\delta_{\compoundG}(z)$ (defined in~\eqref{eq:cd_b_post_mean}) yields the separable estimator that minimizes $R( \boldt(\boldZ),\,\boldmu)$ over all $\boldt \in \mathcal{T}^{\mathrm{S}}$.
\end{theo}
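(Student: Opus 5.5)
The plan is to expand the risk of the separable rule coordinatewise and then recognize the resulting average over $i$ as an integral against the empirical distribution $\compoundG$. By the definition in~\eqref{eq:RMSE} and the independence structure of~\eqref{eq:hierarchy1N}, we have
\[
R^2(\widehat{\boldmu},\boldmu)=\frac1n\sum_{i=1}^n \EE[\boldmu]{(t(Z_i)-\mu_i)^2}
=\frac1n\sum_{i=1}^n\int (t(z)-\mu_i)^2\varphi(z-\mu_i)\,\dd z .
\]
Each summand depends on $i$ only through $\mu_i$. The average $n^{-1}\sum_i h(\mu_i)$ of the nonnegative measurable function $h(\mu):=\int (t(z)-\mu)^2\varphi(z-\mu)\,\dd z$ is exactly $\int h\,\dd\compoundG$, which gives~\eqref{eq:fundamental-compound}. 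Tonelli's theorem justifies all interchanges, since the integrands are nonnegative; the identity holds even when both sides are $+\infty$.

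For the optimality claim, I would read the right-hand side of~\eqref{eq:fundamental-compound} as the Bayes risk $\EE{(t(Z)-\mu)^2}$ under the joint law $\mu\sim\compoundG$, $Z\mid\mu\sim\mathrm N(\mu,1)$. The marginal density of $Z$ under this law is $f_{\compoundG}=f_{\boldmu}$, which is strictly positive everywhere. Because $\compoundG$ has bounded support, $\mu$ has a finite second moment, so the conditional mean $\EE{\mu\mid Z=z}$ exists. Computing this conditional mean by Bayes' formula gives exactly $\delta_{\compoundG}(z)$ from~\eqref{eq:cd_b_post_mean}.

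The standard orthogonality decomposition then gives
\[
\EE{(t(Z)-\mu)^2}=\EE{(\delta_{\compoundG}(Z)-\mu)^2}+\EE{(t(Z)-\delta_{\compoundG}(Z))^2}.
\]
The cross term vanishes after conditioning on $Z$, which is legitimate whenever the left side is finite; if the left side is infinite, the inequality we want is trivial. The decomposition shows that $t=\delta_{\compoundG}$ minimizes the risk over all measurable $t$. By the first part, minimizing this Bayes risk is the same as minimizing $R(\boldt(\boldZ),\boldmu)$ over $\mathcal{T}^{\mathrm S}$, since every element of $\mathcal{T}^{\mathrm S}$ applies a single function $t$ to each coordinate.

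The only step needing care is the handling of integrability in this orthogonality argument, namely possibly infinite risks for arbitrary measurable $t$. I would dispose of it with the case split above: either the risk of $t$ is infinite and the inequality is trivial, or it is finite and the decomposition applies. Everything else is a direct computation.
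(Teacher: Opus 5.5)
Your proposal is correct and follows the paper's proof essentially step for step. You expand the risk coordinatewise, recognize the average as an integral against $\compoundG$, and then read the right-hand side as the Bayes risk under $\mu\sim\compoundG$, which the posterior mean $\delta_{\compoundG}$ minimizes. Your extra care with Tonelli and possibly infinite risks fills in details the paper leaves implicit. Note also that the first display needs only the marginal law of each $Z_i$ and linearity of expectation, not independence.
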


\begin{proof}
Let $\widehat{\boldmu}$ be a separable estimator with coordinate-wise function $t(\cdot)$. By definition, $R^2(\widehat{\boldmu},\,\boldmu) = \frac{1}{n}\sum_{i=1}^n \EE[\boldmu]{(t(Z_i)-\mu_i)^2} = \frac{1}{n}\sum_{i=1}^n \int (t(z)-\mu_i)^2\,\varphi(z-\mu_i)\,\dd z$, and since $\compoundG$ is the empirical distribution of $\boldmu$, the last expression equals the right-hand side of~\eqref{eq:fundamental-compound}.
Meanwhile, the same right-hand side is the risk when integrating over both~\eqref{eq:hierarchy1N} and~\eqref{eq:hierarchy2N} (with $G_{\star} = \compoundG$), and so is minimized over all univariate $t(\cdot)$ by $\delta_{\compoundG}(\cdot)$.
\end{proof}
With the explicit expression for the best separable denoiser, we can reduce the problem to controlling
$$
\frac{1}{n}\sum_{i=1}^n\EE[\boldmu]{ \cb{\widehat{\mu}_i(\Pi) - \delta_{\compoundG}(Z_i)}^2} = \frac{1}{n}\sum_{i=1}^n\EE[\boldmu]{ \cb{ \int \frac{f_G'(Z_i)}{f_G(Z_i)} \dd\Pi(G \mid \boldZ) -  \frac{f'_{\boldmu}(Z_i)}{f_{\boldmu}(Z_i)}}^2},
$$
where the equality uses~\eqref{eq:bnp-post-mean} alongside the Eddington/Tweedie formula~\citep{dyson1926method, efron2011tweedie} according to which $\delta_G(z) = z + f'_G(z)/f_G(z)$ (for $\delta_G$ defined in~\eqref{eq:cd_b_post_mean}). This connects the regret to the compound posterior contraction result of Theorem~\ref{thm:compound-contraction}; the formal argument then builds on techniques of~\citet{jiang2009general}.

\paragraph{Acknowledgements.} We thank Jiafeng Chen and Asaf Weinstein for helpful discussions. This work was completed in part with resources provided by the University of Chicago's Research Computing Center and Chicago Booth Computing Cluster. N.I. gratefully acknowledges support from the U.S. National Science Foundation (DMS-2443410).

\bibliographystyle{abbrvnat}
\bibliography{ebb}

\clearpage
\appendix
\setcounter{equation}{0}
\setcounter{figure}{0}
\setcounter{table}{0}
\setcounter{prop}{0}
\renewcommand{\theequation}{S\arabic{equation}}
\renewcommand{\thefigure}{S\arabic{figure}}
\renewcommand{\thetable}{S\arabic{table}}
\renewcommand{\theprop}{S\arabic{prop}}
\renewcommand{\theHequation}{appendix.\arabic{equation}}
\renewcommand{\theHfigure}{appendix.\arabic{figure}}
\renewcommand{\theHtable}{appendix.\arabic{table}}
\renewcommand{\theHprop}{appendix.\arabic{prop}}

\section{Implementation details}
\label{implement}

\subsection{CLOSE Implementation}
Let $Z_i$ denote the estimate for tract $i$, $\sigma_i$ its standard error,
and $x_i = \log_{10}\sigma_i$. All computations are carried out on
standardized scales: $\tilde Z_i = (Z_i - \bar Z)/s_Z$ and
$\tilde\sigma_i = \sigma_i / s_Z$, where $\bar Z$ and $s_Z$ are the sample
mean and standard deviation of the estimates, and
$\tilde x_i = (x_i - \bar x)/s_x$ analogously. The model that we use for computation is then:
\begin{align*}
  \tilde Z_i \mid \tilde\mu_i &\sim \mathrm{N}(\tilde\mu_i, \tilde\sigma_i^2), \qquad
  \tilde\mu_i = m(\tilde x_i) + s(\tilde x_i)\,\zeta_i, \qquad
  \zeta_i \mid G \overset{\text{iid}}{\sim} G,
\end{align*}
where $m(\cdot)$ and $s(\cdot)$ are unknown functions and $G$ is an unknown
distribution on $\mathbb{R}$. Results are transformed back to
the original scale for reporting.\\

\noindent\textbf{Conditional moment functions.}
We place Gaussian process priors on $m(\cdot)$ and on
$\eta(\cdot) = \operatorname{softplus}^{-1}(s^2(\cdot))$, so that
$s^2(x) = \log(1 + e^{\eta(x)})$ is positive without further constraints.
Each of the two functions, $f \in \{m, \eta\}$, has a Mat\'ern-3/2 covariance
function with its own amplitude and lengthscale parameters
$(\alpha_f, \rho_f)$, and all four parameters have independent
$\text{LogNormal}(0, 1)$ priors. For computation, each $f$ is represented
through its eigenfunction expansion \citep{riutort2023practical} on an
interval $[-L, L]$:
\begin{align*}
  f(x) = \sum_{j=1}^{M} \sqrt{S_f(\omega_j)}\,\beta_{f,j}\,\phi_j(x), \; \;
  \phi_j(x) = L^{-1/2}\sin\!\big(\omega_j (x + L)\big), \; \;
  \omega_j = \frac{j\pi}{2L},
\end{align*}
where $\beta_{f,j} \overset{\text{iid}}{\sim} \mathrm{N}(0,1)$ and
$S_f(\omega) = 4\alpha_f^2 \kappa_f^3 / (\kappa_f^2 + \omega^2)^2$, with
$\kappa_f = \sqrt{3}/\rho_f$, is the spectral density of the Mat\'ern-3/2
kernel. The basis functions $\phi_j$ are common to $m$ and $\eta$; the two
functions differ only through their parameters and coefficients.

Two choices remain. First, $L = c\,S$, where
$S = (\max_i \tilde x_i - \min_i \tilde x_i)/2$ is the half-range of the
standardized input and $c = 1.5$ pads the domain beyond the observed inputs,
as recommended by \citet{riutort2023practical}. This also leaves room for the
unknown population support $[\underline\sigma, \overline\sigma]$ of the noise
levels, which contains the observed range of $\sigma_i$. In the application
and in every simulation design, all observed inputs lie within $78\%$ of the
distance from the center of $[-L, L]$ to its boundary. Second, we use
$M = 30$ basis functions for each of $m$ and $\eta$. In the application the
posterior lengthscales are comparable to the half-range $S$
($\rho_m \approx 3.4$, $\rho_s \approx 2.7$, $S = 2.7$), so high-frequency
basis functions receive little weight: for each of $m$ and $\eta$, basis
functions $26$--$30$ contribute less than $1\%$ of the total posterior
expected absolute coefficient weight $|\sqrt{S_f(\omega_j)}\,\beta_{f,j}|$,
and increasing $M$ to $50$ left all results unchanged.\\

\noindent\textbf{Shape distribution.}
The distribution $G$ is a  Dirichlet process mixture of Gaussians
with a common component variance $A$ (the Gaussian smoothing variance of
Specification~\ref{assu:DPGM}):
\begin{align*}
  G = \sum_{k=1}^{K} w_k\, \mathrm{N}(\mu_k, A), \qquad
  w_k = v_k \prod_{l < k}(1 - v_l), \quad v_K = 1,
\end{align*}
with stick-breaking fractions $v_k \sim \text{Beta}(1, \alpha)$ for $k < K$
and component means $\mu_k \overset{\text{iid}}{\sim} H$. The hyperpriors,
applied on the standardized scale, are $A \sim \text{Beta}(2, 5)$,
$H = \mathrm{N}(0, 3^2)$, and $\alpha \sim \text{Gamma}(2, 0.5)$ in the shape--rate
parameterization, following \citet{hirano2002semiparametric}. Because the location and scale of $G$ are not separately identified from
$m(\cdot)$ and $s(\cdot)$ (they are normalized out in $G_\star$ below), the
base measure $H$ should be interpreted through how diffuse it is relative to a shape
distribution of unit variance. The choice $H = \mathrm{N}(0, 3^2)$ is diffuse in this
sense, and replacing it by a $\text{Uniform}(-10, 10)$ base measure leaves all
identified quantities in the application essentially unchanged. We use the truncation $K = 30$. In the application the truncation is not binding: the posterior expected
mass on components beyond the $20$th is $0.6\%$, and beyond the $27$th it is
$0.2\%$ (increasing $K$ to $50$ left all results unchanged). Given
$(m, s, G)$, the latent $\zeta_i$ are integrated out analytically, so the
likelihood contribution of tract $i$ is the mixture density
$\sum_k w_k\, \mathrm{N}\!\big(\tilde Z_i;\, m(\tilde x_i) + s(\tilde x_i)\mu_k,\;
s^2(\tilde x_i)A + \tilde\sigma_i^2\big)$.

Because $(m, s, G)$ are identified only up to the location and scale of $G$,
we report the normalized quantities defined in Remark~\ref{normalize}: at each
posterior draw, with $\bar\mu = \sum_k w_k \mu_k$ and
$\sigma_G^2 = \sum_k w_k \mu_k^2 - \bar\mu^2 + A$, the standardized shape
distribution $G_\star$ has components $(\mu_k - \bar\mu)/\sigma_G$ and
variance $A/\sigma_G^2$, and the identified conditional moments are
$m_\star = m + s\bar\mu$ and $s_\star = s\,\sigma_G$. These are then mapped
back to the original scale via $\bar Z + s_Z\, m_\star$ and $s_Z\, s_\star$.
The posterior of any functional of the observables, such as $\mu_i$, is
unaffected by this normalization.\\

\subsection{Posterior computation}
\label{app:computation}

The model is fit with Hamiltonian Monte Carlo (the no-U-turn sampler) in
Stan~\citep{JSSv076i01}, with the likelihood parallelized across tracts
using within-chain threading. In the empirical application we run two chains
of $2{,}000$ warmup and $2{,}000$ sampling iterations each, for $4{,}000$
posterior draws, with target acceptance rate $0.9$ and maximum tree depth
$12$; all split-$\widehat R$ statistics for the hyperparameters are below
$1.002$ (and there are no divergent transitions). In the calibrated
simulations, the Bayesian procedure is re-estimated from scratch on every
replication with exactly the same model and priors; the standardization
constants $(\bar Z, s_Z, \bar x, s_x)$ and the boundary $L$ are recomputed
from each simulated data set. Each simulation fit uses a single chain of
$2{,}000$ warmup and $1{,}000$ sampling iterations, with the same target
acceptance rate and tree depth as in the application.

Posterior summaries for $\mu_i$ are formed identically in both settings and
exploit conjugacy given a draw of $(m, s, G)$. Conditional on the draw, the
posterior of $\mu_i$ is a $K$-component mixture of normals with weights
proportional to
$w_k\,\mathrm{N}(\tilde Z_i;\, m_i + s_i\mu_k,\; s_i^2A+ \tilde\sigma_i^2)$,
component means $(s_i^2A\,\tilde Z_i + \tilde\sigma_i^2 (m_i +
s_i\mu_k))/(s_i^2A + \tilde\sigma_i^2)$, and common variance
$s_i^2A \tilde\sigma_i^2/(s_i^2A + \tilde\sigma_i^2)$, where
$m_i = m(\tilde x_i)$ and $s_i = s(\tilde x_i)$. Posterior means are computed
exactly at each draw and averaged across draws. Credible intervals are
equal-tailed and marginalize over the hyperparameter draws: at each draw we
sample $\mu_i$ from its conditional posterior (a component from the mixture
weights, then a normal draw), and the $95\%$ interval is formed from the
$2.5\%$ and $97.5\%$ quantiles of the pooled samples. Pairwise probabilities
such as $\PP{\mu_B > \mu_A \mid \boldZ}$ are computed exactly at each draw from the
two conditional mixtures, which are independent given the draw, and averaged
across draws. All quantities are mapped back to the original scale via
$\bar Z + s_Z(\cdot)$ before reporting.

\section{Definitions of function spaces}
\label{sec:funct_space}

\begin{defi}[Function spaces] Consider a smooth domain $\mathcal{X} \subseteq \RR$.  For any $\alpha > 0$, we have the following function spaces:
\begin{enumerate}
\item The Hölder space $\Sigma^{\alpha}(\mathcal{X})$ is the class of functions $f: \mathcal{X} \rightarrow \RR$ with $\| f \|_{\Sigma^{\alpha}} < \infty$, where  $$ \| f \|_{\Sigma^{\alpha}(\mathcal{X})} = \max_{k:k \leq \underline\alpha} \sup_{x \in \mathcal{X}} \left| D^k f(x)   \right| +  \sup_{x,y \in \mathcal{X} : x \neq y} \frac{\left|  D^{\underline\alpha} f(x)  - D^{\underline\alpha} f(y) \right|}{\left|  x-y \right|^{\alpha - \underline\alpha}}. $$
Here, $\underline\alpha$ denotes the biggest integer strictly smaller than $\alpha$.
\item The Sobolev space $H^{\alpha}(\RR)$ is the class of functions
$f:\mathcal \RR \rightarrow \RR$ with $\|f\|_{H^\alpha(\RR)}<\infty$, where
\[
\|f\|_{H^\alpha(\RR)}
:=
\|f\|_{L^2(\RR)}+v_\alpha(f) , \;
v_\alpha^2(f)
:=
\int_{\RR}|\lambda|^{2\alpha}|\hat f(\lambda)|^2 \dd \lambda . \]
Here, $\hat f$ denotes the Fourier transform of $f$. The Sobolev space over a smooth domain $\mathcal{X} \subset \RR$ is defined as the class of functions $f : \mathcal{X} \rightarrow \RR$ with quotient norm $ \|  f \|_{H^{\alpha}(\mathcal{X})} = \inf \{ \|g \|_{H^{\alpha}(\RR)} : g \in H^{\alpha}(\RR),\;  g|_{\mathcal X}=f$\}.

\end{enumerate}
\end{defi}

\section{Proofs for (in)admissibility}

\subsection{Proof of Proposition~\ref{prop:bnp-admissible-cd}}

\begin{proof}
Define the induced prior $G_\Pi$ on $\RR^n$ by
\[
G_\Pi(A_1,\dots,A_n) := \int \prod_{i=1}^n G(A_i)\, \dd\Pi(G), \; A_1,\dots,A_n \subset \RR.
\]
Below we will use the subscript $G_{\Pi}$ to denote expectations taken with respect to $\boldmu\sim G_\Pi$ and $\boldZ\mid \boldmu \sim \mathrm{N}(\boldmu,I_n)$. The only properties of $\Pi$ we will use below is that $\mathbb E_{G_{\Pi}}[\Norm{\boldmu}^2] < \infty$ (and that $G_{\Pi}$ is a proper prior).

Now suppose that $\widehat{\boldmu}(\Pi)$ is not admissible. Then there exists another estimator $\tilde{\boldmu}=\tilde{\boldmu}(\boldZ)$ such that
$R(\tilde{\boldmu}, \boldmu) \leq R(\widehat{\boldmu}(\Pi), \boldmu)$ for all $\boldmu \in \RR^n$.
By integrating over $G_{\Pi}$ (which is supported on $\RR^n$, this implies that
$$
\EE[G_{\Pi}]{ \Norm{ \tilde{\boldmu} -  \widehat{\boldmu}(\Pi)}^2} = \EE[G_{\Pi}]{ \Norm{ \tilde{\boldmu} -  \boldmu  }^2}  - \EE[G_{\Pi}]{ \Norm{ \widehat{\boldmu}(\Pi)- \boldmu  }^2} \leq 0.
$$
Thus $\widetilde{\boldmu}(\boldz) =\widehat{\boldmu}(\Pi)(\boldz)$ for almost all $\boldz \in \RR^n$ (with respect to Lebesgue measure). But this implies that
$R(\tilde{\boldmu}, \boldmu) = R(\widehat{\boldmu}(\Pi), \boldmu)$ for all $\boldmu \in \RR^n$,
and so $\tilde{\boldmu}$ does not dominate $\widehat{\boldmu}(\Pi)$.

\end{proof}

\subsection{Proof for inadmissibility of NPMLE ( Proposition~\ref{prop:npmle-inadmissible-cd})}
\label{subsec:proof_npmle_inadmissible}

We start with the high-level idea of the proof before furnishing details.

\begin{proof}[Proof sketch for Proposition~\ref{prop:npmle-inadmissible-cd}.]
The key idea is to observe the following. Fix $n \geq 2$. For $\boldz\in \RR^n$, let $\bar{z} := n^{-1}\sum_{i=1}^n z_i$ and define the set
\begin{equation}
\mathcal U := \left\{\boldz\in\RR^n:\ \max_{1\le i\le n}|z_i-\bar z| < 1 \right\}.
\label{eq:u_set}
\end{equation}
By the first order optimality condition of the NPMLE in~\eqref{eq:npmle_and_marginal} over $\mathcal{P}(\RR)$, we have that $\widehat{G}(\boldz) = \delta_{\bar z}$ for
all $\boldz \in \mathcal{U}$. Thus,
$$
\widehat{\boldmu}^{\mathrm{EB}}(\boldz) = \bar{z} \cdot \mathbf{1}_n = (\bar{z},\ldots,\bar{z}) \quad \text{for all } \boldz \in \mathcal{U}.
$$
Now suppose that \smash{$\widehat{\boldmu}^{\mathrm{EB}}$} is admissible. Then by Theorem 3.1.1. of~\citet{brown1971admissible}, \smash{$\widehat{\boldmu}^{\mathrm{EB}}$} is a generalized Bayes estimator and so it is analytic as a function of $\boldz$. Hence, \smash{$\widehat{\boldmu}^{\mathrm{EB}}(\boldz) = \bar{z} \cdot \mathbf{1}_n$} for almost all $\boldz \in \RR^n$ by the identity theorem. However, this can be shown to be a contradiction. Thus, $\widehat{\boldmu}^{\mathrm{EB}}$ is inadmissible.
\end{proof}
 \begin{rema}  The proof technique above has been used before to show inadmissibility of  estimators in the multivariate normal mean estimation problem. Recall the James-Stein~\citeyearpar{james1961estimation} estimator and its positive-part,
$$
\widehat{\boldmu}^{\mathrm{JS}} := \left(1 - \frac{(n-2)}{\Norm{\boldZ}^2_2}\right) \boldZ,\;\;\;\;\;\; \widehat{\boldmu}^{\mathrm{JS+}} := \left(1 - \frac{(n-2)}{\Norm{\boldZ}^2_2}\right)_+ \boldZ,
$$
where $(x)_+ = \max\{x,0\}$. For $n\geq3$, the James-Stein estimator dominates the MLE $\boldZ$, and thus the latter is inadmissible. However, \smash{$\widehat{\boldmu}^{\mathrm{JS}}$} is itself inadmissible since it is dominated by \smash{$\widehat{\boldmu}^{\mathrm{JS+}}$}~\citep{efron1973stein}, and \smash{$\widehat{\boldmu}^{\mathrm{JS+}}$} is inadmissible as well. To show inadmissibility of the latter, \citet[Example 2.9 and Theorem 4.16]{brown1986fundamentals} observes that \smash{$\widehat{\boldmu}^{\mathrm{JS+}}(\boldz) =0$} for \smash{$\Norm{\boldz}^2_2 \leq n-2$}. By the same continuation argument as above, if \smash{$\widehat{\boldmu}^{\mathrm{JS+}}$} were admissible, then \smash{$\widehat{\boldmu}^{\mathrm{JS+}}(\boldz) = 0$} for almost all $\boldz \in \RR^n$, which is a contradiction. Thus, \smash{$\widehat{\boldmu}^{\mathrm{JS+}}$} is inadmissible.
\end{rema}

We are ready to furnish the full proof of the proposition.

\begin{proof}

We only have to fill in two arguments that were not justified in the proof sketch
: (i) to verify the explicit solution of the NPMLE over the open set~\eqref{eq:u_set}, and (ii) to show that there exists another open set on which the NPMLE solution takes a different form.\\

\noindent\textbf{(i).}  Throughout let $\ell(G;\boldz):=\sum_{i=1}^n \log f_G(z_i)$.
Since $G\mapsto f_G(z_i)$ is linear and $\log$ is concave, $G\mapsto \ell(G;\boldz)$ is concave over the
convex set of probability measures on $\RR$.

Let $\bar z := n^{-1}\sum_{i=1}^n z_i$ and recall the open set defined in~\eqref{eq:u_set}:
$
\mathcal U := \{\boldz\in\RR^n:\ \max_{1\le i\le n}|z_i-\bar z|<1\}.
$
Fix $\boldz\in\mathcal{U}$ and consider the point mass $G_0:=\delta_{\bar z}$.
According to a standard first-order optimality condition for the NPMLE~\citep[Theorem 4.1.]{lindsay1983geometrya}, $G_0$ maximizes
$\ell(\cdot;\boldz)$ if and only if for every $\mu\in\RR$,
\begin{equation}\label{eq:KKT}
\sum_{i=1}^n \frac{\varphi(z_i-\mu)}{f_{G_0}(z_i)} \le n.
\end{equation}
(To see this, consider the directional derivative along $(1-\lambda)G_0+\lambda\delta_\mu$ at $\lambda=0^+$.)
Now $f_{G_0}(z_i)=\varphi(z_i-\bar z)$. Let $s_i:=z_i-\bar z$, so that $\sum_{i=1}^n s_i=0$ and
$|s_i|<1$ for all $i$. Take any $\mu \in \RR$ and write $\mu=\bar z+u$, then,
$$
\frac{\varphi(z_i-\mu)}{\varphi(z_i-\bar z)}
=\exp\p{-\frac{(s_i-u)^2-s_i^2}{2}}
=\exp\p{-\frac{u^2}{2}+u\,s_i}
$$
Thus \eqref{eq:KKT} is equivalent to
$
\exp(-u^2/2)\sum_{i=1}^n \exp(u s_i)\le n
\;\Longleftrightarrow\;
n^{-1}\sum_{i=1}^n \exp(u s_i)\le \exp(u^2/2)
\;\text{for all }u\in\RR.
$
But $(s_1,\ldots,s_n)$ has empirical mean $0$ and is supported in $[-1,1]$, so Hoeffding's lemma
implies
$
n^{-1}\sum_{i=1}^n \exp(u s_i) \le \exp(4u^2/8) = \exp\p{u^2/2}
\;\text{ for all }u\in\RR,
$
verifying \eqref{eq:KKT}. Hence $G_0=\delta_{\bar z}$ is an NPMLE for every $\boldz\in\mathcal U$ with uniqueness due to~\citet{lindsay1993uniqueness} and
consequently,
\refstepcounter{equation}\label{eq:onU}$\hat{\boldmu}^{\mathrm{EB}}(\boldz)=\bar z\,\mathbf 1_n \;\text{for all }\boldz\in\mathcal U.$\\

\noindent\textbf{(ii).} We now exhibit an open set on which the NPMLE is not a point mass.
Fix $\varepsilon\in(0,1)$ and let $a>0$ be large. Consider the three-point mixing distribution
$G_\varepsilon := (1-\varepsilon)\delta_0+(\varepsilon/2)\delta_{-a}+(\varepsilon/2)\delta_a$,
and the data vector $\boldz^\star:=(-a,a,0,\ldots,0)\in\RR^n$, for which $\bar z^\star=0$.

Let $L(G;\boldz):=\prod_{i=1}^n f_G(z_i)$ denote the marginal likelihood. We bound:
$f_{G_\varepsilon}(0)=(1-\varepsilon)\varphi(0)+\varepsilon\varphi(a)\ge (1-\varepsilon)\varphi(0),\; f_{G_\varepsilon}(\pm a)=(1-\varepsilon)\varphi(a)+\frac{\varepsilon}{2}\varphi(0)+\frac{\varepsilon}{2}\varphi(2a) \ge \frac{\varepsilon}{2}\varphi(0).$
Therefore
$$
L(G_\varepsilon;\boldz^\star)\ge \Bigl(\frac{\varepsilon}{2}\varphi(0)\Bigr)^2\Bigl((1-\varepsilon)\varphi(0)\Bigr)^{n-2}
= \varphi(0)^n\Bigl(\frac{\varepsilon}{2}\Bigr)^2(1-\varepsilon)^{n-2}.
$$
On the other hand, among point masses $G=\delta_\mu$, the likelihood is maximized at $\mu=\bar z^\star=0$,
so
$$
\sup_{\mu\in\RR}L(\delta_\mu;\boldz^\star)=L(\delta_0;\boldz^\star)
=\varphi(a)^2\varphi(0)^{n-2}=\varphi(0)^n e^{-a^2}.
$$
Hence
$
L(G_\varepsilon;\boldz^\star)/\sup_{\mu}L(\delta_\mu;\boldz^\star)
\ge (\varepsilon/2)^2(1-\varepsilon)^{n-2}e^{a^2},
$
which is $>1$ for $a$ sufficiently large. Thus for such a choice of $a$,
$L(G_\varepsilon;\boldz^\star)>\sup_{\mu}L(\delta_\mu;\boldz^\star)$,
so no point mass can be an NPMLE at $\boldz^\star$.

Define the continuous function
$H(\boldz):=\log L(G_\varepsilon;\boldz)-\log\p{\sup_{\mu}L(\delta_\mu;\boldz)}$.
Since $H(\boldz^\star)>0$ and $H$ is continuous, there exists an open neighborhood
$\mathcal V$ of $\boldz^\star$ such that $H(\boldz)>0$ for all $\boldz\in\mathcal V$.
In particular, for every $\boldz\in\mathcal V$, the NPMLE \smash{$\widehat G(\boldz)$} cannot be
a point mass.

For any (nondegenerate) mixing distribution $G$, the map $z\mapsto\delta_G(z)$ is strictly increasing:
writing $m_k(z):=\int \mu^k\varphi(z-\mu)\,\dd G(\mu)$ so that $\delta_G=m_1/m_0$, one computes
$\delta_G'(z)=\cb{m_2(z)m_0(z)-m_1(z)^2}/{m_0(z)^2}=\Var[G]{\mu\mid Z=z}>0.$
Therefore, for every $\boldz\in\mathcal V$ (where $\widehat G(\boldz)$ is nondegenerate)
the coordinates $\hat\mu_i^{\mathrm{EB}}(\boldz)=\delta_{\widehat G(\boldz)}(z_i)$
cannot all be equal unless the $z_i$ are all equal, which does not occur on the neighborhood
$\mathcal V$. In particular,
\refstepcounter{equation}\label{eq:onV}$\hat{\boldmu}^{\mathrm{NPMLE}}(\boldz)\neq \bar z\,\mathbf 1_n \;\text{for all }\boldz\in\mathcal V.$
\end{proof}

\section{Contraction and regret proofs for homoscedastic Gaussian decision problem}

\subsection{Notation}

We recall the relevant notions of metric entropy.
An $\varepsilon$-bracket $[l,u]$ with respect to $\Dhel$ is a pair of functions $l\le u$ such that $\Dhel(l,u)\le\varepsilon$.
A function $f$ belongs to the bracket if $l\le f\le u$ pointwise. Given a class  $\mathcal{F}$, the $\varepsilon$-bracketing number $N_{[]}(\varepsilon,\mathcal{F},\Dhel)$ is the minimum number of $\varepsilon$-brackets needed to cover $\mathcal{F}$, and the bracketing entropy is its logarithm.
Note that in~\eqref{eq:hellinger}, the Hellinger distance was defined for probability densities.
In the bracketing context the bounding functions $l$ and $u$ need not integrate to one, however, we note that~\eqref{eq:hellinger} also makes sense for any non-negative, square-root-integrable functions and coincides with the usual Hellinger distance when both arguments are densities.

In what follows, we will use the following notation for Gaussian location mixtures,
\begin{equation}
f_{G,v}(\cdot):=\int \varphi(\,\cdot-\mu; v)\,\dd G(\mu),\;\;\; f_{G}(\cdot):=\int \varphi(\,\cdot-\mu)\,\dd G(\mu),
\label{eq:gaussian_loc_mixtures}
\end{equation}
where  $\varphi(\cdot;v)$ is the density of the $\mathrm{N}(0,v)$ distribution. We write $\varphi(\cdot)$ for $\varphi(\cdot;1)$ and $f_G$ for $f_{G,1}$ when the variance is $1$. We also introduce the following notation for the class of Gaussian location mixtures with variance $v \in [\ubar{v},\bar{v}]$ and mixing distributions supported on $[-M,M]$:
\begin{equation}
\mathcal F_{M,\ubar{v},\bar v}:=\cb{
f_{G,v}(\,\cdot\,)\;:
\; G\in\mathcal{P}([-M,M]),\; v \in [\ubar{v}, \bar{v}]
},
\label{eq:gaussian_mixture_class}
\end{equation}
For two densities $f$ and $h$ on $\RR$, we also define:
\begin{equation}
\VKL{f}{h} := \int f(z) \p{\log\frac{f(z)}{h(z)} - \DKL{f}{h}}^2 \dd z.
\label{eq:vkl}
\end{equation}
The KL neighborhood around $f_{\boldmu}$ is given by
\begin{equation}
B(f_{\boldmu}, \varepsilon) := \cb{ G\,:\,\DKL{f_{\boldmu}}{f_G} < \varepsilon^2,\;\; \VKL{f_{\boldmu}}{f_G} < \varepsilon^2 }.
\label{eq:dkl_vkl_ball}
\end{equation}
Finally, we define for any nonnegative function $h$,
\begin{equation}
L_n(h):=\prod_{i=1}^n \frac{h(Z_i)}{f_{\boldmu}(Z_i)},
\label{eq:Ln}
\end{equation}
keeping the data-generating vector $\boldmu$ implicit in the notation.

\subsection{Preliminary lemmata for posterior contraction proofs}

\begin{lemm}\label{lem:denom-bernstein}
Assume $\max_{1\le i\le n}|\mu_i|\le M$ and that Specification~\ref{assu:DPGM} holds for $\Pi$ (with $R \geq M$ if $H$ is Base-Compact).
Let
$\varepsilon_n \equiv \varepsilon_n(\Pi)$ be as in~\eqref{eq:epsilon_n}. Let
 $\mathcal K_n = \mathcal{K}(\varepsilon_n)$ be the local set of Lemma~\ref{lem:G-small-k} (with $\varepsilon=\varepsilon_n$, $k=2$, $m_{\star}\equiv0$, $s_{\star} \equiv 1$). Then there exists a constant $D>0$, depending only on the parameters
in Specification~\ref{assu:DPGM} and on $M$, such that for all sufficiently large $n$ the following holds with probability
at least $1-3/n^2$:
\begin{equation}\label{eq:denom-lb-bern}
\int \prod_{i=1}^n \frac{f_G(Z_i)}{f_{\boldmu}(Z_i)}\,\dd\Pi(G)
\;\geq\;
\exp\!\big\{- (1+D)\,n\varepsilon_n^2 \big\}\cdot \Pi(\mathcal K_n).
\end{equation}
\end{lemm}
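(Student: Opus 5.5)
The plan is the standard evidence lower bound of Ghosal--Ghosh--van der Vaart (Lemma 8.1 in their 2000 paper), with two changes. The ratio $f_G/f_{\boldmu}$ is not a likelihood ratio under $\PP[\boldmu]{\cdot}$, and the observations are independent but not identically distributed. I will use the properties of the local set $\mathcal K_n$ from Lemma~\ref{lem:G-small-k}. Roughly, these are that $\mathcal K_n \subseteq B(f_{\boldmu},\varepsilon_n)$, possibly with the strengthened control ($k=2$) of the log-ratio. In addition, every $G\in\mathcal K_n$ has $f_G$ pointwise comparable to $f_{\boldmu}$ on a region of width $O(\sqrt{\log n})$ around $[-M,M]$, with $\log(f_{\boldmu}/f_G)$ growing at most polynomially in $|z|$ outside it.

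Let $\bar\Pi_n := \Pi(\cdot\cap\mathcal K_n)/\Pi(\mathcal K_n)$. Restricting the integral to $\mathcal K_n$ and applying Jensen's inequality to $\log$ gives
\[
\log\int\prod_{i=1}^n\frac{f_G(Z_i)}{f_{\boldmu}(Z_i)}\,\dd\Pi(G)\;\ge\;\log\Pi(\mathcal K_n)+\sum_{i=1}^n W_i,
\qquad
W_i:=\int\log\frac{f_G(Z_i)}{f_{\boldmu}(Z_i)}\,\dd\bar\Pi_n(G).
\]
It therefore suffices to show $\sum_i W_i\ge -(1+D)n\varepsilon_n^2$ with probability at least $1-3/n^2$. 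The $W_i$ are independent under $\PP[\boldmu]{\cdot}$. By the averaging identity from the proof sketch of Theorem~\ref{thm:compound-contraction} and Fubini,
\[
\sum_{i=1}^n\EE[\boldmu]{W_i}=-n\int\DKL{f_{\boldmu}}{f_G}\,\dd\bar\Pi_n(G)\ge -n\varepsilon_n^2 .
\]
For the variance, Jensen gives $\sum_i\Var[\boldmu]{W_i}\le\sum_i\EE[\boldmu]{(W_i-\EE W_i)^2}$. Because $\frac1n\sum_i\varphi(\cdot-\mu_i)=f_{\boldmu}$, this is bounded by $n\int\int f_{\boldmu}\log^2(f_{\boldmu}/f_G)\,\dd z\,\dd\bar\Pi_n\lesssim n\varepsilon_n^2$, using the $\VKL{\cdot}{\cdot}$ and $\DKL{\cdot}{\cdot}$ bounds in $B(f_{\boldmu},\varepsilon_n)$. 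Note that the per-coordinate variances are not individually controlled, but only their sum is needed.

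Chebyshev's inequality alone would give failure probability of order $1/(n\varepsilon_n^2)$, which is too weak, and this is why the statement mentions Bernstein. I will truncate the data. Let $E_1=\{\max_i|Z_i|\le M+2\sqrt{\log n}\}$; a Gaussian tail bound gives $\PP[\boldmu]{E_1^c}\le 2/n^2$ (adjusting the constant so this holds). On the truncation region, the pointwise comparability property of $\mathcal K_n$ gives $|\log(f_G/f_{\boldmu})|\le b_n$ with $b_n\lesssim\log n$. I then apply Bernstein's inequality to the truncated variables $W_i\ind\{|Z_i|\le M+2\sqrt{\log n}\}$. Their mean differs from $\EE W_i$ by a negligible tail term, bounded by Cauchy--Schwarz using the polynomial growth of the log-ratio and Gaussian tails. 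This yields
\[
\PP\!\Big[\textstyle\sum_i (W_i-\EE W_i)< -Dn\varepsilon_n^2\Big]\le \exp\!\Big(-\frac{D^2n^2\varepsilon_n^4}{2(Cn\varepsilon_n^2+b_nDn\varepsilon_n^2/3)}\Big)=\exp\!\big(-c\,Dn\varepsilon_n^2/\log n\big).
\]
Since $n\varepsilon_n^2\ge\log^2 n$, this is at most $n^{-2}$ once $D$ is large enough. A union bound over the two failure events gives total probability $3/n^2$.

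The main obstacle is the uniform sup-norm control of $\log(f_G/f_{\boldmu})$ over $\mathcal K_n$ on the truncation region. This is what makes Bernstein sharp enough, and it rests entirely on how $\mathcal K_n$ is constructed in Lemma~\ref{lem:G-small-k}. Concretely, $G$ must place mass of order $\varepsilon_n^{O(1)}$ near each $\mu_i$, so that the ratio is at least $\varepsilon_n^{O(1)}$, and the Gaussian tails of $f_G$ (from the bounded $A\le\bar A$ and the bulk mass of $Q$ near $[-M,M]$) must dominate those of $f_{\boldmu}$ up to polynomial factors. I would verify these two inequalities first, and only then run the concentration step.
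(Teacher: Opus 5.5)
Your proposal is correct and matches the paper's proof essentially step for step. The shared steps are:
\begin{itemize}
\item restriction to $\mathcal K_n$ with Jensen;
\item the mean bound $-n\varepsilon_n^2$ via $\frac1n\sum_i\varphi(\cdot-\mu_i)=f_{\boldmu}$;
\item a variance bound of order $n\varepsilon_n^2$ from the KL and $V$ control;
\item truncation at $M+O(\sqrt{\log n})$;
\item Bernstein with an $O(\log n)$ envelope.
\end{itemize}
The truncation constant $2$ must be enlarged: $M+2\sqrt{\log n}$ gives only $2/n$ after the union bound, whereas $\sqrt{8\log n}$, as in the paper, gives $2n^{-3}$.

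The envelope argument should be simplified. You do not need mass near each $\mu_i$, and $\mathcal K_n$ does not provide it. The paper uses $Q([-M_0,M_0])\ge c_0$, which follows from $Q([-M-1,M+1]^c)\le\varepsilon_n^{L_0}$, together with $A\le\bar A$. This gives $f_G(z)\gtrsim e^{-(|z|+M_0)^2/2}$, hence $\sup_{G\in\mathcal K_n}|\log\{f_G(z)/f_{\boldmu}(z)\}|\le C_0(1+z^2)$, and so $b_n\lesssim\log n$ on the truncation region.
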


\begin{proof}
By definition, $n\varepsilon_n^2\gtrsim \log^2 n$.
Let $\Pi_0$ denote $\Pi$ restricted to $\mathcal K_n$ and renormalized to a probability measure, and set
$$
\psi(z):=\int \log\p{\frac{f_G(z)}{f_{\boldmu}(z)}}\,\dd\Pi_0(G),
\quad
U:=\sum_{i=1}^n \psi(Z_i).
$$
We first record three facts about $\mathcal K_n$. First,
$\mathcal K_n\subseteq B(f_{\boldmu},\varepsilon_n)$ (where the latter is defined in~\eqref{eq:dkl_vkl_ball}). This is immediate by noting that $\mathcal{G}^{(2)}(\varepsilon) \equiv B(f_{\boldmu},\varepsilon_n)$.
Second, there exist a fixed radius $M_0>0$ and a fixed
constant $c_0\in(0,1]$, neither depending on $n$, such that every $(Q,A)\in\mathcal K_n$ satisfies
$Q([-M_0,M_0])\ge c_0$. Finally, $A\le\bar A$ holds $\Pi$-almost surely by Specification~\ref{assu:DPGM}.

Restricting the integral to $\mathcal K_n$ and applying Jensen's inequality,
$$
\begin{aligned}
\log\p{\int \prod_{i=1}^n \frac{f_G(Z_i)}{f_{\boldmu}(Z_i)}\,\dd\Pi(G)}
&\geq
\log\p{\int \prod_{i=1}^n \frac{f_G(Z_i)}{f_{\boldmu}(Z_i)}\,\dd\Pi_0(G)} + \log\Pi(\mathcal K_n)
&\geq U + \log\Pi(\mathcal K_n).
\end{aligned}
$$
Hence \eqref{eq:denom-lb-bern} follows once we show that, for a suitable $D>0$,
\begin{equation}\label{eq:denom-U-goal}
\PP[\boldmu]{ U \leq -(1+D)\,n\varepsilon_n^2 } \leq \frac{3}{n^2}
\end{equation}
for all sufficiently large $n$.

\medskip
\noindent\textbf{Mean and variance of $U$.}
Recall the argument presented in the main text:
$$
\EE[\boldmu]{ \sum_{i=1}^n \log\p{\frac{f_G(Z_i)}{f_{\boldmu}(Z_i)}} }
= n\int \log\p{\frac{f_G(z)}{f_{\boldmu}(z)}}\,f_{\boldmu}(z)\,\dd z
= -n\,\DKL{f_{\boldmu}}{f_G}.
$$
Since $\Pi_0$ is supported on $\mathcal K_n\subseteq B(f_{\boldmu},\varepsilon_n)$, Fubini's theorem gives
\[
\EE[\boldmu]{U} = -n\int \DKL{f_{\boldmu}}{f_G}\,\dd\Pi_0(G) \geq -n\varepsilon_n^2.
\]
Moreover,
$$
\begin{aligned}
\Var[\boldmu]{U} & =\sum_{i=1}^n \Var[\mu_i]{ \int \log\p{\frac{f_{\boldmu}(Z_i)}{f_G(Z_i)}} \dd\Pi_0(G) }
 \\ &\leq \sum_{i=1}^n \EE[\mu_i]{ \cb{ \int \p{\log\p{\frac{f_{\boldmu}(Z_i)}{f_G(Z_i)}} - \DKL{f_{\boldmu}}{f_G}} \dd\Pi_0(G) }^2 } \\
& \leq \int \sum_{i=1}^n \EE[\mu_i]{ \p{\log\p{\frac{f_{\boldmu}(Z_i)}{f_G(Z_i)}} - \DKL{f_{\boldmu}}{f_G}}^2 } \dd\Pi_0(G) \\
& = \int \sum_{i=1}^n \int  \p{\log\p{\frac{f_{\boldmu}(z)}{f_G(z)}} - \DKL{f_{\boldmu}}{f_G}}^2 \varphi(z-\mu_i) \dd z \,\dd\Pi_0(G)\\
&= n \int \int  \p{\log\p{\frac{f_{\boldmu}(z)}{f_G(z)}} - \DKL{f_{\boldmu}}{f_G}}^2 f_{\boldmu}(z) \dd z \,\dd\Pi_0(G)
 \\ &= n \int \VKL{f_{\boldmu}}{f_G} \dd\Pi_0(G) \leq n \varepsilon_n^2,
\end{aligned}
$$
where the second inequality follows from Jensen's inequality and Fubini.

\medskip
\noindent\textbf{Envelope.}
Fix $G=Q*\mathrm N(0,A)\in\mathcal K_n$, where we recall that $A \in [0,\bar{A}]$. Then, $f_G(z)\leq 1/{\sqrt{2\pi(1+A)}}\leq 1/{\sqrt{2\pi}}$ and
$$
f_G(z)\geq Q([-M_0,M_0])\inf_{|x|\le M_0}\varphi(z-x;1+A)
\geq \frac{c_0}{\sqrt{2\pi(1+\bar A)}}\,\exp\cb{-(|z|+M_0)^2/2}.
$$
The variance-one location mixture $f_{\boldmu}$ obeys
\smash{$
\exp\{-(|z|+M)^2/2\}/\sqrt{2\pi} \leq f_{\boldmu}(z)\leq 1/\sqrt{2\pi}.
$}
Combining these bounds, there is a constant $C_0>0$, depending only on $(M,M_0,c_0,\bar A)$, such that
$\sup_{G\in\mathcal K_n}\abs{\log\cb{f_G(z)/f_{\boldmu}(z)}}\leq C_0(1+z^2),\; z\in\RR.$
Averaging over $\Pi_0$ yields the envelope
\begin{equation}\label{eq:g-envelope}
|\psi(z)|\leq C_0(1+z^2),\quad z\in\RR.
\end{equation}

\medskip
\noindent\textbf{Truncation and Bernstein.}
Let $T_n:=M+\sqrt{8\log n}$ and $A_n:=\cb{\max_{1\le i\le n}|Z_i|\le T_n}$. Since $Z_i\sim\mathrm N(\mu_i,1)$ with
$|\mu_i|\le M$,
$$
\PP[\boldmu]{A_n^c}\leq \sum_{i=1}^n \PP[\mu_i]{|Z_i|>T_n}\leq 2n\,e^{-(T_n-M)^2/2}=2n^{-3}.
$$
By \eqref{eq:g-envelope}, on $A_n$ we have $|\psi(z)|\le C_0(1+T_n^2)\le C_1\log n$ for $|z|\le T_n$ and all large $n$.
Define the truncated, centered summands
$$
Y_{i,n}:=\big(\psi(Z_i)-\EE[\mu_i]{\psi(Z_i)}\big)\,\ind\{|Z_i|\le T_n\},\quad i=1,\ldots,n.
$$
On $\{|Z_i|\le T_n\}$ we have $|\psi(Z_i)|\le C_1\log n$, while \eqref{eq:g-envelope} gives
$|\EE[\mu_i]{\psi(Z_i)}|\le C_0\,\EE[\mu_i]{1+Z_i^2}=C_0(2+\mu_i^2)\le C_0(2+M^2)$. Hence for $n$ large,
$|Y_{i,n}|\leq C_1\log n + C_0(2+M^2)\leq 2C_1\log n =: B_n.$
Moreover $\Var[\mu_i]{Y_{i,n}}\le \Var[\mu_i]{\psi(Z_i)}$, so
$\sum_{i=1}^n \Var[\mu_i]{Y_{i,n}}\le \Var[\boldmu]{U}\le n\varepsilon_n^2$. Next,
$\abs{\EE[\mu_i]{Y_{i,n}}} \leq \EE[\mu_i]{|\psi(Z_i)|\,\ind\{|Z_i|>T_n\}} + \EE[\mu_i]{|\psi(Z_i)|}\,\PP[\mu_i]{|Z_i|>T_n}.$
Using \eqref{eq:g-envelope}, the Cauchy--Schwarz inequality, and the Gaussian tail bound with
$T_n-M=\sqrt{8\log n}$, both terms are $\lesssim_M n^{-2}$, whence
$\sum_{i=1}^n |\EE[\mu_i]{Y_{i,n}}| \lesssim_M n^{-1}=o(n\varepsilon_n^2)$.

Write $\widetilde Y_{i,n}:=Y_{i,n}-\EE[\mu_i]{Y_{i,n}}$. On $A_n$ we have $U-\EE[\boldmu]{U}=\sum_{i=1}^n Y_{i,n}$, so for
all large $n$,
$$
\cb{U-\EE[\boldmu]{U}\le -D n\varepsilon_n^2}\cap A_n
\;\subseteq\;
\cb{\sum_{i=1}^n Y_{i,n}\le -D n\varepsilon_n^2}
\;\subseteq\;
\cb{\sum_{i=1}^n \widetilde Y_{i,n}\le -\tfrac{D}{2} n\varepsilon_n^2}.
$$
The \smash{$\widetilde Y_{i,n}$} are independent, mean-zero, satisfy \smash{$|\widetilde Y_{i,n}|\le 2B_n$}, and
$\sum_{i=1}^n \VarInline[\mu_i]{\widetilde Y_{i,n}}\le n\varepsilon_n^2.$
Bernstein's inequality~\citep[Theorem 3.1.7]{gine2021mathematical} therefore gives, for all large $n$,
$$
\PP[\boldmu]{\sum_{i=1}^n \widetilde Y_{i,n}\le -\tfrac{D}{2} n\varepsilon_n^2}
\leq
\exp\p{-\frac{(D n\varepsilon_n^2/2)^2/2}{\,n\varepsilon_n^2+\tfrac{2}{3}B_n\cdot \tfrac{D}{2}n\varepsilon_n^2\,}}
\leq
\exp\p{-c_1 D\,\frac{n\varepsilon_n^2}{B_n}},
$$
where $c_1>0$ depends only on $M$ and we used $B_n\to\infty$. Since $B_n\le 2C_1\log n$ and
$n\varepsilon_n^2\gtrsim \log^2 n$, we have $n\varepsilon_n^2/B_n\gtrsim \log n$, so the right-hand side is at
most $n^{-c_2 D}$ for a constant $c_2>0$. Fix $D$ large enough that $c_2 D\ge 2$.

Combining the last display with $\PP[\boldmu]{A_n^c}\le 2n^{-3}$ yields
$\PP[\boldmu]{U-\EE[\boldmu]{U}\le -D n\varepsilon_n^2}\le 3n^{-2}$. Finally, since
$\EE[\boldmu]{U}\ge -n\varepsilon_n^2$, the event $\{U\le -(1+D)n\varepsilon_n^2\}$ is contained in
$\{U-\EE[\boldmu]{U}\le -D n\varepsilon_n^2\}$, establishing \eqref{eq:denom-U-goal} and hence
\eqref{eq:denom-lb-bern}.
\end{proof}

\begin{lemm}\label{lemm:upper_bound_small}
Suppose that
$\max_{1\le i\le n}|\mu_i|\le M$. and that Specification~\ref{assu:DPGM} holds for $\Pi$ (with $R \geq M$ if $H$ is Base-Compact). Let $\varepsilon_n = \varepsilon_n(\Pi)$ be as
in~\eqref{eq:epsilon_n}. Then for any $D>0$ there exists $C>0$ such that, with
$\mathcal U_n:=\cb{G:\Dhel(f_{\boldmu},f_G)\ge C\varepsilon_n}$, the following holds for all sufficiently large $n$ with probability at least $1-1/n^2$:
$\int_{\mathcal U_n} L_n(f_G)\,\dd\Pi(G)\le \exp\!\big(-D\,n\varepsilon_n^2\big).$
\end{lemm}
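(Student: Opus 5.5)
The plan is a Wong--Shen/Ghosal--van der Vaart type argument: split $\mathcal U_n$ into a sieve part and a remainder, handle the sieve part with Hellinger brackets and the remainder with a prior-mass bound. The one adaptation to misspecification is to use an AM--GM inequality over the index $i$ in place of the usual likelihood-ratio martingale. The key observation is the following. For any nonnegative $h$, independence and AM--GM give
\[
\EE[\boldmu]{\prod_{i=1}^n \frac{h(Z_i)}{f_{\boldmu}(Z_i)}}=\prod_{i=1}^n\int \frac{h\,\varphi(\cdot-\mu_i)}{f_{\boldmu}}\le\Big(\frac1n\sum_{i=1}^n\int \frac{h\,\varphi(\cdot-\mu_i)}{f_{\boldmu}}\Big)^n=\Big(\int h\Big)^n ,
\]
and similarly $\EE[\boldmu]{\sqrt{L_n(h)}}\le(\int\sqrt{h f_{\boldmu}})^n$. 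Hence $f_{\boldmu}$ plays exactly the role of the true density in the well-specified proof, with $\EE[\boldmu]{L_n(f_G)}\le 1$ for every $G$.

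\textbf{Step 1 (sieve).} Let $\mathcal F_n$ be the set of $G=Q*\mathrm N(0,A)$ with $A\le\bar A$ and $Q([-a_n,a_n]^c)\le \varepsilon_n^2$. Under \ref{assu:DP:compact} take $a_n=R$, so the complement is $\Pi$-null. Under \ref{assu:DP:gauss} take $a_n\asymp\sqrt{\log n}$; standard Dirichlet process tail bounds then give $\Pi(\mathcal F_n^c)\le \exp(-(D+2)n\varepsilon_n^2)$, since $n\varepsilon_n^2\gtrsim\log^2 n$. By Fubini and the display above, $\EE[\boldmu]{\int_{\mathcal F_n^c}L_n(f_G)\,\dd\Pi}\le\Pi(\mathcal F_n^c)$. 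Markov's inequality at level $\exp(-Dn\varepsilon_n^2)/2$ then disposes of this piece with probability $1-1/(2n^2)$.

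\textbf{Step 2 (entropy).} Each $f_G$ with $G\in\mathcal F_n$ is within $L^1$-distance $2\varepsilon_n^2$ of a Gaussian location mixture with mixing distribution on $[-a_n,a_n]$ and variance $1+A\in[1,1+\bar A]$, i.e.\ of an element of $\mathcal F_{a_n,1,1+\bar A}$. Hellinger bracketing entropy bounds for such classes (moment matching / Taylor expansion of Gaussian kernels, as in \citet{ghosal2001entropies}) give
\[
\log N_{[]}(\varepsilon_n,\mathcal F_{a_n,1,1+\bar A},\Dhel)\lesssim \max(a_n,1)\,\log^{2}(1/\varepsilon_n)\lesssim n\varepsilon_n^2 .
\]
The brackets $[l_j,u_j]$ can be chosen with $\int u_j\le 1+c\varepsilon_n^2$. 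The perturbation term is absorbed by enlarging the upper functions by an envelope of $L^1$-size $\varepsilon_n^2$.

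\textbf{Step 3 (tests via brackets).} Keep only brackets meeting $\mathcal U_n$. For such a bracket the triangle inequality gives $\Dhel(u_j,f_{\boldmu})\ge (C-2)\varepsilon_n$. Using $\int\sqrt{u_j f_{\boldmu}}\le \tfrac12(1+\int u_j)-\Dhel^2(u_j,f_{\boldmu})$ we get
\[
\EE[\boldmu]{\sqrt{L_n(u_j)}}\le\exp\{-n(C-2)^2\varepsilon_n^2+cn\varepsilon_n^2\}.
\]
Then
\[
\int_{\mathcal U_n\cap\mathcal F_n}L_n(f_G)\,\dd\Pi\le\max_j L_n(u_j)\le\Big(\sum_j\sqrt{L_n(u_j)}\Big)^2 .
\]
Markov's inequality at level $\exp(-Dn\varepsilon_n^2/2)/\sqrt2$ bounds the failure probability by $\exp\{\log N-(C-2)^2n\varepsilon_n^2+(c+D/2+1)n\varepsilon_n^2\}\le 1/(2n^2)$, once $C$ is large relative to $D$ and the entropy constant.

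I expect the main obstacle to be Step 2 under \ref{assu:DP:gauss}. There the support radius $a_n$ grows, the DP tail must be traded off against entropy, and the mass of $Q$ outside $[-a_n,a_n]$ must be absorbed into the brackets without destroying the $\int u_j\le1+O(\varepsilon_n^2)$ control. The AM--GM reduction in Step 0 is what makes the misspecified setting no harder than the standard one.
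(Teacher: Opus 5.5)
You have the same AM--GM reduction as the paper, $\mathbb{E}_{\boldmu}\sqrt{L_n(h)}\le(\int\sqrt{h f_{\boldmu}})^n$, and the same Markov/union-bound finish. Under Base-Compact your bracketing route works, provided you bracket at Hellinger level $\varepsilon_n^2$ rather than $\varepsilon_n$. A level-$\varepsilon$ bracket only guarantees $\int u_j\le(1+\sqrt2\,\varepsilon)^2=1+O(\varepsilon)$, and the entropy stays $O(\log^2 n)$ at the finer level.

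The genuine gap is Base-Gauss at the benchmark rate $\varepsilon_n=n^{-1/2}\log n$ ($\kappa=0$). Your Step~1 tail claim is false for $a_n\asymp\sqrt{\log n}$. Indeed $Q([-a_n,a_n]^c)\sim\mathrm{Beta}(\alpha p_n,\alpha(1-p_n))$ with $p_n=H([-a_n,a_n]^c)$, so $\Pi\{Q([-a_n,a_n]^c)>\varepsilon_n^2\}$ is of order $p_n\log(1/\varepsilon_n)$, linear in $p_n$. That is only polynomially small, whereas your Markov step needs it below $e^{-Dn\varepsilon_n^2}/n^2=e^{-D\log^2 n}/n^2$. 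Achieving that forces $a_n\gtrsim\sqrt n\,\varepsilon_n\asymp\log n$. The available entropy bounds for mixtures with mixing support $[-a_n,a_n]$ are then of order $a_n\log^{3/2}n\asymp\log^{5/2}n$ at best, which exceeds $n\varepsilon_n^2=\log^2 n$. This trade-off is exactly why the sieve argument of \citet{ghosal2001entropies} yields only $(\log n)^{3/2}/\sqrt n$ for a normal base measure.

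Step~2 has a second, independent problem: no integrable envelope of $L^1$-size $\varepsilon_n^2$ can absorb the outside mass. The supremum of $\int_{|x|>a_n}\varphi(z-x;1+A)\,\dd Q(x)$ over $Q$ with $Q([-a_n,a_n]^c)\le\varepsilon_n^2$ equals $\varepsilon_n^2/\sqrt{2\pi(1+A)}$ at every $|z|\ge a_n$, which is not integrable.

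The missing idea is to localize to the data rather than to the mixing support. Let $\mathcal A_n=\{\max_i|Z_i|\le T_n\}$ with $T_n=M+\sqrt{8\log n}$, which has probability at least $1-2n^{-3}$. On $\mathcal A_n$, $L_n(f_G)$ depends on $f_G$ only through its values on $[-T_n,T_n]$. \citet[Lemma~2]{zhang2009generalized} gives an $n^{-2}$-cover, in $\sup_{|z|\le T_n}$, of the class of \emph{all} unit-variance Gaussian location mixtures with arbitrary mixing law on $\mathbb R$, with $\log N\lesssim\log^2 n$. This class contains every $f_G$, $G=Q*\mathrm N(0,A)$, under either base measure.

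For each cell meeting $\mathcal U_n$, pick a pivot $f_{0,j}$ in that cell with $\Dhel(f_{0,j},f_{\boldmu})\ge C\varepsilon_n$, and set $u_j=f_{0,j}+2n^{-2}\mathbf{1}_{[-T_n,T_n]}$. This $u_j$ dominates every $f_G$ in the cell on the window, which suffices on $\mathcal A_n$. Moreover $\Dhel^2(u_j,f_{0,j})\le 2n^{-2}T_n$, so $\int\sqrt{u_jf_{\boldmu}}\le 1-C^2\varepsilon_n^2+2\sqrt{T_n}/n\le 1-C^2\varepsilon_n^2/2$. With these upper functions no sieve and no prior tail bound are needed, and your Step~3 goes through unchanged for both base measures.
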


\begin{proof}
Write $r_n:=C\varepsilon_n$, $\delta_n:=n^{-2}$,
$
T_n:=M+\sqrt{8\log n}$ and
$
\mathcal A_n:=\{\max_{1\le i\le n}|Z_i|\le T_n\}.
$
Since
$Z_i\sim\mathrm N(\mu_i,1)$ with $|\mu_i|\le M$, a union bound gives
$\PP[\boldmu]{\mathcal A_n^c}\le 2n\exp\{-(T_n-M)^2/2\}=2n^{-3}$.

Recall the class $\mathcal F_{\infty,1,1}$ of all standard normal location
mixtures defined after~\eqref{eq:gaussian_mixture_class}, and define the supremum
norm on the data window by
$\Norm{h}_{\infty,T_n}:=\sup_{|z|\le T_n}\abs{h(z)}$.
With probability $1$, any $G$ in the support of $\Pi$ is of the form
$G=Q*\mathrm N(0,A)\in\mathcal P(\RR)$, and so $f_G\in\mathcal F_{\infty,1,1}$
$\Pi$-almost surely, under either base measure and without any restriction on the
support of $Q$. By \citet[Lemma 2]{zhang2009generalized}, there exists a
$\delta_n$-covering $\{h_1,\ldots,h_N\}$ of $\mathcal F_{\infty,1,1}$ with respect to
$\Norm{\cdot}_{\infty,T_n}$ satisfying
\begin{equation}
\log N
\le
a_1\log^2 (n)\cdot\max\cb{\frac{T_n}{\sqrt{2\log n}},\,1}
\le
a_2\log^2 n
\le
a_2\,n\varepsilon_n^2,
\label{eq:zhang_entropy}
\end{equation}
where $a_1$ is a universal constant, $a_2$ depends only on $M$, and we used
$n\varepsilon_n^2\ge\log^2 n$.

Let $J$ be the set of indices $j\le N$ for which there exists
$f_{0,j}\in\mathcal F_{\infty,1,1}$ with
\begin{equation}
\Norm{f_{0,j}-h_j}_{\infty,T_n}\le\delta_n,
\;\;\;\;
\Dhel(f_{0,j},f_{\boldmu})\ge r_n,
\label{eq:pivot}
\end{equation}
and for each $j\in J$ fix one such $f_{0,j}$. We note that the covering, the set $J$,
and the choices $f_{0,j}$ depend only on $(n, M, \boldmu)$ and not on the data.

If $\Pi(\mathcal U_n)=0$ there is nothing to prove, so assume otherwise. Take any
$G\in\mathcal U_n$ in the support of $\Pi$ and let $h_j$ be a point in the cover with
$\Norm{f_G-h_j}_{\infty,T_n}\le\delta_n$. Since $f_G$ itself
satisfies~\eqref{eq:pivot} for this $j$, we have $j\in J$, and moreover, for all
$|z|\le T_n$,
$
f_G(z)\le h_j(z)+\delta_n\le f_{0,j}(z)+2\delta_n.
$
Define $u_j:=f_{0,j}+2\delta_n\ind_{[-T_n,T_n]}$ for $j\in J$. Since $L_n$
in~\eqref{eq:Ln} evaluates its argument only at $Z_1,\ldots,Z_n$, it follows that
on the event $\mathcal A_n$,
\begin{equation}
\int_{\mathcal U_n} L_n(f_G)\,\dd\Pi(G)\le\max_{j\in J}L_n(u_j).
\label{eq:net_domination}
\end{equation}
We next bound the $(1/2)$-moment of $L_n(u_j)$ under $\PP[\boldmu]{\cdot}$. By
independence and the AM--GM inequality, for any nonnegative function $h$,
\begin{equation}\label{eq:sqrt_moment_basic}
\EE[\boldmu]{\sqrt{L_n(h)}}
=
\prod_{i=1}^n \int \sqrt{\frac{h(z)}{f_{\boldmu}(z)}}\,\varphi(z-\mu_i)\,\dd z
\le
\p{\int \sqrt{h(z)f_{\boldmu}(z)}\,\dd z}^n.
\end{equation}
Next note that
$u_j \geq f_{0,j}$ on all of $\RR$, and hence, using $(\sqrt{a}-\sqrt{b})^2\le a-b$ for $a\ge b\ge0$,
$\Dhel^2(f_{0,j},u_j) \le \frac{1}{2}\int \p{u_j(z)-f_{0,j}(z)}\dd z= 2\delta_n T_n.$
By Cauchy--Schwarz and the definition of Hellinger distance,
\begin{align}
&\int \sqrt{u_j(z) f_{\boldmu}(z)} \dd z  \notag \\ &\quad = \int \sqrt{f_{0,j}(z) f_{\boldmu}(z)}\dd z + \int (\sqrt{u_j(z)} -\sqrt{f_{0,j}(z)}) \sqrt{f_{\boldmu}(z)}\dd z  \notag \\
&\quad \leq \int \sqrt{f_{0,j}(z) f_{\boldmu}(z)}\dd z \,+\, \p{\int \cb{\sqrt{u_j(z)} -\sqrt{f_{0,j}(z)}}^2 \dd z}^{1/2} \p{\int f_{\boldmu}(z) \dd z}^{1/2} \notag \\
&\quad
=
1-\Dhel^2(f_{\boldmu},f_{0,j})+\sqrt{2}\,\Dhel(f_{0,j},u_j)
\le
1-r_n^2+2\sqrt{\delta_n T_n}. \label{eq:affinity_uj}
\end{align}
Since $\delta_n=n^{-2}$, we have $\sqrt{\delta_n T_n}\lesssim_M n^{-1}\log^{1/4}n$,
while $nr_n^2\ge \log^2 n$, and so for all sufficiently large $n$,
$$
\int \sqrt{u_j(z) f_{\boldmu}(z)}\dd z
\le
1-\frac{r_n^2}{2}.
$$
Combining with \eqref{eq:sqrt_moment_basic} yields
\begin{equation}\label{eq:sqrt_moment_uj}
\EE[\boldmu]{\sqrt{L_n(u_j)}}
\le
\p{1-\frac{r_n^2}{2}}^n
\le
\exp\p{-\frac{n r_n^2}{2}}.
\end{equation}
Define the event
$
\mathcal E_n
:=
\{
\max_{j\in J} L_n(u_j)\le \exp(-n r_n^2/4)
\}.
$
By Markov's inequality applied to $\sqrt{L_n(u_j)}$ and \eqref{eq:sqrt_moment_uj}, for each fixed $j\in J$,
$$
\begin{aligned}
\PP[\boldmu]{L_n(u_j)>\exp\p{-\frac{n r_n^2}{4}}}
&=
\PP[\boldmu]{\sqrt{L_n(u_j)}>\exp\p{-\frac{n r_n^2}{8}}} \\ &\le
\exp\p{\frac{n r_n^2}{8}}\EE[\boldmu]{\sqrt{L_n(u_j)}}
\le
\exp\p{-\frac{3n r_n^2}{8}}.
\end{aligned}
$$
The union bound over $j\in J$ (with $|J|\le N$) gives
\begin{equation}\label{eq:En_fail_compound}
\PP[\boldmu]{\mathcal E_n^c}
\le
N\exp\p{-\frac{3n r_n^2}{8}} \leq \exp\p{ \log N -\frac{3n r_n^2}{8}}.
\end{equation}
Since $r_n^2=C^2\varepsilon_n^2$ and $\log N\le a_2 n\varepsilon_n^2$ by \eqref{eq:zhang_entropy},
choosing $C$ sufficiently large implies that the above term can be made $\leq 1/(2n^2)$.

Finally, on $\mathcal E_n \cap \mathcal A_n$, by~\eqref{eq:net_domination},
\begin{equation}\label{eq:numer_ub_compound}
\int_{\mathcal{U}_n} L_n(f_G)\,\dd\Pi(G)
\le
\max_{j\in J} L_n(u_j)
\le
\exp\p{-\frac{n r_n^2}{4}},
\end{equation}
which also can be made $\leq \exp(-Dn\varepsilon_n^2)$ by large enough choice of $C$.
Since $\PP[\boldmu]{\mathcal A_n^c} \le 2n^{-3}$, the event
$\mathcal E_n \cap \mathcal A_n$ has probability at least $1-1/n^2$, which
concludes the proof.
\end{proof}

\subsection{Proof of Theorem~\ref{thm:compound-contraction}}
\label{sec:proof_compound_contraction}

\begin{proof}
Write $\varepsilon_n:=\varepsilon_n(\Pi)$.
Let $
\mathcal U_n
:=
\{
G:\Dhel(f_{\boldmu},f_G)\ge C\varepsilon_n
\},
$, so that
$$
\Pi(\mathcal U_n\mid \boldZ)
=
\int_{\mathcal U_n} L_n(f_G)\,\dd\Pi(G)\,\Big /
\int L_n(f_G)\,\dd\Pi(G).
$$

\noindent\textbf{Step 1:} We first lower bound the denominator. Let $D$ be the constant in
Lemma~\ref{lem:denom-bernstein}, and let $\mathcal K_n$ be the set appearing in the statement of the same lemma. Define
$\mathcal D_n := \cb{ \int L_n(f_G)\,\dd\Pi(G) \ge \exp\!\big\{-(1+D)n\varepsilon_n^2\big\} \Pi(\mathcal K_n) }.$
By Lemma~\ref{lem:denom-bernstein},
$
\PP[\boldmu]{\mathcal D_n^c}\le 3/n^2
$
for all sufficiently large $n$.

We claim that there exists a constant $C_{\mathrm{KL}}>0$ such that
\begin{equation}\label{eq:Kn-prior-mass-compound}
\Pi(\mathcal K_n)
\ge
\exp\!\big\{-C_{\mathrm{KL}} n\varepsilon_n^2\big\}
\end{equation}
Indeed, by Lemma~\ref{lem:G-small-k} (which supplied the set $\mathcal{K}_n$),
the form
\[
\Pi(\mathcal K_n) \ge c_1 \exp\!\big(-c_2\log^2(\varepsilon_n^{-1})\big) \varepsilon_n^r \log^{-r}(\varepsilon_n^{-1}) \exp\!\big(-c_3\varepsilon_n^{-\kappa} \log^\kappa(\varepsilon_n^{-1})\big).
\]
Since $n\varepsilon_n^2\gtrsim \log^2 n$ and
$\log(\varepsilon_n^{-1})\lesssim \log n$, the logarithmic terms are bounded by a constant
multiple of $n\varepsilon_n^2$. Moreover, if $\kappa>0$, then using
$\varepsilon_n
\ge
n^{-1/(2+\kappa)}\log^{\kappa/(2+\kappa)} n
$
gives
$\varepsilon_n^{-\kappa}\log^\kappa(\varepsilon_n^{-1}) \lesssim n^{\kappa/(2+\kappa)}\log^{2\kappa/(2+\kappa)} n \le n\varepsilon_n^2,$
while the case $\kappa=0$ is immediate. This proves~\eqref{eq:Kn-prior-mass-compound}.

Combining the definition of $\mathcal D_n$ with~\eqref{eq:Kn-prior-mass-compound}, we obtain that on
$\mathcal D_n$,
\begin{equation}\label{eq:denom-lb-compound}
\int L_n(f_G)\,\dd\Pi(G)
\ge
\exp\!\big\{-C_{\mathrm{den}}n\varepsilon_n^2\big\},
\end{equation}
for a constant $C_{\mathrm{den}}>0$ depending only on $M$ and the parameters in
Specification~\ref{assu:DPGM}.\\

\noindent\textbf{Step 2:} We next upper bound the numerator. Choose $D_1>C_{\mathrm{den}}$. By
Lemma~\ref{lemm:upper_bound_small}, there exists a constant $C>0$ such that,
the event
$
\mathcal E_n
:=
\{
\int_{\mathcal U_n} L_n(f_G)\,\dd\Pi(G)
\le
\exp\!\big\{-D_1 n\varepsilon_n^2\big\}
\}
$
satisfies
$
\PP[\boldmu]{\mathcal E_n^c}\le 1/n^2
$
for all sufficiently large $n$.\\

\noindent\textbf{Step 3:} On $\mathcal D_n\cap\mathcal E_n$, equations~\eqref{eq:denom-lb-compound} and the definition of
$\mathcal E_n$ give
$
\Pi(\mathcal U_n\mid \boldZ)
\le
\exp\!\big\{-(D_1-C_{\mathrm{den}})n\varepsilon_n^2\big\}.
$
Setting $c:=D_1-C_{\mathrm{den}}>0$, we conclude that
$\PP[\boldmu]{ \Pi(\mathcal U_n\mid \boldZ)> \exp\!\big\{-c\,n\varepsilon_n^2\big\} } \le \PP[\boldmu]{\mathcal D_n^c} + \PP[\boldmu]{\mathcal E_n^c} \le \frac{4}{n^2}.$
This is precisely the desired claim.
\end{proof}

\subsection{Preliminary lemmata for regret proofs}

We will need the following function: $\tilde{L}(\rho) := \sqrt{-\log(2\pi \rho^2)}$.
\begin{lemm}[Lemma A.1. in~\citet{jiang2009general}]
\label{lemm:proposition_1_jiang_zhang}
For any $z \in \mathbb R$, we have that:
$$
\begin{aligned}
&\abs{\frac{f_G'(z)}{f_G(z)}} \leq \tilde{L}(f_G(z)), \;\;\; -1 \leq \frac{f_G''(z)}{f_G(z)} \leq   \tilde{L}^2(f_G(z)).
\end{aligned}
$$
\end{lemm}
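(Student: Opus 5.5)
The plan is to express all three ratios as moments of $u:=\mu-z$ under the ``posterior'' $\dd H_z(\mu)\propto\varphi(z-\mu)\,\dd G(\mu)$. The key step is a single Jensen inequality for a concave function, applied with $G$ as the integrating measure. Throughout, $c:=(2\pi)^{-1/2}$, so that $0<f_G(z)\le c$ and $\tilde L(f_G(z))$ is well defined and nonnegative.

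First, differentiate under the integral; this is justified by the Gaussian tails, which dominate uniformly on compact $z$-sets. This gives
\[
f_G'(z)=\int (\mu-z)\varphi(z-\mu)\,\dd G(\mu),\qquad
f_G''(z)=\int \{(\mu-z)^2-1\}\varphi(z-\mu)\,\dd G(\mu).
\]
Hence $f_G'(z)/f_G(z)=\mathbb E_{H_z}[u]$ and $f_G''(z)/f_G(z)=\mathbb E_{H_z}[u^2]-1$. The lower bound $f_G''/f_G\ge -1$ is then immediate. By Cauchy--Schwarz, $(f_G'/f_G)^2\le \mathbb E_{H_z}[u^2]$. So both remaining claims follow from the single bound
\[
\mathbb E_{H_z}[u^2]\;=\;\frac{\int (\mu-z)^2\varphi(z-\mu)\,\dd G(\mu)}{f_G(z)}\;\le\;\tilde L^2(f_G(z)).
\]
This yields $|f_G'/f_G|\le\tilde L(f_G)$ and $f_G''/f_G\le \tilde L^2(f_G)-1\le\tilde L^2(f_G)$.

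The main step is this moment bound, and it reduces to a change of variables. Set $x:=\varphi(z-\mu)\in(0,c]$. Then $(\mu-z)^2=-\log(2\pi x^2)=\tilde L^2(x)$, so the numerator equals $\int g(\varphi(z-\mu))\,\dd G(\mu)$ with
\[
g(x):=-x\log(2\pi x^2).
\]
A direct computation gives $g''(x)=-2/x<0$ on $(0,c]$, so $g$ is concave there. Jensen's inequality with respect to $G$ (not $H_z$) then gives
\[
\int g(\varphi(z-\mu))\,\dd G(\mu)\le g\Big(\int\varphi(z-\mu)\,\dd G(\mu)\Big)=g(f_G(z))=f_G(z)\,\tilde L^2(f_G(z)).
\]
Dividing by $f_G(z)$ gives the desired bound.

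The only point needing care is recognizing that Jensen should be applied to the concave map $x\mapsto x\tilde L^2(x)$ under $G$. Applying Jensen naively under $H_z$ would only give the weaker bound $\sqrt{1+\tilde L^2}$. One should also check integrability: $g$ is bounded on $(0,c]$ since $g(x)\to0$ as $x\to0$, so Jensen applies for an arbitrary probability measure $G$.
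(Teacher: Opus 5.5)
Your proof is correct and follows the standard argument behind Lemma~A.1 of Jiang and Zhang (2009), which the paper cites without reproving: you reduce both bounds, via the posterior-moment identities and Cauchy--Schwarz, to $\int(\mu-z)^2\varphi(z-\mu)\,\dd G(\mu)\le f_G(z)\tilde L^2(f_G(z))$, and you obtain this from Jensen's inequality under $G$ for the concave map $x\mapsto -x\log(2\pi x^2)$. Only your closing aside is wrong: Jensen under $H_z$ loses nothing, because $\mathbb E_{H_z}[1/\varphi(z-\mu)]=1/f_G(z)$ and concavity of $\log$ give $\tfrac12\log(2\pi)+\tfrac12\mathbb E_{H_z}[u^2]\le-\log f_G(z)$, which is exactly $\mathbb E_{H_z}[u^2]\le\tilde L^2(f_G(z))$.
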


\begin{lemm}[Theorem E.1.\ in~\citet{saha2020nonparametric}]
\label{lemm:saha_e1}
Suppose that $0< \rho < (2\pi e)^{-1/2}$. Let $G, H$ be two distributions on $\RR$. Then:

$$\int \abs{\frac{f_G'(z)}{f_G(z)\lor \rho} - \frac{f_H'(z)}{f_H(z)\lor \rho}}^2f_G(z)\dd z \lesssim \Dhel^2(f_G, f_H) \cdot \max\cb{ (-\log(2\pi \rho^2))^3,\; \abs{ \log \Dhel(f_G,f_H)}}.$$
\end{lemm}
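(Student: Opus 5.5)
The plan is to follow the argument of \citet[Theorem~3]{jiang2009general}, which \citet{saha2020nonparametric} refine. Write $a:=f_G\lor\rho$, $b:=f_H\lor\rho$, $s_G:=f_G'/a$, $s_H:=f_H'/b$ and $\Delta:=f_G-f_H$. I would start from the decomposition
\[
s_G-s_H=\frac{f_G'-f_H'}{a}+\frac{f_H'}{b}\cdot\frac{b-a}{a},
\]
and bound the $f_G$-weighted $L^2$ norm of each term separately.

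\textbf{Step 1 (uniform bound on the truncated score).} I would first show $|s_H|\le \tilde L(\rho)$ pointwise. On $\{f_H\ge\rho\}$ this follows from Lemma~\ref{lemm:proposition_1_jiang_zhang}, since $\tilde L$ is decreasing. On $\{f_H<\rho\}$ we have $|f_H'|/\rho\le f_H\tilde L(f_H)/\rho$, and $x\mapsto x\tilde L(x)$ is increasing on $(0,(2\pi e)^{-1/2})$; this is exactly where the hypothesis on $\rho$ is used. The same bound holds for $s_G$, so $|s_G-s_H|\le 2\tilde L(\rho)$ everywhere. This lets me truncate freely: on any set where the integrand is awkward, I replace it by $4\tilde L^2(\rho)$ times the $f_G$-mass of that set.

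\textbf{Step 2 (second term).} Because $f_G\le a$, the second term contributes at most $\tilde L^2(\rho)\int (a-b)^2/a$. Since $t\mapsto t\lor\rho$ is $1$-Lipschitz, $|\sqrt a-\sqrt b|\le|\sqrt{f_G}-\sqrt{f_H}|$. On $\{b\le 4a\}$ this gives $(a-b)^2/a\lesssim(\sqrt{f_G}-\sqrt{f_H})^2$, so that region contributes $\lesssim\tilde L^2(\rho)\,\Dhel^2$. On $\{b>4a\}$ I would use Step~1 instead: $\sqrt{f_H}\ge\sqrt b>2\sqrt a\ge2\sqrt{f_G}$, so $f_G\le(\sqrt{f_G}-\sqrt{f_H})^2$ there. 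That region therefore contributes at most $4\tilde L^2(\rho)\cdot 2\Dhel^2$.

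\textbf{Step 3 (first term, main obstacle).} The first term contributes $\int (\Delta')^2 f_G/a^2\le\int(\Delta')^2/a$. The naive bound $\rho^{-1}\|\Delta'\|_2^2$ loses a polynomial factor in $1/\rho$, and avoiding this loss is the crux. I would first restrict to a window $|z|\le T$ with $T\asymp\tilde L(\rho)$ plus a multiple of $\sqrt{\log(1/\Dhel)}$, using Step~1 to dispose of the region outside. Inside the window I would write $\Delta'=2\sqrt{f_G}\,(\sqrt{f_G})'-2\sqrt{f_H}\,(\sqrt{f_H})'$ and again compare with $(\sqrt{f_G}-\sqrt{f_H})^2$; the division by $a$ is absorbed because $f_G/a\le1$ and $f_H/b\le1$. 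What remains is a difference of derivatives of $\sqrt{f}$, which I would control by Gaussian smoothing, as in Jiang--Zhang. Since $\Delta=\varphi*(G-H)$, its Fourier transform decays like $e^{-\omega^2/2}$. Splitting frequencies at $\omega_0^2\asymp\log(1/\Dhel)$ gives
\[
\|\Delta'\|_{L^2(|z|\le T)}^2\lesssim \omega_0^2\|\Delta\|_2^2+e^{-\omega_0^2}\lesssim \Dhel^2\log(1/\Dhel),
\]
using $\|\Delta\|_2^2\lesssim\Dhel^2$ because both densities are bounded by $(2\pi)^{-1/2}$. The remaining $\rho$-dependence should come only through $\tilde L(\rho)$: the score bound from Step~1 and the window width together give the factor $(-\log(2\pi\rho^2))^3$. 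A window measured in $\tilde L$ units, combined with a possible further Cauchy--Schwarz, accounts for the cube.

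\textbf{Conclusion.} Steps~2 and~3 give the bound $\Dhel^2\max\{(-\log 2\pi\rho^2)^3,|\log\Dhel|\}$. I expect Step~3 to be the delicate part, namely obtaining only polylogarithmic dependence on $\rho$ without a $1/\rho$ loss. If the $\sqrt f$-based comparison does not close, I would fall back on the exact localized argument in \citet[proof of Theorem~3]{jiang2009general}.
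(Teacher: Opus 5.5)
Your Steps 1 and 2 are sound. The monotonicity of $x\tilde L(x)$ on $(0,(2\pi e)^{-1/2})$ is exactly where the restriction on $\rho$ is used, and on $\{b>4a\}$ the uniform bound from Step 1 controls the whole integrand, not just the second term. (The paper gives no proof of this lemma; it imports it from \citet{saha2020nonparametric}.) The gap is Step 3, which carries the entire content of the result.

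\textbf{The window fails.} $G$ and $H$ are arbitrary and the constant is universal, so the $f_G$-mass of $\{|z|>T\}$ need not be small. Take $G=\delta_{\mu_0}$, $H=\delta_{\mu_0+\epsilon}$ with $|\mu_0|\gg T$, or $G$ uniform on a very long interval. Essentially all of $f_G$ then lies outside the window, and your crude bound there is of order $\tilde L^2(\rho)$ rather than $\Dhel^2$. The window also does not address the real difficulty: already for $G=\delta_0$, the denominator $f_G\lor\rho$ equals $\rho$ at $|z|=\tilde L(\rho)$, which is inside the window.

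\textbf{The $\sqrt f$ factorization and the Fourier bound do not connect.} Write $\Delta'=2(\sqrt{f_G}-\sqrt{f_H})(\sqrt{f_G})'+2\sqrt{f_H}\{(\sqrt{f_G})'-(\sqrt{f_H})'\}$. Using Step 1 and $f_H\le 4a$ on $\{b\le 4a\}$, what remains is $\int\{(\sqrt{f_G})'-(\sqrt{f_H})'\}^2$. But $\sqrt{f_G}-\sqrt{f_H}$ is not of the form $\varphi*(\text{signed measure})$, so the bound $|\hat\Delta(\omega)|\le 2e^{-\omega^2/2}$ that drives your frequency split is not available for it, and you give no other argument. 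What you actually prove, $\|\Delta'\|_2^2\lesssim\Dhel^2\log(1/\Dhel)$, is unweighted. Converting it into a bound on $\int(\Delta')^2/a$ costs exactly the factor $\rho^{-1}$ you set out to avoid. The cube of $-\log(2\pi\rho^2)$ is asserted, not derived.

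\textbf{The missing ingredient is a weighted derivative estimate.} Put $w:=1/(f_G\lor f_H\lor\rho)$.
\begin{itemize}
\item On $\{b\le 4a\}$ the first term is at most $\int(\Delta')^2/a\le 4\int(\Delta')^2w$.
\item Since $(\sqrt{f_G}+\sqrt{f_H})^2\le 4(f_G\lor f_H)$, one also has $\int\Delta^2w\le 8\Dhel^2$.
\end{itemize}
So you need $\int(\Delta')^2w\lesssim\Lambda\int\Delta^2w$, up to negligible terms, with $\Lambda$ only polylogarithmic in $1/\rho$ and $1/\Dhel$.

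Your own Step 1 is what makes this feasible: $\log w$ is $\tilde L(\rho)$-Lipschitz, so $w$ is essentially constant on intervals of length $1/\tilde L(\rho)$. One way to exploit this is the integration by parts $\int(\Delta')^2w=-\int\Delta\Delta''w-\int\Delta\Delta'w'$, with $|w'|\le\tilde L(\rho)w$, iterated against the Gaussian control of $\|\Delta^{(k)}\|_2$ for large $k$. Another is to localize to intervals of that length.

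Proving such an estimate with the stated power of $\log(1/\rho)$ is the real work behind Theorem~E.1. Deferring it to \citet{jiang2009general}, as your fallback does, leaves the proof incomplete.
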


\begin{lemm}[Posterior moments upper bounded]
\label{lemm:posterior_moments_upper_bounded}
Suppose Specification~\ref{assu:DPGM} holds and $n\geq2$. Then there exists a constant $C$
that depends on the specification parameters such that
$\abs{\widehat{\mu}_i(\Pi)} \leq C (|Z_i| + \sqrt{\log n}), \; \EE[\Pi]{\mu_i^2 \mid \boldZ} \leq C^2(Z_i^2 + \log n).$
\end{lemm}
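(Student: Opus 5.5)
The plan is to use the leave-one-out representation in Proposition~\ref{prop:datta_loo} together with Tweedie's formula. That representation gives $\widehat{\mu}_i(\Pi)=\delta_{\widebar G_{-i}}(Z_i)$. Under Specification~\ref{assu:DPGM}, $\widebar G_{-i}$ is a mixture of distributions of the form $Q*\mathrm{N}(0,A)$ with $A\le\bar A$. The base measure $H$ is either compactly supported or Gaussian, so $\widebar G_{-i}$ is a genuine probability measure on $\RR$. For any probability measure $G$, Tweedie's formula gives $\delta_G(z)=z+f_G'(z)/f_G(z)$, and Lemma~\ref{lemm:proposition_1_jiang_zhang} gives $|f_G'(z)/f_G(z)|\le \tilde L(f_G(z))=\sqrt{-\log(2\pi f_G(z)^2)}$. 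Hence $|\widehat\mu_i(\Pi)|\le |Z_i|+\sqrt{2\log(1/f_{\widebar G_{-i}}(Z_i))}$. Everything then reduces to a lower bound $f_{\widebar G_{-i}}(Z_i)\gtrsim n^{-c}\exp\{-c'Z_i^2\}$.

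\textbf{Step 1: lower bound on the marginal density.} Write $\widebar G_{-i}=\EE[\Pi]{G\mid \boldZ_{-i}}$, so that $f_{\widebar G_{-i}}(z)=\EE[\Pi]{f_G(z)\mid\boldZ_{-i}}$. The posterior given $\boldZ_{-i}$ can depend badly on the data, so I will not bound it directly. Instead I use the Dirichlet process structure, which is the main obstacle.

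Under $Q\sim\mathrm{DP}(\alpha,H)$, the posterior of $Q$ given $A$ and latent allocations is again a Dirichlet process. Its base measure is $(\alpha H+\sum_{j\neq i}\delta_{\theta_j})/(\alpha+n-1)$. Therefore
\begin{equation*}
\EE[\Pi]{Q\mid \boldZ_{-i}}\ \ge\ \frac{\alpha}{\alpha+n-1}\,H
\end{equation*}
setwise. Convolving, $\widebar G_{-i}\ge \frac{\alpha}{\alpha+n-1}\,\EE[\Pi]{H*\mathrm{N}(0,A)\mid\boldZ_{-i}}$. This yields
\begin{equation*}
f_{\widebar G_{-i}}(z)\ \ge\ \frac{\alpha}{\alpha+n-1}\inf_{A\in[0,\bar A]} \int \varphi(z-\mu;1+A)\,\dd H(\mu).
\end{equation*}
The infimum is bounded separately for each base measure. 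Under Base-Gauss it is at least $\varphi(z;1+\bar A+A_H)\cdot c$. Under Base-Compact it is at least $c\,\exp\{-(|z|+R)^2/2\}$. In both cases $\log(1/f_{\widebar G_{-i}}(Z_i))\lesssim \log n + Z_i^2+1$. For $n\ge2$ the constant $1$ is absorbed into $\log n$, which gives the first bound.

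\textbf{Step 2: second moment.} Conditionally on $G$ and $\boldZ$, $\mu_i$ has the one-dimensional posterior under prior $G$, again by the leave-one-out structure. So
\begin{equation*}
\EE[\Pi]{\mu_i^2\mid\boldZ}=\EE\bigl[\delta_G(Z_i)^2+\Var[G]{\mu_i\mid Z_i}\bigr],
\end{equation*}
where the outer expectation is over the posterior of $G$ given $\boldZ$. The variance term equals $1+f_G''/f_G-(f_G'/f_G)^2\le 1+\tilde L^2(f_G(Z_i))$ by Lemma~\ref{lemm:proposition_1_jiang_zhang}. Similarly, $\delta_G(Z_i)^2\le 2Z_i^2+2\tilde L^2(f_G(Z_i))$.

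This again requires controlling $\log(1/f_G(Z_i))$, now averaged over the posterior. The map $x\mapsto -\log x$ is convex, so a direct Jensen bound goes the wrong way. I handle this by rewriting the posterior of $G$ given $\boldZ$ as the posterior given $\boldZ_{-i}$, tilted by $f_G(Z_i)$. Then
\begin{equation*}
\EE\bigl[\log(1/f_G(Z_i))\mid\boldZ\bigr]=\frac{\EE[f_G\log(1/f_G)\mid\boldZ_{-i}]}{\EE[f_G\mid \boldZ_{-i}]}.
\end{equation*}
Since $x\log(1/x)\le C$ for $x\le(2\pi)^{-1/2}$, and we may also split according to whether $f_G\ge\rho$, the numerator is at most $\EE[f_G\mid\boldZ_{-i}]\log(1/\rho)+\sup_{x\le\rho}x\log(1/x)$. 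Choosing $\rho$ equal to the Step 1 lower bound on $\EE[f_G(Z_i)\mid\boldZ_{-i}]$ gives a bound $\lesssim \log n+Z_i^2$. This yields $\EE[\Pi]{\mu_i^2\mid\boldZ}\le C^2(Z_i^2+\log n)$.
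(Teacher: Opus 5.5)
Your Step 1 is essentially the paper's argument. It uses the LOO representation, then Dirichlet-process conjugacy conditional on $(A,\boldtau_{-i})$ to get $\widebar G_{-i}\ge\frac{\alpha}{\alpha+n-1}\inf_{A\le\bar A}H*\mathrm N(0,A)$, then Tweedie's formula with Lemma~\ref{lemm:proposition_1_jiang_zhang}. There is one small slip. Under Base-Gauss the infimum over $A$ is $\min\{\varphi(z;1+A_H),\varphi(z;1+\bar A+A_H)\}$, which for large $|z|$ is attained at $A=0$. So it is not $\ge c\,\varphi(z;1+\bar A+A_H)$, but it is still $\ge c\,e^{-z^2/\{2(1+A_H)\}}$, which is all you need.

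For the second moment you take a genuinely different route. The paper observes that the LOO representation describes the entire posterior of $\mu_i$, not just its mean. Since $\mu_i$ and $\boldZ_{-i}$ are conditionally independent given $G$, we have $\mu_i\mid\boldZ_{-i}\sim\widebar G_{-i}$. Hence $\mathbb{E}_\Pi[\mu_i^2\mid\boldZ]=\mathbb{E}_{\widebar G_{-i}}[\mu_i^2\mid Z_i]\le 2Z_i^2+2+2f''_{\widebar G_{-i}}(Z_i)/f_{\widebar G_{-i}}(Z_i)$. The Jiang--Zhang bound, combined with the Step 1 lower bound on the single quantity $f_{\widebar G_{-i}}(Z_i)$, then finishes the proof. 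The wrong-direction Jensen issue never arises, because everything is evaluated at one mixing distribution.

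You instead average $\mathbb{E}_G[\mu_i^2\mid Z_i]$ over $\Pi(\cdot\mid\boldZ)$, so you must control $\mathbb{E}[\log\{1/f_G(Z_i)\}\mid\boldZ]$. Your tilting identity $\Pi(\dd G\mid\boldZ)\propto f_G(Z_i)\,\Pi(\dd G\mid\boldZ_{-i})$, together with truncation at $\rho$, handles this correctly. It does so only if you use $\sup_{x\le\rho}x\log(1/x)=\rho\log(1/\rho)$ for $\rho\le 1/e$; shrink the Step 1 bound by a constant if needed to ensure this. The crude bound $x\log(1/x)\le C$ that you also mention would, after dividing by $f_{\widebar G_{-i}}(Z_i)$, cost a factor polynomial in $n$ rather than a logarithm.

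Your route yields a mildly stronger by-product, $\mathbb{E}[\log\{1/f_G(Z_i)\}\mid\boldZ]\lesssim\log n+Z_i^2$. This controls the marginal density level for typical posterior draws of $G$. For this lemma, however, the paper's direct use of the LOO posterior is shorter.
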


\begin{proof}
We start with the representation in Proposition~\ref{prop:datta_loo}. We first bound
$
f_{\widebar{G}(\boldZ_{-i})}(Z_i).
$
Write $G=Q*\mathrm N(0,A)$, and let
$
\mu_j\mid\tau_j,A\sim\mathrm N(\tau_j,A)
$
with $\tau_j\sim Q$ (all independent). Conditional on $(A,\boldtau_{-i},\boldZ_{-i})$, the observations
$\boldZ_{-i}$ provide no additional information about $Q$, and hence Dirichlet process conjugacy gives, for any measurable set $S$,
$$
\EE[\Pi]{Q(S)\mid A,\boldtau_{-i},\boldZ_{-i}}
=
\frac{\alpha}{\alpha+n-1}H(S)
+
\frac{1}{\alpha+n-1}\sum_{j\ne i}\delta_{\tau_j}(S).
$$
It follows that
$\EE[\Pi]{G(S)\mid A,\boldtau_{-i},\boldZ_{-i}} \geq \frac{\alpha}{\alpha+n-1} \{H*\mathrm N(0,A)\}(S).$
Taking conditional expectations given $\boldZ_{-i}$ therefore yields
$$
\widebar{G}(\boldZ_{-i})(S)
=
\EE[\Pi]{G(S)\mid\boldZ_{-i}}
\geq
\frac{\alpha}{\alpha+n-1}\eta(S), \;\;\text{ where }\;\eta(S)
:=
\inf_{A\in[0,\bar A]}
\{H*\mathrm N(0,A)\}(S).
$$
Under Base-Compact, let $R$ be the radius of the support of $H$; under
Base-Gauss, fix any $R>0$. In either case,
$
\eta_0:=\eta([-R,R])>0.
$
Thus,
$$
\begin{aligned}
f_{\widebar{G}(\boldZ_{-i})}(Z_i)
\geq
\frac{1}{\sqrt{2\pi}}
\int_{-R}^R
\exp\p{-\frac{(Z_i-\mu)^2}{2}}
\dd\widebar{G}(\boldZ_{-i})(\mu)
\geq
\frac{\alpha\eta_0}{\alpha+n-1}
\frac{1}{\sqrt{2\pi}}
\exp\p{-Z_i^2-R^2}.
\end{aligned}
$$
Hence
$
-\log f_{\widebar{G}(\boldZ_{-i})}(Z_i)
\lesssim
\log n+Z_i^2.
$
From Lemma~\ref{lemm:proposition_1_jiang_zhang} and the Eddington--Tweedie formula, we thus get
$$
\abs{\widehat{\mu}_i(\Pi)}
\lesssim
\abs{Z_i}
+
\sqrt{\log n+Z_i^2}
\lesssim
|Z_i|+\sqrt{\log n}.
$$
We now turn to the second moment. Also using the LOO representation and
Lemma~\ref{lemm:proposition_1_jiang_zhang}, we have
$$
\begin{aligned}
\EE[\Pi]{\mu_i^2 \mid \boldZ}
&=
\EE[\widebar{G}(\boldZ_{-i})]{\mu_i^2 \mid Z_i}
\,\leq\,
2Z_i^2
+
2\EE[\widebar{G}(\boldZ_{-i})]{(\mu_i-Z_i)^2 \mid Z_i}\\
&=
2Z_i^2
+
2
+
2\frac{
f''_{\widebar{G}(\boldZ_{-i})}(Z_i)
}{
f_{\widebar{G}(\boldZ_{-i})}(Z_i)
}
\,\leq\,
2Z_i^2
+
2
+
2\tilde{L}^2\p{
f_{\widebar{G}(\boldZ_{-i})}(Z_i)
}
\,\leq\,
C^2(\log n+Z_i^2),
\end{aligned}
$$
for some constant $C$ depending only on the specification parameters.
\end{proof}

The following lemma is stated using the heteroscedastic notation.

\begin{lemm}\label{lem:hellinger-compactness}
For every $M<\infty$, there exist constants $\rho_0>0$ and $M_0<\infty$, depending only on $M$ and $\bar\sigma^2$, such that, for all
$G_0\in\mathcal P([-M,M])$ and $G\in\mathcal P(\RR)$,
$\Dhel\p{\boldf_{G_0},\boldf_G}\le \rho_0 \;\Longrightarrow\; G([-M_0,\,M_0]) \ge \frac{1}{2}.$
\end{lemm}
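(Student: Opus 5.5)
We argue by contraposition: if $G$ puts little mass on $[-M_0,M_0]$, then for at least one design point the mixture $f_{G,\sigma_i^2}$ must be far from $f_{G_0,\sigma_i^2}$ in Hellinger distance. The same argument also controls the average over $i$. The key observation is that $f_{G_0,\sigma^2}$, for $G_0\in\mathcal P([-M,M])$ and $\sigma^2\le\bar\sigma^2$, puts most of its mass on a fixed window $[-T,T]$. On the other hand, if $G([-M_0,M_0])<1/2$, then $f_{G,\sigma^2}$ puts less than roughly $1/2$ plus a small amount of mass on $[-T,T]$, once $M_0\gg T$.

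\textbf{Step 1: localize $f_{G_0,\sigma^2}$.} Choose $T=T(M,\bar\sigma^2)$ so that, for every $G_0\in\mathcal P([-M,M])$ and every $\sigma^2\le\bar\sigma^2$,
\[
\int_{[-T,T]} f_{G_0,\sigma^2}\ge 0.95 .
\]
A Gaussian tail bound gives this with $T=M+c\bar\sigma$.

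\textbf{Step 2: bound the mass of $f_{G,\sigma^2}$ on $[-T,T]$.} Decompose $G$ into its restrictions to $[-M_0,M_0]$ and to the complement. Atoms $\mu$ with $|\mu|>M_0$ contribute at most $\sup_{|\mu|>M_0}\mathrm N(\mu,\sigma^2)([-T,T])\le 0.05$, once $M_0=T+c'\bar\sigma$. Therefore, if $G([-M_0,M_0])<1/2$,
\[
\int_{[-T,T]} f_{G,\sigma^2}\le \tfrac12+0.05=0.55 .
\]
This holds uniformly in $\sigma^2\le\bar\sigma^2$.

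\textbf{Step 3: convert the mass gap into a Hellinger bound.} Total variation is dominated by Hellinger distance: $\|f-h\|_{TV}\le \sqrt2\,\Dhel(f,h)$ under the paper's normalization with the factor $1/2$. The mass discrepancy on $[-T,T]$ therefore gives
\[
\Dhel(f_{G_0,\sigma_i^2},f_{G,\sigma_i^2})\ge (0.95-0.55)/\sqrt2=:\rho_1>0
\]
for every $i$. Averaging the squares over $i$ yields $\Dhel(\boldf_{G_0},\boldf_G)\ge\rho_1$. Setting $\rho_0:=\rho_1/2$ completes the contrapositive.

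The only points to check are that all constants depend only on $M$ and $\bar\sigma^2$, which is clear since no lower bound on $\sigma_i$ is needed, and the direction of the TV–Hellinger inequality. There is no real obstacle; the main care is making the tail cutoffs uniform in $\sigma^2\in(0,\bar\sigma^2]$, and these cutoffs only become easier as $\sigma$ shrinks.
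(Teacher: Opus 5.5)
Your proposal is correct and is essentially the paper's own argument. The paper likewise fixes a window ($B=[-M-R,M+R]$ with $M_0=M+2R$) that carries at least $15/16$ of every $f_{G_0,\sigma_i^2}$, and shows that $f_{G,\sigma_i^2}$ puts at most $1/2+1/16$ on it when $G([-M_0,M_0])<1/2$. It then turns the resulting total variation gap into a uniform per-coordinate Hellinger lower bound, using that total variation is at most $\sqrt{2}$ times the Hellinger distance, before averaging over $i$ and halving the constant; only the numerical constants ($1/16$ versus your $0.05$) differ.
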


\begin{proof}
Take $R$ large enough that $\PP{\abs{\mathrm N(0,\bar\sigma^2)}>R}\le1/16$, and set $M_0:=M+2R$ and $B:=[-M-R,M+R]$. If $G_0$ is supported on $[-M,M]$, then $\int_B f_{G_0,\sigma_i^2}(z)\,\dd z\ge 15/16$ for every $i$. If $G([-M_0,M_0])<1/2$, then at least half of the mixing mass lies outside $[-M_0,M_0]$, and for such means the probability of falling in $B$ is at most $1/16$, uniformly over $\sigma_i^2\le\bar\sigma^2$; the remaining half of the mass contributes at most $1/2$. Hence $\int_B f_{G,\sigma_i^2}(z)\,\dd z\le 9/16$, and so
the total variation distance is at least $3/8$ for every $i$.
Using $\TV(f,h)\le\sqrt2\,\Dhel(f,h)$, every coordinate Hellinger distance, and therefore also the average $\Dhel\p{\boldf_{G_0},\boldf_G}$, is bounded below by $3/(8\sqrt2)$. The claim follows by taking $\rho_0=3/(16\sqrt2)$.
\end{proof}

\subsection{Proof of Theorem~\ref{theo:main_risk_bound_eq}}
\label{subsec:main_cd_regret_proof}
\begin{proof}

Our proof follows six steps.\\

\noindent{\textbf{Step 1:}} Fix $\boldmu\in[-M,M]^n$ and recall the notation $\compoundG$ in \eqref{eq:compound_prior}. Also note the formal equality $f_{\boldmu}=f_{\compoundG}$. Let
$\widehat{\boldmu}^{\star}:=\widehat{\boldmu}^{\mathrm B}(\compoundG) =\bigl(\delta_{\compoundG}(Z_i)\bigr)_{i=1}^n,$
be the oracle separable rule,
so that, by Theorem~\ref{thm:fundamental-compound},
\refstepcounter{equation}\label{eq:opt-separable-is-bayes}$\inf_{\boldt\in\mathcal T^{\mathrm S}} R\bigl(\boldt(\boldZ),\boldmu\bigr) =R\bigl(\widehat{\boldmu}^{\star},\boldmu\bigr).$
By Theorem~\ref{theo:equiv_and_simple}, for a constant $C_M$ depending only on $M$,
\begin{equation}\label{eq:reduce-PE-to-S}
\inf_{\boldt\in\mathcal T^{\mathrm S}} R\bigl(\boldt(\boldZ),\boldmu\bigr)
\le
\inf_{\boldt\in\mathcal T^{\mathrm{PE}}} R\bigl(\boldt(\boldZ),\boldmu\bigr) + \frac{C_M}{\sqrt n}.
\end{equation}
Consequently, it suffices to prove that for some $C>0$ depending only on $M$ and the prior specification,
\begin{equation}\label{eq:goal-separable}
\sup_{\boldmu\in[-M,M]^n}\Bigl\{R\bigl(\widehat{\boldmu}(\Pi),\boldmu\bigr)
-R\bigl(\widehat{\boldmu}^{\star},\boldmu\bigr)\Bigr\}
\le C \log^{3/2}n\cdot \varepsilon_n(\Pi),
\end{equation}
since combining \eqref{eq:goal-separable} with \eqref{eq:reduce-PE-to-S} yields the stated theorem
(after absorbing $C_M/\sqrt n$ into the larger right-hand side).\\

\noindent\textbf{Step 2:}
By the triangle inequality,
\begin{equation}\label{eq:minkowski-BB-vs-star}
R\bigl(\widehat{\boldmu}(\Pi),\boldmu\bigr)-R\bigl(\widehat{\boldmu}^{\star},\boldmu\bigr)
\le
\Biggl\{\frac1n\sum_{i=1}^n
\EE[\boldmu]{\Bigl(\widehat\mu_i(\Pi)-\delta_{\compoundG}(Z_i)\Bigr)^2}\Biggr\}^{1/2}.
\end{equation}
Thus it remains to show that the average inside the braces is $\lesssim \log^{3}n \cdot \varepsilon_n^2$
uniformly in $\boldmu\in[-M,M]^n$.\\

\noindent \textbf{Step 3:}
Let $T_n:=M+\sqrt{6\log n}$ and define the truncation event
$
\mathcal A_n:=\{\max_{1\le i\le n}|Z_i|\le T_n\}.
$
Since $Z_i\sim \mathrm N(\mu_i,1)$ and $|\mu_i|\le M$, a union bound yields
$\PP[\boldmu]{\mathcal A_n^c}\le 2n\exp\{-(T_n-M)^2/2\}\le 2/n^2$.

Let $r_n:=C_0\varepsilon_n$, where $C_0$ is the constant from
Theorem~\ref{thm:compound-contraction}, and define
$$
\mathcal H_n:=\cb{G\,:\, \Dhel\bigl(f_{\boldmu},f_G\bigr)\le r_n}\cap \cb{G\,:\, G[-M_0, M_0] \geq 1/2},
$$
where $M_0$ is given by Lemma~\ref{lem:hellinger-compactness}. By the same lemma, for large enough $n$, the second condition in the definition of $\mathcal H_n$ is automatically satisfied for all $G$ such that $\Dhel(f_{\boldmu},f_G)\le r_n$. Moreover, by Theorem~\ref{thm:compound-contraction}, there exist constants $c>0$ and (for all $n\ge2$)
an event $\mathcal C_n$ with $\PP[\boldmu]{\mathcal C_n^c}\le 4/n^2$ such that on $\mathcal C_n$,
\refstepcounter{equation}\label{eq:posterior-mass-outside}$\Pi(\mathcal H_n^c\mid \boldZ)\le \exp\p{-c n \varepsilon_n^2}.$
In this step, we control the regret in the bad event $\mathcal{A}_n^c \cup \mathcal{C}_n^c$.
Now recall that $\delta_{\compoundG}(Z_i)\in [-M,M]$ almost surely and by Lemma~\ref{lemm:posterior_moments_upper_bounded}, $\widehat\mu_i(\Pi) \lesssim |Z_i| + \sqrt{\log n}$. Thus:
$$
\begin{aligned}
&\EE[\boldmu]{\Bigl(\widehat\mu_i(\Pi)-\delta_{\compoundG}(Z_i)\Bigr)^2 \ind\{\mathcal{A}_n^c \cup \mathcal{C}_n^c\}} \\ &\quad\lesssim \EE[\boldmu]{(Z_i^2 + \log n) \ind\{\mathcal{A}_n^c \cup \mathcal{C}_n^c\}} \\ &\quad\lesssim \EE[\boldmu]{ Z_i^4 + \log^2 n}^{1/2} \PP[\boldmu]{\mathcal{A}_n^c \cup \mathcal{C}_n^c}^{1/2}.
\end{aligned}
$$

\noindent \textbf{Step 4:} We now control the regret in the ``good'' event $\mathcal{A}_n \cap \mathcal{C}_n$.
For every $i$, by~\eqref{eq:bnp-post-mean},
$
\widehat\mu_i(\Pi)
=\int \delta_G(Z_i)\,\dd\Pi(G\mid \boldZ).
$
Moreover let $\pi_n(\boldZ) = \Pi( \mathcal{H}_n^c \mid \boldZ)$ and let $\Pi_{\mathcal{H}_n^c}(\cdot \mid \boldZ)$, resp.  $\Pi_{\mathcal{H}_n}(\cdot \mid \boldZ)$ denote the normalized restriction of  $\Pi(\cdot \mid \boldZ)$ to $\mathcal{H}_n^c$, resp.\ $\mathcal{H}_n$. We get,
$\widehat\mu_i(\Pi) \,=\, (1- \pi_n(\boldZ))\int \delta_G(Z_i)\,\dd\Pi_{\mathcal{H}_n}(G\mid \boldZ) \,+\pi_n(\boldZ) \int \delta_G(Z_i)\,\dd\Pi_{\mathcal{H}_n^c}(G\mid \boldZ).$
Then,
\begin{equation}
\begin{aligned}
\p{\widehat\mu_i(\Pi)-\delta_{\compoundG}(Z_i)}^2  &\leq 2(1-\pi_n(\boldZ))^2 \p{\delta_{\compoundG}(Z_i) - \int \delta_G(Z_i)\,\dd\Pi_{\mathcal{H}_n}(G\mid \boldZ)}^2  \\
& \;\;\;\; + \,\, 2\pi_n(\boldZ)^2\p{\delta_{\compoundG}(Z_i) - \int \delta_G(Z_i)\,\dd\Pi_{\mathcal{H}_n^c}(G\mid \boldZ)}^2.
\end{aligned}
\label{eq:posterior_split}
\end{equation}
We control the above two terms separately. \\

\noindent \textbf{Step 5:} Now recalling that $\delta_{\compoundG}(Z_i) \in [-M,M]$ and Jensen's inequality
$$
\begin{aligned}
\pi_n(\boldZ)^2\p{\delta_{\compoundG}(Z_i) - \int \delta_G(Z_i)\,\dd\Pi_{\mathcal{H}_n^c}(G\mid \boldZ)}^2 &\lesssim \pi_n(\boldZ)^2 \p{1\,+\,\int \delta_G^2(Z_i)\,\dd\Pi_{\mathcal{H}_n^c}(G\mid \boldZ)}.
\end{aligned}
$$
Meanwhile,
$$
\pi_n(\boldZ) \int \delta_G^2(Z_i)\,\dd\Pi_{\mathcal{H}_n^c}(G\mid \boldZ) = \int_{\mathcal{H}_n^c} \delta_G^2(Z_i)\,\dd\Pi(G\mid \boldZ)  \leq \int \delta_G^2(Z_i)\,\dd\Pi(G\mid \boldZ) \leq \EE[\Pi]{\mu_i^2 \mid \boldZ}.
$$
The right-hand side can be upper bounded using Lemma~\ref{lemm:posterior_moments_upper_bounded} by $\log n + Z_i^2$ (up to a constant). Moreover recall that on the event $\mathcal{C}_n$, $\pi_n(\boldZ) \leq \exp(-c n\varepsilon_n^2)$. Thus on $\mathcal{A}_n \cap \mathcal{C}_n$,
$$
\pi_n(\boldZ)^2\p{\delta_{\compoundG}(Z_i) - \int \delta_G(Z_i)\,\dd\Pi_{\mathcal{H}_n^c}(G\mid \boldZ)}^2 \lesssim \exp(-c  n \varepsilon_n^2) \p{\log n + T_n^2}
$$
Thus the contribution of this part to the regret is negligible.\\

\noindent \textbf{Step 6:} We continue by controlling the first term in~\eqref{eq:posterior_split}, first upper bounding \smash{$(1-\pi_n(\boldZ))$} by $1$. Then, Jensen's inequality gives, almost surely,
\begin{equation}\label{eq:jensen-pointwise}
\p{\int \delta_G(Z_i)\,\dd\Pi_{\mathcal{H}_n}(G\mid \boldZ) - \delta_{\compoundG}(Z_i)}^2
\le
\int \p{\delta_G(Z_i)-\delta_{\compoundG}(Z_i)}^2\,\dd\Pi_{\mathcal{H}_n}(G\mid \boldZ).
\end{equation}
Let,
\begin{equation}
\widehat{\Delta}_n(G):=\frac1n\sum_{i=1}^n\Bigl(\delta_G(Z_i)-\delta_{\compoundG}(Z_i)\Bigr)^2.
\end{equation}
Averaging \eqref{eq:jensen-pointwise} over $i$ yields the pointwise bound
\begin{equation}\label{eq:avg-jensen}
\frac{1}{n}\sum_{i=1}^n \p{\int \delta_G(Z_i)\,\dd\Pi_{\mathcal{H}_n}(G\mid \boldZ) - \delta_{\compoundG}(Z_i)}^2
\le \int \widehat{\Delta}_n(G)\,\dd\Pi_{\mathcal{H}_n}(G\mid \boldZ)
\le \sup_{G\in\mathcal H_n}\widehat{\Delta}_n(G).
\end{equation}
Therefore, it suffices to show that
\begin{equation}\label{eq:supDelta-goal}
\EE[\boldmu]{\sup_{G\in\mathcal H_n}\widehat{\Delta}_n(G) \ind\{\mathcal{A}_n\}}
\lesssim \log^3n \cdot \varepsilon_n^2,
\qquad\text{uniformly in }\boldmu\in[-M,M]^n.
\end{equation}
On $\mathcal A_n$, all $Z_i\in[-T_n,T_n]$.
For any $G$ with $G([-M_0,M_0]) \geq 1/2$ and any $|z|\le T_n$,
$f_G(z)=\int \varphi(z-u)\,\dd G(u) \ge \inf_{u\in[-M_0,M_0]}\varphi(z-u)/2=\varphi(|z|+M_0)/2\ge \varphi(T_n+M_0)/2=:\rho_n.$
Following~\citet{jiang2009general}, for any $\rho>0$, define the regularized Bayes rule $$\delta_G^\rho(z):=z+\frac{f'_G(z)}{f_G(z)\lor\rho}.$$
The regularized Bayes rule at $\rho_n$ and any $|z| \leq T_n$
satisfies
$\delta_G^{\rho_n}(z)=\delta_G(z)$.
In particular, on $\mathcal A_n$,
$$
\widehat{\Delta}_n(G) = \widehat{\Delta}^{\rho_n}(G),\;\;\;  \text{ where }\;   \widehat{\Delta}^{\rho_n}(G)  :=\frac{1}{n}\sum_{i=1}^n\Bigl(\delta_G^{\rho_n}(Z_i)-\delta_{\compoundG}^{\rho_n}(Z_i)\Bigr)^2.
$$
By Proposition~3 of \citet{jiang2009general} (applied with $M$ there replaced by $T_n$ and $\rho=\rho_n$),
there exists a finite set of mixing distributions $\cb{H_1,\ldots,H_{N_n}}\subseteq\mathcal H_n$ with $\log N_n \lesssim \log^2 n$ such that for all $G\in\mathcal H_n$
\begin{equation}\label{eq:supnorm-net}
\min_{1\le j\le N_n}\sup_{|z|\le T_n}\abs{\delta_G^{\rho_n}(z)-\delta_{H_j}^{\rho_n}(z)}
\leq n^{-2}.
\end{equation}
For $G\in\mathcal H_n$, pick $j(G)\in\argmin_{j}\sup_{|z|\le T_n}|\delta_G^{\rho_n}(z)-\delta_{H_j}^{\rho_n}(z)|$.
On $\mathcal A_n$, we have for each $i$,
$$
\Bigl(\delta_G^{\rho_n}(Z_i)-\delta_{\compoundG}^{\rho_n}(Z_i)\Bigr)^2
\le
2\Bigl(\delta_{H_{j(G)}}^{\rho_n}(Z_i)-\delta_{\compoundG}^{\rho_n}(Z_i)\Bigr)^2 + 8\cdot n^{-2},
$$
and hence
\begin{equation}\label{eq:supDelta-by-net}
\sup_{G\in\mathcal H_n}\widehat{\Delta}^{\rho_n}_n(G)
\le
2\max_{1\le j\le N_n}\widehat{\Delta}^{\rho_n}_n(H_j) \;+\; 8 n^{-2}
\qquad\text{on }\mathcal A_n.
\end{equation}
Now fix $j\le N_n$ and define
$$
X_{ij}:=\bigl(\delta_{H_j}^{\rho_n}(Z_i)-\delta_{\compoundG}^{\rho_n}(Z_i)\bigr)^2 \lesssim |\log \rho_n|  \lesssim \log n,
$$
where the first inequality follows from Lemma~\ref{lemm:proposition_1_jiang_zhang}.
With this notation, we have that $\widehat{\Delta}_n^{\rho_n}(H_j)=n^{-1}\sum_{i=1}^n X_{ij}$.
Since the $Z_i$ are independent under $\PP[\boldmu]{\cdot}$, the $X_{ij}$ are independent (for fixed $j$).
Moreover,
$$
\begin{aligned}
\EE[\boldmu]{\widehat{\Delta}^{\rho_n}_n(H_j)}
&=
\int \Bigl(\delta^{\rho_n}_{H_j}(z)-\delta^{\rho_n}_{\compoundG}(z)\Bigr)^2 f_{\boldmu}(z)\,\dd z
\\ &\lesssim  \Dhel^2(f_{\boldmu}, f_{H_j}) \cdot  \{(-\log(2\pi \rho_n^2))^3\lor \abs{ \log \Dhel(f_{\boldmu}, f_{H_j})}\},
\end{aligned}
$$
by using Lemma~\ref{lemm:saha_e1}. Hence, we get,
$%
\EEInline[\boldmu]{\widehat{\Delta}_n^{\rho_n}(H_j)}
\lesssim \log^3 n \cdot \varepsilon_n^2.
$
Moreover, note that $0\le X_{ij}\lesssim \log n$ and
$
\Var[\mu_i]{X_{ij}} \leq \EE[\mu_i]{X_{ij}^2} \lesssim \log n \EE[\mu_i]{X_{ij}}.
$
Thus,
$$
\sum_{i=1}^n \Var[\mu_i]{X_{ij}} \lesssim \log n \cdot n \cdot \EE[\boldmu]{\widehat{\Delta}_n^{\rho_n}(H_j)} \lesssim \log^4 n \cdot n \cdot \varepsilon_n^2.
$$
Applying Bernstein’s inequality to $\widehat{\Delta}_n(H_j)$ yields,
for a sufficiently large constant $K>0$ (depending only on $M$),
$$
\PP[\boldmu]{\widehat{\Delta}^{\rho_n}_n(H_j) > K \log^3n \cdot \varepsilon_n^2}
\le \exp\p{ -c_1 \log n \cdot n \varepsilon_n^2} \leq
\exp\p{-c_1' \log^2 n},
$$
for some $c_1,c_1'$ that can be made arbitrarily large by choosing $K$ large enough.
By the union bound over $j\le N_n$ and \eqref{eq:supnorm-net} (so $\log N_n\lesssim \log^2 n$),
we conclude that for all large $n$,
\begin{equation*}
\begin{aligned}
&\PP[\boldmu]{\max_{1\le j\le N_n}\widehat{\Delta}^{\rho_n}_n(H_j) > K \log^3n \cdot \varepsilon_n^2}\\
&\quad\le
N_n\,\exp\p{-c_1\log^2 n}
\le
\exp\p{C\log^2 n - c_1\log^2 n}
\le
\frac{1}{n^2},
\end{aligned}
\end{equation*}
for $c_1$ large enough.
Combining the above with \eqref{eq:supDelta-by-net}, and using $\PP[\boldmu]{\mathcal A_n^c}\le 2/n^2$,
we obtain for all $n\ge2$ (after adjusting constants),
\[
\PP[\boldmu]{\sup_{G\in\mathcal H_n}\widehat{\Delta}_n^{\rho_n}(G) \gtrsim \log^3 n \cdot \varepsilon_n^2}
\le
\PP[\boldmu]{\mathcal A_n^c}
+\PP[\boldmu]{\max_{j\le N_n}\widehat{\Delta}_n^{\rho}(H_j) \gtrsim \log^3 n \cdot \varepsilon_n^2}
\lesssim \frac{1}{n^2}.
\]
This computation implies
\eqref{eq:supDelta-goal} and hence concludes the proof.
\end{proof}

\section{Proofs for CLOSE}

\subsection{Notation}
Throughout this section $\boldsigma^2 = (\sigma_1^2,\dots,\sigma_n^2)$ is treated as fixed and satisfies $0 < \underline\sigma^2 \leq \sigma_i^2 \leq \overline\sigma^2 < \infty $ for all $i$. All probabilities and expectations are conditional on these variances.  For any two vectors of densities $\boldf=(f_1,\ldots,f_n)$ and $\boldh=(h_1,\ldots,h_n)$ recall the average squared Hellinger distance $\Dhel^2(\boldf,\boldh)$ defined in~\eqref{eq:avg_hellinger_het} and also define,
\begin{equation}
\DKL{\boldf}{\boldh}
:=
\frac1n\sum_{i=1}^n
\DKL{f_i}{h_i},\;\;
\VKL{\boldf}{\boldh} := \frac{1}{n}\sum_{i=1}^n \VKL{f_i}{h_i},
\label{eq:vectorized_metrics}
\end{equation}
where $\VKL{f_i}{h_i}$ is defined in~\eqref{eq:vkl}.
It will be convenient to extend the latter to higher moments, i.e., for $k\geq 2$:
$$
\VKLk{\boldf}{\boldh}
=
\frac1n\sum_{i=1}^n
\int f_i(z)
\left|
\log\frac{f_i(z)}{h_i(z)}
-
\DKL{f_i}{h_i}
\right|^k \dd z.
$$
Notice that for $k=2$, $\VKLk{\boldf}{\boldh}=\VKL{\boldf}{\boldh}$. For $k\ge2$, define
$$
B_k(\boldf_{\star},\varepsilon)
=
\left\{
\theta:
\DKL{\boldf_{\star}}{\boldf_\theta}\le \varepsilon^2,\;
\VKLk{\boldf_\star}{\boldf_\theta}\le \varepsilon^k
\right\}.
$$
Let $f_{\star,\sigma_i^2}$, $f_{G,\star,\sigma_i^2}$, and $f_{\theta,\sigma_i^2}$ denote, respectively, the true marginal density, the marginal density obtained by varying $G$ while fixing the nuisance components at their true values $(\eta_{1,\star},s_{\star})$, and the full candidate marginal density. The vectorized counterparts are denoted by $ (\boldf_{\star}, \boldf_{G,\star}, \boldf_{\theta}  )$.

\subsection{Lemmata and proof of contraction}

\begin{lemm}[Softplus rescaling]
\label{lem:softplus-rescaling}
Let $\mathcal{X} = [\underline{\sigma},\overline{\sigma}]$ and $
\Lambda(t)=\log(1+e^t)$. Define $
s_\eta=\Lambda^{1/2}\circ\eta,
$
and suppose that $\alpha>1/2$ and
$s_\star:\mathcal X\to[0,\infty)$ satisfies
$
s_\star\in H^\alpha\cap\Sigma^\alpha $ and $
\max\left\{
\Norm{s_\star}_{H^\alpha},
\Norm{s_\star}_{\Sigma^\alpha}
\right\}
\leq B.
$
Suppose moreover that, for all sufficiently large $n$, there exists
$\eta_n\in H^{\alpha+1/2}$ such that
$\Norm{s_\star-s_{\eta_n}}_\infty \leq B\varepsilon_n \; , \; \Norm{\eta_n}_{H^{\alpha+1/2}} \leq B\sqrt n\,\varepsilon_n,$
where $\varepsilon_n\asymp n^{-\alpha/(2\alpha+1)}$.
Then, for every fixed $c>0$, there exist a constant $B_c<\infty$
and functions $\widetilde\eta_n\in H^{\alpha+1/2}$ such that
$\Norm{c s_\star-s_{\widetilde\eta_n}}_\infty \leq B_c\varepsilon_n \; , \; \Norm{\widetilde\eta_n}_{H^{\alpha+1/2}} \leq B_c\sqrt n\,\varepsilon_n$
for all sufficiently large $n$.
\end{lemm}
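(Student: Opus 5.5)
The natural candidate is an exact reparametrization of the link. Since $s_\eta^2=\Lambda(\eta)$, the function $c\,s_{\eta_n}$ equals $s_{\widetilde\eta_n}$ as soon as $\Lambda(\widetilde\eta_n)=c^2\Lambda(\eta_n)$. I will therefore set
\[
\widetilde\eta_n:=\phi_c\circ\eta_n,\qquad \phi_c(t):=\Lambda^{-1}\!\bigl(c^2\Lambda(t)\bigr)=\log\!\bigl((1+e^t)^{c^2}-1\bigr).
\]
The sup-norm requirement is then immediate:
\[
\Norm{c s_\star-s_{\widetilde\eta_n}}_\infty=c\,\Norm{s_\star-s_{\eta_n}}_\infty\le cB\varepsilon_n .
\]
So all the work is in bounding $\Norm{\phi_c\circ\eta_n}_{H^{\alpha+1/2}}$ by a constant multiple of $\sqrt n\,\varepsilon_n$.

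First I will record the structure of $\phi_c$.
\begin{itemize}
\item It is smooth and strictly increasing.
\item As $t\to-\infty$, $\phi_c(t)=t+2\log c+\psi_-(t)$, where $\psi_-$ and all its derivatives are $O(e^{t})$.
\item As $t\to+\infty$, $\phi_c(t)=c^2t+\psi_+(t)$, where $\psi_+$ and its derivatives decay exponentially.
\end{itemize}
In particular $\phi_c'\in[\min(1,c^2),\max(1,c^2)]$ and every derivative $\phi_c^{(m)}$, $m\ge 2$, is bounded and integrable on $\RR$.

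The upper tail of $\eta_n$ is harmless. Since $\Norm{s_\star}_\infty\le B$ and $\Norm{s_\star-s_{\eta_n}}_\infty\le B\varepsilon_n$, we get $\Lambda(\eta_n)\le 4B^2$ for large $n$. Hence $\eta_n\le K_B$ uniformly, and on $(-\infty,K_B]$ we may write $\phi_c(t)=t+2\log c+\psi(t)$ with $|\psi^{(m)}(t)|\lesssim e^{t}$ for all $m\ge0$.

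Next I bound the composition. For $\phi_c\circ\eta_n$, the affine part contributes $\Norm{\eta_n}_{H^{\alpha+1/2}}+O(1)$. It remains to show
\[
\Norm{\psi\circ\eta_n}_{H^{\alpha+1/2}}\lesssim 1+\Norm{\eta_n}_{H^{\alpha+1/2}}.
\]
I will use the Faà di Bruno/Moser expansion, treating integer order first and interpolating to fractional order. Each term has the form $\psi^{(m)}(\eta_n)\prod_{l}D^{j_l}\eta_n$ with $\sum_l j_l=k\le\lceil\alpha+1/2\rceil$. The standard Gagliardo--Nirenberg bound controls such a term by
\[
\sup_t\bigl|\psi^{(m)}(t)\bigr|\,\Norm{\eta_n}_\infty^{m-1}\,\Norm{\eta_n}_{H^k}.
\]
The factor $\Norm{\eta_n}_\infty^{m-1}$ could be as large as $(\sqrt n\varepsilon_n)^{m-1}$, because $H^{\alpha+1/2}\hookrightarrow L^\infty$. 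That would destroy the rate.

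This is where I expect the main obstacle. I will try two fixes, in this order.

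The first fix is to exploit the exponential decay of $\psi$. I will use the weighted version of the Gagliardo--Nirenberg step: the products involve $\psi^{(m)}(\eta_n)$ evaluated only where $\eta_n$ is moderate, and $\sup_{t\le K_B}|t|^{m-1}|\psi^{(m)}(t)|<\infty$. Concretely, I will split $\mathcal X$ into the set where $\eta_n\ge -L$ and its complement. On the first set the argument of $\psi$ is bounded, so all Moser constants are uniform. On the complement, $\psi^{(m)}(\eta_n)$ is at most $e^{-L}$ and absorbs the polynomial factors.

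The second fix is a preliminary modification of $\eta_n$, used if the first proves delicate at fractional order. I will replace $\eta_n$ by a smooth lower clipping at level $-L\log n$, namely $\bar\eta_n:=\chi_n\circ\eta_n$ with $\chi_n$ a smooth convex maximum of $t$ and $-L\log n$. This changes $s_{\eta_n}$ by at most $n^{-L/2}\ll\varepsilon_n$, so the sup-norm bound is preserved. It also makes $\Norm{\bar\eta_n}_\infty\lesssim\log n$. The catch is that bounding $\Norm{\chi_n\circ\eta_n}_{H^{\alpha+1/2}}$ is itself a composition problem. It is a more benign one, because $\chi_n'\in[0,1]$ and the higher derivatives of $\chi_n$ are supported on a unit window. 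Running Moser with this sup bound would still cost powers of $\log n$, so I regard it as the fallback rather than the main route.

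The conclusion is then $\Norm{\widetilde\eta_n}_{H^{\alpha+1/2}}\le B_c\sqrt n\,\varepsilon_n$, where $B_c$ depends only on $c$, $B$, $\alpha$ and $\mathcal X$. Throughout, I will use $\sqrt n\,\varepsilon_n\to\infty$ to absorb the $O(1)$ terms, such as the constant $2\log c$ and the bounded-range contributions.
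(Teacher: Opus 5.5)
\textbf{The gap.} Your sup-norm step is correct. The whole proof then rests on $\|\phi_c\circ\eta_n\|_{H^{\alpha+1/2}}\lesssim\sqrt n\,\varepsilon_n$, and neither of your routes delivers it.

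\emph{The main route.} Your target $\|\psi\circ\eta_n\|_{H^{\alpha+1/2}}\lesssim 1+\|\eta_n\|_{H^{\alpha+1/2}}$, argued from $\eta_n\le K_B$ alone, is false for $\alpha>1$. Take $\eta(x)=a(x-\overline{\sigma})$ on $\mathcal X$. Then $\eta\le 0$ and $\|\eta\|_{H^{\alpha+1/2}}\asymp a$, while scaling gives $\|\psi\circ\eta\|_{H^{\alpha+1/2}}\gtrsim a^{\alpha}$ for $c\neq 1$. This norm is produced on the window $\{-L\le\eta\le 0\}$, of length $L/a$, plus a tail damped only by the fixed factor $e^{-L}$. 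So your first fix fails on both pieces.
\begin{itemize}
\item On the window, Gagliardo--Nirenberg does not localize to level sets: on a short interval its lower-order terms scale with the inverse length.
\item On the tail, a fixed $e^{-L}$ cannot absorb powers of $\sqrt n\,\varepsilon_n$. Letting $L=L_n\to\infty$ instead brings back Moser constants that grow with $L_n$.
\end{itemize}

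\emph{The clipping fallback.} It is not benign. The higher derivatives of $\chi_n$ live on a unit window that $\eta_n$ may cross steeply, and here there is no exponential damping. For example, $s_\star\equiv 0$ is admissible, and $\eta_n(x)=2\log\varepsilon_n-C-a(x-\underline{\sigma})$ with $a=c_0\sqrt n\,\varepsilon_n$ satisfies both hypotheses for suitable constants $C,c_0$. Yet it crosses $-L\log n$ with slope $a$ at an interior point, so $\|\chi_n\circ\eta_n\|_{H^{\alpha+1/2}}\gtrsim a^{\alpha}\gg\sqrt n\,\varepsilon_n$ when $\alpha>1$.

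Rescuing exact reparametrization would require using $\|s_{\eta_n}-s_\star\|_\infty\le B\varepsilon_n$ quantitatively, to rule out steep passages of $\eta_n$ through levels where $e^{\eta_n}$ is not small. You use that hypothesis only to get $\eta_n\le K_B$. Such an argument is delicate, because $s_\star$ has only $\alpha$ derivatives, not $\alpha+1/2$. In any case, the $\log n$ losses you anticipate would contradict the log-free bound $B_c\sqrt n\,\varepsilon_n$ that the lemma claims.

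\textbf{The missing idea: give up exactness.} With $q=\Lambda^{-1}$, your map is $\phi_c(t)=t+d_c(\Lambda(t))$, where $d_c(x):=q(c^2x)-q(x)=2\log c+\log h(c^2x)-\log h(x)$ and $h(x)=(e^x-1)/x$. This $d_c$ is $C^\infty$ on $[0,\infty)$.

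The paper never composes $\eta_n$ with a nonlinear map in $H^{\alpha+1/2}$. Set $v_n:=\Lambda(\eta_n)$. Then $\|v_n-s_\star^2\|_\infty\lesssim\varepsilon_n$, and $d_c$ is Lipschitz on the bounded range. Hence the correction $d_c(v_n)$ is uniformly $O(\varepsilon_n)$-close to the fixed function $g_c:=d_c\circ s_\star^2\in H^\alpha\cap\Sigma^\alpha$. This is a composition with the bounded, $n$-independent $s_\star$, so there is no growth problem.

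The RKHS approximation results (Lemmas 11.36--11.37 of Ghosal and van der Vaart) then give $g_{c,n}$ with $\|g_{c,n}-g_c\|_\infty\lesssim\varepsilon_n$ and $\|g_{c,n}\|_{H^{\alpha+1/2}}\lesssim\sqrt n\,\varepsilon_n$. Set $\widetilde\eta_n:=\eta_n+g_{c,n}$.
\begin{itemize}
\item The Sobolev bound is now just the triangle inequality.
\item For the sup norm, $\|\widetilde\eta_n-q(c^2v_n)\|_\infty\lesssim\varepsilon_n$ and $s_{q(c^2v_n)}=c\,s_{\eta_n}$. Since $\Lambda^{1/2}$ is globally Lipschitz, this gives $\|c\,s_\star-s_{\widetilde\eta_n}\|_\infty\lesssim\varepsilon_n$.
\end{itemize}
Exact rescaling is traded for an $O(\varepsilon_n)$ sup-norm error, which the conclusion allows anyway.
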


\begin{proof}
Throughout the proof, $C_c$ denotes a constant that may change from
line to line. Let
$q(x)=\Lambda^{-1}(x)=\log(e^x-1) \; , \; d_c(x)=q(c^2x)-q(x), \; x>0.$
Define $h(x) := (e^x-1)/x$ for $x\neq 0$ and $h(0):=1$.
The function $h$ is $C^\infty$ and strictly positive on
$[0,\infty)$, and
$d_c(x) = 2\log c+\log h(c^2x)-\log h(x).$
Thus, $d_c$ extends to a $C^\infty$ function on $[0,\infty)$,
with $d_c(0)=2\log c$. Since $\alpha>1/2$, the spaces $H^\alpha$ and $\Sigma^\alpha$ are
closed under pointwise multiplication. Hence
$s_\star^2\in H^\alpha\cap\Sigma^\alpha$. By smooth composition, we also have
$
g_c:=d_c\circ s_\star^2
\in H^\alpha\cap\Sigma^\alpha $ and $
\Norm{g_c}_{H^\alpha}
+
\Norm{g_c}_{\Sigma^\alpha}
\leq C_c.
$
Lemmas~11.36--11.37 in~\citet{ghosal2017fundamentals} therefore
give functions $g_{c,n}\in H^{\alpha+1/2}$ satisfying
\begin{equation}
\label{eq:gc-approx}
\Norm{g_{c,n}-g_c}_\infty
\leq C_c\varepsilon_n \; , \;
\Norm{g_{c,n}}_{H^{\alpha+1/2}}
\leq C_c\sqrt n\,\varepsilon_n.
\end{equation}
Set $
s_n=s_{\eta_n},
v_n=s_n^2=\Lambda(\eta_n)$.
Since $s_n$ converges uniformly to the bounded function $s_\star$,
the functions $s_n$ are uniformly bounded. Consequently,
\[
\Norm{v_n-s_\star^2}_\infty \leq \Norm{s_n-s_\star}_\infty \left( \Norm{s_n}_\infty+\Norm{s_\star}_\infty \right) \leq C\varepsilon_n.
\]
The ranges of $v_n$ and $s_\star^2$ therefore lie in a common compact
interval. Since $d_c$ is Lipschitz on this interval,
$
\Norm{d_c(v_n)-g_c}_\infty
=
\Norm{d_c(v_n)-d_c(s_\star^2)}_\infty
\leq C_c\varepsilon_n.
$
Together with~\eqref{eq:gc-approx}, this implies
\begin{equation}
\label{eq:correction-close}
\Norm{g_{c,n}-d_c(v_n)}_\infty
\leq C_c\varepsilon_n.
\end{equation}

Now define $
\widetilde\eta_n:=\eta_n+g_{c,n}.
$
Since $q(v_n)=\eta_n$, we have
$
q(c^2v_n)=\eta_n+d_c(v_n).
$
Moreover,
$
s_{q(c^2v_n)}
=
\sqrt{\Lambda(q(c^2v_n))}
=
c\sqrt{v_n}
=
c s_n.
$
Since the map $\Lambda^{1/2}$ is globally Lipschitz, it follows from~\eqref{eq:correction-close} that
$
\Norm{s_{\widetilde\eta_n}-c s_n}_\infty
\leq C_c\varepsilon_n.$
Therefore,
$\Norm{s_{\widetilde\eta_n}-c s_\star}_\infty \leq \Norm{s_{\widetilde\eta_n}-c s_n}_\infty + c\Norm{s_n-s_\star}_\infty \leq C_c\varepsilon_n.$
Finally, by the triangle inequality and~\eqref{eq:gc-approx}, we obtain
$\Norm{\widetilde\eta_n}_{H^{\alpha+1/2}} \leq \Norm{\eta_n}_{H^{\alpha+1/2}} + \Norm{g_{c,n}}_{H^{\alpha+1/2}} \leq C_c\sqrt n\,\varepsilon_n.$
\end{proof}

\begin{lemm}[Small ball for CLOSE] \label{lem-small-close}
Suppose the hypothesis of Theorem \ref{thm:close_contraction} holds. Let $\varepsilon_n \asymp n^{-\alpha/(2 \alpha+1)} $. Then, for any $k \geq 2$, there exists constants $c,c' > 0$ such that $\Pi \big( B_k(\boldf_{\star},\varepsilon_n) \big) \geq c \exp(- c' n \varepsilon_n^2).$

\end{lemm}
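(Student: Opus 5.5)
The plan is to reduce the prior mass of $B_k(\boldf_{\star},\varepsilon_n)$ to a product of three prior masses, one for each independent block of $\theta=(G,\eta_1,\eta_2)$:
\[
\Pi\big(B_k(\boldf_\star,\varepsilon_n)\big)\;\ge\;\Pi_G(\mathcal K_n)\cdot \Pi_\eta\big(\Norm{\eta_1-m_\star}_\infty\le \varepsilon_n\big)\cdot \Pi_\eta\big(\Norm{\eta_2-\widetilde\eta_{2,n}}_\infty\le \varepsilon_n\big).
\]
For this inclusion we need three pieces: (a) a deterministic perturbation bound showing that the product of these three neighbourhoods lies in $B_k(\boldf_\star,C\varepsilon_n)$; (b) a lower bound on each factor of order $\exp(-c' n\varepsilon_n^2)$; and (c) a final rescaling $\varepsilon_n\mapsto\varepsilon_n/C$, which only changes constants.

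\textbf{Step 1 (normalization).} The DP base measure lives on $[-R,R]$, while $G_\star$ lives on $[-M,M]$, possibly with $M>R$. We therefore re-represent the truth. Take $c=M/(R/2)$ and replace $(G_\star,s_\star)$ by $(G_\star(c\,\cdot),\,c s_\star)$, so that the new shape distribution lives on $[-R/2,R/2]$ while the induced law of $\mu_i$, and hence $\boldf_\star$, is unchanged. Lemma~\ref{lem:softplus-rescaling} then supplies $\widetilde\eta_{2,n}$ with
\[
\Norm{c s_\star-s_{\widetilde\eta_{2,n}}}_\infty\lesssim\varepsilon_n,\qquad \Norm{\widetilde\eta_{2,n}}_{H^{\alpha+1/2}}\lesssim\sqrt n\,\varepsilon_n .
\]
Since the $\theta\mapsto\boldf_\theta$ map is all that matters for $B_k$, we may assume from now on that $R\ge 2M$.

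\textbf{Step 2 (GP factors).} For $\eta_1$ I would use the standard concentration-function bound for the Mat\'ern process (e.g.\ \citet{ghosal2017fundamentals}, Ch.~11):
\[
-\log\Pi_\eta\big(\Norm{\eta-w}_\infty\le 2\varepsilon\big)\;\le\; \inf_{h\in\mathbb H:\,\Norm{h-w}_\infty\le\varepsilon}\Norm{h}_{\mathbb H}^2/2\;-\;\log\Pi_\eta\big(\Norm{\eta}_\infty\le\varepsilon\big).
\]
The small-ball term is $\lesssim \varepsilon^{-1/\alpha}$, which equals $n\varepsilon_n^2$ at $\varepsilon=\varepsilon_n$. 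For $w=m_\star\in\Sigma^\alpha\cap H^\alpha$, the RKHS approximation term (Lemma~11.37 there) is also $\lesssim n\varepsilon_n^2$. For $\eta_2$, Step~1 directly gives an RKHS element $h=\widetilde\eta_{2,n}$ with $\Norm{h}_{H^{\alpha+1/2}}^2\lesssim n\varepsilon_n^2$ at distance $\lesssim\varepsilon_n$ from the target. Moreover $t\mapsto\Lambda^{1/2}(t)$ is globally Lipschitz, so $\Norm{\eta_2-\widetilde\eta_{2,n}}_\infty\le\varepsilon_n$ implies $\Norm{s_{\eta_2}-s_\star}_\infty\lesssim\varepsilon_n$.

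\textbf{Step 3 ($G$ factor).} I would invoke Lemma~\ref{lem:G-small-k} with the nuisance components fixed at $(m_\star,s_\star)$. It yields a set $\mathcal K_n$ of $G=Q*\mathrm N(0,A)$ with
\[
\DKL{\boldf_\star}{\boldf_{G,\star}}\le\varepsilon_n^2,\qquad \VKLk{\boldf_\star}{\boldf_{G,\star}}\le\varepsilon_n^k,\qquad Q([-M_0,M_0])\ge c_0,
\]
and prior mass at least $\exp(-C\log^2(1/\varepsilon_n))$ when $\kappa=0$. This mass is $\ge\exp(-c'n\varepsilon_n^2)$ because $n\varepsilon_n^2=n^{1/(2\alpha+1)}\gg\log^2 n$.

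\textbf{Step 4 (perturbation; the main obstacle).} Fix $G\in\mathcal K_n$ and $(\eta_1,\eta_2)$ in the neighbourhoods of Step~2, and split
\[
\log\frac{f_{\star,\sigma^2}}{f_{\theta,\sigma^2}}=\log\frac{f_{\star,\sigma^2}}{f_{G,\star,\sigma^2}}+\log\frac{f_{G,\star,\sigma^2}}{f_{\theta,\sigma^2}} .
\]
The two densities in the second term share the mixing distribution $G$ and differ only through the Gaussian location $a+bu$ with $|a-a'|+|b-b'|\lesssim\varepsilon_n$. A pointwise ratio bound under the mixture integral gives
\[
\Big|\log\frac{f_{G,\star,\sigma^2}(z)}{f_{\theta,\sigma^2}(z)}\Big|\lesssim \varepsilon_n\,(1+z^2).
\]
This bound is delicate because $u$ is unbounded under the $\mathrm N(0,A)$ smoothing with $A\le\bar A$. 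I would handle it by conditioning on $\tau\sim Q$, integrating the Gaussian part in closed form (the resulting variances $s^2A+\sigma^2\ge\ubar\sigma^2$ differ by $O(\varepsilon_n)$), and using $Q([-M_0,M_0])\ge c_0$ together with compact support of $Q$ to control the resulting ratio.

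Given this bound, the $k$-th central moment of the second term under $f_{\star,\sigma^2}$ is $\lesssim\varepsilon_n^k$, and Minkowski combines it with Step~3 to control $\VKLk{}{}$. The KL term needs order $\varepsilon_n^2$, not $\varepsilon_n$, so a first-order bound is not enough. I would write the KL difference as
\[
\int f_\star\log\frac{f_{G,\star}}{f_\theta}=\DKL{f_{G,\star}}{f_\theta}+\int\big(f_\star-f_{G,\star}\big)\log\frac{f_{G,\star}}{f_\theta},
\]
and bound each piece separately:
\begin{itemize}
\item The first piece is at most $\int\DKL{\mathrm N(a+bu,\cdot)}{\mathrm N(a'+b'u,\cdot)}\,\dd G(u)\lesssim\varepsilon_n^2$, by joint convexity of KL.
\item The second piece is bounded by Cauchy--Schwarz, combining the weighted $L^2$ distance between $f_\star$ and $f_{G,\star}$ (which is $\lesssim\varepsilon_n$ up to logarithmic factors by Step~3) with the envelope $\varepsilon_n(1+z^2)$.
\end{itemize}
Any stray logarithmic factors can be absorbed by running Steps~2--3 at radius $\varepsilon_n/\log n$. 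The GP small-ball exponent then becomes $n^{1/(2\alpha+1)}\log^{1/\alpha}n$, so I would need to either avoid these logs or check that the statement's $\varepsilon_n\asymp n^{-\alpha/(2\alpha+1)}$ tolerates adjusting constants. This KL bookkeeping is the step I expect to require the most care.
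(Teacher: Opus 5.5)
Your overall route is the paper's. You rescale the representation via Lemma~\ref{lem:softplus-rescaling} so the shape distribution lies inside the support of $H$. You then bound $\Pi(B_k(\boldf_\star,\varepsilon_n))$ below by the product of the $G$-mass from Lemma~\ref{lem:G-small-k} and two Mat\'ern concentration-function bounds, with centre $\widetilde\eta_{2,n}$ for $\eta_2$. Next you integrate out the $\mathrm N(0,A)$ part, so that $f_{G,\star,\sigma_i^2}$ and $f_{\theta,\sigma_i^2}$ are mixtures over the same $Q$, and obtain the envelope $|L_{\theta,i}(z)|\lesssim\varepsilon_n(1+z^2)$. You do not need $Q([-M_0,M_0])\ge c_0$ for that envelope: a uniform-in-$x$ bound on the Gaussian log-ratio passes directly to the ratio of the two $Q$-mixtures. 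Finally you split the KL exactly as the paper does.

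The step you leave open is a genuine gap as written. You do not show that the cross term $\int L_{\theta,i}\,(f_{\star,\sigma_i^2}-f_{G,\star,\sigma_i^2})$ is $O(\varepsilon_n^2)$ without logarithms, and your fallback does not rescue it. Running the GP steps at radius $\varepsilon_n/\log n$ raises the $\eta_1$ exponent to order $n\varepsilon_n^2\log^{1/\alpha}n$, which the stated bound $c\exp(-c'n\varepsilon_n^2)$ does not allow. It also cannot improve the $\eta_2$ contribution at all: the centre $\widetilde\eta_{2,n}$ is itself only $O(\varepsilon_n)$-accurate, so $\Norm{s_{\eta_2}-cs_\star}_\infty$, and hence the envelope of $L_{\theta,i}$, stays of order $\varepsilon_n$ however small the ball around it.

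The missing idea is the Hellinger factorization the paper uses. Write $f_\star-f_{G,\star}=(\sqrt{f_\star}-\sqrt{f_{G,\star}})(\sqrt{f_\star}+\sqrt{f_{G,\star}})$ and apply Cauchy--Schwarz:
\[
\Big|\int L_{\theta,i}\,(f_{\star,\sigma_i^2}-f_{G,\star,\sigma_i^2})\Big|\le\Big\{2\int L_{\theta,i}^2\,(f_{\star,\sigma_i^2}+f_{G,\star,\sigma_i^2})\Big\}^{1/2}\sqrt{2}\,\Dhel(f_{\star,\sigma_i^2},f_{G,\star,\sigma_i^2})\lesssim\varepsilon_n\,\Dhel(f_{\star,\sigma_i^2},f_{G,\star,\sigma_i^2}).
\]
The first factor uses your envelope together with the Gaussian tails of both $f_{\star,\sigma_i^2}$ and $f_{G,\star,\sigma_i^2}$. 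The latter holds because $Q$ is supported on $[-R,R]$ and $A\le\bar A$. Now average over $i$, using Cauchy--Schwarz and $\Dhel^2\le\DKL{\cdot}{\cdot}$. This gives $n^{-1}\sum_i|\cdots|\lesssim\varepsilon_n\,\DKL{\boldf_\star}{\boldf_{G,\star}}^{1/2}\le\varepsilon_n^2$, using only the stated conclusion of Lemma~\ref{lem:G-small-k}, with no logarithmic loss.

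Even if some logarithmic factor did enter through the $G$-part, the right remedy would be to shrink only the $G$-neighbourhood, not the GP radii. Its prior cost, $\exp\{-C\log^2(1/\varepsilon_n)\}$, is negligible relative to $\exp(-n\varepsilon_n^2)$.
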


\begin{proof}
Under the working model for $\mu_i$, we have
$Z_i = m_{\star}(\sigma_i) + s_\star(\sigma_i)\xi + \varepsilon_i, \; \varepsilon_i\sim \mathrm{N}(0,\sigma_i^2), \; \xi\sim G_{\star}.$
Note that the marginal distribution is unchanged under the rescaling
$
(s_\star,\xi)\mapsto (c s_\star,c^{-1}\xi), \; c>0.
$
By Lemma~\ref{lem:softplus-rescaling}, the regularity conditions are preserved under any fixed rescaling of this form, up to a change in constants. Moreover, since the small-ball set is defined in terms of the induced marginal density, prior mass near any observationally equivalent representation contributes to the same small-ball probability. We may therefore choose $c$ sufficiently large so that the corresponding $G_\star$ is supported in $[-R,R]$, the support of the base measure $H$. We fix such a representation for the remainder of the proof and suppress the dependence on $c$ from the notation. Define
\[
\mathcal G^{(k)}(\varepsilon_n)
=
\left\{
G:
\DKL{\boldf_{\star}}{\boldf_{G,\star}}
\leq \varepsilon_n^2,\;
\VKLk{\boldf_{\star}}{\boldf_{G,\star}}
\leq \varepsilon_n^k
\right\}.
\]
The proof proceeds through the following steps. \\

\noindent \textbf{Step 1: Represent $f_{G,\star,\sigma_i^2}$ and $f_{\theta,\sigma_i^2}$ using the prior parameters.}
Since
$
G=Q * \mathrm{N}(0,A)$
we may represent a latent draw $U\sim G$ as
$
U=X+\xi,
$
where
$
X\sim Q, \;
\xi\sim \mathrm{N}(0,A),
$
and $X$ and $\xi$ are independent. The partial-true marginal $f_{G,\star,\sigma_i^2} $ is the law of
\[
Z_i
=
m_{\star}(\sigma_i)
+
s_\star(\sigma_i)X
+
s_\star(\sigma_i)\xi
+
\varepsilon_i, \;
\varepsilon_i\sim \mathrm{N}(0,\sigma_i^2),
\]
with $\varepsilon_i$ independent of $(X,\xi)$. In particular,
$
Z_i\mid X=x
\sim
\mathrm{N}(a_{0i}(x),\omega_{0i}^2)$,
where
$
a_{0i}(x)
=
m_{\star}(\sigma_i)
+
s_\star(\sigma_i)x,
$
and
$
\omega_{0i}^2
=
\sigma_i^2+s_\star^2(\sigma_i)A.
$
It follows that
\[
f_{G,\star,\sigma_i^2} (z)
=
\int
\varphi(z-a_{0i}(x),\omega_{0i}^2)\,dQ(x).
\]
Similarly, under the full candidate $\theta=(G,\eta_1,\eta_2)$, we have \[
f_{\theta,\sigma_i^2}(z)
=
\int
\varphi(z-a_i(x),\omega_i^2)\,dQ(x).
\]
where $
a_i(x)
=
\eta_1(\sigma_i)
+
s_{\eta_2}(\sigma_i)x,
$
and
$
\omega_i^2
=
\sigma_i^2+s_{\eta_2}^2(\sigma_i)A.
$\\

\noindent \textbf{Step 2: Control the perturbation from $f_{G,\star,\sigma_i^2}$ to $f_{\theta,\sigma_i^2}$.}
Suppose
$
\|\eta_1-m_{\star}\|_\infty\le c\varepsilon_n $ and $
\|\eta_2-\eta_{2\star,n}\|_\infty\le c\varepsilon_n$. Then,
by the Lipschitz property of $\sqrt{\Lambda}$, we have
$\|s_{\eta_2}-s_\star\|_\infty \le \|s_{\eta_2}-s_{\eta_{2\star,n}}\|_\infty + \|s_{\eta_{2\star,n}}-s_\star\|_\infty \le C\varepsilon_n.$
We use the following elementary Gaussian comparison bound. Suppose we have constants
$
0<\underline\omega\le \omega,\omega_0\le \overline\omega<\infty
,\;
|a|+|a_0|\le C,
$. If we define
$
\delta=|a-a_0|+|\omega^2-\omega_0^2|
$, then
\[
\DKL{\mathrm{N}(a_0,\omega_0^2)}{\mathrm{N}(a,\omega^2)}
\le
C\delta^2, \; \; \left|
\log
\frac{\varphi(z-a_0,\omega_0^2)}
{\varphi(z-a,\omega^2)}
\right|
\le
C\delta(1+z^2).
\]
Since \(Q\) is supported on \([-R,R]\), we obtain , uniformly over \(i\) and \(x\),
\[
|a_i(x)-a_{0i}(x)|
\le
|\eta_1(\sigma_i)-\eta_{1\star}(\sigma_i)|
+
|s_{\eta_2}(\sigma_i)-s_\star(\sigma_i)|\,|x|
\le
C\varepsilon_n.
\]
Since \(A\le \bar A < \infty\), and \(s_{\eta_2},s_\star\) are uniformly bounded on the local
event $\{  s_{\eta_2} : \| s_{\eta_2} - s_{\star}  \|_{\infty} \leq C \varepsilon_n  \} $, we have
$
|\omega_i^2-\omega_{0i}^2|
=
A\left|s_{\eta_2}^2(\sigma_i)-s_\star^2(\sigma_i)\right|
\le
C\varepsilon_n.
$
Moreover $
\omega_i^2,\omega_{0i}^2\ge \sigma_i^2\ge \underline\sigma^2 $,
so the component variances are uniformly bounded away from zero. Note that they are also uniformly
bounded above on the local event. Therefore, on this event, uniformly over \(i\) and \(x\),
$\DKLInline{\mathrm{N}(a_{0i}(x),\omega_{0i}^2)}{\mathrm{N}(a_i(x),\omega_i^2)} \le C\varepsilon_n^2.$
Since $f_{G,\star,\sigma_i^2}$ and $f_{\theta,\sigma_i^2}$ are Gaussian mixtures with the same mixing distribution \(Q\), the preceding bound and the log-sum
inequality gives
$
\DKLInline{f_{G,\star,\sigma_i^2}}{f_{\theta,\sigma_i^2}}
\le
C\varepsilon_n^2
$. Define $L_{\theta,i}(z):=\log (  f_{G,\star,\sigma_i^2}(z) / f_{\theta,\sigma_i^2}(z))$. After integrating over $Q$, the Gaussian log-ratio bound given above yields $
|L_{\theta,i}(z)|
\le
C\varepsilon_n(1+z^2)$. Since $Q$ is compactly supported and \(A\le\bar A\), it follows that

\[
\int |L_{\theta,i}(z)|^k f_{\star,\sigma_i^2}(z)\,dz
\le
C\varepsilon_n^k, \;
\int |L_{\theta,i}(z)|^k f_{G,\star,\sigma_i^2}(z)\,\dd z
\le
C\varepsilon_n^k.
\]

\noindent \textbf{Step 3: Implied KL bounds.} Define the quantity $D_{\theta,i}(z):=\log(f_{\star,\sigma_i^2} (z) / f_{G,\star,\sigma_i^2} (z))$. Since
$ \log (f_{\star,\sigma_i^2}(z) / f_{\theta,\sigma_i^2}(z))
=
D_{\theta,i}(z)+L_{\theta,i}(z),
$
we can write
\[
\begin{aligned}
\DKL{f_{\star,\sigma_i^2}}{  f_{\theta,\sigma_i^2} }
&=
\DKL{f_{\star,\sigma_i^2}}{ f_{G,\star,\sigma_i^2}  }
+
\int L_{\theta,i}(z)f_{G,\star,\sigma_i^2}(z) \dd z \\ &\quad +\int L_{\theta,i}(z)(f_{\star,\sigma_i^2}(z)-f_{G,\star,\sigma_i^2} (z)) \dd z.
\end{aligned}
\]
The second term is $ \int L_{\theta,i} f_{G,\star,\sigma_i^2}  =      \DKLInline{  f_{G,\star,\sigma_i^2}    }{ f_{\theta,\sigma_i^2}    } \leq C \varepsilon_n^2 $. For the third term, Cauchy-Schwarz yields
\[
\begin{aligned}
&\left|
\int L_{\theta,i} ( f_{\star,\sigma_i^2}  -f_{G,\star,\sigma_i^2})
\right|\\
&\quad\le
\left\{
\int L_{\theta,i}^2\p{\sqrt{f_{\star,\sigma_i^2}}+\sqrt{f_{G,\star,\sigma_i^2}}}^2
\right\}^{1/2}
\left\{
\int\p{\sqrt{f_{\star,\sigma_i^2}}-\sqrt{f_{G,\star,\sigma_i^2}}}^2
\right\}^{1/2}.
\end{aligned}
\]
Note that
$
(\surd{f_{\star,\sigma_i^2}}+\surd{f_{G,\star,\sigma_i^2}})^2\le 2(f_{\star,\sigma_i^2}+f_{G,\star,\sigma_i^2})
$. The bounds in Step 2 also show
$
\int L_{\theta,i}^2  f_{\star,\sigma_i^2} \le C\varepsilon_n^2, \;
\int L_{\theta,i}^2 f_{G,\star,\sigma_i^2}
  \le C\varepsilon_n^2$. It follows that
\[
\left|
\int L_{\theta,i} (f_{\star,\sigma_i^2}-f_{G,\star,\sigma_i^2})
\right|
\le
C\varepsilon_n \Dhel(f_{\star,\sigma_i^2},f_{G,\star,\sigma_i^2}).
\]
Averaging over $i$ and applying Cauchy--Schwarz,
\[
\frac1n\sum_{i=1}^n
\left|
\int L_{\theta,i}(f_{\star,\sigma_i^2}-f_{G,\star,\sigma_i^2})
\right|
\le
C\varepsilon_n
\left\{
\frac1n\sum_{i=1}^n \Dhel^2(f_{\star,\sigma_i^2}, f_{G,\star,\sigma_i^2})
\right\}^{1/2}.
\]
Since $\Dhel^2(f_{\star,\sigma_i^2},f_{G,\star,\sigma_i^2})\le \DKL {f_{\star,\sigma_i^2}}{f_{G,\star,\sigma_i^2}} $ and \(G\in\mathcal G_{}^{(k)}(c\varepsilon_n) \), we obtain
\[
 n^{-1} \sum_{i=1}^n \Dhel^2(f_{\star,\sigma_i^2},f_{G,\star,\sigma_i^2}
)
\le \DKL{f_{\star,\sigma_i^2}}{f_{G,\star,\sigma_i^2}}
\le
c^2\varepsilon_n^2.
\]
Thus,
$
 n^{-1} \sum_{i=1}^n
\left|
\int L_{\theta,i}(f_{\star,\sigma_i^2}-f_{G,\star,\sigma_i^2})
\right|
\le
C\varepsilon_n^2.$
Combining these bounds gives
$\DKL{\boldf_{\star}}{\boldf_{\theta}} \leq \DKL{\boldf_{\star}}{\boldf_{G,\star}} + C\varepsilon_n^2 \le C\varepsilon_n^2.$\\

\noindent \textbf{Step 4: Higher-order log-variation control.}
With $ L_{\theta,i} $ and $ D_{\theta,i}(z) $ as defined in the previous steps, observe that
\[
\log\frac{f_{\star,\sigma_i^2}}{f_{\theta,\sigma_i^2}}-\DKL{f_{\star,\sigma_i^2}}{f_{\theta,\sigma_i^2}}
=
\left(D_{\theta,i}-\DKL{f_{\star,\sigma_i^2}}{f_{G,\star,\sigma_i^2}}\right)
+
\left(L_{\theta,i}-\int L_{\theta,i}  f_{\star,\sigma_i^2} \right).
\]
Using $
|x+y|^k\le 2^{k-1}(|x|^k+|y|^k),
$
we get
\begin{align*}
& \int f_{\star,\sigma_i^2}
\left|
\log\frac{f_{\star,\sigma_i^2}}{ f_{\theta,\sigma_i^2}}-\DKL{f_{\star,\sigma_i^2}}{ f_{\theta,\sigma_i^2}}
\right|^k \\
&\quad \le
C
\int f_{\star,\sigma_i^2}|D_{\theta,i}-\DKL{f_{\star,\sigma_i^2}}{f_{G,\star,\sigma_i^2}}|^k\\
&\qquad+
C
\int f_{\star,\sigma_i^2}
\left|
L_{\theta,i}-\int L_{\theta,i} f_{\star,\sigma_i^2}
\right|^k.
\end{align*}
For the second term, Jensen's inequality gives
\[
\int f_{\star,\sigma_i^2}
\left|
L_{\theta,i}-\int L_{\theta,i} f_{\star,\sigma_i^2}
\right|^k
\le
C\int |L_{\theta,i}|^k f_{\star,\sigma_i^2}
\le
C\varepsilon_n^k,
\]
where the last inequality is from Step 2. Hence
\[
\int f_{\star,\sigma_i^2}
\left|
\log\frac{f_{\star,\sigma_i^2}}{f_{\theta,\sigma_i^2}}-\DKL{f_{\star,\sigma_i^2}}{f_{\theta,\sigma_i^2}}
\right|^k
\le
C
\int f_{\star,\sigma_i^2}|D_{\theta,i}-\DKL{f_{\star,\sigma_i^2}}{f_{G,\star,\sigma_i^2}}|^k
+
C\varepsilon_n^k.
\]
Averaging over \(i\) gives
$ \VKLk{\boldf_{\star}}{\boldf_{\theta}} \leq C\VKLk{ \boldf_{\star} }{\boldf_{G,\star}} + C \varepsilon_n^k.
$
Since \(G\in\mathcal G_{}^{(k)}(c\varepsilon_n) \), we obtain the bound $ \VKLk{\boldf_{\star}}{\boldf_{\theta}} \leq C \varepsilon_n^k .  $ \\

\noindent \textbf{Step 5: Small ball probability:} The preceding steps show that for a sufficiently small constant $c > 0$, we have the inclusion
$$
B_k(\boldf_{\star},    \varepsilon_n)
\supseteq
\left\{
(G,\eta_1,\eta_2):
G\in\mathcal G^{(k)}(c\varepsilon_n),\;
\|\eta_1-m_{\star}\|_\infty\le c\varepsilon_n,\;
\|\eta_2-\eta_{2\star,n}\|_\infty\le c\varepsilon_n
\right\}.
$$
In particular, it suffices to lower bound the prior probability of the event on the right. As the priors are independent and the event has a product structure, we can focus on the individual events for $(G,\eta_1,\eta_2)$.
By Lemma \ref{lem:G-small-k} and our Prior Specification \ref{loc-scale-prior}, there exists $c_1,c_2 > 0$ such that $\Pi \big( G : G \in \mathcal{G}^k(c \epsilon_n) \big) \geq c_1 \exp(- c_2 \log^2n) \geq c_1\exp( - c_2n \varepsilon_n^2) .$
 Since Prior Specification \ref{loc-scale-prior} imposes independent Matérn GP priors $\Pi_{\eta}$ with regularity $\alpha>1/2$ on $\eta_i$, $i=1,2$, their small-ball probabilities can be determined by their concentration function: $$ \phi_{g}(\varepsilon) = \inf_{h \in H^{\alpha+1/2}: \| h-g  \|_{\infty} \leq \varepsilon} \frac 12 \| h \|_{H^{\alpha+1/2}}^2 - \log  \Pi_{\eta} \big(  \| \eta \|_{\infty} < \varepsilon   \big) . $$
Note that $H^{\alpha+1/2}$ is the RKHS of this Gaussian process. For $m_{\star} \in \Sigma^{\alpha} \cap H^{\alpha}$. It follows from Lemma 11.36-11.37 in \cite{ghosal2017fundamentals} that $\phi_{m_{\star}} (\epsilon) \lessapprox \varepsilon^{-1/\alpha}$ for all $\epsilon > 0$ sufficiently small and  $$ \Pi_{\eta} \big( \eta : \|  \eta - m_{\star}  \|_{\infty} \leq c \varepsilon_n   \big) \geq c_1 \exp(- c_2 \varepsilon_n^{-1/\alpha}) $$
 for universal constants $c_1,c_2 > 0$. Since $\varepsilon_n^{-1/\alpha} \asymp n \varepsilon_n^2$, this lower bound is of the right order. For the other nuisance, we have by assumption, $\eta_{2 \star,n} \in H^{\alpha+1/2} $ and $\| \eta_{2 \star,n} \|_{H^{\alpha+1/2}} \lessapprox \sqrt{n} \varepsilon_n$. By choosing $h = \eta_{2 \star,n}$ in the infimum, Lemma 11.36-11.37 in \cite{ghosal2017fundamentals} implies that $\Pi_{\eta} \big( \eta : \| \eta - \eta_{2 \star,n} \|_{\infty} \leq c \varepsilon_n \big) \geq c_1 \exp(- c_2 n \varepsilon_n^2)$
 for universal constants $c_1,c_2 > 0$. The claim follows from combining the preceding bounds.

\end{proof}

\begin{proof}[Proof of Theorem~\ref{thm:close_contraction}]
 From the results in Chapter 11 of \cite{ghosal2017fundamentals}, the RKHS of the Matérn prior with regularity $\alpha$  coincides (with equivalent norms) with the Sobolev space of order $ \alpha + 1/2$. Moreover, the prior can be viewed as a Gaussian random
element in the Sobolev space $H^{\beta}$ for any order $\beta < \alpha$. Fix any  $1/2 <\beta < \alpha$ and let $\delta_n \asymp (n \varepsilon_n^2)^{-(\alpha - \beta)}$. For a constant $C > 0$ (to be specified below), and $H_{1}^{\beta}$ the unit ball of $H^{\beta}$, we define the sieve set
\begin{equation}
\mathcal{H}_n = \big \{   \delta_n H_1^{\beta} + C  \sqrt{n} \varepsilon_n  H_1^{\alpha+1/2}  \big \}.
\label{eq:eta_sieve}
\end{equation}
Let $\Pi_{\mathrm{GP}}$ denote the law of the Matérn Gaussian process prior, and let
$W\sim \Pi_{\mathrm{GP}}$. By the Borell-Sudakov inequality \citep[Proposition 11.17]{ghosal2017fundamentals} applied to the Gaussian random element
$W$ in the Banach space $H^\beta$, we have
$$
\begin{aligned}
    \Pi(\mathcal H_n^c)
    &\leq
    1-\Phi\left[
        \Phi^{-1}\left\{
            \Pi_{\mathrm{GP}}\big(\|W\|_{H^\beta}\leq \delta_n\big)
        \right\}
        +
        C\sqrt n\,\varepsilon_n
    \right].
\end{aligned}
$$
From the results in  Section 2.7.2 of \cite{van1996weak}, we have
$$
    \log N\big(
        H_1^{\alpha+1/2},\,
        \|\cdot\|_{H^\beta},\,
        \eta
    \big)\,
    \lesssim\,
    \eta^{-1/(\alpha+1/2-\beta)}
$$
for all sufficiently small $\eta > 0$. By Theorem 1.2 of \cite{li1999approximation}, $- \log \Pi_{\mathrm{GP}}(\|W\|_{H^\beta}\leq \delta) $ is bounded above by a constant multiple of \smash{$\delta^{-1/(\alpha - \beta)}$} for all sufficiently small $\delta$. Using the Gaussian quantile bound
$
    \Phi^{-1}(y)\geq -\sqrt{2\log(1/y)}
$,
we get, for another constant \(C'>0\),
$$
    \Pi(\mathcal H_n^c)
    \leq
    1-\Phi\left[
        C\sqrt n\,\varepsilon_n
        -
        C'\delta_n^{-1/\{2(\alpha-\beta)\}}
    \right].
$$
Since $
    \delta_n \asymp (n\varepsilon_n^2)^{-(\alpha-\beta)} $,
we have $C'\delta_n^{-1/\{2(\alpha-\beta)\}}\leq K\sqrt n\,\varepsilon_n$
for some constant \(K>0\).
Substituting this bound into the previous display gives
$
    \Pi(\mathcal H_n^c)
    \leq
    1-\Phi\left[
        (C-K)\sqrt n\,\varepsilon_n
    \right].
$
For \(C>K\), the argument of the Gaussian tail is positive. Hence, using
\(1-\Phi(x)\leq e^{-x^2/2}\) for \(x>0\), we obtain
$\Pi(\mathcal H_n^c)\leq
\exp\{-(C-K)^2n\varepsilon_n^2/2\}$.
Since $ L = (C-K)^2/2 $ can be made as large as desired by picking $C$ sufficiently large, we obtain
$
    \Pi(\mathcal H_n^c)
    \leq
    \exp\{-L n\varepsilon_n^2\} $
with \(L\) as large as desired. Define

\begin{equation}
\mathcal G
=
\left\{
G=Q * \mathrm{N}(0,A)\;:\;
Q\in\mathcal P([-R,R]),
\;
0\le A\le \overline A
\right\}
\label{eq:mathcal_G_sieve}
\end{equation}
where $[-R,R]$ is the support of the base measure $H$. Viewing $\theta = (G,\eta_1,\eta_2)  $ as the parameter, we define the produce sieve
\begin{equation}
\Theta_n = \mathcal{G} \otimes \mathcal{H}_n \otimes \mathcal{H}_n.
\label{eq:close-sieve}
\end{equation}
Note that, since $\beta > 1/2$, the Sobolev embedding theorem implies that  $ \| \eta \|_{\infty} \leq C \sqrt{n} \epsilon_n $ for every $\eta \in \mathcal{H}_n$. To prove the theorem, we verify the conditions of Theorem 8.23 in \cite{ghosal2017fundamentals} using $\Theta_n$ as the candidate sieve. The proof proceeds through several steps which we outline below.  \\

\noindent \textbf{Step 1: Lipschitz dependence on the nuisance functions.} We fix $G\in\mathcal G$ and compare the parameters $
\theta=(G,\eta_1,\eta_2)
$ and $
\theta'=(G,\eta_1',\eta_2') $. By convexity of squared Hellinger distance under mixtures,
$$
\Dhel^2(f_{\theta,\sigma^2},f_{\theta',\sigma^2})
\le
\int
\Dhel^2
\left(
\mathrm{N}(\eta_1(\sigma)+s_{\eta_2}(\sigma)u,\,\sigma^2),
\mathrm{N}(\eta_1'(\sigma)+s_{\eta_2'}(\sigma)u,\,\sigma^2)
\right)
\,\dd G(u).
$$
For equal-variance normals, note that
$
\Dhel^2\{\mathrm{N}(a,\sigma^2),\mathrm{N}(b,\sigma^2)\}
\le
\frac{(a-b)^2}{8\sigma^2}.
$
Thus
\[
\Dhel^2(f_{\theta,\sigma_i^2},f_{\theta',\sigma_i^2})
\le
C
\int
\left[
\{\eta_1(\sigma_i) - \eta'_1(\sigma_i)\} + \{s_{\eta_2}(\sigma_i) - s_{\eta'_2}(\sigma_i)\}u
\right]^2
\dd G(u).
\]
Because every $G \in \mathcal{G}$ satisfies $
\int u^2\,\dd G(u)\le C$ for another constant $C >0$, we obtain \[ \sup_{i}
\Dhel^2(f_{\theta,\sigma_i^2},f_{\theta',\sigma_i^2})
\le
C
\left[
\|\eta_1-\eta_1'\|_\infty^2
+
\|s_{\eta_2}-s_{\eta_2'}\|_\infty^2
\right].
\]
Since $s$ is Lipschitz, averaging over $i$ gives
$$
\Dhel(\boldf_{G,\eta_1,\eta_2},\;\boldf_{G,\eta_1',\eta_2'})
\le
C\|\eta_1-\eta_1'\|_\infty
+
C\|\eta_2-\eta_2'\|_\infty.
$$

\noindent \textbf{Step 2: Entropy of the DPGM.} For fixed \(\eta_2\), remove set the common location $\eta_1 \equiv 0$, yielding the marginal density,
$$
f_{(G,0,\eta_2),\sigma_i^2}(z)
=
\int
\varphi\left(
z-s_{\eta_2}(\sigma_i)u;\sigma_i^2
\right)
\,\dd G(u).
$$
If \(G=Q*\mathrm{N}(0,A)\), then we may write \(U=X+\xi\), where $
X\sim Q,
\xi\sim \mathrm{N}(0,A)$,
and \(X\) and \(\xi\) are independent. Integrating out \(\xi\), we get
$$
f_{(G,0,\eta_2),\sigma_i^2}(z)
=
\int
\varphi\left(
z-s_{\eta_2}(\sigma_i)x;\,
\sigma_i^2+s_{\eta_2}^2(\sigma_i)A
\right)
\,\dd Q(x).
$$
For fixed \(\eta_2\), define the metric
$$
d_{\eta_2}(G,G')
:=
\left\{
n^{-1} \sum_{i=1}^n
\Dhel^2(
f_{(G,0,\eta_2),\sigma_i^2},
f_{(G',0,\eta_2),\sigma_i^2})
\right\}^{1/2}.
$$

Let
$
S_n
=
1\vee
\sup_{\eta\in\mathcal H_n}
\sup_{\sigma}
s_\eta(\sigma)
$. Since $\sqrt{\Lambda(\cdot)}$ is globally Lipschitz, we have the bound $ S_n  \leq D \sqrt{n} \varepsilon_n $.  For \(\eta_2\in\mathcal H_n\), write
$
s_i=s_{\eta_2}(\sigma_i).
$ If \(s_i=0\), then $f_{(G,0,\eta_2),\sigma_i^2}$ does not depend on \(G\), and hence the
corresponding Hellinger distance is zero. If $s_i>0$, then by the change of variables
\(y=z/s_i\), we have
$$
\Dhel(
f_{(G,0,\eta_2),\sigma_i^2},
f_{(G',0,\eta_2),\sigma_i^2})
=
\Dhel\left(
f_{G,\tau_i^2},
f_{G',\tau_i^2}
\right),$$
where $\tau_i = \sigma_i / s_i$ and
$
f_{G,\tau^2}(y)
=
\int \varphi(y-u;\tau^2)\,\dd G(u).
$
Since \(\sigma_i\ge\underline\sigma\) and \(s_i\le S_n\), we have
$
\tau_i\ge \tau_n := \underline\sigma / S_n   $. For \(\tau_i\ge \tau_n\), note that we can write
$f_{G,\tau_i^2}=f_{G,\tau_n^2}* \mathrm{N}(0,\tau_i^2-\tau_n^2)$.
As the Hellinger contracts under convolution, we have
$
\Dhel(
f_{G,\tau_i^2},
f_{G',\tau_i^2})
\leq
\Dhel(
f_{G,\tau_n^2},
f_{G',\tau_n^2}).$
It follows that, uniformly over \(\eta_2\in\mathcal H_n\),
$
d_{\eta_2}(G,G')
\le
\Dhel(
f_{G,\tau_n^2},
f_{G',\tau_n^2}).$ In particular, it suffices to bound the entropy of the class
\[ \mathcal{P} =
\left\{
f_{Q,\sigma^2} :  \;
Q\in\mathcal P([-R,R]),\;
\tau_n^2\le \sigma^2\le \overline A + \tau_n^2
\right\}.
\]
By Lemma 3 of \cite{ghosal2007posterior},
$\log N(\varepsilon_n,\mathcal P,\Dhel)\le CR \tau_n^{-1}\log^2n$.
Since $\tau_n = \underline\sigma/ S_n$ and noting the bound on $S_n$ above, we obtain
$\log N(\varepsilon_n,\mathcal P,\Dhel)\le C\sqrt n\varepsilon_n\log^2 n$.\\

\noindent \textbf{Step 3: Entropy of the GP.}
For $\mathcal H_n
=
\delta_nH_1^\beta
+
M\sqrt n\,\varepsilon_n H_1^{\alpha+1/2}$, we can apply separate coverings for the two pieces of the set. By Section 2.7.2 of \cite{van1996weak}, we have \[
\log N(\tau,\mathcal H_n,\|\cdot\|_\infty) \leq C \bigg[
\left(
\frac{\delta_n}{\tau}
\right)^{1/\beta}
+
\left(
\frac{\sqrt n\,\varepsilon_n}{\tau}
\right)^{1/(\alpha+1/2)} \bigg]
\]
for all sufficiently small $\tau > 0$. At $\tau = c \varepsilon_n$ for some $c > 0$ and $\delta_n \asymp (n \varepsilon_n^2)^{-(\alpha - \beta)} \asymp \varepsilon_n^{(\alpha-\beta)/\alpha}$, we have  the bound \begin{align*}
    \left(
\frac{\delta_n}{\tau}
\right)^{1/\beta}
+
\left(
\frac{\sqrt n\,\varepsilon_n}{\tau}
\right)^{1/(\alpha+1/2)} \leq C \big[ \varepsilon_n^{-1/\alpha}   + n^{\frac{1}{2 \alpha+1}}   \big].
\end{align*}
Since $\varepsilon_n^{-1/\alpha} \asymp n \varepsilon_n^2 \asymp n^{\frac{1}{2 \alpha+1}} $, we obtain $
\log N(c\varepsilon_n,\mathcal H_n,\Norm{\cdot}_\infty) \leq C n \varepsilon_n^2. $ \\

\noindent \textbf{Step 4: Overall entropy.}
We now construct a product cover of  $\Theta_n = \mathcal{G} \otimes \mathcal{H}_n \otimes \mathcal{H}_n$.  Let $
\{\eta_{1,k}:1\le k\le N_1\} $
be a \(c\epsilon_n\)-net of \(\mathcal H_n\) in sup norm, and let $
\{\eta_{2,\ell}:1\le \ell\le N_2\} $
be another \(c\epsilon_n\)-net of \(\mathcal H_n\) in sup norm. Here \(c>0\) is a sufficiently
small fixed constant. For each fixed \(\eta_{2,\ell}\), let
$
\{G_{\ell,m}:1\le m\le N_G\}
$
be a \(c\epsilon_n\)-net of \(\mathcal G\) under the metric \(d_{\eta_{2,\ell}}\), defined in Step 2. The bounds in Step 2 show that these nets may be chosen so that $
\log N_G \le C n\epsilon_n^2 $
uniformly in \(\ell\).

We claim that the product collection
\[
\left\{ (G_{\ell,m},\eta_{1,k},\eta_{2,\ell}) : 1\le k\le N_1,\; 1\le \ell\le N_2,\; 1\le m\le N_G \right\}
\]
is a \(C\epsilon_n\)-net of \(\Theta_n\) under the average marginal Hellinger distance \(\Dhel\). To see this, fix any
$
\theta=(G,\eta_1,\eta_2)\in\Theta_n.
$
Choose \(\eta_{1,k}\) and \(\eta_{2,\ell}\) such that
$
\|\eta_1-\eta_{1,k}\|_\infty\le c\epsilon_n $ and $
\|\eta_2-\eta_{2,\ell}\|_\infty\le c\epsilon_n$.
By Step 1, after absorbing the Lipschitz constant of the link into the generic constant, we obtain
$
\Dhel(\boldf_{G,\eta_1,\eta_2},\;\boldf_{G,\eta_{1,k},\eta_{2,l}})
\le
C\varepsilon_n.
$
Now choose \(G_{\ell,m}\) such that
$
d_{\eta_{2,\ell}}(G,G_{\ell,m})\le c\varepsilon_n.
$
Since \(\eta_{1,k}\) is a common translation in this comparison, we obtain
$ \Dhel(\boldf_{G,\eta_{1,k},\eta_{2,\ell}},\;\boldf_{G_{\ell,m},\eta_{1,k},\eta_{2,l}})
\le
c\varepsilon_n.$
Therefore, by the triangle inequality,
$\Dhel(\boldf_{G,\eta_{1},\eta_{2}},\;\boldf_{G_{\ell,m},\eta_{1,k},\eta_{2,l}}) \le C\varepsilon_n.$
Taking logarithms of the product cardinality yields
$\log N(C\varepsilon_n,\Theta_n,\Dhel) \le \sup_{\eta_2\in\mathcal H_n} \log N(c\varepsilon_n,\mathcal G,d_{\eta_2}) + 2\log N(c\varepsilon_n,\mathcal H_n,\|\cdot\|_\infty).$
By Step 3, $
\log N(c\varepsilon_n,\mathcal H_n,\|\cdot\|_\infty)
\le
C n\varepsilon_n^2.
 $
Together with the DPGM entropy bound above, this gives
$
\log N(C\varepsilon_n,\Theta_n,\Dhel)
\le
C n\varepsilon_n^2.$
Thus, the entropy condition of Theorem 8.23 in \cite{ghosal2017fundamentals} is satisfied.

For the sieve complement condition, since
the prior for \(G\) is supported on \(\mathcal G\), the only contribution to the complement
comes from the two nuisance-function sieves. Therefore
$\Pi(\Theta_n^c) \le \Pi(\eta_1\notin\mathcal H_n)+\Pi(\eta_2\notin\mathcal H_n).$
From the bound derived in the beginning of the proof, for \(M\) sufficiently large (in the definition of $\mathcal{H}_n)$, we have
$
\Pi(\eta_j\notin\mathcal H_n)
\le
\exp\{-L n\varepsilon_n^2\}$ for $j=1,2$,
where \(L\) can be made arbitrarily large. Hence
$
\Pi(\Theta_n^c)
\le
2\exp\{-L n\varepsilon_n^2\}
\le
\exp\{-L' n\varepsilon_n^2\}
$
for any desired fixed \(L'>0\), after increasing \(M\) if necessary. The small ball requirement with rate $\varepsilon_n$ is verified in Lemma \ref{lem-small-close}. Together with the preceding estimates, this verifies all the conditions of Theorem 8.23 in \cite{ghosal2017fundamentals} and posterior contraction follows.

The lower bound on the probability of contraction follows by noting that Lemma \ref{lem-small-close} verifies the small ball probability for any $k^{th}$ order ball with $k \geq 2$. By Lemma 8.21 in \cite{ghosal2017fundamentals}, the corresponding marginal-likelihood lower bound fails with probability of order $(n\varepsilon_n^2)^{-k/2}$, while the testing and sieve-complement errors are exponentially small in $n\varepsilon_n^2$. Since $n\varepsilon_n^2 = n^{1/(2\alpha+1)}$, choosing any $k > 2q(2\alpha+1)$ ensures that $(n\varepsilon_n^2)^{-k/2} \leq n^{-q}$.

\end{proof}

\begin{lemm}[Small-ball with fixed nuisances]
\label{lem:G-small-k}
Consider the marginal density $\boldf_{\star}$ that arises from $(m_{\star},s_{\star},G_{\star})$. The marginal density obtained by varying $G$ while fixing $(m_{\star},s_{\star})$ is denoted by $\boldf_{G,\star}$. Suppose the prior on $
G=Q* \mathrm{N}(0,A)$ is $  \Pi =\mathrm{DP}(\alpha,H) \otimes \Gamma$ for some $\alpha > 0 $, base measure $H$ and variance prior $\Gamma$. Assume the following: \begin{enumerate}
    \item  $G_{\star}$ has support contained in $[-M,M]$ for some $M > 0$ and $H$ has density that is bounded below on  $[-M,M]$.
    \item For all sufficiently small $\delta > 0$,  $
\Gamma(A\le \delta)
\ge
c_1\delta^r \exp(-c_2 \delta^{- \kappa}) $ for some constants $c_1,c_2 > 0$ and $\kappa,r \geq 0$.
\item  The variances satisfy $0 < \underline\sigma \leq \sigma_i \leq \overline\sigma < \infty$ for all $i$. Moreover, $s_{\star}(\sigma)$ and $m_{\star}(\sigma)$ are uniformly bounded as functions of  $ (\sigma_1, \dots , \sigma_n)$.

\end{enumerate}
For any integer $k \geq 2$, define
$
\mathcal G^{(k)}(\varepsilon)
:=
\{
G\,:\,
\DKL{\boldf_{\star}}{\boldf_{G,\star}}\le \varepsilon^2,\;\;
\VKLk{\boldf_{\star}}{\boldf_{G,\star}}\leq \varepsilon^k\}.
$
Then, for all sufficiently small $\varepsilon > 0 $, there exists a set $\mathcal{K}(\varepsilon)$ in the $(Q,A)$ parameter space that satisfies: \begin{enumerate}
    \item  $ (Q,A) \in \mathcal{K}(\varepsilon)  $ implies $G = Q * \mathrm{N}(0,A) \in \mathcal{G}^{(k)}(\varepsilon)$
\item  $Q([-M-1,M+1]^c) \leq \varepsilon^{L_0} $ \item  $\Pi (\mathcal{K}(\varepsilon)) \geq a_3 \exp\!\big(- a_4 \log^2 (\varepsilon^{-1})\big)\,
\varepsilon^r \log^{-r}(\varepsilon^{-1})
\exp\!\big(- a_5 \varepsilon^{- \kappa} \log^{\kappa}(\varepsilon^{-1})\big)$
    \end{enumerate}
where $a_3,a_4,a_5, L_0 > 0$ are universal constants.

\end{lemm}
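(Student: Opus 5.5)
The plan follows the standard Ghosal--van der Vaart route for Dirichlet mixtures, applied uniformly over the design points. The product structure $\Pi=\mathrm{DP}(\alpha,H)\otimes\Gamma$ lets me build $\mathcal K(\varepsilon)$ as a product set: a set of $Q$'s close to a finite discrete approximation of $G_\star$, times the interval $\{A\le \delta_\varepsilon\}$ with $\delta_\varepsilon:=c\,\varepsilon/\log(\varepsilon^{-1})$. By \eqref{eq:prior_var_tail}, $\Gamma(A\le\delta_\varepsilon)$ already produces the factor $\varepsilon^r\log^{-r}(\varepsilon^{-1})\exp\{-a_5\varepsilon^{-\kappa}\log^\kappa(\varepsilon^{-1})\}$ in item 3. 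The remaining factor $\exp\{-a_4\log^2(\varepsilon^{-1})\}$ should come from the Dirichlet process mass.

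\textbf{Reduction to a scale family.} For design point $i$ write $s_i=s_\star(\sigma_i)$ and $m_i=m_\star(\sigma_i)$. If $s_i=0$, then $f_{G,\star,\sigma_i^2}=f_{\star,\sigma_i^2}$ and the $i$th term vanishes. Otherwise, translating by $m_i$ and rescaling by $s_i$ (both of which preserve $\mathrm{KL}$, $\Dhel$ and the $V_k$ functionals) reduces the comparison to the pair $f_{G_\star,\tau_i^2}$ versus $f_{Q,\tau_i^2+A}$, with $\tau_i=\sigma_i/s_i$. Because $s_\star$ is bounded, $\tau_i\ge\tau_0:=\underline\sigma/\|s_\star\|_\infty>0$. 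Every bound below will therefore be stated uniformly over $\tau\ge\tau_0$, and then averaged over $i$.

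\textbf{Approximation and the Dirichlet process mass.} First, I will use moment matching (Lemma 3.1 / Lemma 8 of Ghosal and van der Vaart) to obtain a discrete $G'=\sum_{j\le N}p_j\delta_{u_j}$ supported on $[-M,M]$ with $N\lesssim\log(\varepsilon^{-1})$. It should satisfy $\Dhel(f_{G_\star,\tau^2},f_{G',\tau^2})\le\varepsilon^{b}$ uniformly in $\tau\ge\tau_0$, where $b$ is a large power chosen below. A variance lower bound is exactly what makes the moment matching go through.

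Second, I will take $\varepsilon^{b}$-separated atoms and surround them by cells $V_j=[u_j-\varepsilon^b,u_j+\varepsilon^b]$. I then add $V_0=[-M-1,M+1]\setminus\cup_j V_j$ and the cell $V_{N+1}=[-M-1,M+1]^c$. The set of $Q$'s is
\[
\Bigl\{Q:\ \textstyle\sum_{j\le N}|Q(V_j)-p_j|\le\varepsilon^{b}\Bigr\},
\]
and this set forces $Q(V_{N+1})\le\varepsilon^{b}$, which gives item 2 with $L_0=b$. On it, $\Dhel(f_{Q,\tau^2},f_{G',\tau^2})\lesssim\varepsilon^{b/2}$ uniformly in $\tau\ge\tau_0$, by the usual estimate $\|f_{Q}-f_{G'}\|_1\lesssim \varepsilon^b/\tau_0+\sum|Q(V_j)-p_j|$. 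Adding the variance $A\le\delta_\varepsilon$ costs $\Dhel\lesssim A/\tau_0^2$.

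Third, since $H$ has density bounded below on $[-M,M]$, the Dirichlet distribution of $(Q(V_0),\dots,Q(V_{N+1}))$ gives (Ghosal--van der Vaart, Lemma 10) a mass of at least $\exp\{-cN\log(\varepsilon^{-1})\}\ge\exp\{-a_4\log^2(\varepsilon^{-1})\}$.

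\textbf{From Hellinger to $\mathrm{KL}$ and $V_k$; the main obstacle.} The step I expect to be hardest is passing from the Hellinger bound to the $\mathrm{KL}$ and $V_k$ conditions, because $V_k$ requires the bound $\varepsilon^k$ rather than $\varepsilon^2$.

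Here the key inputs are the lower bound $Q([-M-1,M+1])\ge1/2$ on $\mathcal K(\varepsilon)$ and the boundedness of $m_\star,s_\star$. Together they give
\[
f_{G,\star,\sigma_i^2}(z)\gtrsim \exp\{-C(1+z^2)\},
\]
while $f_{\star,\sigma_i^2}$ is bounded and has Gaussian tails. I will then apply the Wong--Shen type bound (Ghosal--van der Vaart, Lemma 8 / Ghosal and van der Vaart 2017, Lemma B.2). With $h=\Dhel\lesssim\varepsilon^{b/2}+\delta_\varepsilon$, it gives $\mathrm{KL}\lesssim h^2\log(1/h)$ and $V_k\lesssim h^2\log^k(1/h)$ coordinatewise, uniformly in $i$; averaging over $i$ preserves both.

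The difficulty is that $h^2\log^k(1/h)\le\varepsilon^k$ forces $h\lesssim\varepsilon^{k/2}$ up to logarithms. I would handle this by choosing $b$ large (depending on $k$) for the discrete approximation. The cost is only a constant factor in $N\lesssim\log(\varepsilon^{-1})$, so it only changes $a_4$. The $A$ term, however, contributes $h\asymp\delta_\varepsilon$, and this is the real tension. If the crude $h\lesssim A$ estimate is too lossy for $k>2$, my fallback is the sharper form of the Wong--Shen bound: its $V_k$ is controlled by $h^2$ times a power of $\log$ of the density-ratio envelope. Failing that, I would compare $f_{Q*\mathrm N(0,A),\tau^2}$ with $f_{G',\tau^2+A}$ and absorb the small variance shift by rerunning the moment matching at variance $\tau^2+A$, which is uniform in the variance.
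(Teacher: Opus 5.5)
Your construction of $\mathcal K(\varepsilon)$ matches the paper's: a moment-matched discrete approximant with $N\lesssim\log(\varepsilon^{-1})$ atoms, small cells around the atoms, $A\le c\,\varepsilon/\log(\varepsilon^{-1})$, and the Dirichlet mass bound $\exp\{-cN\log(\varepsilon^{-1})\}$. It correctly delivers items~2 and~3. Your Hellinger-plus-Wong--Shen step also gives item~1 when $k=2$, provided you use the version of that bound that only needs $\int f_\star(f_\star/f_{G,\star})^{\delta}<\infty$ for small $\delta>0$, since the ratio here is unbounded.

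The gap is item~1 for $k\ge3$, exactly where you flagged it, and neither fallback closes it.
\begin{itemize}
\item Any bound of the form $V_k\lesssim h^2\cdot\mathrm{polylog}$ is quadratic in $h$, whether the logarithm is of $1/h$ or of a density-ratio envelope.
\item At points of your set with $A$ near $c\,\varepsilon/\log(\varepsilon^{-1})$, no choice of approximant pushes $h$ below order $A$. For instance, take $G_\star=\delta_0$ and $s_\star\equiv1$. Every candidate $f_{Q,\tau^2+A}$ has characteristic function of modulus at most $e^{-(\tau^2+A)\omega^2/2}$. Comparing with $e^{-\tau^2\omega^2/2}$ at $\omega=1/\tau$ gives $\Dhel(f_{G_\star,\tau^2},f_{Q,\tau^2+A})\gtrsim A/\tau^2$.
\item Rerunning the moment matching at variance $\tau^2+A$ cannot remove this, because the target $f_{G_\star,\tau^2}$ still has variance $\tau^2$.
\end{itemize}
So your route yields only $V_k\lesssim\varepsilon^2\,\mathrm{polylog}(\varepsilon^{-1})$, not $\varepsilon^k$. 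Shrinking the $A$-window to about $\varepsilon^{k/2}$ could rescue the Hellinger route. However, item~3 would then read $\varepsilon^{rk/2}\exp\{-c\,\varepsilon^{-k\kappa/2}\cdots\}$, which is not the stated lemma, even though with $\kappa=0$ it would suffice downstream. The general-$k$ form matters: the $1-C_q/n^q$ guarantee in Theorem~\ref{thm:close_contraction} takes $k>2q(2\alpha+1)$.

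The loss is in the conversion, not the construction. The missing idea is to control $\ell_i=\log(f_{\star,\sigma_i^2}/f_{G,\star,\sigma_i^2})$ pointwise rather than through $\Dhel$. Inflating the variance by $A$ moves $\ell_i$ by $O\{A(1+z^2)\}$, i.e.\ by $O(\varepsilon)$ uniformly on the bulk. Hence the true $V_k$ is $O(\varepsilon^k)$ even though $h\asymp\varepsilon$; $h$ cannot see this.

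The paper works in $y=(z-m_i)/\sigma_i$ with $\lambda_i=s_i/\sigma_i\in[0,\Lambda]$, which also covers $s_i\to0$ without a separate case. It proves $|f_{G,\star,\sigma_i^2}/f_{\star,\sigma_i^2}-1|\le C\varepsilon$ uniformly in $i$ on the window $|y|\le T_\varepsilon$, with $T_\varepsilon\asymp\sqrt{\log(\varepsilon^{-1})}$. Three ingredients give this:
\begin{itemize}
\item Matching $\asymp\log(\varepsilon^{-1})$ moments and Taylor expanding $e^{\lambda yu-\lambda^2u^2/2}$ gives sup-norm error $\varepsilon^{L}$ on the window. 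This becomes a relative error $O(\varepsilon)$ because the true marginal is at least $\varepsilon^{C}$ there.
\item The cell perturbation of $Q$ is handled the same way.
\item For $w=y-\lambda t$ with $t$ in the cells, $|\log\{\varphi(w;1+\lambda^2A)/\varphi(w;1)\}|\lesssim A(1+T_\varepsilon^2)\lesssim\varepsilon$. The choice $A\le c\,\varepsilon/\log(\varepsilon^{-1})$ is calibrated exactly for this.
\end{itemize}
Then the window contributes at most $C\varepsilon^k$ to $V_k$. Your envelope $|\ell_i|\le C(1+z^2)$ and the Gaussian tails, with the window constant large depending on $k$, make the tail contribution $O(\varepsilon^k)$. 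KL$\,\lesssim\varepsilon^2$ follows from $-\log(1+x)\le -x+Cx^2$ on the window, together with $\int(f_{G,\star,\sigma_i^2}-f_{\star,\sigma_i^2})=0$ and the tail masses.
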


\begin{proof}
Define
$
m_i=m_{\star}(\sigma_i),s_i=s_\star(\sigma_i), \lambda_i=\frac{s_i}{\sigma_i}.
$
Since \(s_\star\) is bounded above and \(\sigma_i\ge \underline\sigma>0\), there exists
\(\Lambda<\infty\) such that
$
0\le \lambda_i\le \Lambda
$
for all \(i\). Denote the true  marginal by
$f_{\star, i}(z) = \int \varphi(z-m_i-s_i u,\sigma_i^2)\,\dd G_\star(u).$
Under the change of variables
$ y = \sigma_i^{-1} (z- m_i),
$
we have
$\sigma_i f_{\star, i}(m_i+\sigma_i y) = \int \varphi(y-\lambda_i u)\, \dd G_\star(u),$
where \(\varphi\) is the standard normal density. For \(\lambda\in[0,\Lambda]\), define
$h_{\lambda,G}(y) = \int \varphi(y-\lambda u)\, \dd G(u).$
We denote the transformed true marginal by \(h_{\lambda_i,G_\star}\). Let
$
\bar m=\sup_i |m_i|<\infty.
$
Choose \(B_z>0\), to be fixed sufficiently large below, and define
$
S_{\varepsilon}=B_z\sqrt{\log(\varepsilon^{-1})}.
$
Define
$
T_{\varepsilon}=\underline\sigma^{-1}(S_{\varepsilon}+\bar m).
$
Then \(|z|\le S_{\varepsilon}\) implies
$ \sigma_i^{-1} \left|   z- m_i  \right|
\le T_{\varepsilon}
$
for every \(i\). Also, for some fixed constant \(B_y<\infty\), we have that
$T_{\varepsilon}\le B_y\sqrt{\log(\varepsilon^{-1})}.$\\

\noindent \textbf{Step 1: Uniform finite approximation of \(G_\star\).}
Let \(q_{\varepsilon}\ge1\) be an integer to be chosen below. Since \(G_\star\) is supported on
\([-M,M]\), Lemma A.1 of \cite{ghosal2001entropies} implies that there exists a discrete
measure \(\widetilde G\), supported on \([-M,M]\), with at most \(2q_{\varepsilon}+2\) support points
such that
\[
\int z^\ell\,\dd \widetilde G(z)
=
\int z^\ell\,\dd G_\star(z),
\qquad
\ell=0,1,\ldots,2q_{\varepsilon}.
\]
For fixed \(\lambda\in[0,\Lambda]\) and \(|y|\le T_{\varepsilon}\), observe that
\[
\varphi(y-\lambda u)
=
\varphi(y)
\exp\left\{
\lambda yu-\frac{\lambda^2u^2}{2}
\right\}.
\]
Define
$
t_{\lambda,y}(u)
=
\lambda yu-\frac{\lambda^2u^2}{2}.
$
For \(u\in[-M,M]\), \(\lambda\in[0,\Lambda]\), and \(|y|\le T_{\varepsilon}\), we have
$
|t_{\lambda,y}(u)|\le C(1+T_{\varepsilon}).
$
Taylor expanding the exponential gives
\[
e^{t_{\lambda,y}(u)}
=
\sum_{\ell=0}^{q_{\varepsilon}}
\frac{t_{\lambda,y}(u)^\ell}{\ell!}
+
R_{q_{\varepsilon}+1}\{t_{\lambda,y}(u)\}.
\]
Each \(t_{\lambda,y}(u)^\ell\) is a polynomial in \(u\) of degree at most \(2\ell\).
For \(\ell\le q_{\varepsilon}\), this degree is at most \(2q_{\varepsilon}\). Hence the polynomial part integrates
identically under \(G_\star\) and \(\widetilde G\). Thus the approximation error is only the
Taylor remainder. Since
$
|R_{q_{\varepsilon}+1}(t)|
\le
e^{|t|}
|t|^{q_{\varepsilon}+1}/(q_{\varepsilon}+1)!,
$
we obtain
\[
\sup_{\lambda\in[0,\Lambda]}
\sup_{|y|\le T_{\varepsilon}}
\left|
h_{\lambda,G_\star}(y)-h_{\lambda,\widetilde G}(y)
\right|
\le
C e^{ C(1+T_{\varepsilon})}
\frac{\{C(1+T_{\varepsilon})\}^{q_{\varepsilon}+1}}{(q_{\varepsilon}+1)!}.
\]
Fix \(L>1\) sufficiently large, to be chosen below. Taking $
q_{\varepsilon}=\lceil R\log(\varepsilon^{-1})\rceil $
with $R>0$ sufficiently large gives a discrete measure
\(\widetilde G\) satisfying
\[
\sup_{\lambda\in[0,\Lambda]}
\sup_{|y|\le T_{\varepsilon}}
\left|
h_{\lambda,G_\star}(y)-h_{\lambda,\widetilde G}(y)
\right|
\le
\varepsilon^L.
\]
Next, for \(u\in[-M,M]\), \(\lambda\in[0,\Lambda]\), and \(|y|\le T_{\varepsilon}\),
$
|y-\lambda u|\le T_{\varepsilon}+\Lambda M.
$
Therefore, for constants \(c,C'>0\), we have that
$h_{\lambda,G_\star}(y) = \int\varphi(y-\lambda u)\,\dd G_\star(u) \ge c\exp\{-C'T_{\varepsilon}^2\}.$
Since \(T_{\varepsilon}^2\le B_y^2\log(\varepsilon^{-1})\), this implies
$
h_{\lambda,G_\star}(y)
\ge
c\varepsilon^{C'B_y^2}.
$
With any $
L>C'B_y^2+1$, it follows that
$$
\sup_{\lambda\in[0,\Lambda]}
\sup_{|y|\le T_{\varepsilon}}
\left|
\frac{h_{\lambda,\widetilde G}(y)}
{h_{\lambda,G_\star}(y)}
-1
\right|
\le
C\varepsilon.
$$
Returning to the \(z\)-scale, define
\[
 f_{\widetilde{G},i}(z)
=
\int
\varphi(z-m_i-s_i u,\sigma_i^2) \dd \widetilde G(u).
\]
Since \(|z|\le S_{\varepsilon}\) implies \(|(z-m_i)/\sigma_i|\le T_{\varepsilon}\), we obtain
\[
\sup_i
\sup_{|z|\le S_{\varepsilon}}
\left|
\frac{f_{\widetilde{G},i}(z)}
{f_{\star, i}(z)}
-1
\right|
\le
C\varepsilon.
\]

\noindent \textbf{Step 2: Stability under small perturbations of the mixing measure.}
Write the discrete measure as
$
\widetilde G
=
\sum_{j=1}^{J_{\varepsilon}}p_j\delta_{u_j}$ for support points $ u_j\in[-M,M]$ and  $
J_{\varepsilon}\le 2q_{\varepsilon}+2.
$
By slightly  perturbing the support points and using fewer points if necessary, we may assume that the support points are separated by
at least \(2\varepsilon^{L_0}\) for some \(L_0>L\) and that $U_j = \{ t : \left| t - u_j\right| \leq \varepsilon^{L_0}  \}  \subset [-M,M]$ for $j=1,\dots,J_{\varepsilon}$. In particular, by defining
$
U_0
=
\mathbb R\setminus \bigcup_{j=1}^{J_{\varepsilon}}U_j
$, the collection \(\{U_0,U_1,\ldots,U_{J_{\varepsilon}}\}\) is a disjoint partition of \(\mathbb R\). For any probability measure \(Q\), define
$h_{\lambda,Q}(y) = \int \varphi(y-\lambda t) \dd Q(t).$
We can write the difference in terms of:
\[
\begin{aligned}
&h_{\lambda,Q}(y)-h_{\lambda,\widetilde G}(y)\\
&\quad=
\int_{U_0}\varphi(y-\lambda t)\dd Q(t)
+
\sum_{j=1}^{J_{\varepsilon}}
\int_{U_j}
\left\{
\varphi(y-\lambda t)-\varphi(y-\lambda u_j)
\right\}
\dd Q(t)\\
&\qquad+
\sum_{j=1}^{J_{\varepsilon}}
\{Q(U_j)-p_j\}\varphi(y-\lambda u_j).
\end{aligned}
\]
Since \(\sum_{j=1}^{J_{\varepsilon}}p_j=1\), we have
$
Q(U_0)
=
1-\sum_{j=1}^{J_{\varepsilon}}Q(U_j)
\le
\sum_{j=1}^{J_{\varepsilon}}|Q(U_j)-p_j|.
$
Also,
\[
\sup_{\lambda\in[0,\Lambda]}\sup_{y,t\in\mathbb R}
\left|
\frac{\partial}{\partial t}\varphi(y-\lambda t)
\right|
\le
\Lambda\|\varphi'\|_\infty
<\infty.
\]
Therefore, we have the bound
\[
\sup_{\lambda\in[0,\Lambda]}
\sup_{y\in\mathbb R}
\left|
h_{\lambda,Q}(y)-h_{\lambda,\widetilde G}(y)
\right|
\le
C
\left[
\varepsilon^{L_0}
+
\sum_{j=1}^{J_{\varepsilon}}|Q(U_j)-p_j|
\right].
\]
In particular, if
$
\sum_{j=1}^{J_{\varepsilon}}|Q(U_j)-p_j|
\le
\varepsilon^{L_0},
$
then
\[
\sup_{\lambda\in[0,\Lambda]}
\sup_{y\in\mathbb R}
\left|
h_{\lambda,Q}(y)-h_{\lambda,\widetilde G}(y)
\right|
\le
C\varepsilon^{L_0}.
\]
Combining this with the preceding lower bound for \(h_{\lambda,G_\star}\), for
\(|y|\le T_{\varepsilon}\), since $L_0 > L$, we obtain
\[ \sup_{\lambda\in[0,\Lambda]}
\sup_{|y|\le T_{\varepsilon}}
\left|
\frac{h_{\lambda,Q}(y)}
{h_{\lambda,G_\star}(y)}
-1
\right|
\le
C\varepsilon
+
C\varepsilon^{L_0-C'B_y^2} \leq C \varepsilon.
\]
Define the same-variance marginal induced by \(Q\) as
$f_{Q,i}(z) = \int \varphi(z-m_i-s_i t,\sigma_i^2)\,\dd Q(t).$
Since $
\sigma_i f_{Q,i}(m_i+\sigma_i y)=h_{\lambda_i,Q}(y)$
and \(|z|\le S_{\varepsilon}\) implies \(|(z-m_i)/\sigma_i|\le T_{\varepsilon}\), we obtain
\[
\sup_i
\sup_{|z|\le S_{\varepsilon}}
\left|
\frac{f_{Q,i}(z)}
{f_{\star,i}(z)}
-1
\right|
\le
C\varepsilon.
\]
\noindent \textbf{Step 3: Incorporate the small Gaussian convolution \(A\).} Let $
G=Q* N(0,A) $
and suppose
$
A\le a_{\mathrm{var}}\varepsilon\log^{-1}(\varepsilon^{-1}),$
where \(a_{\mathrm{var}}>0\) is sufficiently small. Define the partial true marginal
$f_{G,i}(z) = \int \varphi(z-m_i-s_i t,\sigma_i^2+s_i^2A)\dd Q(t).$
Change of variables yields
$\sigma_i f_{G,i}(m_i+\sigma_i y) = \int \varphi(y-\lambda_i t,1+\lambda_i^2A) \dd Q(t).$
On the set \(U = \cup_{j=1}^{J_{\varepsilon}} U_j \), all \(t\)'s lie in
a fixed compact interval, say \([-M-1,M+1]\), for all large \(n\). Hence, for
\(|y|\le T_{\varepsilon}\), \(t\in U\), and \(\lambda_i\in[0,\Lambda]\),
\[
\left|
\log
\frac{\varphi(y-\lambda_i t,1+\lambda_i^2A)}
{\varphi(y-\lambda_i t,1)}
\right|
\le
C A(1+T_{\varepsilon}^2)
\le
C a_{\mathrm{var}}\varepsilon.
\]
Taking \(a_{\mathrm{var}}>0\) sufficiently small, the contribution from \(U\) changes by a
relative \(1+O(\varepsilon)\). The contribution from \(U_0\) is bounded in absolute value by
\( Q(U_0)\le C\varepsilon^{L_0}\). Dividing by the lower bound
\(h_{\lambda_i,G_\star}(y)\ge c\varepsilon^{C'B_y^2}\), and using
\(L_0>C'B_y^2+1\), this outside-cell contribution is also \(O(\varepsilon)\). Hence
\[
\sup_i
\sup_{|y|\le T_{\varepsilon}}
\left|
\frac{\sigma_i f_{G,i}(m_i+\sigma_i y)}
{h_{\lambda_i,G_\star}(y)}
-1
\right|
\le
C\varepsilon.
\]
Returning to the \(z\)-scale,
\[
\sup_i
\sup_{|z|\le S_{\varepsilon}}
\left|
\frac{f_{G,i}(z)}
{f_{\star,i}(z)}
-1
\right|
\le
C\varepsilon.
\]

\noindent \textbf{Step 4 KL and \(V_k\) control.}
Let
$
\ell_i(z)
:=
\log(f_{\star,i}(z)/f_{G,i}(z)).
$
By the preceding approximations, we obtain
$
\sup_i
\sup_{|z|\le S_{\varepsilon}}
|\ell_i(z)|
\le
C\varepsilon.
$ We first record a global envelope for \(\ell_i\). Let
$
U=\bigcup_{j=1}^{J_{\varepsilon}}U_j.
$
Since
$
Q(U_0)
\le
\varepsilon^{L_0}$, we have, for all sufficiently large \(n\),
$
Q(U)\ge 1-\varepsilon^{L_0}\ge \frac12.
$
Moreover, since the atoms \(u_j\) belong to \([-M,M]\) and the radii of the \(U_j\)'s are
\(\varepsilon^{L_0}\), we have
$
U\subset[-M-1,M+1]
$
for all sufficiently small $\varepsilon$.

Note that the true density \(f_{\star,i}\) is a Gaussian mixture with mixing support contained in
\([-M,M]\). In particular, the component means \(m_i+s_i u\) are contained in a compact interval, and the variances
\(\sigma_i^2\) are, by assumption, bounded away from $0$ and $\infty$. Therefore there exist constants
\(c_1,C_1>0\) such that
$f_{\star,i}(z)\ge c_1\exp\{-C_1(1+z^2)\}$
uniformly in \(i\) and \(z\). Similarly, on the event under consideration, the density \(f_{G,i}\) contains at least
\(Q(U)\ge 1/2\) mass over \(t\in U\subset[-M-1,M+1]\). For such \(t\), the component means
\(m_i+s_i t\) are contained in a common compact interval, and the component variances
$
\sigma_i^2+s_i^2A
$
are bounded away from $0$ and $\infty$. Hence there exist constants \(c_2,C_2>0\) such that
$f_{G,i}(z)\ge c_2\exp\{-C_2(1+z^2)\}$
uniformly in \(i\) and \(z\). Note that the functions \(f_{\star,i}\) and \(f_{G,i}\) are also uniformly bounded above. Combining these upper and lower
bounds gives
$
|\ell_i(z)|
\le
C(1+z^2),
$
uniformly in \(i\). Next we record the tail bounds. Since \(G_\star\) is compactly supported and
\(m_i,s_i,\sigma_i\) are uniformly bounded with \(\sigma_i\ge\underline\sigma>0\), the true
marginals \(f_{\star,i}\) have uniformly Gaussian tails. Hence, by choosing \(B_z>0\) sufficiently
large in
$
S_{\varepsilon}=B_z\sqrt{\log(\varepsilon^{-1})},
$
we have
\[
\sup_i
\int_{|z|>S_{\varepsilon}}
(1+z^2)^k f_{\star,i}(z) \dd z
\le
C\varepsilon^k \; , \;
\sup_i
\int_{|z|>S_{\varepsilon}}
(1+z^2)^2 f_{\star,i}(z)\dd z
\le
C\varepsilon^2.
\]
For the candidate density \(f_{G,i}\), split its mixing distribution into \(U = \cup_{j=1}^{J_{\varepsilon}} U_j \) and \(U_0\).
The contribution from \(U\) has uniformly Gaussian tails because \(U\subset[-M-1,M+1]\).
The contribution from \(U_0\) has total mass at most \(Q(U_0)\le\varepsilon^{L_0}\). Taking
\(L_0\ge 2\) and \(B_z\) sufficiently large gives
\[
\sup_{i} \int_{|z| > S_{\varepsilon}} f_{G,i} \dd z \leq C \varepsilon^2.
\]
We now control the KL divergence. On \(|z|\le S_{\varepsilon}\), write
$
\frac{f_{G,i}(z)}{f_{\star,i}(z)}
=
1+\Delta_i(z),
|\Delta_i(z)|\le C\varepsilon.
$
Then
$
\ell_i(z)
=
-\log(1+\Delta_i(z)).
$
Using the elementary inequality
$
-\log(1+x)\le -x+Cx^2
$
for \(|x|\) sufficiently small, we get
$
\ell_i(z)
\le
-\Delta_i(z)+C\Delta_i^2(z).
$ Therefore,
\[
\int_{|z|\le S_\varepsilon}\ell_i(z)f_{\star,i}(z) \dd z
\le
-\int_{|z|\le S_\varepsilon}\{f_{G,i}(z)-f_{\star,i}(z)\} \dd z
+
C\varepsilon^2.
\]
Since both \(f_{\star,i}\) and \(f_{G,i}\) integrate to one,
\[
-\int_{|z|\le S_{\varepsilon}}\{f_{G,i}(z)-f_{\star,i}(z)\} \dd z
=
\int_{|z|>S_{\varepsilon}}\{f_{G,i}(z)-f_{\star,i}(z)\} \dd z.
\]
Thus
\[
-\int_{|z|\le S_{\varepsilon}}\{f_{G,i}(z)-f_{\star,i}(z)\} \dd z
\le
\int_{|z|>S_{\varepsilon}}f_{G,i}(z) \dd z
+
\int_{|z|>S_{\varepsilon}}f_{\star,i}(z) \dd z
\le
C\varepsilon^2.
\]
The tail contribution to the KL integral is also \(O(\varepsilon^2)\):
\[
\int_{|z|>S_{\varepsilon}}|\ell_i(z)|f_{\star,i}(z) \dd z
\le
C\int_{|z|>S_{\varepsilon}}(1+z^2)f_{\star,i}(z) \dd z
\le
C\varepsilon^2.
\]
Combining the bounds yields
$
\DKL{f_{\star,i}}{f_{G,i}}
\le
C\varepsilon^2 $
uniformly in \(i\). Averaging over \(i\), we obtain
$
\DKL{\boldf_{\theta_\star}}{\boldf_{\theta_{G,\star}}}
\le
C\varepsilon^2
$. Next, let
$
K_i=\DKL{f_{\star,i}}{f_{G,i}}.
$ By Jensen's inequality,
\[
\int |\ell_i(z)-K_i|^k f_{\star,i}(z) \dd z
\le
C\int |\ell_i(z)|^k f_{\star,i}(z) \dd z.
\]
 On \(|z|\le S_{\varepsilon}\), we have $
|\ell_i(z)|\le C\varepsilon$
 and so
$
\int_{|z|\le S_{\varepsilon}}|\ell_i(z)|^k f_{\star,i}(z) \dd z
\le
C\varepsilon^k.
$
On the tail,
\[
\int_{|z|>S_{\varepsilon}}|\ell_i(z)|^k f_{\star,i}(z) \dd z
\le
C\int_{|z|>S_{\varepsilon}}(1+z^2)^k f_{\star,i}(z) \dd z
\le
C\varepsilon^k.
\]
Thus
$
\int |\ell_i(z)-K_i|^k f_{\star,i}(z) \dd z
\le
C\varepsilon^k
$
uniformly in \(i\). Averaging over \(i\), we obtain the desired bound
$
\VKLk{\boldf_{\theta_\star}}{\boldf_{\theta_{G,\star}}}
\le
C\varepsilon^k.
$\\

\noindent \textbf{Step 5: Lower bound the prior probability.}
The preceding steps show that if
$
\sum_{j=1}^{J_{\varepsilon}}|Q(U_j)-p_j|
\le
\varepsilon^{L_0} $ , $
A\le a_{\mathrm{var}}\varepsilon\log^{-1}(\varepsilon^{-1})$  for some sufficiently large $L_0 > 0$  and sufficiently small $a_{var} > 0$, we have $G = Q * \mathrm{N}(0,A) \in \mathcal G^{(k)}(\varepsilon)  $. It remains to lower bound the probability of the set $$ \mathcal{K}(\varepsilon) = \bigg \{   (Q,A)\, :\, \sum_{j=1}^{J_{\varepsilon}}|Q(U_j)-p_j|
\le
\varepsilon^{L_0} \;, \;    A\le a_{\mathrm{var}}\varepsilon\log^{-1}(\varepsilon^{-1})    \bigg \}. $$
As the priors on $(Q,A)$ are independent, it suffices to lower bound the events separately. Since the density of the base measure $H$ is bounded below on $[-M,M]$, we have $H(U_j) \geq c_0 \varepsilon^{L_0}$ for each $j$ and some universal constant $c_0 > 0$. Since $J_{\varepsilon} \leq 3R \log(\varepsilon^{-1})$,  Lemma G.13 in \cite{ghosal2017fundamentals} implies that $$ \Pi \big( Q : \sum_{j=1}^{J_{\varepsilon}}|Q(U_j)-p_j|
\le
\varepsilon^{L_0}    \big) \geq c' \exp(-C' \log^2(\varepsilon^{-1})) $$
for some constants $c,C' > 0$. The small  ball probability of $\{ A : A \le a_{\mathrm{var}}\varepsilon\log^{-1}(\varepsilon^{-1})    \} $ follows  from the assumption on the variance prior $\Gamma$. Finally, as noted in Step 2, the $Q$ probability of $[-M-1,M+1]^c  $ follows from $ [-M-1,M+1]^c \subset U_0$ and $Q(U_0) \leq  \sum_{j=1}^{J_{\varepsilon}}|Q(U_j)-p_j|   \leq \varepsilon^{L_0}$.

\end{proof}

\subsection{Lemmata for regret}

Let us use the notation, for $\theta=(G,\eta_1,\eta_2)$ and $\rho>0$, for a regularized Bayes rule
\begin{equation}
\delta_{\theta}^{\rho}(z, \sigma^2) := z \,+\, \sigma^2 \frac{ f'_{\theta,\sigma^2}(z)}{f_{\theta,\sigma^2}(z)\vee (\rho/\sigma)},
\label{eq:reg_bayes_rule_close}
\end{equation}
where $'$ denotes differentiation with respect to $z$.  In this section, for functions
$h: \RR \times [\ubar\sigma^2,\bar\sigma^2] \to \RR$, we write $\Norm{h}_{\infty}$ for the
supremum norm over the whole domain,
$$
\Norm{h}_{\infty} := \sup_{z \in \RR}\; \sup_{\sigma^2\in[\ubar\sigma^2,\bar\sigma^2]} \abs{h(z, \sigma^2)}.
$$
Let us recall the sieve $\Theta_n$ defined in \eqref{eq:close-sieve} and let $\varepsilon_n$ be as in
Theorem~\ref{thm:close_contraction}. The following result establishes a covering number for regularized Bayes rules
induced by parameters in the sieve $\Theta_n$ under the $\Norm{\cdot}_{\infty}$ norm.

\begin{lemm}[Covering regularized Bayes rules for CLOSE]
\label{lemm:covering_number_reg_bayes_close}
Let \(\alpha>1/2\) denote the Mat\'ern regularity in Specification~\ref{loc-scale-prior}, so that
\(\varepsilon_n=n^{-\alpha/(2\alpha+1)}\), and let \(1/2<\beta<\alpha\) be the exponent used in the
definition of the sieve \(\mathcal H_n\). There exist constants \(c,C>0\), depending only on the prior
specification, \(\alpha,\beta,M,\bar A,\ubar\sigma^2,\bar\sigma^2\), the link function, and the
constants in the definition of $\mathcal H_n$, but not on $\rho$ or $n$, such that, for every
$0<\rho<1/10$ and all sufficiently large $n$,
$\log N\p{c\varepsilon_n,\; \cb{\delta_\theta^\rho:\theta\in\Theta_n},\; \Norm{\cdot}_{\infty}} \le C n\varepsilon_n^2|\log(\rho\varepsilon_n)|^{3}.$
In particular, if \(\rho=\rho_n\ge n^{-q}\) for some fixed \(q<\infty\), then there exists
\(C_q<\infty\) such that
$\log N\p{c\varepsilon_n,\; \cb{\delta_\theta^{\rho_n}:\theta\in\Theta_n},\; \Norm{\cdot}_{\infty}} \le C_q n\varepsilon_n^2 \log^3 n$
for all sufficiently large \(n\).
\end{lemm}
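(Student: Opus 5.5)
We cover the three components of $\theta=(G,\eta_1,\eta_2)$ separately and combine the covers by a triangle inequality, as in Step~4 of the proof of Theorem~\ref{thm:close_contraction}. The regularized rule depends on $\theta$ only through the pair $(f_{\theta,\sigma^2},f'_{\theta,\sigma^2})$. The map $(f,f')\mapsto f'/(f\vee\rho/\sigma)$ is Lipschitz on the relevant range, with constant of order $\rho^{-1}(1+|\log\rho|^{1/2})$; this uses Lemma~\ref{lemm:proposition_1_jiang_zhang}, which bounds $|f'/f|$ by $\tilde L(f)\lesssim|\log\rho|^{1/2}$ wherever $f\ge\rho/\sigma$. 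Hence it suffices to cover the class $\{(f_{\theta,\sigma^2},f'_{\theta,\sigma^2})\}$ in sup norm over $(z,\sigma^2)$ at scale $\epsilon':=c\rho\varepsilon_n/|\log\rho|^{1/2}$. Only polynomial dependence on $(\rho\varepsilon_n)^{-1}$ enters through $\log(1/\epsilon')$, which produces the $|\log(\rho\varepsilon_n)|$ powers.

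\textbf{Nuisance part.} For fixed $G\in\mathcal G$, the density $f_{\theta,\sigma^2}(z)$ and its $z$-derivative are Gaussian mixtures with variance at least $\ubar\sigma^2$. Their dependence on $(\eta_1(\sigma),s_{\eta_2}(\sigma))$ is therefore Lipschitz in sup norm, with constant polynomial in $S_n\lesssim\sqrt n\varepsilon_n$, since $\int u^2\,\dd G\le C$ on $\mathcal G$. Translating $\eta_1$ and rescaling by $s_{\eta_2}$ only move component means. A sup-norm $\epsilon'/\mathrm{poly}(n)$-net of $\mathcal H_n$ therefore suffices. By the entropy bound in Step~3 of the proof of Theorem~\ref{thm:close_contraction}, evaluated at this smaller scale, its log-cardinality is $\lesssim (\delta_n/\tau)^{1/\beta}+(\sqrt n\varepsilon_n/\tau)^{1/(\alpha+1/2)}$. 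At $\tau=\epsilon'/\mathrm{poly}(n)$ this picks up extra polynomial factors in $n$ and $\rho^{-1}$. That would break the target bound, so the sup-norm Lipschitz step must be done carefully, without polynomial losses. The plan is to use a sup-norm net of $\mathcal H_n$ at scale $c\varepsilon_n$ exactly as in Step~3, which has entropy $\lesssim n\varepsilon_n^2$. The $\rho$-dependence is then absorbed in the $G$-cover, by working on the scale $\rho\varepsilon_n$ only there. Concretely, $|\delta^\rho_\theta-\delta^\rho_{\theta'}|$ is at most the Lipschitz constant of $\delta_G$ in its location argument (bounded by $1+\tilde L^2\lesssim|\log\rho|$ via Lemma~\ref{lemm:proposition_1_jiang_zhang}) times $\|\eta-\eta'\|_\infty$. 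So a $c\varepsilon_n/|\log\rho|$ net of $\mathcal H_n$ suffices, and its entropy is $\lesssim n\varepsilon_n^2|\log\rho|^{O(1)}$.

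\textbf{Mixing part.} For fixed nuisances, reduce as in Step~2 of the proof of Theorem~\ref{thm:close_contraction}. Writing $G=Q*\mathrm N(0,A)$, the rule becomes the posterior mean for a Gaussian location mixture with mixing law $Q$ on $[-R,R]$ and effective noise level $\tau_i\ge\tau_n=\ubar\sigma/S_n$. Then apply the covering argument of \citet[Proposition~3]{jiang2009general}. Replace $Q$ by a discrete measure with $O(|\log(\rho\varepsilon_n)|)$ moments matched on each window of length $\tau_n$, and grid the atoms, the weights, and the variance $A\in[0,\bar A]$. The resulting entropy is $\lesssim (R/\tau_n)|\log(\rho\varepsilon_n)|^2\cdot|\log(\rho\varepsilon_n)|\lesssim\sqrt n\varepsilon_n|\log(\rho\varepsilon_n)|^3\le n\varepsilon_n^2|\log(\rho\varepsilon_n)|^3$. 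The sup over $z\in\RR$ is handled by the regularization: for $|z|$ beyond $M+S_n R+\sqrt{|\log\rho|}$ both densities are below $\rho/\sigma$ and their derivatives are below $\rho$ times polylog. The main obstacle is making the Jiang--Zhang moment-matching argument uniform over the heteroscedastic $\sigma^2\in[\ubar\sigma^2,\bar\sigma^2]$ and over the variable scale factor $s_{\eta_2}(\sigma)$. I would handle this by matching moments with respect to the least smoothing, $\tau_n$, and appealing to monotonicity under further Gaussian convolution, as in Step~2. The final claim for $\rho_n\ge n^{-q}$ follows since then $|\log(\rho_n\varepsilon_n)|\lesssim_q\log n$.
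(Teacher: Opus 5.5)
Your overall architecture is the paper's: a nuisance Lipschitz bound with only a polylogarithmic constant, so the $\mathcal H_n$-nets stay at scale $\varepsilon_n/\mathrm{polylog}(1/\rho)$; all $\rho$-dependence pushed into a moment-matching cover of the $G$-coordinate with entropy $\sqrt n\,\varepsilon_n\cdot\mathrm{polylog}$; then a product cover. Your Lipschitz constant $\rho^{-1}|\log\rho|^{1/2}$ for $(f,f')\mapsto f'/(f\vee\rho/\sigma)$ is even slightly better than the paper's $\rho^{-2}$. However, the two steps that carry the real difficulty are not justified as written.

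\textbf{The nuisance Lipschitz bound.} For $\eta_1$ the perturbation is a rigid translation, and your $1+\tilde L^2\lesssim|\log\rho|$ argument is fine. For $\eta_2$ it is not a translation.
\begin{itemize}
\item In the $G$-representation, the atom at $u$ moves by $(s_{\eta_2}-s_{\eta_2'})(\sigma)\,u$. This is unbounded, because $G=Q*\mathrm N(0,A)$ has Gaussian tails.
\item In the $Q$-representation, the atoms $x\in[-R,R]$ move by a bounded amount. But the component variance $v=\sigma^2+s^2A$ also changes, and a location-Lipschitz constant says nothing about that.
\end{itemize}
The bound must also be \emph{relative} to $f\vee\rho/\sigma$. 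An absolute sup-norm bound on $(f,f')$ costs another factor $\rho^{-1}$ and forces a $\rho\varepsilon_n$-net of $\mathcal H_n$ — exactly the polynomial loss you yourself identified. The missing ingredient is the scale-free estimate $sA/v\le C$ and $sA/v^{3/2}\le C$, uniformly in $s\ge0$, $A\le\bar A$, $\sigma\ge\underline{\sigma}$. It yields
\begin{itemize}
\item $sA|\partial_v\varphi|\le C|\log\rho|\,(\varphi\vee\rho/\sigma)$, and
\item $sA|\partial_z\partial_v\varphi|\le C|\log\rho|^{3/2}(\varphi\vee\rho/\sigma)$.
\end{itemize}
Integrate along $\eta_{j,t}=(1-t)\eta_j+t\eta_j'$. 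Averaging the componentwise bounds over $Q$ costs only a factor $2$, since $\int (g\vee c)\,\mathrm dQ\le 2(\int g\,\mathrm dQ\vee c)$. This gives a Lipschitz constant $C|\log\rho|^{3/2}$ uniformly in $s\le S_n$, which keeps the $\mathcal H_n$-entropy below $n\varepsilon_n^2|\log\rho|^3$.

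\textbf{Uniformity of the $G$-cover over $s=s_{\eta_2}(\sigma)$.} You rightly flag this as the main obstacle, but it does not follow from ``matching at the least smoothing $\tau_n$ plus monotonicity under further convolution.'' That argument worked in the contraction proof because Hellinger distance is invariant under $y=(z-\eta_1)/s$. Here you need sup-norm control of $f$ and $f'$ on the $z$-scale. Under this change of variables they acquire Jacobian factors $s^{-1}$ and $s^{-2}$, so your bound blows up as $s\downarrow 0$ — a regime the softplus link explicitly permits.

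The paper instead matches moments directly for the $z$-scale kernel $\psi_r(x)=\partial_z^r\varphi(z-\eta_1(\sigma)-sx;\sigma^2+s^2A)$. Its derivatives $\partial_x^k\psi_r=(-s)^k\partial_z^{k+r}\varphi$ are bounded by $C^{k+1}s^k\sqrt{(k+r)!}$, uniformly in $z$ and all other arguments (Cram\'er's inequality and $v\ge\underline{\sigma}^2$). This bound is increasing in $s$. Hence a single discrete $Q^\circ$, matching $O(|\log u|)$ moments on windows of length $|\log u|^{1/2}/(1+\sqrt n\varepsilon_n)$, works simultaneously for all $(\eta_1,\eta_2,\sigma,A,z)$. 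That is why the paper's metric on $\mathcal G$ takes the supremum over $\mathcal H_n\times\mathcal H_n$.

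Uniformity in $z\in\mathbb R$ then comes for free. That matters, because your tail argument is insufficient. Where both densities are below $\rho/\sigma$, the rule difference is $\sigma^3|f'_\theta-f'_{\theta'}|/\rho$, so you would need $|f'|\lesssim\rho\varepsilon_n$ there, not $\rho$ times a polylog. The threshold would also have to be shifted by $\|\eta_1\|_\infty$, which can be of order $\sqrt n\,\varepsilon_n$.
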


\begin{proof}
All constants below may change from line to line but do not depend on \(\rho\) or \(n\).

\medskip
\noindent\textbf{Step 1: Pointwise Gaussian bounds.}
For every polynomial $P_r$ of degree at most $r$ and given a fixed constant $C_0>0$, there exists
another constant $C>0$ such that
$$
\abs{P_r(y)}
\min\cb{1,\frac{C_0}{\rho}\exp(-y^2/2)}
\le
C|\log\rho|^{r/2},
\;\text{ for all }\, \rho \in (0,1/10),\; y \in \RR.
$$
Indeed, if $|y|\le C_1|\log\rho|^{1/2}$ where $C_1$ is a fixed large constant, then
$\abs{P_r(y)}\le C_2 |\log \rho|^{r/2}$ for another constant $C_2$. If
$|y|> C_1 |\log \rho|^{1/2}$, then, taking $C_1$ large enough depending only on $C_0$,
$$
\frac{C_0}{\rho}\exp(-y^2/2)
=
\cb{\frac{C_0}{\rho}\exp(-y^2/4)}\exp(-y^2/4)
\le
\exp(-y^2/4),
$$
and hence
$
\abs{P_r(y)}
\min\cb{1,C_0 \rho^{-1}\exp(-y^2/2)}
\le
\abs{P_r(y)}\exp(-y^2/4)
\le C_2.
$
This proves the displayed polynomial bound.

Now let $v\ge \ubar\sigma^2$, $\sigma\in[\ubar\sigma,\bar\sigma]$, and write $y=(z-m)/\sqrt v$.
Since
$\partial_z^r\varphi(z-m;v) = v^{-r/2}P_r(y)\varphi(z-m;v), \; r=0,1,2,$
for certain polynomials $P_r$ of degree $r$, and since
$$
\frac{\varphi(z-m;v)}{\varphi(z-m;v)\vee \rho/\sigma}
=
\min\cb{1,\frac{\sigma}{\rho\sqrt{2\pi v}}\exp(-y^2/2)}
\le
\min\cb{1,\frac{C}{\rho}\exp(-y^2/2)},
$$
we get
\begin{equation}
\label{eq:close-reg-bayes-gaussian-z-bounds}
\frac{\abs{\partial_z^r\varphi(z-m;v)}}
{\varphi(z-m;v)\vee \rho/\sigma}
\le
C|\log\rho|^{r/2},
\quad r=0,1,2.
\end{equation}
Moreover, if \(v=\sigma^2+s^2A\), \(s\ge0\), $\sigma^2 \geq \ubar{\sigma}^2$ and \(A\in[0,\bar A]\),
then
$$
\frac{sA}{v}\le C,
\;\;\,
\frac{sA}{v^{3/2}}\le C,
$$
uniformly over \(s,A,\sigma\). Since
$\partial_v\varphi(z-m;v) = \frac{1}{2v}(y^2-1)\varphi(z-m;v) \;\;\text{ and }\;\; \partial_z\partial_v\varphi(z-m;v) = v^{-3/2}P_3(y)\varphi(z-m;v)$
for a polynomial \(P_3\) of degree \(3\), the same polynomial bound gives
\begin{equation}
\label{eq:close-reg-bayes-gaussian-v-bounds}
\frac{sA\abs{\partial_v\varphi(z-m;v)}}
{\varphi(z-m;v)\vee \rho/\sigma}
\le
C|\log\rho|,
\;\;\,
\frac{sA\abs{\partial_z\partial_v\varphi(z-m;v)}}
{\varphi(z-m;v)\vee \rho/\sigma}
\le
C|\log\rho|^{3/2}.
\end{equation}

\medskip
\noindent\textbf{Step 2: Lipschitz continuity in the nuisance functions.}
We next control the effect of changing the nuisance functions while keeping $G$ fixed. Write
$G=Q*\mathrm N(0,A)$ where $Q$ is supported on $[-R,R]$. Recall that
$$
f_{\theta,\sigma^2}(z)
=
\int
\varphi\!\p{
z-\eta_1(\sigma)-s_{\eta_2}(\sigma)x;\,\,
\sigma^2+s_{\eta_2}^2(\sigma)A
}
\,\dd Q(x).
$$
Let
$
\theta=(G,\eta_1,\eta_2),
$
$
\widetilde\theta=(G,\widetilde\eta_1,\widetilde\eta_2),
$
and for $t \in [0,1]$,
$$
\theta_t := (G, \eta_{1,t},\eta_{2,t}),\; \text{ where }\;   \eta_{j,t}:=(1-t)\eta_j+t\widetilde\eta_j,
\;\; j=1,2.
$$
Since \(\sqrt\Lambda\) is globally Lipschitz, the map \(t\mapsto s_{\eta_{2,t}}(\sigma)\) is absolutely
continuous and, for almost every \(t\),
$$
\abs{
\frac{\dd}{\dd t}s_{\eta_{2,t}}(\sigma)
}
\le
C\Norm{\eta_2-\widetilde\eta_2}_\infty .
$$
Combining this with \eqref{eq:close-reg-bayes-gaussian-z-bounds} and
\eqref{eq:close-reg-bayes-gaussian-v-bounds}, and using that $Q$ is supported on $[-R,R]$, gives,
for almost every \(t\),
$$
\abs{
\frac{\dd}{\dd t}
f_{\theta_t,\sigma^2}(z)
} \, \le \,
 C|\log\rho|
\p{
\Norm{\eta_1-\widetilde\eta_1}_\infty
+
\Norm{\eta_2-\widetilde\eta_2}_\infty
}
\cb{
f_{\theta_t,\sigma^2}(z)\vee \rho/\sigma
},
$$
and,
$$
\abs{
\frac{\dd}{\dd t}
f_{\theta_t,\sigma^2}'(z)
}
\;\le\;
C|\log\rho|^{3/2}
\p{
\Norm{\eta_1-\widetilde\eta_1}_\infty
+
\Norm{\eta_2-\widetilde\eta_2}_\infty
}
\cb{
f_{\theta_t,\sigma^2}(z)\vee \rho/\sigma
}.
$$
Here we also used the elementary inequality
$$
\int \{g_x(z)\vee \rho/\sigma\}\,\dd Q(x)
\le
\int g_x(z)\,\dd Q(x)+\rho/\sigma
\le
2\cb{\int g_x(z)\,\dd Q(x)\vee \rho/\sigma}.
$$
Similarly, \eqref{eq:close-reg-bayes-gaussian-z-bounds} with $r=1$ gives
$
\abs{
f_{\theta_t,\sigma^2}'(z)
}
\le
C|\log\rho|^{1/2}
\cb{
f_{\theta_t,\sigma^2}(z)\vee \rho/\sigma
}.
$
Therefore, for almost every $t$,
$$
\begin{aligned}
\abs{
\frac{\dd}{\dd t}
\left[
\frac{
f_{\theta_t,\sigma^2}'(z)
}{
f_{\theta_t,\sigma^2}(z)\vee \rho/\sigma
}
\right]
}
& \le
\frac{
\abs{
\frac{\dd}{\dd t}
f_{\theta_t,\sigma^2}'(z)
}
}{
f_{\theta_t,\sigma^2}(z)\vee \rho/\sigma
}
\;+\;
\frac{
\abs{f_{\theta_t,\sigma^2}'(z)}
\abs{
\frac{\dd}{\dd t}
f_{\theta_t,\sigma^2}(z)
}
}{
\{f_{\theta_t,\sigma^2}(z)\vee \rho/\sigma\}^2
}\\
 &\le
C|\log\rho|^{3/2}
\p{
\Norm{\eta_1-\widetilde\eta_1}_\infty
+
\Norm{\eta_2-\widetilde\eta_2}_\infty
}.
\end{aligned}
$$
Integrating over \(t\in[0,1]\), multiplying by \(\sigma^2 \le \bar\sigma^2\), and taking suprema over
\(z \in \RR\) and \(\sigma^2\in[\ubar\sigma^2,\bar\sigma^2]\) gives
\begin{equation}
\label{eq:close-reg-bayes-eta-lipschitz}
\Norm{
\delta_{(G,\eta_1,\eta_2)}^\rho
-
\delta_{(G,\widetilde\eta_1,\widetilde\eta_2)}^\rho
}_{\infty}
\le
C|\log\rho|^{3/2}
\p{
\Norm{\eta_1-\widetilde\eta_1}_\infty
+
\Norm{\eta_2-\widetilde\eta_2}_\infty
}.
\end{equation}

\medskip

\noindent\textbf{Step 3: Covering the $G$-coordinate.}
Let $G,G'\in\mathcal G$.
Fix $(\eta_1,\eta_2)\in\mathcal H_n\times\mathcal H_n$ and $\sigma\in[\ubar\sigma,\bar\sigma]$. Then observe that,
$$
\begin{aligned}
&\abs{
\frac{f'_{(G,\eta_1,\eta_2), \sigma^2}(z)}{f_{(G,\eta_1,\eta_2), \sigma^2}(z)\vee \rho/\sigma}
-
\frac{f'_{(G',\eta_1,\eta_2), \sigma^2}(z)}{f_{(G',\eta_1,\eta_2), \sigma^2}(z)\vee \rho/\sigma}
} \\
& \quad\quad
\le \;
\frac{\abs{f'_{(G,\eta_1,\eta_2), \sigma^2}(z)-f'_{(G',\eta_1,\eta_2),\sigma^2}(z)}}{\rho/\bar\sigma}\\
&\qquad+
\frac{\abs{f'_{(G',\eta_1,\eta_2), \sigma^2}(z)}\,\abs{f_{(G,\eta_1,\eta_2), \sigma^2}(z)-f_{(G',\eta_1,\eta_2), \sigma^2}(z)}}{(\rho/\bar\sigma)^2}.
\end{aligned}
$$
Moreover, since $\sup_{\theta,z,\sigma \geq \ubar{\sigma}}|f'_{\theta,\sigma^2}(z)|\le C$. Multiplying by \(\sigma^2\le\bar\sigma^2\) and using
\(0<\rho<1\), we obtain, uniformly over \((\eta_1,\eta_2)\in\mathcal H_n\times\mathcal H_n\),
\begin{equation}
\label{eq:close-reg-bayes-G-lipschitz}
\Norm{
\delta_{(G,\eta_1,\eta_2)}^\rho
-
\delta_{(G',\eta_1,\eta_2)}^\rho
}_{\infty}
\le
C\rho^{-2}\,d_{\mathcal G}(G,G'),
\end{equation}
where we define:
$$
d_{\mathcal G}(G,G')
:=
\sup_{(\eta_1,\eta_2)\in\mathcal H_n\times\mathcal H_n}\;
\sup_{\sigma^2\in[\ubar\sigma^2,\bar\sigma^2]}\;
\sum_{r=0}^1\,
\sup_{z\in\RR}\,
\abs{
\partial_z^r f_{(G,\eta_1,\eta_2),\sigma^2}(z)
-
\partial_z^r f_{(G',\eta_1,\eta_2),\sigma^2}(z)
}.
$$
Our goal in this step is to upper bound the covering number $\log N\p{u,\,\mathcal G,\,d_{\mathcal G}}$ for $ 0<u\le \varepsilon_n$.
using a moment-matching construction in the spirit of
\citet[proof of Theorem~7]{jiang2020general}.

By the Sobolev embedding bound used in the proof of Theorem~\ref{thm:close_contraction},
$
\sup_{\eta\in\mathcal H_n}\lVert\eta\rVert_\infty
\le
C\sqrt n\,\varepsilon_n,
$
and since \(\sqrt\Lambda\) is globally Lipschitz,
$1\vee \sup_{\eta\in\mathcal H_n} \sup_{\sigma\in[\ubar\sigma,\bar\sigma]}s_\eta(\sigma) \le C(1+\sqrt n\,\varepsilon_n).$
Fix \(G=Q*\mathrm N(0,A)\in\mathcal G\), with \(Q\in\mathcal P([-R,R])\) and \(A\in[0,\bar A]\).
For $(\eta_1,\eta_2)\in\mathcal H_n\times\mathcal H_n$, $\sigma\in[\ubar\sigma,\bar\sigma]$,
$z\in\RR$, $A\in[0,\bar A]$, and \(r=0,1\), put
$$
\psi_r(x)
:=
\partial_z^r \varphi\!\p{
z-\eta_1(\sigma)-s_{\eta_2}(\sigma)x;\,\,
\sigma^2+s_{\eta_2}^2(\sigma)A
},
$$
with the dependence on $\eta_1,\eta_2, A, \sigma, z$ omitted. Notice that
$\partial_z^r f_{(G,\eta_1,\eta_2),\sigma^2}(z) = \int \psi_r(x)\,\dd Q(x).$
Moreover, for any $ k \geq 0$ and $r \in \cb{0,1}$
$\partial_x^k \psi_r(x) = \{-s_{\eta_2}(\sigma)\}^k\,\partial_z^{k+r} \varphi(z-\eta_1(\sigma)-s_{\eta_2}(\sigma)x;\,\sigma^2+s_{\eta_2}^2(\sigma)A).$
Meanwhile, by Cramér's inequality, we have that $\sup_y \abs{\partial_y^m\varphi(y;v)}\le C^{m+1}\sqrt{m!}$, uniformly over $v\ge\ubar\sigma^2$. Thus,
\begin{equation}
\abs{\partial_x^k \psi_r(x)} \leq C^{k+1}(1+\sqrt n\,\varepsilon_n)^k\sqrt{(k+r)!}\;,
\label{eq:close-kernel-x-derivative-bound}
\end{equation}
uniformly over $x$ and the other suppressed arguments.

Next, for $u$ sufficiently small, set
$
a:=|\log u|^{1/2}
$
and partition $[-R,R]$ into intervals $I_j$ of length at most $a/(1+\sqrt n\,\varepsilon_n)$,
with left endpoints $x_j$; the number of intervals is at most
$C\{1+(1+\sqrt n\,\varepsilon_n)/a\}$. Let $K:=\lceil C_0a^2\rceil$, where $C_0$ is a
sufficiently large constant. By Carath\'eodory's theorem, applied separately on each interval
$I_j$, there is a discrete probability measure $Q^\circ$ on $[-R,R]$ such that
$$
\int_{I_j}(x-x_j)^k\,\dd Q(x)
=
\int_{I_j}(x-x_j)^k\,\dd Q^\circ(x),
\qquad
k=0,\ldots,2K+1,
$$
for every $j$, and $Q^\circ$ has at most
$$
m\le
C\cb{1+\frac{1+\sqrt n\,\varepsilon_n}{a}}(2K+2)
\le
C(1+\sqrt n\,\varepsilon_n)a^2
$$
support points. The partition and $Q^\circ$ depend only on $(Q,u,n)$, and not on
$(\eta_1,\eta_2,\sigma,z,A,r)$.
For $x\in I_j$, Taylor's theorem and~\eqref{eq:close-kernel-x-derivative-bound} give, uniformly over the suppressed arguments and $r\in\{0,1\}$,
$$
\left|
\psi_r(x)
-
\sum_{k=0}^{2K+1}
\frac{\partial_x^k\psi_r(x_j)}{k!}(x-x_j)^k
\right|
\le
\frac{
C^{2K+3}(1+\sqrt n\,\varepsilon_n)^{2K+2}
\sqrt{(2K+3)!}
}{
(2K+2)!
}
\left(\frac{a}{1+\sqrt n\,\varepsilon_n}\right)^{2K+2}.
$$
Since the polynomial part integrates identically under $Q$ and $Q^\circ$ on each $I_j$, it follows that
$$
\begin{aligned}
\left|
\int \psi_r(x)\,\dd Q(x)
-
\int \psi_r(x)\,\dd Q^\circ(x)
\right|
\le
C
\frac{(Ca)^{2K+2}\sqrt{(2K+3)!}}{(2K+2)!}
\le
C'\sqrt K
\left(\frac{C^2 e a^2}{K}\right)^{K+1}
\le
e^{-K},
\end{aligned}
$$
where the second inequality follows from Stirling's formula and for the third inequality we took $C_0$ sufficiently large in the definition $K=\lceil C_0a^2\rceil$. Now, recalling that
$a^2=|\log u|$, the last display is bounded by $u/4$ (for $u$ small enough). Summing over $r=0,1$, we obtain
\begin{equation}
\label{eq:close-Q-finite-approx}
d_{\mathcal G}\p{Q*\mathrm N(0,A),\,Q^\circ*\mathrm N(0,A)}
\le
u/2.
\end{equation}
It remains to discretize $Q^\circ$ and $A$. Move each support point of $Q^\circ$ to the nearest point on a grid of $[-R,R]$ with grid spacing
$cu/\{1+\sqrt n\,\varepsilon_n\}$.
By~\eqref{eq:close-kernel-x-derivative-bound} with $k=1$, moving a support point by this amount changes $\psi_r$, uniformly over $r=0,1$ and the suppressed arguments, by at most $Ccu$. Hence the resulting perturbation in $d_{\mathcal G}$ is at most $Ccu$.
Next, approximate the vector of weights of the resulting discrete measure in $\ell_1$ to accuracy $cu$. By~\eqref{eq:close-kernel-x-derivative-bound} with $k=0$, this changes $d_{\mathcal G}$ by at most another $Ccu$. Together, these steps yield a discretized approximation $Q'$ of $Q^{\circ}$.

Now, approximate $A\in[0,\bar A]$ as $A'$ on a grid with grid spacing
$cu/\{(1+\sqrt n\,\varepsilon_n)^2\}$.
Indeed,
$\partial_A\psi_r(x) = s_{\eta_2}^2(\sigma)\, \partial_v\partial_z^r \varphi\p{ z-\eta_1(\sigma)-s_{\eta_2}(\sigma)x;\, v }, \;\; v=\sigma^2+s_{\eta_2}^2(\sigma)A.$
Since $v\ge\ubar\sigma^2$, the Gaussian derivatives above are uniformly bounded for $r=0,1$, and hence
$\sup_{r=0,1}\sup_x |\partial_A\psi_r(x)| \le C(1+\sqrt n\,\varepsilon_n)^2.$
Thus the perturbation of $A$ to $A'$ changes $d_{\mathcal G}$ by at most $Ccu$ (for fixed $Q$).

The overall discretization yields $G'=Q'*\mathrm N(0,A') \in \mathcal{G}$.
Taking $c>0$ sufficiently small, combining the above three bounds with~\eqref{eq:close-Q-finite-approx}, and using the triangle inequality, we find that
$d_{\mathcal G}(G,G')\le u.$
It remains to count the number of possible $G'$. The grid for the support points has at most
$
C\sqrt n\varepsilon_n/u
$
points of which we choose $m$, while the $A$-grid has at most $C n \varepsilon_n^2 / u$
points. Moreover, the simplex of probability vectors on at most $m$ points admits an $\ell_1$-cover of radius $cu$ with at most $(C/u)^m$ elements. Consequently, the logarithm of the number of possible discretized laws is bounded by
$
Cm
\left\{|\log u|+\log(\sqrt n\,\varepsilon_n)
\right\}.
$
Using $m\le C\sqrt n\,\varepsilon_n|\log u|$
and the fact that $u\le\varepsilon_n=n^{-\alpha/(2\alpha+1)}$,
we conclude that
\begin{equation}
\log N\p{u,\mathcal G,d_{\mathcal G}}
\le
C\sqrt n\,\varepsilon_n
\log^2 u.
\label{eq:close-G-density-derivative-cover}
\end{equation}

\medskip
\noindent\textbf{Step 4: The product cover.}
Let
$
\tau_n
:=
c_0\varepsilon_n|\log\rho|^{-3/2},
$
where \(c_0>0\) will be chosen small enough. The entropy bound for the Mat\'ern sieve gives
$$
\log N\p{\tau_n,\mathcal H_n,\Norm{\cdot}_\infty}
\le
C\left[
\p{\frac{\delta_n}{\tau_n}}^{1/\beta}
+
\p{\frac{\sqrt n\,\varepsilon_n}{\tau_n}}^{1/(\alpha+1/2)}
\right]
\le
C n\varepsilon_n^2|\log\rho|^{3}.
$$
Here we used
$$
\delta_n\asymp(n\varepsilon_n^2)^{-(\alpha-\beta)}
\asymp\varepsilon_n^{(\alpha-\beta)/\alpha},
\;\;
n\varepsilon_n^2\asymp\varepsilon_n^{-1/\alpha},
$$
and \(\beta>1/2\).

Let \(\{\eta_{1,k}:1\le k\le N_1\}\) and \(\{\eta_{2,\ell}:1\le \ell\le N_2\}\) be \(\tau_n\)-nets of
\(\mathcal H_n\) in sup norm, and let \(\{G_m:1\le m\le N_G\}\) be a
\(c_1\rho^2\varepsilon_n\)-net of \((\mathcal G, d_{\mathcal G})\), where \(c_1>0\) will be chosen
small enough; note that a single net of \(\mathcal G\) serves all pairs \((k,\ell)\), by the
definition of \(d_{\mathcal G}\). By \eqref{eq:close-G-density-derivative-cover} applied with
\(u=c_1\rho^2\varepsilon_n\), and using \(1+\sqrt n\,\varepsilon_n\le C n\varepsilon_n^2\),
$$
\log N_G
\le
C n\varepsilon_n^2
\cb{1+\log\frac{1}{\rho^2\varepsilon_n}}^{2}.
$$
Now take any \(\theta=(G,\eta_1,\eta_2)\in\Theta_n\). Choose \(k,\ell\) such that
\(\Norm{\eta_1-\eta_{1,k}}_\infty\le\tau_n\) and \(\Norm{\eta_2-\eta_{2,\ell}}_\infty\le\tau_n\), and
then \(m\) such that \(d_{\mathcal G}(G,G_m)\le c_1\rho^2\varepsilon_n\). By the triangle inequality,
\eqref{eq:close-reg-bayes-eta-lipschitz}, and \eqref{eq:close-reg-bayes-G-lipschitz},
$$
\Norm{
\delta_\theta^\rho
-
\delta_{(G_m,\eta_{1,k},\eta_{2,\ell})}^\rho
}_{\infty}
\le
C|\log\rho|^{3/2}\cdot 2\tau_n
+
C\rho^{-2}\cdot c_1\rho^2\varepsilon_n
\le
c\varepsilon_n,
$$
after decreasing \(c_0\) and \(c_1\), if necessary. Hence
$\{\delta_{(G_m,\eta_{1,k},\eta_{2,\ell})}^\rho : k\le N_1,\,\ell\le N_2,\,m\le N_G\}$
is a $c\varepsilon_n$-net of $\{\delta_\theta^\rho:\theta\in\Theta_n\}$ under
\(\Norm{\cdot}_{\infty}\). Since \(|\log\rho|\le\log\{1/(\rho^2\varepsilon_n)\}\),
$
\log N_1+\log N_2+\log N_G
\le
C n\varepsilon_n^2
\{1-\log(\rho^2\varepsilon_n)\}^{3},
$
which proves the first claim. If \(\rho=\rho_n\ge n^{-q}\), then
\(1+\log\{1/(\rho_n^2\varepsilon_n)\}\le C_q\log n\), and the second display follows.
\end{proof}

\begin{lemm}
\label{lem:close-crude-moments}
Under Specification~\ref{loc-scale-prior}, there is a constant \(C<\infty\), depending only on the
specification and on \(\ubar\sigma^2,\bar\sigma^2\), such that
$\widehat\mu_i^2(\Pi) \le \EE[\Pi]{\mu_i^2\mid \boldZ,\boldsigma^2} \le C\{n+\Norm{\boldZ}_2^2\}, \; i=1,\ldots,n .$
\end{lemm}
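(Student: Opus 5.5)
The first inequality, $\widehat\mu_i^2(\Pi)\le \EE[\Pi]{\mu_i^2\mid\boldZ,\boldsigma^2}$, is Jensen's inequality for the posterior, since $\widehat\mu_i(\Pi)$ is the posterior mean of $\mu_i$. All the work is in the second inequality. The plan is to mimic Lemma~\ref{lemm:posterior_moments_upper_bounded}: condition on the structural parameters, use Tweedie-type bounds for the inner conditional moment, and control the marginal density at $Z_i$ from below. The location-scale structure with unbounded GP functions $(\eta_1,\eta_2)$ rules out a clean leave-one-out representation like Proposition~\ref{prop:datta_loo}. I will therefore settle for the crude $n+\Norm{\boldZ}_2^2$ bound, which is all the statement asks for.

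\emph{Step 1 (inner conditional moment).} Fix $\theta=(G,\eta_1,\eta_2)$ with $G=Q*\mathrm N(0,A)$, and write $m=\eta_1(\sigma_i)$, $s=s_{\eta_2}(\sigma_i)$. Conditional on $\theta$, the observation model is $Z_i=\mu_i+\varepsilon_i$ with $\varepsilon_i\sim\mathrm N(0,\sigma_i^2)$, and $\mu_i$ has law $m+s\,G$. Tweedie's formula with variance $\sigma_i^2$ gives
\[
\EE{(\mu_i-Z_i)^2\mid Z_i,\theta}=\sigma_i^2+\sigma_i^4 f''_{\theta,\sigma_i^2}(Z_i)/f_{\theta,\sigma_i^2}(Z_i).
\]
Since $f_{\theta,\sigma_i^2}$ is a mixture of $\mathrm N(\cdot,\sigma_i^2)$ densities, the rescaled analogue of Lemma~\ref{lemm:proposition_1_jiang_zhang} bounds the ratio by $\sigma_i^{-2}\{1+\tilde L^2(\sigma_i f_{\theta,\sigma_i^2}(Z_i))\}$. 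This yields
\[
\EE{\mu_i^2\mid Z_i,\theta}\lesssim Z_i^2+1-\log\bigl(\sigma_i f_{\theta,\sigma_i^2}(Z_i)\bigr).
\]

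\emph{Step 2 (integrate over the posterior).} Taking posterior expectations over $\theta$, it remains to bound
\[
\EE[\Pi]{-\log f_{\theta,\sigma_i^2}(Z_i)\mid\boldZ,\boldsigma^2}\lesssim n+\Norm{\boldZ}_2^2.
\]
I would handle this by a tilting argument. Write $\ell(\theta)=\sum_j\log f_{\theta,\sigma_j^2}(Z_j)$. Each term satisfies $\log f_{\theta,\sigma_j^2}(Z_j)\le -\log(\ubar\sigma\sqrt{2\pi})$, so $-\log f_{\theta,\sigma_i^2}(Z_i)\le -\ell(\theta)+Cn$.

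\emph{Step 3 (bound the posterior mean of $-\ell(\theta)$).} The posterior density with respect to the prior is $e^{\ell}/\int e^{\ell}\,\dd\Pi$. Use $x e^{-x}\le e^{-1}$ for $x=-\ell(\theta)-Cn\ge0$ (or directly the entropy inequality $\EE[\text{post}]{-\ell}\le -\log\int e^{\ell}\,\dd\Pi$, which follows from Jensen: $\int e^{\ell}\,\dd\Pi=\EE[\text{post}]{e^{\ell}\cdot e^{-\ell}\dots}$; more precisely, $\mathrm{KL}(\text{post}\,\|\,\Pi)\ge0$ gives $\EE[\text{post}]{\ell}\ge \log\int e^{\ell}\,\dd\Pi$). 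Thus it suffices to lower bound the marginal likelihood:
\[
\log\int e^{\ell}\,\dd\Pi\ge -C(n+\Norm{\boldZ}_2^2).
\]
For this, restrict to the prior event $\{\Norm{\eta_1}_\infty\le1,\ \Norm{\eta_2}_\infty\le1,\ A\le\bar A,\ Q([-R,R])=1\}$, which has positive prior probability $p_0$ independent of $n$ by GP small-ball bounds and the compact base measure. On this event every component mean lies in a fixed compact set and every variance lies in $[\ubar\sigma^2,\bar\sigma^2+C]$. Hence $f_{\theta,\sigma_j^2}(Z_j)\ge c\,e^{-C(1+Z_j^2)}$, so $\ell(\theta)\ge -C(n+\Norm{\boldZ}_2^2)$, and the marginal likelihood is at least $p_0e^{-C(n+\Norm{\boldZ}_2^2)}$.

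Combining the three steps gives the stated bound. The main obstacle is Step 2: the posterior law of $\theta$ depends on $Z_i$ itself, and $f_{\theta,\sigma_i^2}(Z_i)$ can be tiny when $\eta_1$ is large. The entropy/Jensen device sidesteps this at the cost of the factor $n$, which is why the bound is only crude. One point still needs checking: the variance-$\sigma_i^2$ version of the Jiang--Zhang bound, which I expect to follow by rescaling $z\mapsto z/\sigma_i$.
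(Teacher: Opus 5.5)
Your argument is correct, and it takes a genuinely different route from the paper's.

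\textbf{The paper's route.} The paper uses the leave-one-out representation. Writing $\overline G_i$ for the posterior-predictive law of $\mu_i$ given $(\boldZ_{-i},\boldsigma^2)$, it has $\EE[\Pi]{\mu_i^2\mid\boldZ,\boldsigma^2}=\EE[\overline G_i]{\mu_i^2\mid Z_i,\sigma_i^2}$. It then lower-bounds $\Pi(E_0\mid\boldZ_{-i},\boldsigma^2)\ge\exp\{-C(n+\Norm{\boldZ_{-i}}_2^2)\}$ for a fixed good event $E_0$ via Bayes' formula. This gives $f_{\overline G_i,\sigma_i^2}(Z_i)\ge\exp\{-C(n+\Norm{\boldZ}_2^2)\}$, and the Jiang--Zhang bound is applied once, to the single mixture $\overline G_i$.

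\textbf{Your route.} You apply the (correctly rescaled) Jiang--Zhang bound for each fixed $\theta$, so you must control $\mathbb{E}_{\mathrm{post}}[-\log f_{\theta,\sigma_i^2}(Z_i)]$ under the full posterior, which depends on $Z_i$. You handle this with the Gibbs inequality $\mathbb{E}_{\mathrm{post}}[\ell]\ge\log\int e^{\ell}\,\mathrm{d}\Pi$, i.e.\ nonnegativity of $\mathrm{KL}(\text{post}\,\|\,\Pi)$. This is well defined because $\ell$ is bounded above. Combined with the uniform upper bound on each likelihood factor, it gives the same order $n+\Norm{\boldZ}_2^2$.

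Both proofs use the same positive-mass event and the same per-coordinate Tweedie bound. The only difference is how the $Z_i$-dependence of the posterior is neutralised: the paper removes $Z_i$ from the posterior via LOO, while you absorb it with the variational inequality. Your route avoids the LOO bookkeeping. The paper's matches the structure of Lemma~\ref{lemm:posterior_moments_upper_bounded}.

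\textbf{Two corrections to the write-up.}

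First, your claim that the location-scale structure with unbounded GP functions rules out a LOO representation is wrong. Given $\theta$, the pairs $(\mu_j,Z_j)$ are independent across $j$. Hence, conditional on $\boldZ_{-i}$, one has $\mu_i\sim\overline G_i$ and $Z_i\mid\mu_i\sim\mathrm N(\mu_i,\sigma_i^2)$, regardless of how large $\eta_1,\eta_2$ can be. This is exactly the analogue of Proposition~\ref{prop:datta_loo} that the paper uses, and it removes the ``main obstacle'' you identified.

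Second, drop the ``$xe^{-x}\le e^{-1}$'' alternative in Step~3; note also that the correct nonnegative variable is $x=Cn-\ell(\theta)$, not $-\ell(\theta)-Cn$. That device only yields $\mathbb{E}_{\mathrm{post}}[-\ell]\le e^{Cn-1}/\int e^{\ell}\,\mathrm{d}\Pi$. With your lower bound on the marginal likelihood, this is exponentially large in $n+\Norm{\boldZ}_2^2$. Only the KL/Gibbs inequality gives the logarithmic, hence polynomial, bound, and it is the one your final reduction actually relies on.
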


\begin{proof}
The first inequality follows from Jensen's inequality.  We prove the second.
Use the leave-one-out representation of the full Bayes rule analogous to the one stated in Proposition~\ref{prop:datta_loo}. Let $\overline{G}_{i}$ be the posterior mean of the law of
$\mu_i$ conditional on
$(\boldZ_{-i},\boldsigma^2)$. Then
$
\EE[\Pi]{\mu_i^2\mid\boldZ,\boldsigma^2}
=
\EE[\overline{G}_{i}]{\mu_i^2\mid Z_i,\sigma_i^2}$.
Let
\smash{$
f_{\overline{G}_{i}, \sigma_i^2}(z)
=
\int \varphi(z-\mu;\sigma_i^2)\,\dd\overline{G}_{i}(\mu)
$}
be the corresponding marginal (predictive) density.

We first lower bound $f_{\overline{G}_{i},\sigma_i^2}(Z_i)$. Choose a fixed event $E_0 \subset \Theta$ in the parameter
space on which
$
\|\eta_1\|_\infty\le 1,\,
\|\eta_2\|_\infty\le 1,\,
A\le a_0
$
for some fixed \(a_0>0\) with \(\Gamma(A\le a_0)>0\). The Gaussian process support and
the lower-tail assumption on \(\Gamma\) imply \(\Pi(E_0)>0\). Since the base measure \(H\) is
supported on \([-R,R]\), on \(E_0\) each induced law assigns at least a fixed
positive amount of mass $\delta >0$ to a fixed compact interval \([-B_0,B_0]\), uniformly in \(j\), that is,
$\Pi( \{\mu_i \in [-B_0, B_0]\} \mid \theta) \geq \delta\, \text{ for all }\, i=1,\ldots,n\; \text{ and }\; \theta \in E_0.$
Hence there
are constants $c,C>0$ such that
$$
f_{\theta,\sigma_i^2}(z)\ge c\exp(-Cz^2)\, \text{ for all }\, i=1,\ldots,n\; \text{ and }\; \theta \in E_0.
$$
Also $f_{\theta,\sigma_i^2}(z)\le (2\pi\ubar\sigma^2)^{-1/2}$ for all $\theta,i,z$. Bayes' formula based
on \((\boldZ_{-i},\boldsigma^2)\) gives
$$
\begin{aligned}
\Pi(E_0\mid\boldZ_{-i},\boldsigma^2) & = \frac{\int_{E_0} \prod_{j \neq i} f_{\theta,\sigma_j^2}(Z_j)\dd \Pi(\theta)}{\int \prod_{j \neq i} f_{\theta,\sigma_j^2}(Z_j)\dd \Pi(\theta)} \\ &\geq \frac{ \Pi(E_0) c^{n-1} \exp(-C \Norm{\boldZ_{-i}}_2^2)}{ (2\pi\ubar\sigma^2)^{-(n-1)/2}}
\ge
\exp\{-C'(n+\Norm{\boldZ_{-i}}_2^2)\},
\end{aligned}
$$
for another constant $C'$.
It follows that \(\overline G_{i}\) puts at least
\(\exp\{-C'(n+\Norm{\boldZ_{-i}}_2^2)\}\) mass on \([-B_0,B_0]\), and therefore
$$
f_{\overline{G}_i, \sigma_i^2}(Z_i)
\ge
\exp\{-C''(n+\Norm{\boldZ_{-i}}_2^2+Z_i^2)\} = \exp\{-C''(n+\Norm{\boldZ}_2^2)\},
$$
for yet another constant $C''$. Using Lemma~\ref{lemm:proposition_1_jiang_zhang} (similar to our argument for the proof of Lemma~\ref{lemm:posterior_moments_upper_bounded}), we find that,
$$
\EE[\Pi]{\mu_i^2\mid\boldZ,\boldsigma^2} = \EE[\overline{G}_i]{\mu_i^2 \mid Z_i, \sigma_i^2} \lesssim 1 + Z_i^2 + \abs{\log(f_{\overline{G}_i, \sigma_i^2}(Z_i))} \lesssim n + \Norm{\boldZ}_2^2,
$$
as claimed.
\end{proof}

\begin{lemm}
\label{lem:close-average-to-coordinate}
Suppose Assumptions~\ref{assu:LS} and~\ref{assu:close-fixed-design} hold. Let $\Theta_n=\mathcal G\otimes\mathcal H_n\otimes\mathcal H_n$ be the
sieve defined in~\eqref{eq:close-sieve}.
For every $D<\infty$,
$$
\sup_{\theta\in\Theta_n:
\,\Dhel(\boldf_\star,\boldf_\theta)\le D\varepsilon_n(\Pi)}
\max_{1\le i\le n}
\Dhel(f_{\star,\sigma_i^2},f_{\theta,\sigma_i^2})
=o(1),
$$
uniformly over $(G_\star,m_\star,s_\star)$ satisfying
Assumption~\ref{assu:LS}.
\end{lemm}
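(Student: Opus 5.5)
The plan is a local averaging argument: combine continuity of $\sigma\mapsto f_{\theta,\sigma^2}$ in Hellinger distance with the regular design of Assumption~\ref{assu:close-fixed-design}. Fix $i$ and let $N_i:=\{j:|\sigma_j-\sigma_i|\le h_n\}$, so that $|N_i|\ge cnh_n$. For every $j\in N_i$ the triangle inequality gives
\[
\Dhel(f_{\star,\sigma_i^2},f_{\theta,\sigma_i^2})\le \Dhel(f_{\star,\sigma_i^2},f_{\star,\sigma_j^2})+\Dhel(f_{\star,\sigma_j^2},f_{\theta,\sigma_j^2})+\Dhel(f_{\theta,\sigma_j^2},f_{\theta,\sigma_i^2}).
\]
I will average this over $j\in N_i$ and show that each of the three averaged terms is $o(1)$, uniformly in $i$, in $\theta$, and in the truth.

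\emph{Middle term.} By Cauchy--Schwarz,
\[
\frac{1}{|N_i|}\sum_{j\in N_i}\Dhel(f_{\star,\sigma_j^2},f_{\theta,\sigma_j^2})\le\Big(\frac{n}{|N_i|}\Dhel^2(\boldf_\star,\boldf_\theta)\Big)^{1/2}\le D(c h_n)^{-1/2}\varepsilon_n .
\]
With $h_n=n^{-1/(2\alpha+1)}$ and $\varepsilon_n=n^{-\alpha/(2\alpha+1)}$, the right-hand side is of order $n^{-(2\alpha-1)/\{2(2\alpha+1)\}}\to0$, because $\alpha>1/2$.

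\emph{Continuity of the marginals in $\sigma$.} I will use a common mixing measure and apply the convexity argument of Step~1 in the proof of Theorem~\ref{thm:close_contraction}. For mixtures indexed by $\sigma$ and $\sigma'$, convexity bounds $\Dhel^2(f_{\theta,\sigma^2},f_{\theta,\sigma'^2})$ by
\[
\int\Dhel^2\big(\mathrm N(\eta_1(\sigma)+s_{\eta_2}(\sigma)u,\sigma^2),\mathrm N(\eta_1(\sigma')+s_{\eta_2}(\sigma')u,\sigma'^2)\big)\,\dd G(u).
\]
The Gaussian Hellinger formula then bounds this by
\[
C\big[|\eta_1(\sigma)-\eta_1(\sigma')|^2+|s_{\eta_2}(\sigma)-s_{\eta_2}(\sigma')|^2\textstyle\int u^2\,\dd G+|\sigma-\sigma'|^2\big],
\]
using $\sigma,\sigma'\ge\ubar\sigma$ and $\int u^2\,\dd G\le R^2+\bar A$ on $\mathcal G$. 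Note that only \emph{increments} of $\eta_1,\eta_2$ enter, and not their sup norms; this matters because the sup norms on $\mathcal H_n$ grow like $\sqrt n\varepsilon_n$. For the truth, the Hölder regularity of $m_\star,s_\star$ in Assumption~\ref{assu:LS} immediately gives a bound $C(h_n^{\alpha\wedge1}+h_n)\to0$.

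\emph{Third term (main obstacle).} Here I need a modulus of continuity for $\eta\in\mathcal H_n=\delta_nH_1^\beta+C\sqrt n\varepsilon_nH_1^{\alpha+1/2}$ at scale $h_n$, uniform over the sieve. Write $\eta=\eta^{(1)}+\eta^{(2)}$ according to the two pieces.
\begin{itemize}
\item Since $\beta>1/2$, Sobolev embedding puts $\eta^{(1)}$ in $\Sigma^{\beta-1/2}$ with norm $\lesssim\delta_n$, so its increment over distance $h_n$ is $\lesssim\delta_n\to0$.
\item For $\eta^{(2)}$, Sobolev embedding of $H^{\alpha+1/2}$ (with $\alpha+1/2>1$) makes it Lipschitz, or at least $\Sigma^{\gamma}$ for some $\gamma\in(1/2,1]$, with constant $\lesssim\sqrt n\varepsilon_n$. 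Its increment is therefore $\lesssim\sqrt n\varepsilon_n h_n^{\gamma}=n^{(1/2-\gamma)/(2\alpha+1)}\to0$. I will take $\gamma=1$ when $\alpha\ge1/2$, which always holds here.
\end{itemize}
Composing with the globally Lipschitz map $\Lambda^{1/2}$ transfers these bounds to $s_{\eta_2}$.

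Combining the three averaged terms gives the bound $\max_i\Dhel(f_{\star,\sigma_i^2},f_{\theta,\sigma_i^2})\lesssim n^{-(2\alpha-1)/\{2(2\alpha+1)\}}+\delta_n+n^{-1/\{2(2\alpha+1)\}}+h_n^{\alpha\wedge1}$. The right-hand side does not depend on $i$, $\theta$, or the truth beyond the constants $B,M$. I expect the delicate point to be this uniform modulus for the RKHS component: it is exactly the place where the growing radius $\sqrt n\varepsilon_n$ of the sieve has to be beaten by the bandwidth $h_n$.
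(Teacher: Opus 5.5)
Your argument is correct and is essentially the paper's: a modulus of continuity in $\sigma$ for the true and candidate marginals, uniform over the sieve, that involves only increments of $\eta_1,\eta_2$ and so beats the sieve radius $\sqrt n\,\varepsilon_n=h_n^{-1/2}$ at bandwidth $h_n$, combined with Assumption~\ref{assu:close-fixed-design} and $\varepsilon_n/\sqrt{h_n}=h_n^{\alpha-1/2}\to0$. The paper runs the last step through the maximizing index $i_0$ rather than your triangle inequality plus Cauchy--Schwarz over neighbours, and the two are equivalent.

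There is one slip to fix. $H^{\alpha+1/2}$ does not embed into Lipschitz functions when $\alpha\le 1$; for example, $H^{5/4}$ contains functions behaving like $|x|^{0.8}$ near a point. So you cannot take $\gamma=1$. Instead take $\gamma\in(1/2,\alpha\wedge1)$, which is what the paper does with $\tilde\alpha$. This still gives $\sqrt n\,\varepsilon_n h_n^{\gamma}=h_n^{\gamma-1/2}\to0$, so the conclusion is unaffected. The only change is that the corresponding term in your final rate becomes $n^{-(\gamma-1/2)/(2\alpha+1)}$ instead of $n^{-1/\{2(2\alpha+1)\}}$.
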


\begin{proof}
Write $\varepsilon_n:=\varepsilon_n(\Pi)$ and
$h_n=n^{-1/(2\alpha+1)}$, so that
$\varepsilon_n=h_n^\alpha$ and $\sqrt n\varepsilon_n=h_n^{-1/2}$.
Also fix
$
\tilde\alpha:=\{1/2+(\alpha\wedge1)\}/2 \in (1/2, \alpha \wedge 1).
$
Recall that, for some $1/2<\beta<\alpha$, the sieve for either nuisance
function is
$$
\mathcal H_n
=
\delta_n H_1^\beta
+
C\sqrt n\,\varepsilon_n H_1^{\alpha+1/2},
\qquad
\delta_n\asymp(n\varepsilon_n^2)^{-(\alpha-\beta)}=o(1).
$$
By Sobolev embedding, uniformly over $\eta\in\mathcal H_n$,
$
\abs{\eta(\sigma)-\eta(\widetilde\sigma)}
\lesssim
\delta_n+
\sqrt n\,\varepsilon_n
\abs{\sigma-\widetilde\sigma}^{\tilde\alpha}.
$
Since $\sqrt{\Lambda}$ is globally Lipschitz, the same bound holds with
$\eta$ replaced by $s_\eta$. Moreover, Assumption~\ref{assu:LS} gives
\smash{$
\abs{m_\star(\sigma)-m_\star(\widetilde\sigma)}
\lesssim
\abs{\sigma-\widetilde\sigma}^{\tilde\alpha},
$}
and, using the approximant $\eta_{2,n}$ from
Assumption~\ref{assu:LS}(iii),
$\abs{s_\star(\sigma)-s_\star(\widetilde\sigma)} \lesssim \varepsilon_n+ \sqrt n\,\varepsilon_n \abs{\sigma-\widetilde\sigma}^{\tilde\alpha}.$
For $\theta\in\Theta_n$, define
$
d_\theta(\sigma):=
\Dhel\p{f_{\star,\sigma^2},f_{\theta,\sigma^2}}.
$
We claim that, uniformly over $\theta\in\Theta_n$,
$\abs{d_\theta(\sigma)-d_\theta(\widetilde\sigma)} \lesssim \delta_n+\varepsilon_n+ (1+\sqrt n\,\varepsilon_n) \abs{\sigma-\widetilde\sigma}^{\tilde\alpha}.$
To see this, write $G=Q*\mathrm N(0,A)$, so that
$$
f_{\theta,\sigma^2}(z)
=
\int
\varphi\p{
z-\eta_1(\sigma)-s_{\eta_2}(\sigma)u;\,
\sigma^2+A s_{\eta_2}^2(\sigma)
}
\,\dd Q(u),
$$
where $Q$ is supported on $[-R,R]$. For normal distributions whose
standard deviations are bounded away from zero,
\smash{$
\Dhel(\mathrm N(a,v^2),\mathrm N(b,w^2))
\lesssim
\abs{a-b}+\abs{v-w}.
$}
Moreover,
$$
\abs{
\sqrt{\sigma^2+A s^2}
-
\sqrt{\widetilde\sigma^2+A\widetilde s^2}
}
\le
\abs{\sigma-\widetilde\sigma}
+
\sqrt A\,\abs{s-\widetilde s}.
$$
Convexity of squared Hellinger distance under mixtures therefore gives
the claimed modulus for the candidate marginals. The same argument,
using the bounds on $(m_\star,s_\star)$ above and the compact support of
$G_\star$, gives the corresponding modulus for the true marginals.
The claim then follows from the triangle inequality.

In particular, uniformly over $\theta\in\Theta_n$,
$$
\sup_{\abs{\sigma-\widetilde\sigma}\le h_n}
\abs{d_\theta(\sigma)-d_\theta(\widetilde\sigma)}
\lesssim
\delta_n+\varepsilon_n+h_n^{\tilde\alpha}
+
\sqrt n\,\varepsilon_n h_n^{\tilde\alpha}
=o(1),
$$
because
$
\sqrt n\,\varepsilon_n h_n^{\tilde\alpha}
=
h_n^{\tilde\alpha-1/2}\to0.
$
Denote the left-hand side above by $\omega_n$, so that
$\omega_n=o(1)$.

Now fix $\theta\in\Theta_n$ such that
$\Dhel(\boldf_\star,\boldf_\theta)\le D\varepsilon_n$, and choose
$i_0$ such that
$
d_\theta(\sigma_{i_0})=\max_i d_\theta(\sigma_i).
$
For every $j$ with
$\abs{\sigma_j-\sigma_{i_0}}\le h_n$,
$
d_\theta(\sigma_j)
\ge
\p{d_\theta(\sigma_{i_0})-\omega_n}_+.
$
Hence, by Assumption~\ref{assu:close-fixed-design},
$D^2\varepsilon_n^2 \ge \frac1n\sum_{j=1}^n d_\theta^2(\sigma_j) \ge c h_n \p{d_\theta(\sigma_{i_0})-\omega_n}_+^2.$
It follows that
$$
\max_{1\le i\le n}
\Dhel(f_{\star,\sigma_i^2},f_{\theta,\sigma_i^2})
\le
\omega_n+\frac{D\varepsilon_n}{\sqrt{c h_n}}
=
o(1)+\frac{D}{\sqrt c}h_n^{\alpha-1/2}
=o(1),
$$
since $\alpha>1/2$, with the $o(1)$ term uniform in the relevant quantities.
\end{proof}

\subsection{Proof for regret for CLOSE (Theorem~\ref{theo:close-regret})}
\label{subsec:close-regret}

\begin{proof}
Write $\varepsilon_n:=\varepsilon_n(\Pi)$. Throughout the proof, all probabilities and expectations are conditional on the fixed design $\boldsigma^2$. By Assumption~\ref{assu:LS}, we may take $M_1=B(1+M)$ so that
$
|\mu_i|\le M_1
$
almost surely under the true model, uniformly over $i$. For $\theta=(G,\eta_1,\eta_2)$, write
$
\delta_\theta(z,\sigma^2):=
z+\sigma^2f'_{\theta,\sigma^2}(z)/f_{\theta,\sigma^2}(z)
$
and similarly let
$
\delta_\star(z,\sigma^2):=
z+\sigma^2f'_{\star,\sigma^2}(z)/f_{\star,\sigma^2}(z).
$
Thus
$
\delta_\star(Z_i,\sigma_i^2)=\EE[\star]{\mu_i\mid Z_i,\sigma_i^2}
$
and let
$
\widehat{\boldmu}^{\star}:=(\delta_\star(Z_i,\sigma_i^2))_{i=1}^n.
$\\

\noindent\textbf{Step 1:}
Since the coordinates are conditionally independent under the true hierarchy,
$
\widehat{\boldmu}^{\star}
=
\EE[\star]{\boldmu\mid\boldZ,\boldsigma^2}.
$
Therefore, by orthogonality,
$$
R^2\p{\widehat{\boldmu}(\Pi),(G_\star,m_\star,s_\star)}
=
R^2\p{\widehat{\boldmu}^{\star},(G_\star,m_\star,s_\star)}
+
\frac1n
\EE[\star]{
\Norm{\widehat{\boldmu}(\Pi)-\widehat{\boldmu}^{\star}}_2^2
}.
$$
Moreover,
$
R^2\p{\widehat{\boldmu}^{\star},(G_\star,m_\star,s_\star)}
=
n^{-1}\sum_{i=1}^n
\EE[\star]{\Var[\star]{\mu_i\mid Z_i,\sigma_i^2}}.
$
Thus, using $\sqrt{a+b}-\sqrt a\le\sqrt b$, it suffices to show that
\begin{equation}
\label{eq:close-oracle-discrepancy-goal}
\frac1n\sum_{i=1}^n
\EE[\star]{
\p{
\widehat\mu_i(\Pi)-\delta_\star(Z_i,\sigma_i^2)
}^2
}
\lesssim
\varepsilon_n^2\log^4 n.
\end{equation}

\noindent \textbf{Step 2:}
Let $\Theta_n$ denote the sieve in~\eqref{eq:close-sieve}. By
Theorem~\ref{thm:close_contraction}, together with the sieve-complement
bound in its proof, there exist constants $D,c>0$ such that, for every
$q\geq1$, there is an event $\mathcal C_n$ satisfying
$
\PP[\star]{\mathcal C_n^c}\le C_qn^{-q}
$, and, on $\mathcal C_n$,
$
\Pi\p{\mathcal B_n^c\mid\boldZ,\boldsigma^2}
\le
\exp\{-cn\varepsilon_n^2\},
$
where
$
\mathcal B_n:=\cb{\theta\in\Theta_n:\Dhel(\boldf_\star,\boldf_\theta)\le D\varepsilon_n}.
$
We next use Lemma~\ref{lem:hellinger-compactness}. Recall that the true
conditional law of $\mu_i$ is supported on $[-M_1,M_1]$, uniformly in
$i$. Applying Lemma~\ref{lem:hellinger-compactness} with $n=1$, there
exist constants $\rho_0>0$ and $M_2<\infty$ such that, uniformly over
$\sigma\in[\ubar\sigma,\bar\sigma]$,
$\Dhel\p{f_{\star,\sigma^2},f_{\theta,\sigma^2}}\le\rho_0 \;\Longrightarrow\; \PP[U\sim G]{ \abs{\eta_1(\sigma)+s_{\eta_2}(\sigma)U}\le M_2 } \ge\frac12.$
Here the two mixing distributions in Lemma~\ref{lem:hellinger-compactness}
are the laws of
$
m_\star(\sigma)+s_\star(\sigma)U$, $U\sim G_\star,
$
and
$
\eta_1(\sigma)+s_{\eta_2}(\sigma)U$, $U\sim G,
$
respectively.

By Lemma~\ref{lem:close-average-to-coordinate}, for all sufficiently
large $n$,
$
\max_{1\le i\le n}
\Dhel(f_{\star,\sigma_i^2},f_{\theta,\sigma_i^2})
\le\rho_0
$
for every $\theta\in\mathcal B_n$. Consequently,
\begin{equation}
\label{eq:close-coordinate-compactness}
\PP[U\sim G]{
\abs{\eta_1(\sigma_i)+s_{\eta_2}(\sigma_i)U}\le M_2
}
\ge\frac12
\end{equation}
for every $i=1,\ldots,n$ and every $\theta\in\mathcal B_n$.

Finally, let
$
T_n:=M_1+\bar\sigma\sqrt{16\log n}
$
and
$
\mathcal A_n:=\cb{\max_{1\le i\le n}|Z_i|\le T_n}.
$
Since $|\mu_i|\le M_1$ almost surely under the true model, a Gaussian
tail bound gives
$
\PP[\star]{\mathcal A_n^c}\le2n^{-7}
$.\\

\noindent\textbf{Step 3:}
We first control the contribution from $\mathcal A_n^c\cup\mathcal C_n^c$. Recall from Lemma~\ref{lem:close-crude-moments} that
$
\widehat\mu_i^2(\Pi)
\lesssim
n+\Norm{\boldZ}_2^2,
$
while $|\delta_\star(Z_i,\sigma_i^2)|\le M_1$. Hence
$
(\widehat\mu_i(\Pi)-\delta_\star(Z_i,\sigma_i^2))^2
\lesssim
n+\Norm{\boldZ}_2^2.
$
Moreover, by Assumption~\ref{assu:LS},
$\EEInline[\star]{(n+\Norm{\boldZ}_2^2)^2}\lesssim n^2$.
Taking $q$ sufficiently large above and applying Cauchy--Schwarz gives
$$
\begin{aligned}
\frac1n\sum_{i=1}^n
\EE[\star]{
\p{
\widehat\mu_i(\Pi)-\delta_\star(Z_i,\sigma_i^2)
}^2
\ind{\mathcal A_n^c\cup\mathcal C_n^c}
}
\lesssim
n\,\PP[\star]{\mathcal A_n^c\cup\mathcal C_n^c}^{1/2}
\lesssim n^{-5/2},
\end{aligned}
$$
which is negligible relative to $\varepsilon_n^2\log^4 n$.\\

\noindent\textbf{Step 4:}
For every $i$, by iterated expectation,
$
\widehat\mu_i(\Pi)=\int
\delta_\theta(Z_i,\sigma_i^2)
\,\dd\Pi(\theta\mid\boldZ,\boldsigma^2).
$
On $\mathcal C_n$, let
$
\pi_n:=\Pi(\mathcal B_n^c\mid\boldZ,\boldsigma^2)
$
and let $\Pi_{\mathcal B_n}$ and $\Pi_{\mathcal B_n^c}$ denote the posterior restricted and normalized to $\mathcal B_n$ and $\mathcal B_n^c$, respectively. Then
$$
\begin{aligned}
\p{
\widehat\mu_i(\Pi)-\delta_\star(Z_i,\sigma_i^2)
}^2
&\le
2
\p{
\int_{\mathcal B_n}
\delta_\theta(Z_i,\sigma_i^2)
\,\dd\Pi_{\mathcal B_n}(\theta\mid\boldZ,\boldsigma^2)
-
\delta_\star(Z_i,\sigma_i^2)
}^2\\
&\quad+
2\pi_n^2
\p{
\int_{\mathcal B_n^c}
\delta_\theta(Z_i,\sigma_i^2)
\,\dd\Pi_{\mathcal B_n^c}(\theta\mid\boldZ,\boldsigma^2)
-
\delta_\star(Z_i,\sigma_i^2)
}^2.
\end{aligned}
$$
By Jensen's inequality, the first term is at most
$$
2\int_{\mathcal B_n}
\p{
\delta_\theta(Z_i,\sigma_i^2)
-
\delta_\star(Z_i,\sigma_i^2)
}^2
\,\dd\Pi_{\mathcal B_n}(\theta\mid\boldZ,\boldsigma^2).
$$
For the second term, using Jensen's inequality and $|\delta_\star|\le M_1$,
$$
\begin{aligned}
&\pi_n^2
\p{
\int_{\mathcal B_n^c}
\delta_\theta(Z_i,\sigma_i^2)
\,\dd\Pi_{\mathcal B_n^c}(\theta\mid\boldZ,\boldsigma^2)
-
\delta_\star(Z_i,\sigma_i^2)
}^2 \\
&\quad\lesssim\;
\pi_n^2
+
\pi_n
\int_{\mathcal B_n^c}
\delta_\theta^2(Z_i,\sigma_i^2)
\,\dd\Pi(\theta\mid\boldZ,\boldsigma^2)\,\le\,
\pi_n
\cb{
1+\EE[\Pi]{\mu_i^2\mid\boldZ,\boldsigma^2}
}.
\end{aligned}
$$
On $\mathcal C_n$, $\pi_n\le\exp\{-cn\varepsilon_n^2\}$. By Lemma~\ref{lem:close-crude-moments}, its contribution on $\mathcal A_n$ is therefore bounded by
$
Cn\log n\exp\{-cn\varepsilon_n^2\}
$
and is negligible. Together with Step 3, it remains to show that
\begin{equation}
\label{eq:close-good-posterior-sup-goal}
\EE[\star]{
\sup_{\theta\in\mathcal B_n}
\frac1n\sum_{i=1}^n
\p{
\delta_\theta(Z_i,\sigma_i^2)
-
\delta_\star(Z_i,\sigma_i^2)
}^2
\ind{(\mathcal A_n)}
}
\lesssim
\varepsilon_n^2\log^4 n.
\end{equation}

\noindent\textbf{Step 5:}
We next replace the ordinary Bayes rules $\delta_{\theta}(z, \sigma^2)$ by the regularized Bayes rules $\delta_\theta^\rho(z,\sigma^2)$ defined in~\eqref{eq:reg_bayes_rule_close}. Set $\rho_n:=n^{-K}$ for a sufficiently large fixed $K$. Recalling~\eqref{eq:close-coordinate-compactness}, we find that on $\mathcal{A}_n$, for every $\theta\in\mathcal B_n$ and every $i$,
$$
\begin{aligned}
f_{\theta,\sigma_i^2}(Z_i)
=
\int
\varphi\p{
Z_i-\eta_1(\sigma_i)-s_{\eta_2}(\sigma_i)u;\sigma_i^2
}
\,\dd G(u)
\ge
\frac{1}{2\sqrt{2\pi}\bar\sigma}
\exp\p{
-\frac{(T_n+M_2)^2}{2\ubar\sigma^2}
}
\ge
\frac{\rho_n}{\sigma_i},
\end{aligned}
$$
for $K$ sufficiently large. Since the true conditional law of $\mu_i$ is supported on $[-M_1,M_1]$, the same argument gives
$
f_{\star,\sigma_i^2}(Z_i)\ge\rho_n/\sigma_i.
$
Consequently, on $\mathcal A_n$,
$
\delta_\theta(Z_i,\sigma_i^2)
=
\delta_\theta^{\rho_n}(Z_i,\sigma_i^2)$ and
$
\delta_\star(Z_i,\sigma_i^2)
=
\delta_\star^{\rho_n}(Z_i,\sigma_i^2),
$
simultaneously for every $i$ and every $\theta\in\mathcal B_n$.
Define
$$
\widehat\Delta_n(\theta)
:=
\frac1n\sum_{i=1}^n
\p{
\delta_\theta^{\rho_n}(Z_i,\sigma_i^2)
-
\delta_\star^{\rho_n}(Z_i,\sigma_i^2)
}^2.
$$
It therefore suffices to show
$
\EEInline[\star]{\sup_{\theta\in\mathcal B_n}\widehat\Delta_n(\theta)}
\lesssim
\varepsilon_n^2\log^4 n.
$

\noindent\textbf{Step 6:}
By Lemma~\ref{lemm:covering_number_reg_bayes_close}, there exist $\theta_1,\ldots,\theta_N\in\mathcal B_n$ such that
$
\log N\lesssim n\varepsilon_n^2\log^3 n
$
and, for every $\theta\in\mathcal B_n$, there exists $j=j(\theta)$ satisfying
$
\lVert\delta_\theta^{\rho_n}-\delta_{\theta_j}^{\rho_n}\rVert_\infty
\lesssim
\varepsilon_n.
$
Indeed, we may start with a cover of $\Theta_n$, discard balls that do not intersect $\mathcal B_n$, and choose one point of $\mathcal B_n$ from each remaining ball, at the cost of changing the covering radius by a constant factor. Hence
$\sup_{\theta\in\mathcal B_n} \widehat\Delta_n(\theta) \le 2\max_{1\le j\le N}\widehat\Delta_n(\theta_j)+C\varepsilon_n^2.$
Fix $j\le N$ and define
$$
X_{ij}:=\p{
\delta_{\theta_j}^{\rho_n}(Z_i,\sigma_i^2)
-
\delta_\star^{\rho_n}(Z_i,\sigma_i^2)
}^2.
$$
By Lemma~\ref{lemm:proposition_1_jiang_zhang}, after rescaling by $\sigma_i$,
$
0\le X_{ij}\lesssim|\log\rho_n|\lesssim\log n.
$
Let
$
h_{ij}:=\Dhel(f_{\star,\sigma_i^2},f_{\theta_j,\sigma_i^2}).
$
Lemma~\ref{lemm:saha_e1}, again applied after rescaling by $\sigma_i$, gives
$\EE[\star]{X_{ij}} \lesssim h_{ij}^2 \max\cb{ |\log\rho_n|^3,\, |\log h_{ij}| }.$
Since $\theta_j\in\mathcal B_n$, $n^{-1}\sum_{i=1}^n h_{ij}^2\le D^2\varepsilon_n^2$.
Also, for all sufficiently large $n$, the function $x\mapsto x\log(1/x)$ is increasing over the relevant range. By concavity and Jensen's inequality,
$$
\begin{aligned}
\frac1n\sum_{i=1}^n
h_{ij}^2|\log h_{ij}|
=
\frac{1}{2n}\sum_{i=1}^n
h_{ij}^2\log\p{\frac{1}{h_{ij}^2}}
\le
\frac12
\p{\frac1n\sum_{i=1}^n h_{ij}^2}
\log\p{
\frac{1}{n^{-1}\sum_{i=1}^n h_{ij}^2}
}
\lesssim
\varepsilon_n^2\log n.
\end{aligned}
$$
It follows that
\begin{equation}
\label{eq:close-net-mean}
\frac1n\sum_{i=1}^n
\EE[\star]{X_{ij}}
\lesssim
\varepsilon_n^2\log^3 n.
\end{equation}
Moreover, since $X_{ij}\lesssim\log n$,
$$
\begin{aligned}
\sum_{i=1}^n
\Var[\star]{X_{ij}}
\le
\sum_{i=1}^n
\EE[\star]{X_{ij}^2}
\lesssim
\log n
\sum_{i=1}^n
\EE[\star]{X_{ij}}
\lesssim
n\varepsilon_n^2\log^4 n.
\end{aligned}
$$
Bernstein's inequality therefore gives, for a sufficiently large constant $A$,
$$
\PP[\star]{
\widehat\Delta_n(\theta_j)
>
A\varepsilon_n^2\log^4 n
}
\le
\exp\p{
-cA n\varepsilon_n^2\log^3 n
}.
$$
Taking a union bound over $j=1,\ldots,N$, and increasing $A$ if necessary, yields
$$
\PP[\star]{
\max_{1\le j\le N}
\widehat\Delta_n(\theta_j)
>
A\varepsilon_n^2\log^4 n
}
\le
\exp\p{
-cn\varepsilon_n^2\log^3 n
}.
$$
Finally,
$
\widehat\Delta_n(\theta_j)\lesssim\log n
$
uniformly over $j$, again by the regularized score bound. Thus the preceding tail probability implies
$
\EEInline[\star]{\max_{1\le j\le N}\widehat\Delta_n(\theta_j)}
\lesssim
\varepsilon_n^2\log^4 n.
$
Combining this with the covering bound gives
$
\mathbb E_{\star}[
\sup_{\theta\in\mathcal B_n}
\widehat\Delta_n(\theta)
]
\lesssim
\varepsilon_n^2\log^4 n.
$
By Step 5, this proves~\eqref{eq:close-good-posterior-sup-goal}. Combining Steps 3--6 proves~\eqref{eq:close-oracle-discrepancy-goal}, and the claim now follows from Step 1 by taking square roots.
\end{proof}

\subsection{Proofs for the mean-variance independent heteroscedastic model}

\begin{proof}[Proof of Theorem~\ref{theo:het-ind-contraction}]
Throughout, we condition on the fixed variances $\boldsigma^2$. The proof follows the steps in the proof of Theorem~\ref{thm:compound-contraction}, and we only record substantial changes.

Let
$
L_n(\boldf_G):=\prod_{i=1}^n
f_{G,\sigma_i^2}(Z_i)/f_{G_\star,\sigma_i^2}(Z_i).
$
In contrast to the compound setting, this is a genuine likelihood ratio. Take the local set $\mathcal K_n$ supplied by Lemma~\ref{lem:G-small-k} with $m_\star\equiv0$ and $s_\star\equiv1$. The Bernstein argument as in Lemma~\ref{lem:denom-bernstein}, with $\DKL{\cdot}{\cdot}$ and $\VKL{\cdot}{\cdot}$  replaced by their vectorized counterparts in~\eqref{eq:vectorized_metrics}, gives the required denominator lower bound. Lemma~\ref{lem:G-small-k} gives $\Pi(\mathcal K_n)\ge\exp\{-Cn\varepsilon_n^2\}$.
Next, we can follow Lemma~\ref{lemm:upper_bound_small} almost verbatim, again with some simplifications because we are working with genuine likelihood ratios. We replace \citet[Lemma 2]{zhang2009generalized} for constructing the covering by~\citet[Lemma 4]{jiang2020general}. Combining the denominator and numerator bounds exactly as in Step~3 of the proof of Theorem~\ref{thm:compound-contraction} proves the overall claim.
\end{proof}

\begin{proof}[Proof of Theorem~\ref{theo:regret_bayes_ind_gauss}]
The proof follows the proof of Theorem~\ref{theo:main_risk_bound_eq} (i.e., the same $6$ steps); and so again we only sketch differences.

Step 1, i.e., the reduction to permutation-equivariant rules to the separable oracle, is not needed. We replace Step 2 by the following argument.
Since
$
\widehat{\boldmu}^{\mathrm B}(G_\star)=\EE[G_\star]{\boldmu\mid\boldZ,\boldsigma^2},
$
orthogonality gives

$$
R^2(\widehat{\boldmu}(\Pi),G_\star)
=
R^2(\widehat{\boldmu}^{\mathrm B}(G_\star),G_\star)
+
\frac1n
\EE[G_\star]{
\Norm{\widehat{\boldmu}(\Pi)-\widehat{\boldmu}^{\mathrm B}(G_\star)}_2^2
}.
$$
Using the triangle inequality, it therefore suffices to bound the last term by
$C\varepsilon_n^2\log^3 n$.

Steps 3-6 can also be applied almost verbatim.
The main changes are that we use the heteroscedastic regularized rules from~\eqref{eq:reg_bayes_rule_close}, with threshold $\rho/\sigma$, and cover them using Theorem 7 of~\citet{jiang2020general} (in lieu of using Proposition~3 of \citet{jiang2009general} which we used for the proof of Theorem~\ref{theo:main_risk_bound_eq}).

\end{proof}

\end{document}